\theoremstyle{definition}
\newtheorem{theorem}{Theorem}[section]
\newtheorem{lemma}[theorem]{Lemma}
\newtheorem{proposition}[theorem]{Proposition}
\newtheorem{corollary}[theorem]{Corollary}
\theoremstyle{definition}
\newtheorem{definition}[theorem]{Definition}
\newtheorem{example}[theorem]{Example}
\theoremstyle{remark}
\newtheorem{remark}[theorem]{Remark}
\numberwithin{equation}{section}
\newcommand{\qedex}{$\hfill\lozenge$}
\DeclareMathOperator{\coker}{coker}
\DeclareMathOperator{\coim}{coim}
\DeclareMathOperator{\dom}{dom}
\DeclareMathOperator{\im}{im}
\DeclareMathOperator{\cl}{cl}
\DeclareMathOperator{\opn}{opn}
\DeclareMathOperator{\mo}{mo}
\DeclareMathOperator{\inv}{Inv}
\DeclareMathOperator{\inter}{int}
\newcommand{\zint}[1]{[0,#1]_{\ZZ}}
\newcommand{\zintp}[1]{[1,#1]_{\ZZ}}
\newcommand{\zintab}[2]{[#1,#2]_{\ZZ}}
\newcommand{\inscr}{\sqsubseteq}
\newcommand{\ovscr}{\sqsupseteq}
\newcommand{\dgV}{G_\cV}
\newcommand{\dgVz}{G_{\cV_0}}
\newcommand{\dgVo}{G_{\cV_1}}
\newcommand{\scc}{scc}
\DeclareMathOperator{\paths}{Paths}
\newcommand{\pathsV}{\paths_\cV}
\DeclareMathOperator{\sol}{Sol}
\DeclareMathOperator{\esol}{eSol}
\newcommand{\esolV}{\esol_\cV}
\DeclareMathOperator{\isol}{iSol}
\newcommand{\solV}{\sol_\cV}
\newcommand{\isolV}{\isol_\cV}
\newcommand{\pbeg}[1]{#1^\sqsubset}
\newcommand{\pend}[1]{#1^\sqsupset}
\newcommand{\mvm}{F_\cV}
\DeclareMathOperator{\smvf}{\mathbb{MVF}}
\DeclareMathOperator{\pf}{pf}
\newcommand\mvx[1]{[#1]_\cV}
\newcommand{\tInt}{\Lambda}
\newcommand{\tdyn}{\lambda}
\newcommand{\tdynmax}{T}
\newcommand{\ipair}[1]{\left(P_{#1}, E_{#1}\right)}
\newcommand{\ipairpp}[3]{\left(N_{#1}^{#3}, N_{#2}^{#3}\right)}
\newcommand{\ipairleqp}[2]{\left(N_{#1}^{#2}, N_{0}^{#2}\right)}
\newcommand{\ipairdb}[2]{P_{#1}^{#2}\setminus E_{#1}^{#2}}
\newcommand{\ipairl}[1]{\left(P_{#1}^{\,\vdash}, E_{#1}^{\,\vdash}\right)}
\newcommand{\ipairr}[1]{\left(P_{#1}^{\,\dashv}, E_{#1}^{\,\dashv}\right)}
\DeclareMathOperator{\con}{Con}
\newcommand{\conv}[3]{
    \ifnum\numexpr#1\relax=0 {0} \else {\ZZ_2^{#1}}\fi
    \times
    \ifnum\numexpr#2\relax=0 {0} \else {\ZZ_2^{#2}}\fi
    \times
    \ifnum\numexpr#3\relax=0 {0} \else {\ZZ_2^{#3}}\fi
    }
\DeclareMathOperator{\uim}{uim}
\newcommand{\uimp}{\uim^+}
\newcommand{\uimm}{\uim^{-}}
\newcommand{\uimpm}{\uim^{\pm}}
\newcommand{\MD}{\cM}
\newcommand{\BD}{\cB}
\newcommand{\BDmd}{\BD_{\!\bullet}}
\newcommand{\BDmdt}[1]{\BD_{#1\bullet}}
\newcommand{\zzBD}{\mathfrak{B}}
\newcommand{\zzTD}{\mathfrak{TD}}
\newcommand{\zzMD}{\mathfrak{M}}
\newcommand{\zzV}{\mathfrak{V}}
\newcommand{\bl}{B}
\newcommand{\blz}[1]{B_{#1, 0}}
\newcommand{\blo}[1]{B_{#1, 1}}
\newcommand{\blt}[1]{B_{#1, 2}}
\newcommand{\cset}[1]{C_{#1}}
\newcommand{\indMD}[1]{#1_{\bullet}}
\newcommand{\indMDV}[2]{#1_{\bullet,#2}}
\newcommand{\mvfseq}[1]{\cV_0, \cV_1, \ldots, \cV_{#1}}
\newcommand{\seqof}[2]{#1_0, #1_1, \ldots, #1_{#2}}
\newcommand{\idxmap}[1]{\iota_{#1}}
\newcommand{\idxfwd}[1]{\overrightarrow{\iota}_{\!\!#1}}
\newcommand{\idxbck}[1]{\overleftarrow{\iota}_{\!\!#1}}
\newcommand{\quiver}{Q}
\newcommand{\stringset}{S}
\newcommand{\stringmodule}[1]{\mathbb{S}_{#1}}
\newcommand{\fk}{k}
\newcommand{\dg}{d}
\newcommand{\perm}[1]{\mathbb{#1}}
\DeclareMathOperator{\id}{Id}
\newcommand{\sbm}[1]{{\let\amp=&\left[\begin{smallmatrix}#1\end{smallmatrix}\right]}}
\newcommand{\poset}{\mathbb{P}}
\DeclareMathOperator{\vect}{Vect_\fk}
\def\cA{\text{$\mathcal A$}}
\def\cB{\text{$\mathcal B$}}
\def\cM{\text{$\mathcal M$}}
\def\cT{\text{$\mathcal T$}}
\def\cV{\text{$\mathcal V$}}
\def\NN{\mathbb{N}}
\def\PP{\mathbb{P}}
\def\QQ{\mathbb{Q}}
\def\RR{\mathbb{R}}
\def\SS{\mathbb{S}}
\def\ZZ{\mathbb{Z}}
\title[Conley-Morse persistence barcode]
    {Conley-Morse persistence barcode:\\ 
        a homological signature of combinatorial bifurcations}
\author[Tamal K. Dey, Micha\l{} Lipi\'nski, Manuel Soriano-Trigueros]
    {Tamal K. Dey$^1$\orcidlink{0000-0001-5160-9738}, 
    Micha\l{} Lipi\'nski$^{2*}$\orcidlink{0000-0001-9789-9750} and 
    Manuel Soriano-Trigueros$^{2,3}$\orcidlink{0000-0003-2449-1433}}
\address{$^{1}$ Purdue University, IN, US}
\address{$^{2}$ Institute of Science and Technology Austria (ISTA)}
\address{$^{3}$ Universidad de Sevilla, Spain}
\email{tamaldey@purdue.edu}
\email{michal.lipinski@ist.ac.at}
\email{msoriano4@us.es}
\definecolor{darkred}{rgb}{1, 0.1, 0.3}
\definecolor{darkblue}{rgb}{0.1, 0.1, 1}
\subjclass[2020]{Primary: 37B30 55N31; Secondary: 37G99.}
\keywords{multivector field, Conley index, Morse decomposition, bifurcation, continuation, zigzag persistence, persistence barcode, gentle algebras}
\thanks{$^{*}$ Corresponding author}
\begin{document}

\setcounter{page}{1}

\maketitle
\begin{center}
    (Communicated by Peter Bubenik)
\end{center}

\begin{abstract}
Bifurcation characterizes the qualitative changes in parameterized dynamical systems and is one of the major topics in the field.
In this work, we study combinatorial bifurcations within the framework of combinatorial dynamical systems---a young but already well-established theory.
We introduce the Conley-Morse persistence barcode, a compact algebraic descriptor of combinatorial bifurcations.
This barcode captures structural changes in a dynamical system at the level of Morse decompositions and provides a characterization of the nature of observed transitions in terms of the Conley index.
The construction of the Conley-Morse persistence barcode builds upon ideas from topological persistence.
Specifically, we 
consider a persistence module obtained from the Conley index of invariant sets indexed over a poset. Using gentle algebras, we prove that this module decomposes into simple intervals (bars) and
compute them by adapting the zigzag persistence algorithm to our purpose.
\end{abstract}

\tableofcontents
% \newpage

% \vspace{-0.8cm}
\section{Introduction}
% general problem of continuation
A common approach to understanding the structure of a dynamical system involves analyzing its invariant sets and connections among them,
    as these sets constitute the long-term behavior of the system.
An invariant set is called \emph{isolated} if there exists a neighborhood---known as an isolating block---that separates it from other invariant sets.
Charles Conley~\cite{Conley1978} introduced a Morse decomposition as a means of organizing isolated invariant sets into a unified structure. 
%  introduced Morse decomposition as a method of summarizing the global gradient-like structure of the dynamics.
It is defined as a collection of isolated invariant sets, known as \emph{Morse sets}, such that the network of connections among them forms a partial order, thereby reflecting the system's global gradient structure. 
Notably, all recurrent behavior is encapsulated within these Morse sets.
Furthermore, each isolated invariant set can also be characterized by a homological signature, called the \emph{Conley index}.

An important property of isolated invariant sets and the Conley index is their robustness with respect to sufficiently small perturbations. 
This means that for every isolated invariant set $S$, we can find a corresponding $S'$ in the perturbed system with an isomorphic Conley index and similar dynamical behavior.
The stability extends further to the notion of \emph{continuation}, which allows for the identification of isolated invariant sets across parameter spaces.
Franzosa further generalized this idea by introducing the continuation of the Morse decompositions~\cite{Franzosa1988}.
In this paper, we build upon these concepts to study the continuation of Morse sets in a combinatorial dynamical system.

Recently Mrozek~\cite{Mrozek2017} introduced the theory of \emph{combinatorial multivector fields}, 
which was later generalized in~\cite{LKMW2022}.
Throughout this paper we work within the generalized framework.
Combinatorial multivector fields can be viewed as a combinatorial counterpart to classical continuous vector fields and admit
    combinatorial analogues of fundamental dynamical notions, including  Morse decomposition and the Conley index. 
    % equipped with combinatorial analogues of various fundamental dynamical concepts, including  Morse decomposition and the Conley index. 
The theory of multivector fields is based on Forman's combinatorial vector fields~\cite{Forman1998b, Forman1998a}.
These combinatorial frameworks have already proven effective in analyzing continuous dynamical systems~\cite{MrozekSrzednickiThorpeWanner2022, MrozekWanner2021,  Woukeng2024}.
Recently, the concept of continuation has also been adapted to the combinatorial setting~\cite{DeLiMrSl2022},
    introducing, in particular, the notion of \emph{combinatorial perturbation}, 
    which enables the definition of a parameterized combinatorial dynamical system.
It has also been shown there that combinatorial continuation fits naturally into the language of \emph{persistent homology}. 

We leverage these observations to capture \emph{combinatorial bifurcations} of Morse sets in terms of a persistence module, referred to as the \emph{Conley--Morse persistence module}.
The module is induced by a \emph{transition diagram}, which fuses local changes to Morse sets into a single unified structure.
The transition diagram is composed of smaller sub-diagrams called \emph{AR-split diagrams}, which capture local attractor-repeller breakdowns of combinatorial isolated invariant sets.
The Conley-Morse persistence module is a module over a poset, which is generally difficult to analyze \cite{poset_decomposition}.
However, we show that dynamical constraints cause it to fall into the family of \emph{gentle algebras} \cite{Assem1981, Assem1987}, allowing a decomposition into \emph{zigzag intervals} (or \emph{strings}).
We refer to this decomposition as the \emph{Conley-Morse persistence barcode}.
The strings represent timelines of Conley index generators, 
    allowing us to track changes to the corresponding Conley indices, 
    including redistribution, 
        mutual annihilation, or emergence of generators due to changes in the dynamics.
Moreover, building on a recent zigzag persistence algorithm~\cite{DW22},
    we provide an algorithm for computing the Conley-Morse persistence barcode.

The development of computational tools for characterizing bifurcations---or, more broadly, the evolution of a dynamical system---has been an active area of research in recent years. 
We highlight several notable directions.
A standard input in \emph{topological data analysis} is a point cloud; 
    accordingly, in~\cite{GuMuKh2022,TyMuKh2020}, the authors use zigzag persistence to detect bifurcations by analyzing time series generated by the underlying dynamics. 
Various methods have also been proposed to track critical points of a combinatorial gradient vector field 
    (e.g., those induced by a scalar field), 
    using path connectivity~\cite{King2017}, merge trees~\cite{Hotz2023}, topological robustness~\cite{trophy2024}, 
    or direct analysis of crossing values~\cite{DhaChaNat2025}.
The tracking of critical points is often motivated by the Cerf theorem~\cite{Cerf1970}, 
    which in the smooth setting guarantees the existence of a parametrization connecting two Morse functions with only finitely many degenerate critical points along the parametrization.
    A~take on Cerf theory from a multiparameter persistence perspective is presented in~\cite{Bubenik2024}. 
    Another approach to studying multiparameter discrete Morse theory is developed in~\cite{Brouillette:2024aa}, where the authors incorporate multivector fields to handle violations of the classical definition of a discrete Morse function.
    However, all these tracking approaches are rooted in Morse theory and are therefore limited to gradient systems.

More general results can be obtained by switching to Conley index theory.
For instance, 
    tools for the qualitative classification of parameter spaces based on combinatorial dynamics have been developed in \cite{Arai2009, BushGameiroHarkerKokubuEtc:2012}, 
    and a framework for describing bifurcations that combines Conley index theory and sheaf theory was introduced in~\cite{DowKalVan2023}.
In a similar spirit, our results
    can be viewed as a connection between bifurcation theory and persistence theory in the context of combinatorial dynamics.

The paper is organized as follows.
In Section~\ref{sec:main-results}, we present the motivation and general intuition behind the Conley-Morse persistence barcode. 
In Section~\ref{sec:preliminaries}, we recall some basic facts and fix the notation. 
Section~\ref{sec:multivector-fields-theory} introduces multivector fields theory and the combinatorial continuation of an isolated invariant set. 
We also establish the concept of a combinatorial isolating block and a block decomposition, 
     and present their properties.  
Section~\ref{sec:transition-diagram} contains the main construction of the paper: the transition diagram for a zigzag filtration of block decompositions, 
    which gives rise to the Conley-Morse persistence module. 
Moreover, we provide an explicit recipe for its construction.
Section~\ref{sec:gentle-algebras-persistence} recalls the necessary background from persistence theory and gentle algebras, which are fundamental for the decomposition of the Conley-Morse persistence module.
In Section~\ref{sec:cm-barcodes} we define the Conley-Morse persistence barcode as a decomposition of the persistence module induced by the transition diagram and discuss some of its properties.
Section~\ref{sec:algorithm} presents an algorithm for computing the Conley-Morse persistence barcode.
Finally, Section~\ref{sec:discussion} outlines further directions and open questions inspired by this work.
In addition, we supply the paper with two appendices: a list of symbols and  an index of concepts to simplify navigation through the paper.

\section{Motivation and Main Ideas}\label{sec:main-results}
\subsection{The sphere example}\label{subsec:sphere-example}
Even though all results presented in this paper primarily concern combinatorial multivector fields, 
    we begin with an informal reminder of some classical concepts and an example of a continuous flow.
    This is done for two reasons:
first, the developed framework is strongly inspired by the continuation theory for continuous flows, 
    and we plan to adapt the framework to that setting in a follow-up work; 
second, we believe that continuous flows are better for building an intuition, 
    even for readers already familiar with combinatorial multivector fields.

Consider a parameterized vector field $\varphi_\lambda:\RR\times\SS^2\rightarrow\SS^2$, where $\lambda\in[0,5]$,  as presented in Figure~\ref{fig:sphere-example-continuous-vf}. 
For $\lambda=0$, we have a simple dynamics with the red repelling equilibrium $R$ at the north pole and the attracting equilibrium at the south pole.
For certain $\lambda\in(0,1)$, the system undergoes a Hopf bifurcation, transforming the north pole equilibrium into an attracting equilibrium $E$ and a repelling periodic orbit $O$.
The orbit then travels south and eventually breaks into repelling equilibrium $T$ (the red point) and the saddle $S$ (the yellow point) (see Figure~\ref{fig:sphere-example-continuous-vf} for $\lambda=3$).
Finally, for a $\lambda\in(4,5)$, the saddle and the southern attractor collide, annihilating each other.

The diagram in Figure~\ref{fig:CM-barcode-sphere_example} presents the so-called \emph{Conley--Morse persistence barcode} capturing the evolution of the vector field on the sphere in terms of Conley indices.
Prior to explaining the diagram, we recall a few standard definitions.

\begin{figure}[t]
    \centering
    \includegraphics[width=0.32\linewidth]{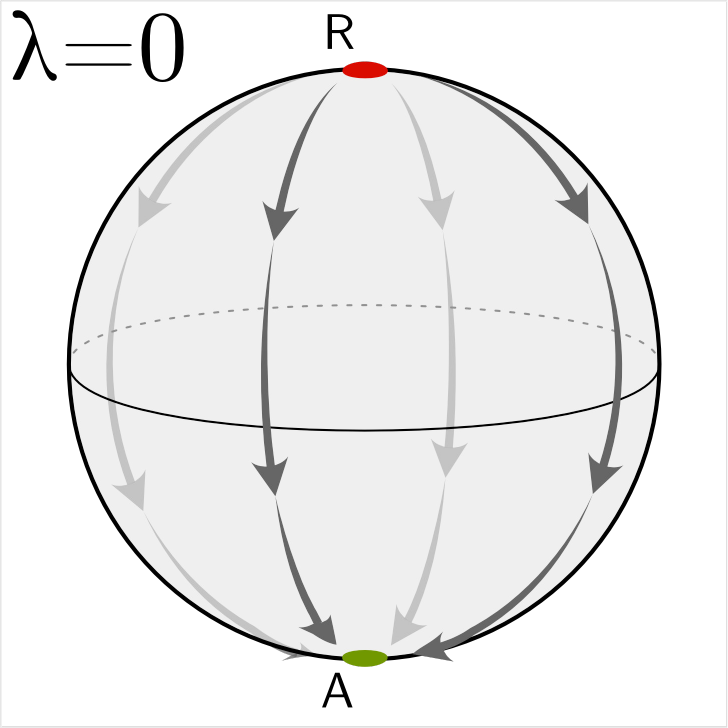}
    \includegraphics[width=0.32\linewidth]{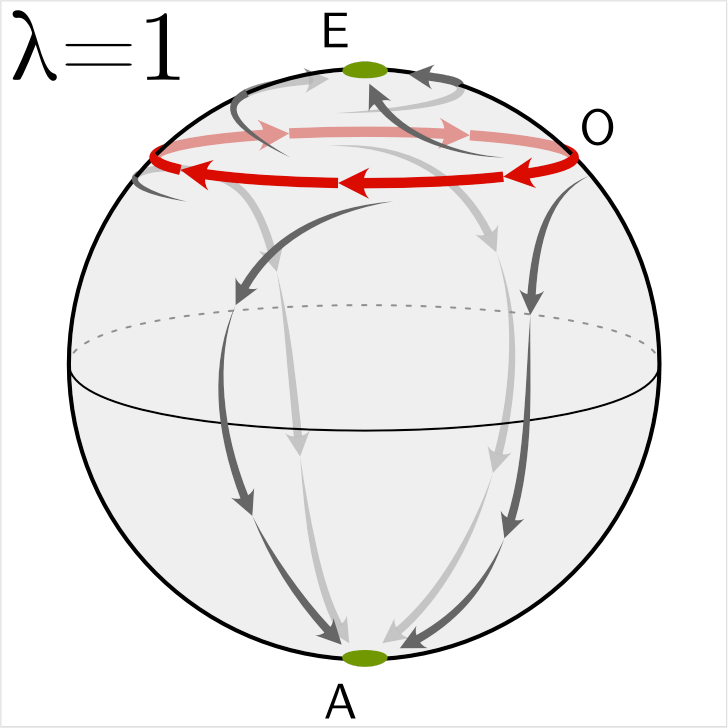}
    \includegraphics[width=0.32\linewidth]{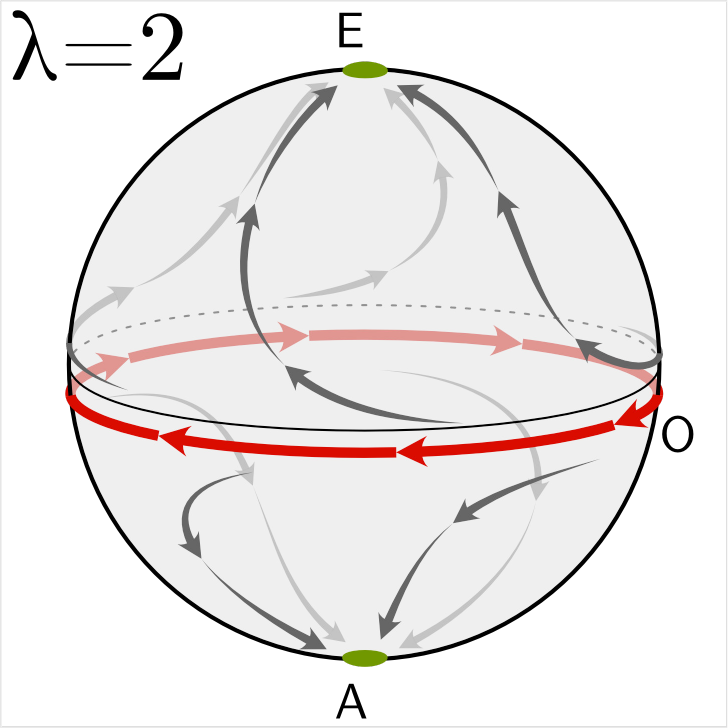}\\
    
    \vspace{1.2mm}
    \includegraphics[width=0.32\linewidth]{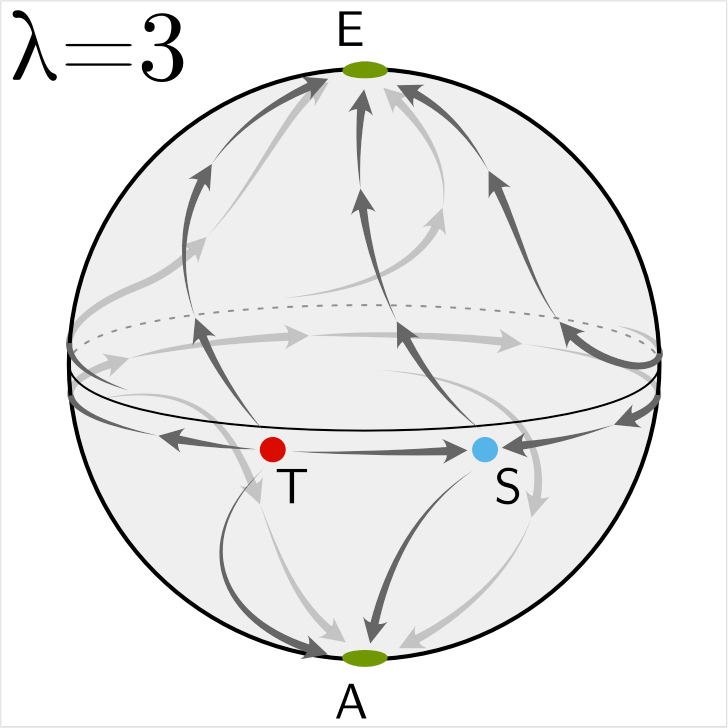}
    \includegraphics[width=0.32\linewidth]{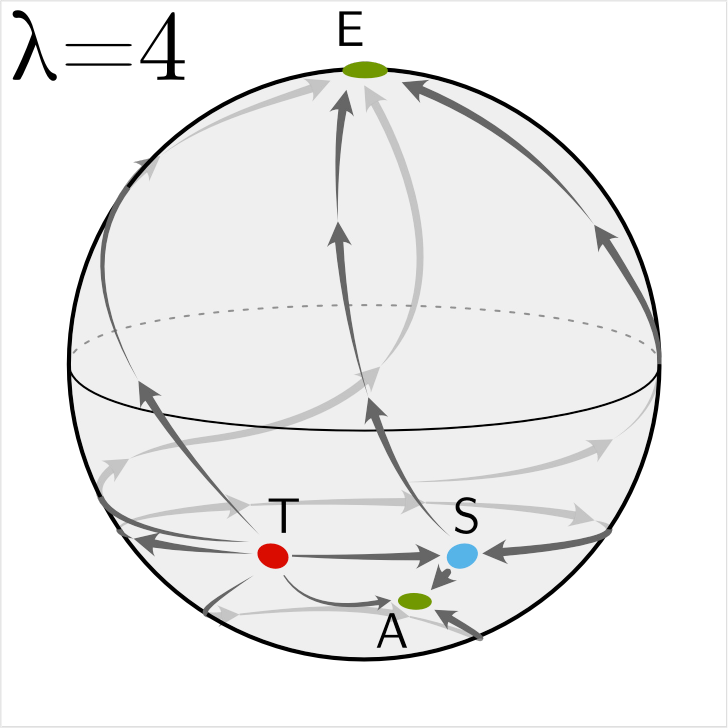}
    \includegraphics[width=0.32\linewidth]{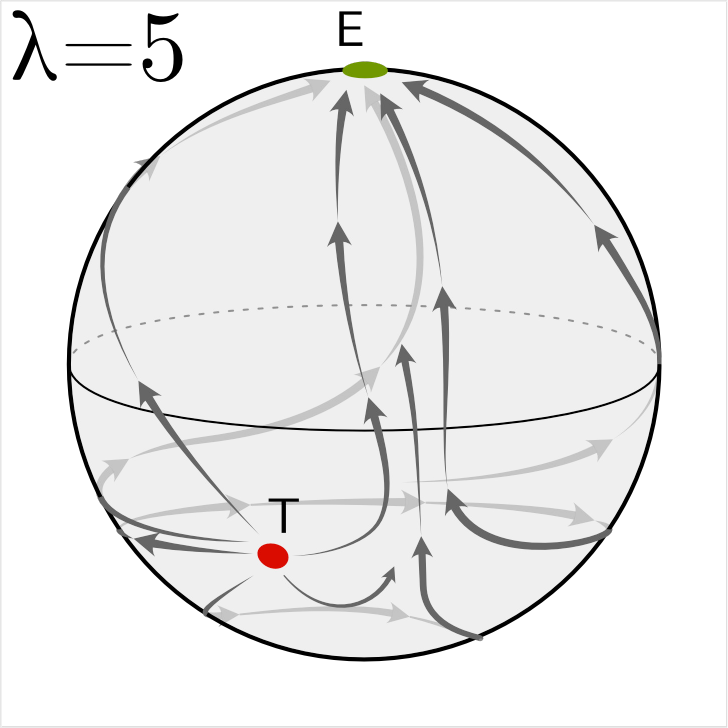}
    \caption{A parameterized flow on a 2-sphere. 
        }
    \label{fig:sphere-example-continuous-vf}
% \end{figure}
% \begin{figure}[t]
    \centering
    \vspace{0.45cm}
    \includegraphics[width=0.95\textwidth]{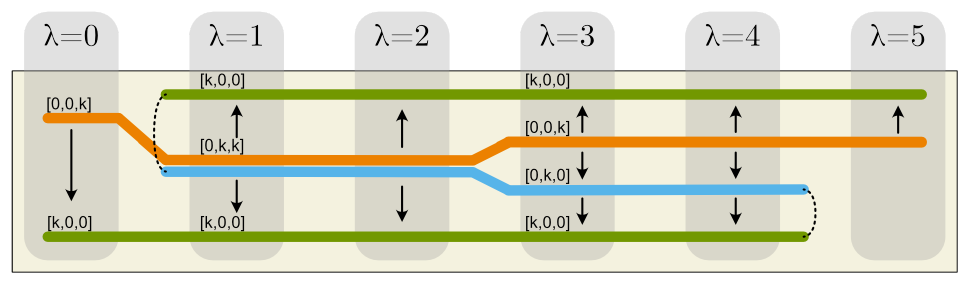}
    \caption{Conley-Morse persistence barcode corresponding to the parameterized vector field in Figure~\ref{fig:sphere-example-continuous-vf}.}
    \label{fig:CM-barcode-sphere_example}
\end{figure}

\begin{figure}[t]
    \centering
    \includegraphics[width=0.25\linewidth]{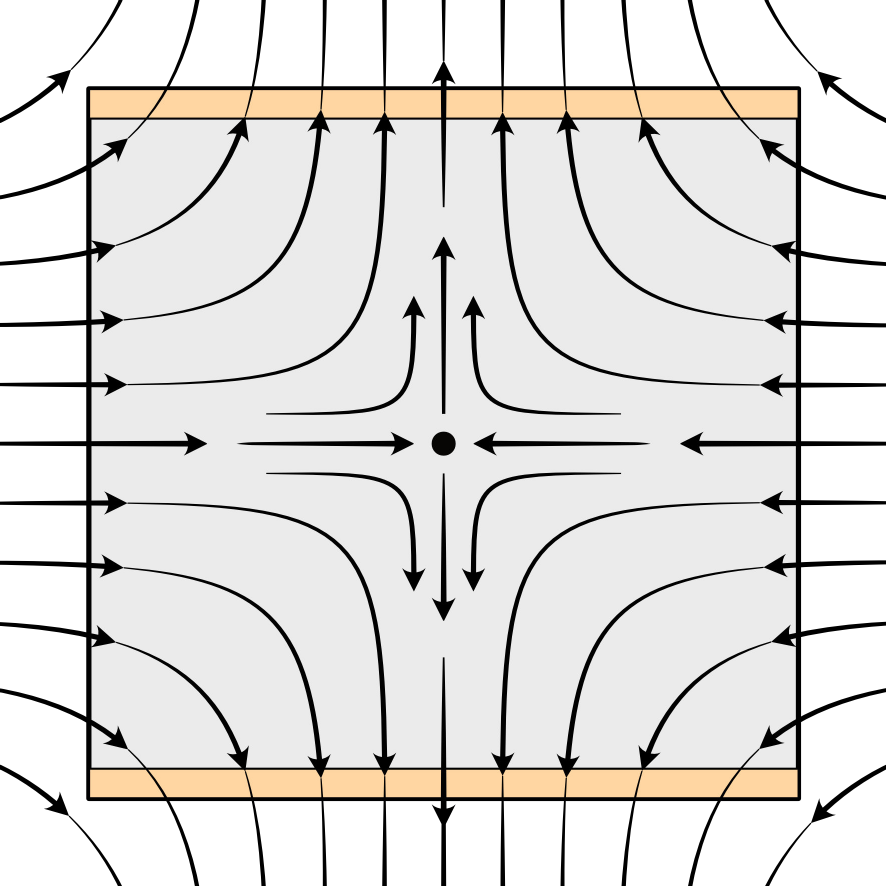}
    \caption{The gray region is an isolating block for the saddle point in the center and 
        the orange segments represent its exit part.}
    \label{fig:index-pair-example}
\end{figure}

Given a vector field $\varphi$, an \emph{invariant part} of a set $S\subset\RR^n$ is $\inv S\coloneqq \{x\in S\mid \varphi(\RR,x)\subset S\}$.
In particular, $S$ is \emph{invariant} if $\inv S=S$.
We say that a set is an \emph{isolated invariant set} if there exists a compact neighborhood $N$ of $S$ such that $S=\inv N\subset \inter N$.
The equilibria and the periodic orbit in our example are instances of isolated invariant sets.
% For an isolated invariant set $S$, we can construct an \emph{isolating block}\index{isolating block} $B$, that is, 
A stronger notion than an isolating neighborhood is an \emph{isolating block}\index{isolating block} defined as a compact set such that its \emph{exit set} (that is the part of the boundary through which the flow escapes) given by
\begin{equation*}
    B^-\coloneqq  \left\{x\in B\mid \varphi\left([0,T),x\right)\not\subset B, \forall T>0\right\},
\end{equation*}
is closed 
and for all $T>0$ we have
\begin{equation*}
    \inv_T(B, \varphi)\subset \inter B,
\end{equation*}
where
\begin{equation*}
    \inv_{T}(B)\coloneqq \{x\in B\mid \varphi([-T,T], x)\subset B\}.
\end{equation*}
In particular, every isolated invariant set $S$ admits an isolating block $B$ such that $\inv B=S$.
An example of an isolating block (the gray region) for a saddle point with the exit set marked in orange is depicted in Figure~\ref{fig:index-pair-example}.
Using an isolating block, we can compute the homological Conley index, 
    given by
    $\con(S) \coloneqq[H_{0}(B,B^-)$, $H_{1}(B,B^-)$, $H_{2}(B,B^-)]$,
    where $H_{d}(B,B^-)$ represents the relative (singular) homology of degree $d$ calculated over the field $k=\ZZ_2$.
In particular, for attracting equilibria, $A$ and $E$, repelling equilibria, $R$ and $T$, saddle $S$ and repelling periodic orbit $O$ in the example, we have 
    $\con(A)=\con(E)=[\fk,0,0]$, 
    $\con(R)=\con(T)=[0,0,\fk]$, 
    $\con(S)=[0,\fk,0]$, and
    $\con(O)=[0,\fk,\fk]$. 
Note that each type of an isolated invariant set in this example admits a different Conley index. 
We refer to \cite{MischMro_Conley_2002} for a brief introduction to Conley index theory.

We say that an isolated invariant set $S$ in $\varphi_\lambda$ \emph{continues} to $S'$ in $\varphi_{\lambda'}$ if there exists a set $B$, 
    which is an isolating block in $\varphi_{{\tau}}$ for all ${\tau}\in[\lambda,\lambda']$,
    $\inv_{\varphi_{\lambda}}B=S$ and $\inv_{\varphi_{\lambda'}}B=S'$.
It follows that the Conley index is preserved through a continuation;
    however, the structure of the invariant set isolated by $B$ may change significantly.

Consider again the first step in our example.
The Hopf bifurcation at the north pole turns the repelling equilibrium $R$ into attracting equilibrium $E$ and periodic orbit $O$.
With a proper isolating block around the north pole one can show that $R$ continues into the union of $E$, $O$, and the trajectories connecting them, which together form an isolated invariant disc, which we denote by $D$ (compare the invariant part of the set $N_2$ for $\lambda=0$ and $\lambda=1$ in Figure~\ref{fig:hopf-example}).
Note that such a disc behaves globally as a repeller; 
    in particular, its Conley index is the same as that of $R$, that is $[0,0,\fk]$.
However, $D$ can be decomposed into $E$ and $O$.
As a result of this split the total rank of the Conley indices increases by two.
In particular, we argue, that the degree 2 generator of $D$, which continues from $R$,
    has been passed to $O$ through the bifurcation. 
Moreover, the split of $D$ into two subcomponents created a new pair of coupled generators, 
    in this case of degree 0 and 1.
We record these events in the \emph{Conley-Morse persistence barcode} shown in Figure \ref{fig:CM-barcode-sphere_example}.
In particular, 
    the degree 2 generator of $R$ at $\lambda=0$ is represented by the orange bar that continues to $\lambda=1$ (and further).
For a $\lambda\in (0,1)$, two new bars are born representing the emergence of generators through the described split;
    the dashed line indicates that they are coupled (in the sense captured by Theorem~\ref{thm:index_triple}).
From the remaining part of the diagram we can read off further qualitative changes in the dynamics,
    for instance, we can see that for certain $\lambda\in(2,3)$, 
    the periodic orbit breaks creating two equilibria.
The diagram tells that their Conley indices inherit generators from the orbit.
Finally, for $\lambda\in(3,4)$, two equilibria and their Conley indices annihilate each other, 
    which is reflected by the ending of the corresponding bars.

While the bars represent the Conley index generators, the black vertical arrows indicate connecting trajectories between corresponding isolated invariant sets.
In particular, they represent \emph{Morse decompositions} and therefore the global gradient structure of the system.

Summarizing, the diagram in Figure~\ref{fig:CM-barcode-sphere_example} illustrates the evolution of the entire dynamical system through timelines of the Conley index generators; 
    for instance, the generator corresponding to $R$ at $\lambda=0$ travels from the north pole via the orbit to the new repeller $T$ in $\lambda=5$ close to the south pole, 
    which is not obvious simply from inspection of the vector field.

\begin{figure}[b]
    \centering
    \includegraphics[width=0.25\linewidth]{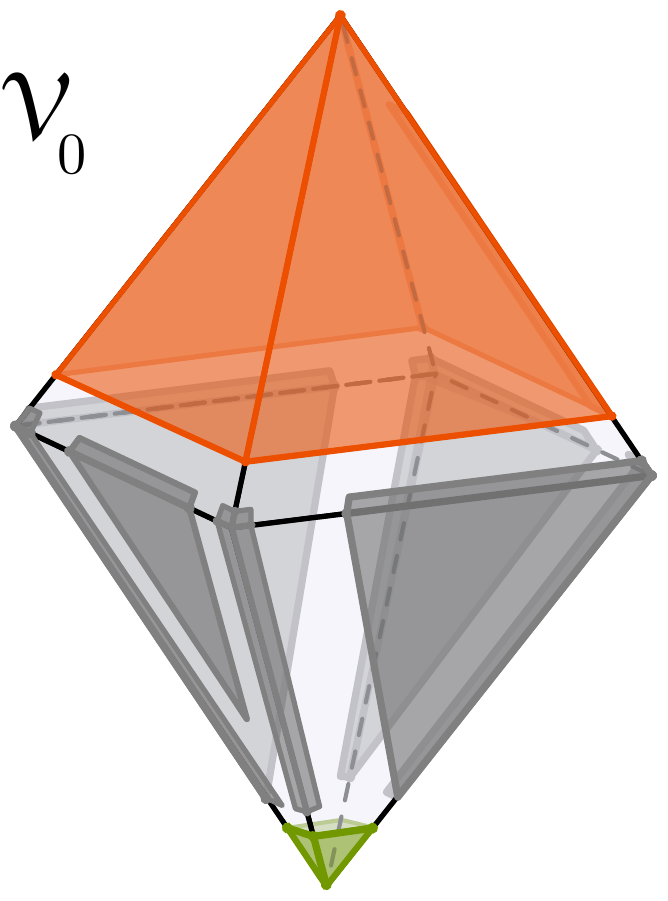}
    \includegraphics[width=0.25\linewidth]{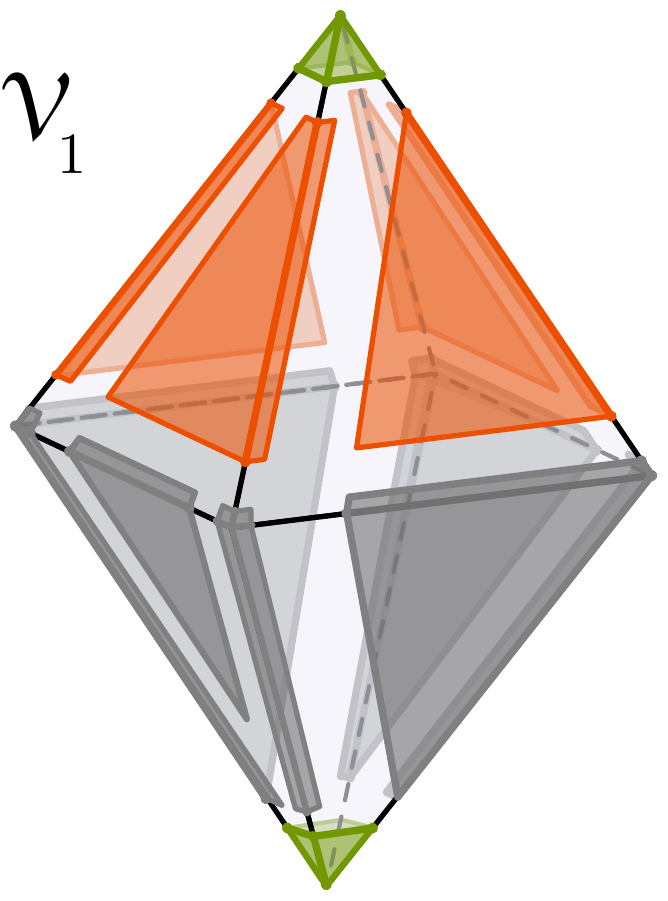}
    \includegraphics[width=0.25\linewidth]{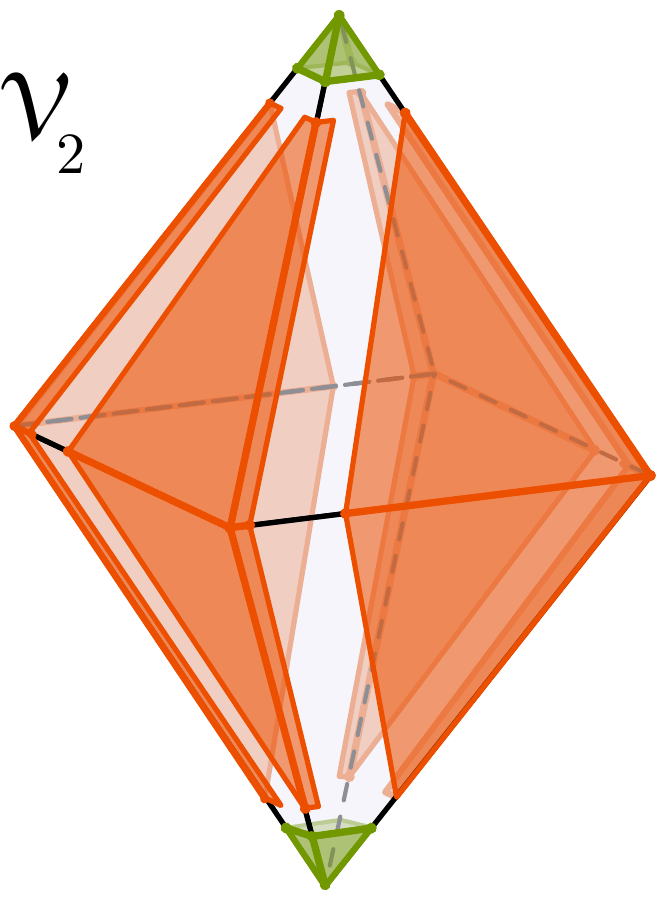}\\
    
    \vspace{1.2mm}
    \includegraphics[width=0.25\linewidth]{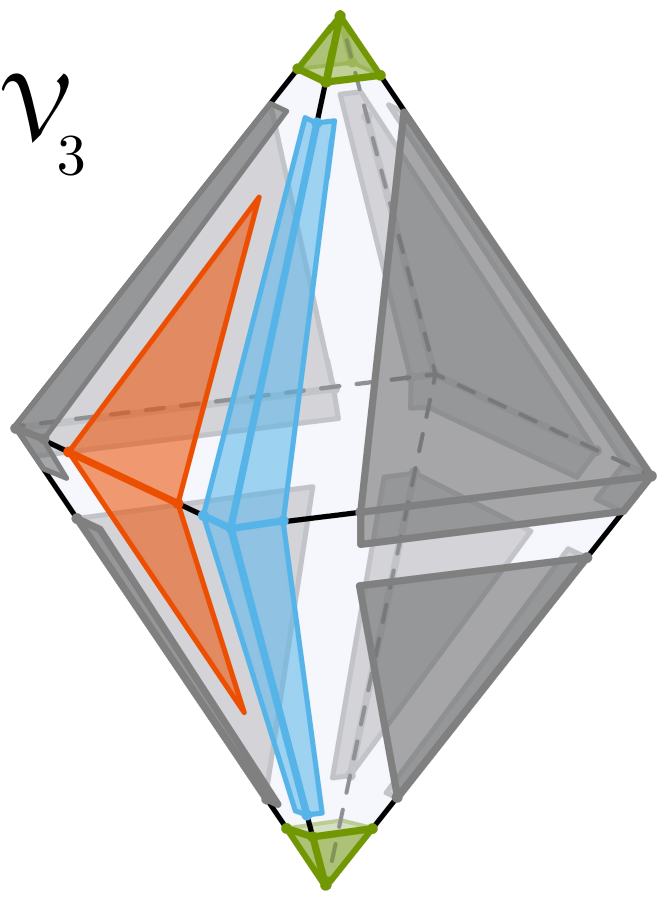}
    \includegraphics[width=0.25\linewidth]{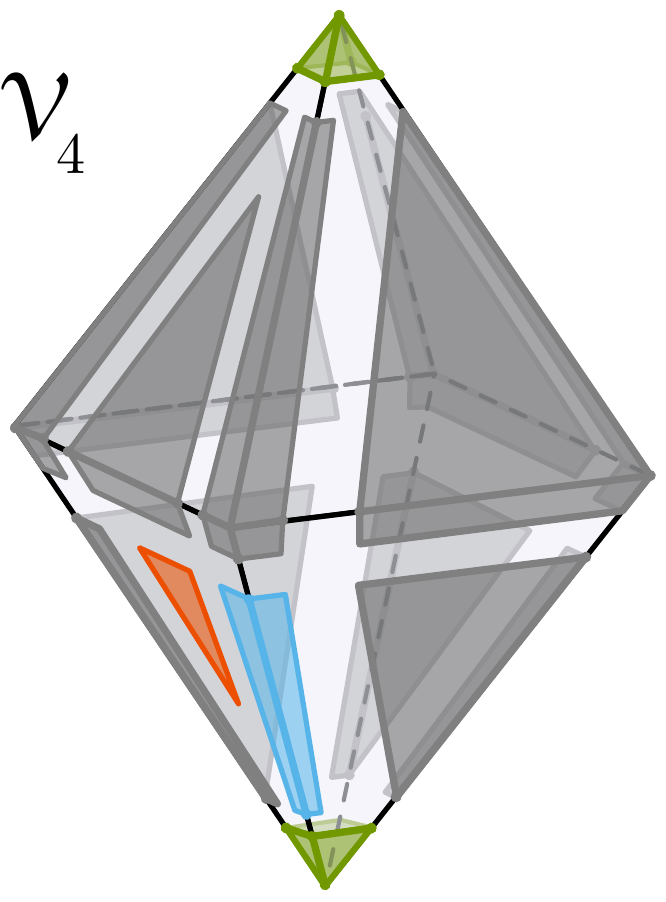}
    \includegraphics[width=0.25\linewidth]{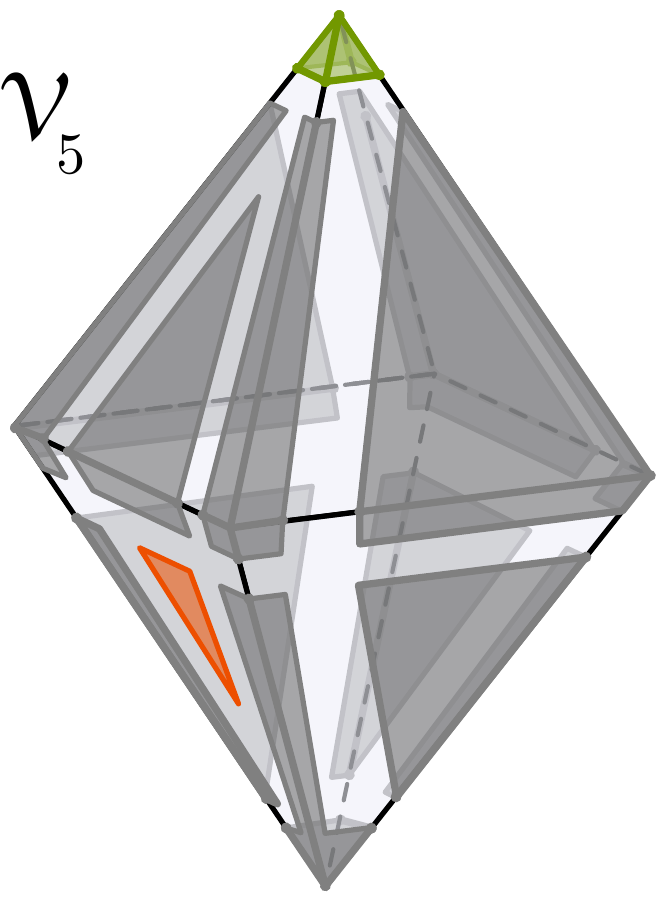}
    \caption{A parameterized combinatorial multivector field on a combinatorial 2-sphere, which models the system in Figure~\ref{fig:sphere-example-continuous-vf}.}
    \label{fig:sphere-example-combinatorial-mvf}
\end{figure}

We emphasize once more that the continuous example serves only as a source of intuition and an outline of the ultimate goal, which is beyond the scope of this paper. 
All results presented in this work are in the framework of combinatorial multivector fields (though, some can be also applied in the continuous case),
    which can serve as a model or an approximation of a continuous vector field.
Nevertheless, we strongly believe that the presented construction of the Conley-Morse persistence barcode can be adapted to the continuous setting; we leave it for future work.

All the dynamical concepts mentioned so far have their counterparts in 
combinatorial dynamics; see Section~\ref{sec:multivector-fields-theory}. 
In this context, a~multivector field $\cV$ models a continuous vector field, and a multivalued map $\mvm$---the flow (Subsection~\ref{subsec:mvf-elementary}).
The theory provides the concept of combinatorial isolated invariant sets and isolating blocks (Definition~\ref{def:isolating_block}), as well as Morse decomposition (Definition~\ref{def:morse_decomposition}) and the Conley index (Definition~\ref{def:conley_index}).
We also define a combinatorial continuation of an isolated invariant set (Definition~\ref{def:continuation}).

Figure~\ref{fig:sphere-example-combinatorial-mvf} shows a combinatorial model of the example in Figure~\ref{fig:sphere-example-continuous-vf}.
The hollow octahedron models the sphere and the sequence of multivector fields---as we argue in Section~\ref{subsec:combinatorial_continuation}---can be seen as a continuously parameterized combinatorial multivector field.
Each stage in Figure~\ref{fig:sphere-example-continuous-vf} has a counterpart in Figure~\ref{fig:sphere-example-combinatorial-mvf} with an isomorphic Morse decomposition and Conley indices.
In particular, $\cV_\lambda$ corresponds to $\varphi_\lambda$ for all $\lambda\in\{0,1,\ldots,5\}$.
We do not unfold this example in detail here, but we encourage the reader to take another look at the combinatorial example after going through Section~\ref{sec:multivector-fields-theory},
    to convince themselves that it indeed models the continuous flow shown in Figure~\ref{fig:sphere-example-continuous-vf}.
Moreover, the Conley-Morse persistence barcode in Figure~\ref{fig:CM-barcode-sphere_example} describes the 
    combinatorial dynamics in Figure~\ref{fig:sphere-example-combinatorial-mvf} as well.

	\begin{figure}[b]
        \centering
        \includegraphics[width=0.30\linewidth]{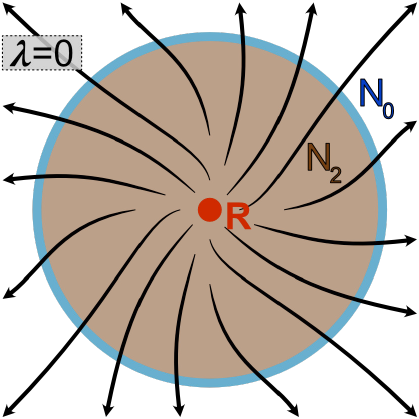}
        \hspace{1cm}
        \includegraphics[width=0.30\linewidth]{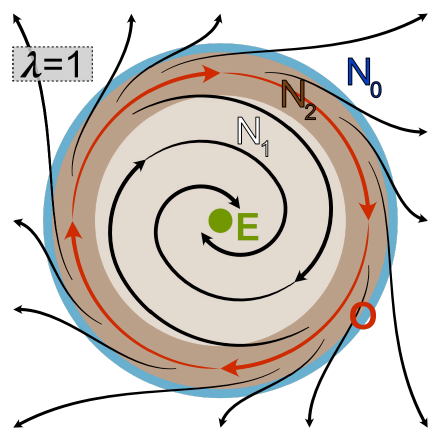}
        \caption{A Hopf bifurcation of repelling equilibrium $R$ (left panel) turning into attracting equilibrium $E$ and repelling periodic orbit $O$ (right panel).}
        \label{fig:hopf-example}
    \end{figure}

\subsection{Attractor-repeller split example}
    In this section we illustrate the key observation that allows us to construct the Conley-Morse persistence barcode.
    Consider again the Hopf bifurcation between $\lambda=0$ and $\lambda=1$ at the northern part of the sphere in Figure~\ref{fig:sphere-example-continuous-vf}, 
        with the repelling equilibrium $R$ turning into the attracting equilibrium $E$ and the repelling periodic orbit $O$. 
    Figure~\ref{fig:hopf-example} provides a top view of the sphere as the bifurcation occurs.

    Define $N_0$ as the blue ring and $N_2$ as the brown disc in the left panel of Figure~\ref{fig:hopf-example}.
    Let $N_1$ be the union of $N_0$ and the light brown disc in the right panel.
    In particular, we have $N_0\subset N_1\subset N_2$.
    
    As discussed earlier, $R$ in $\lambda=0$ continues to the invariant disc at $\lambda=1$---that is, the union of $E$, $O$ and the trajectories connecting them---because both sets are isolated by a common isolating block $N_2$.
    With $N_1$ and $N_0$, we can decompose the Conley index of $R$ as follows.
    First, observe that $B_E\coloneqq\cl(N_1\setminus N_0)$, $B_O\coloneqq\cl(N_2\setminus N_1)$, and $B_R\coloneqq\cl(N_2\setminus N_0)$ are isolating blocks for $E$, $O$, and $R$, respectively.
    However, we can compute the Conley index directly using $N_0$, $N_1$ and $N_2$. 
        By the excision property, we have $H(N_2, N_0)\cong H(B_R, B_R^-)$;
        similarly, $H(N_1, N_0)\cong H(B_E, B_E^-)$ and $H(N_2, N_1)\cong H(B_O, B_O^-)$ for 
            $B_E$ and $B_O$.
    In fact, these pairs form so-called \emph{index pairs}---we will discuss their combinatorial analogues in Section~\ref{subsec:conley-index}.
    They induce the following diagram that we call the \emph{attractor-repeller split diagram} (or \emph{AR-split diagram}):
    \begin{equation}\label{eq:introduction-basic-split}
        \begin{tikzcd}[row sep=normal, column sep=1.5em]
            H(N_2,N_0)\arrow[r]\arrow[rd, leftarrow]
                & H(N_2,N_1)\\
                & H(N_1,N_0)\arrow[u,dashed,swap]
        \end{tikzcd}
    \qquad\cong\qquad
        \begin{tikzcd}[row sep=normal, column sep=2.5em]
         [ 0, 0, \fk]\arrow[r, "{[0, 0, \id]}"] 
            & {[0, \fk, \fk]} \\
        & {[\fk, 0, 0]}\arrow[u, dashed,swap, "0"]\arrow[lu, "{[0, 0, 0]}"]
        \end{tikzcd}        
    \end{equation}       
    where the homomorphisms are induced by inclusions.
    These maps relate Conley index generators of isolated invariant sets before and after bifurcation.
    The diagram on the right shows concrete vector spaces and maps for the example.
    In particular, we see that the degree-2 generator of $R$ is mapped into the Conley index of $O$.

    The basic split theorem introduced in Section~\ref{subsec:basic-transition-diagram} (Theorem~\ref{thm:index_triple}) provides additional insight. 
    For instance, property~\ref{it:index_triple_split} of that theorem states that
        whenever an isolated invariant set splits, 
        no Conley index generator is lost---in other words,
        through a decomposition, every generator of a bifurcating set is ``passed on'' to one of the newly created sets.
    Property~\ref{it:index_triple_isomorphism} states that new generators are always born (or die) in pairs of codimension $1$.
    Both properties are illustrated in diagram~\eqref{eq:introduction-basic-split}: 
        first, because the degree-2 generator is mapped from $R$ to $O$;
        second, two new generators, of degree-$0$ and $1$, are born together during the split.
        
    \begin{figure}
        \centering
        \includegraphics[width=0.32\linewidth]{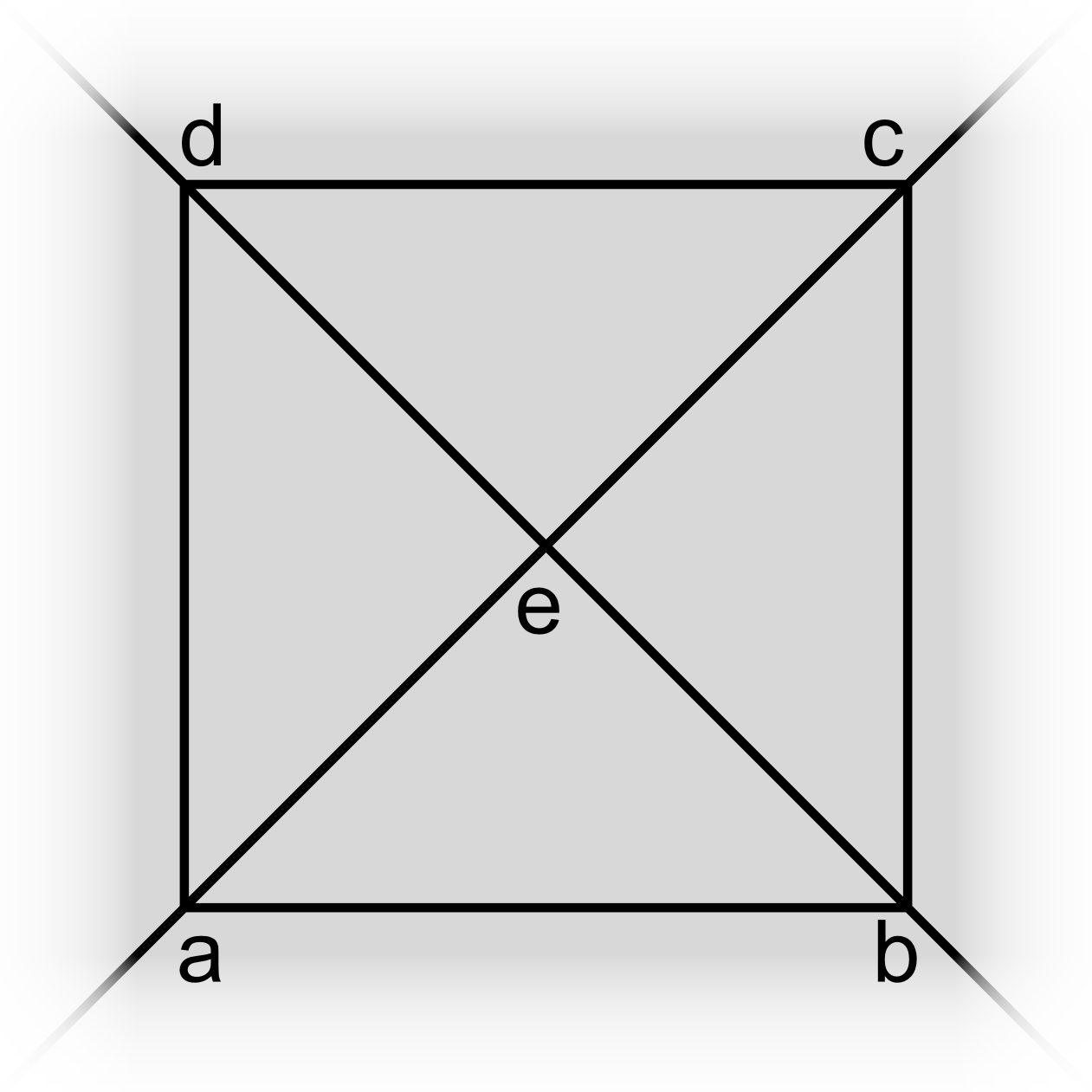}
        \includegraphics[width=0.32\linewidth]{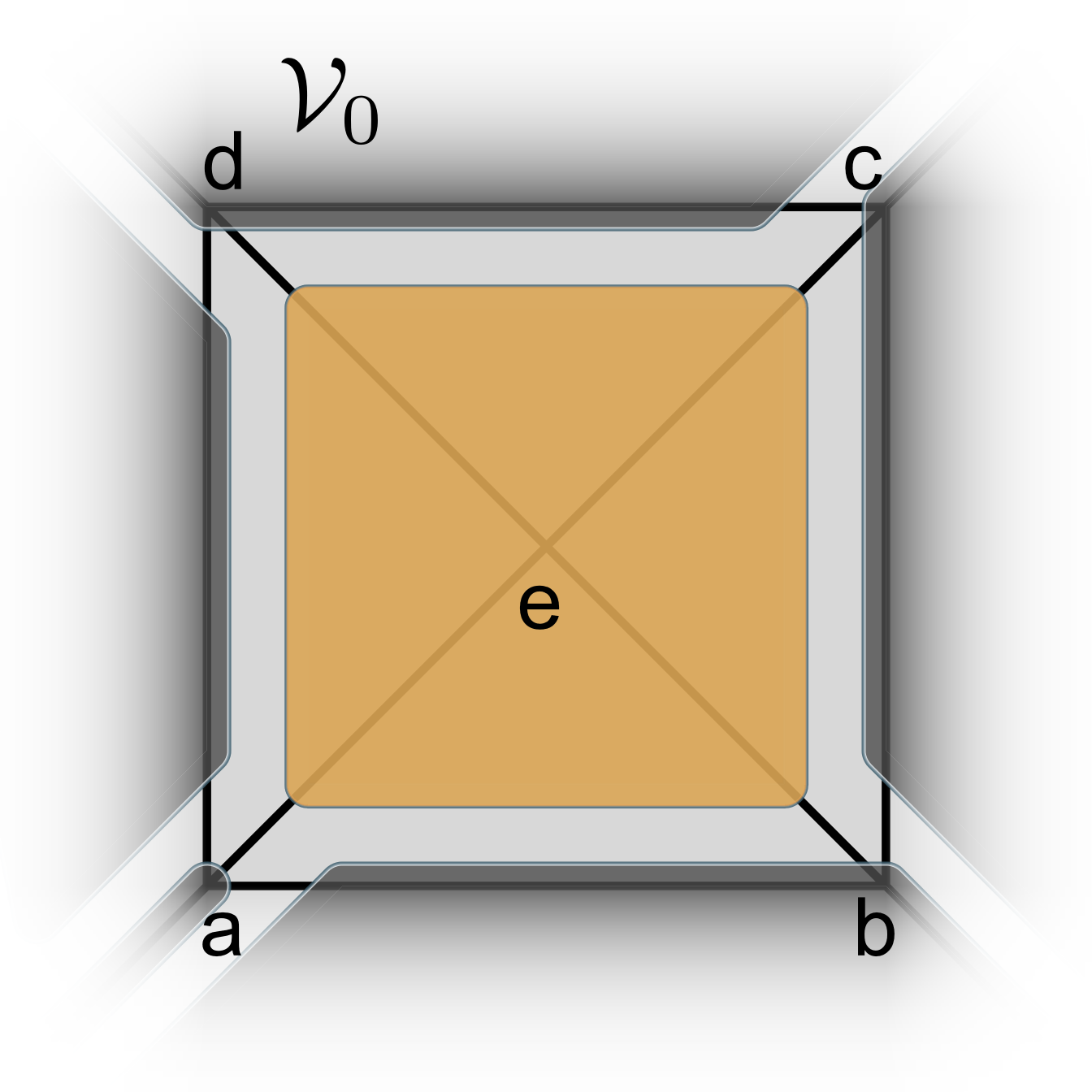}
        \includegraphics[width=0.32\linewidth]{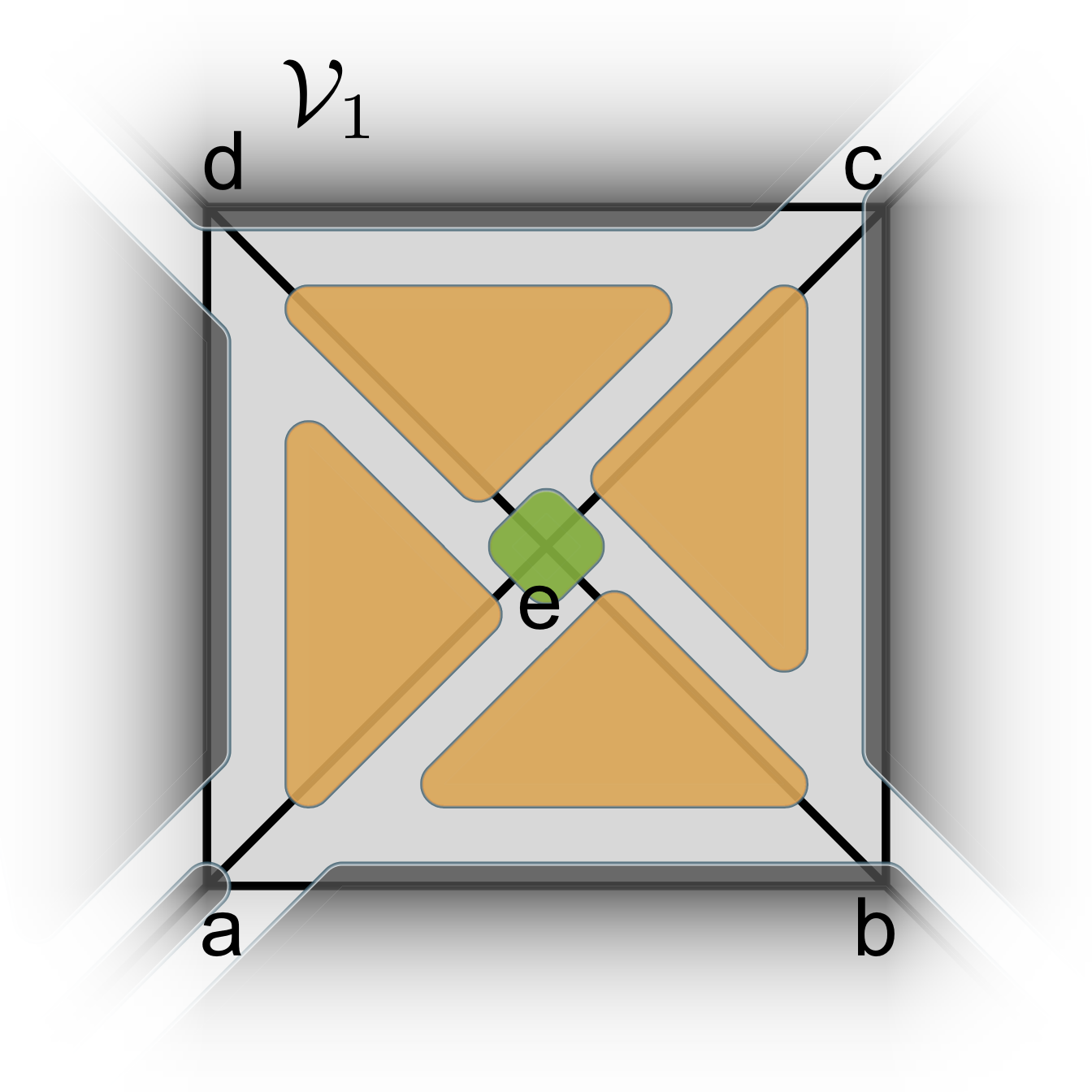}
        \caption{Combinatorial analogue of a Hopf bifurcation.
            The left panel represents the top view of the octahedron in Figure~\ref{fig:sphere-example-combinatorial-mvf},
            similarly, the middle and right panel show projections of multivector fields $\cV_0$ and $\cV_1$.
            }
        \label{fig:sphere-example-hopf-combinatorial}
    \end{figure}

    Figure~\ref{fig:sphere-example-hopf-combinatorial} 
        shows a combinatorial version of that bifurcation.
    In particular, it is the top view of the octahedron in Figure~\ref{fig:sphere-example-combinatorial-mvf}, where point $e$ corresponds to the vertex at the north pole.
    The combinatorial counterparts of $N_0$, $N_1$ and $N_2$ are
    \begin{align*}
        N_0&\coloneqq\{a, b, c, d, ab, bc, cd, ad\},\\
        N_1&\coloneqq N_0 \cup \{e\},\\
        N_2&\coloneqq N_1 \cup \{ae, be, ce, de, abe, bce, cde, ade\}.
    \end{align*}
    One can verify that the combinatorial AR-split diagram is identical, in terms of homology groups, to diagram~\eqref{eq:introduction-basic-split}.
    Moreover, $E=N_1\setminus N_0$,  $R=N_2\setminus N_0$, and $O=N_2\setminus N_1$ are combinatorial isolating blocks (see Definition~\ref{def:isolating_block}) 
   for the corresponding combinatorial isolated invariant sets.

\subsection{Homological signature of a bifurcation}
Another motivation for the development of the Conley-Morse persistence barcodes is the classification of bifurcations.
Consider two 1-dimensional flows, $\varphi_\lambda$ and $\psi_\lambda$ parameterized by $\lambda$, as illustrated in the upper panels of Figure~\ref{fig:bifurcation1d}.
Each vertical line through the plot represents a 1-dimensional flow on the real line.
Red and green segments denote repelling and attracting equilibria, respectively.
Note that, pointwise---for a fixed $\lambda$---both dynamical systems are qualitatively the same: 
\begin{itemize}
    \item for $\lambda<-1$ and $\lambda>1$ they have a single attracting equilibrium; 
    \item for $\lambda\in(-1,1)$---two attracting and one repelling equilibrium in between;
    \item for $\lambda\in\{-1,1\}$---an attracting equilibrium and one degenerate equilibrium.
\end{itemize}
However, the corresponding Conley-Morse persistence barcodes presented in Figure~\ref{fig:bifurcation1d} (bottom) capture the difference in the nature of the two bifurcations.
Thus, the Conley-Morse persistence barcode can be regarded as an algebraic signature of a bifurcation.

Figure~\ref{fig:cusp-isola-mvf} presents minimalist combinatorial models for both bifurcations.
Again, we encourage the reader to revisit the examples in Figures~\ref{fig:sphere-example-hopf-combinatorial} and~\ref{fig:cusp-isola-mvf} after going through Section~\ref{sec:multivector-fields-theory}.

\begin{figure}
    \centering
    \includegraphics[width=0.45\linewidth]{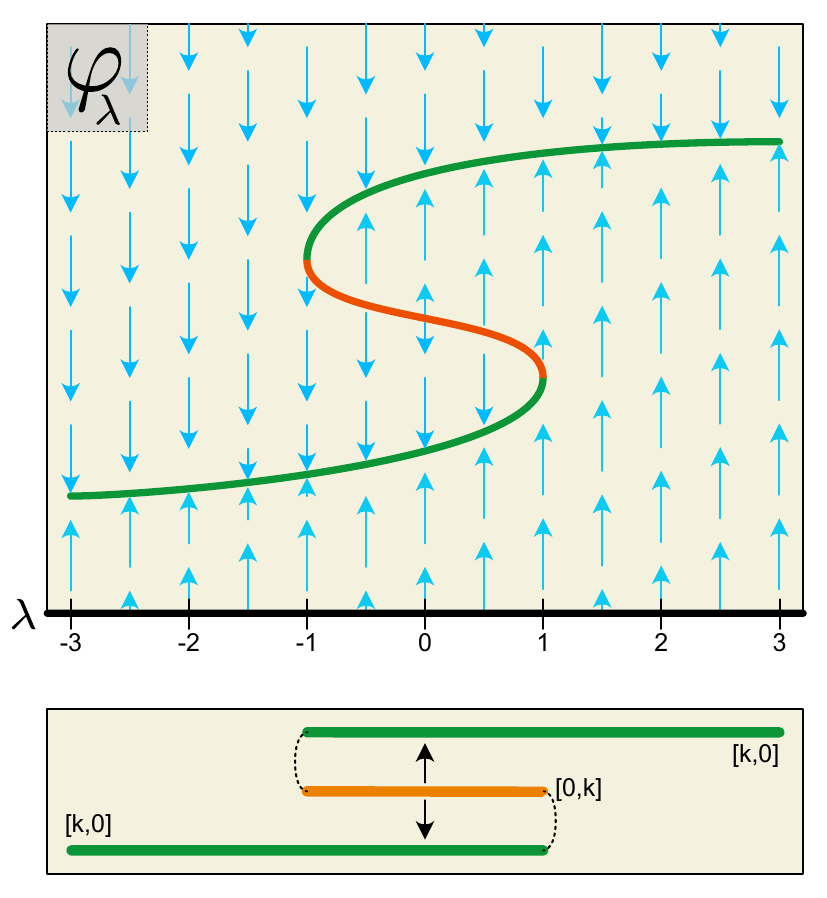}
    \includegraphics[width=0.45\linewidth]{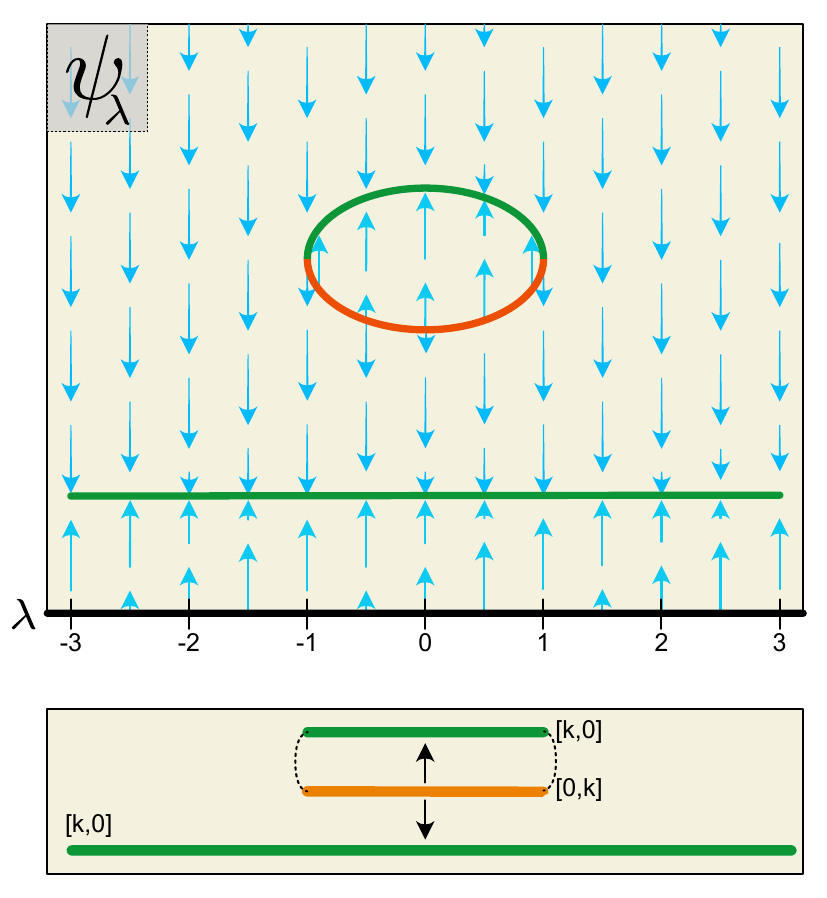}
    \caption{Two 1-dimensional flows parameterized by $\lambda$.
    The bottom row presents the corresponding Conley-Morse persistence barcodes.}
    \label{fig:bifurcation1d}
\end{figure}

\begin{figure}
    \centering
    \includegraphics[width=0.45\linewidth]{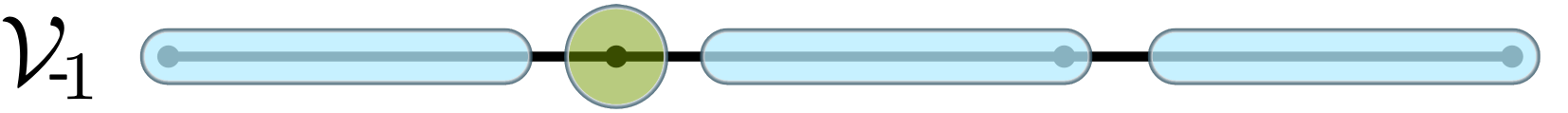}
    \hfill
    \includegraphics[width=0.45\linewidth]{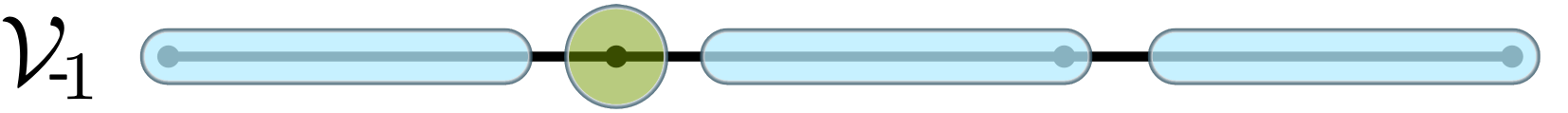}
    
    \includegraphics[width=0.45\linewidth]{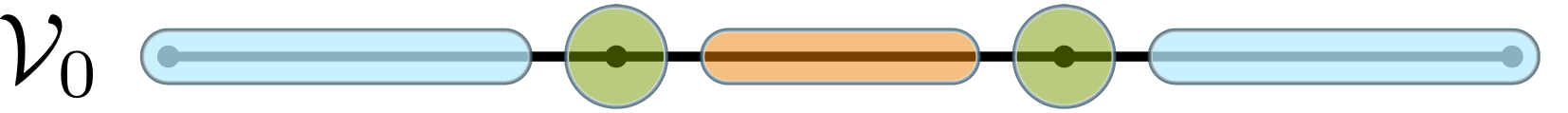}
    \hfill
    \includegraphics[width=0.45\linewidth]{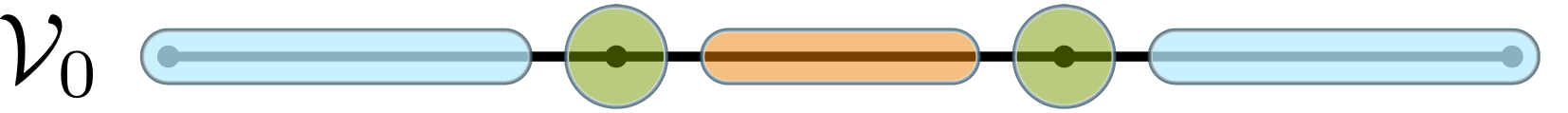}
    
    \includegraphics[width=0.45\linewidth]{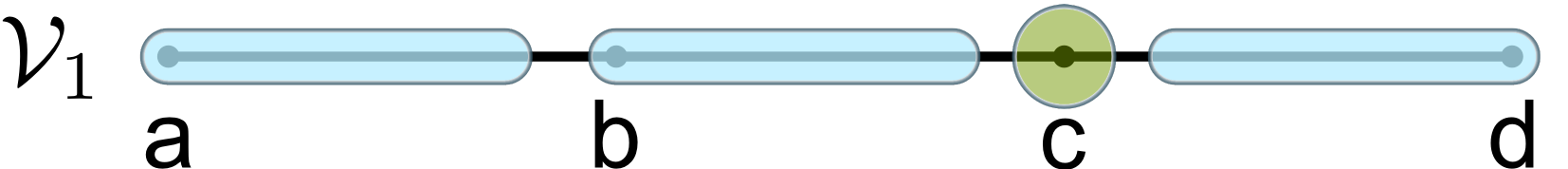}
    \hfill
    \includegraphics[width=0.45\linewidth]{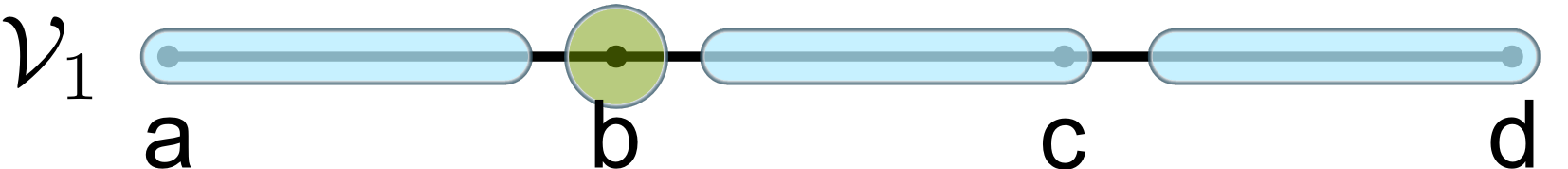}
    \caption{Minimalist combinatorial models for the parameterized 1-dimensional flows in Figure~\ref{fig:bifurcation1d}.}
    \label{fig:cusp-isola-mvf}
\end{figure}

\section{Preliminaries}\label{sec:preliminaries}
\subsection{Sets}\label{subsec:prelim-sets}
A $\ZZ$-interval $I$ is an intersection of an interval in $\RR$ with $\ZZ$.
We say that a $\ZZ$-interval $I$ is \emph{right-bounded}\index{right/left-bounded set} if $I$ admits a maximal element, otherwise $I$ is \emph{right-infinite}\index{right/left-infinite set}.
Similarly, if $I$ has a minimal element, then it is \emph{left-bounded}; otherwise, it is \emph{left-infinite}. 
We denote bounded $\ZZ$-intervals by $\zintab{n}{m} \coloneqq \{n, n+1, \ldots, m\}$.

Let $\cA$ and $\cB$ be families of subsets of $X$.
Then we say that $\cA$ is \emph{inscribed in}\index{inscribed (relation)} $\cB$ if for every $A\in\cA$ there exists $B\in\cB$ such that $A\subset B$.
We denote this relation by writing $\cA\sqsubseteq\cB$.

\subsection{Digraphs}\label{subsec:prelim-graphs}
A pair $G=(V, E)$ is called a \emph{directed graph}\index{directed graph} (or \emph{digraph}\index{digraph}), where $V$ is the set of vertices and relation $E\subset V\times V$ is the collection of edges. 
A sequence $\rho\coloneqq \rho_0,\rho_1,\ldots, \rho_\ell$ is a \emph{path}\index{path} of length $\ell$ in $G$ if $(\rho_{i-1}, \rho_i)\in E$ for all $i=1,2,\ldots, \ell$.
The endpoints of $\rho$ are denoted by $\pbeg\rho\coloneqq \rho_0$ and $\pend\rho\coloneqq \rho_\ell$.
Let $\psi$ be another path in $G$ such that $(\pend\rho, \pbeg\psi)\in E$ then the \emph{concatenation}\index{concatenation of paths} of $\rho$ and $\psi$ is also a path in $G$, 
    we denote it by $\rho\cdot\psi$.
Sometimes we identify a vertex $v\in V$ with a path of length $0$, 
    which allows us to write $x\cdot y\cdot z$ for a path of length 2, under the assumption that $(x,y),(y,z)\in E$. 

A \emph{strongly connected component}\index{strongly connected component} (\scc) of $G$ 
    is a maximal subset $A\subset V$ such that for every $x,y\in A$ there exists a path $\rho$ from $x$ to $y$.

\subsection{Relations}\label{subsec:prelim-rels}
Let $R\subset X\times X$ be a binary relation on a set~$X$.
A pair $(X, R)$ is called a \emph{partially ordered set} (or a \emph{poset})\index{partial order}\index{poset} if $R$ is a reflexive, antisymmetric, and transitive relation.
If additionally any two elements of $X$ are comparable then $(X,R)$ is called a \emph{linear order}\index{linear order}.
A linear order $(X,R')$ is a \emph{linear extension}\index{linear extension} of a partial order $(X, R)$ if $R\subset R'$.

For a partial order $(X,\leq)$ and $x,y\in X$ we write $x\prec y$ if there is no $z\in X\setminus\{x,y\}$ such that $x<z<y$.
The \emph{Hasse diagram}\index{Hasse diagram} of a partial order $(X,\leq)$ is a restriction of $\leq$ to pairs $(x,y)$ such that $x\prec y$.

A subset $A\subset X$ is called an \emph{upper set}\index{upper set} if $x< y $ and $x\in A$ implies $y\in A$.
Symmetrically, $A$ is a \emph{down set}\index{down set} if $x< y $ and $y\in A$ implies $x\in A$.
An intersection of a down set and an upper set is called a \emph{convex set}\index{convex set}.

A \emph{fence}\index{fence} in a poset $(X,\leq)$ is a sequence $\seqof{x}{n}\subset X$ such that for every $i\in\{1,2,\ldots n\}$ either $x_{i-1}\leq x_i$ or $x_{i-1}\geq x_i$.

\subsection{Finite topological spaces}\label{subsec:prelim-ftop}

Let $(X, \cT)$ be a finite topological space satisfying the T$_0$ separation axiom.
For a subset $A\subset X$ we denote its \emph{closure}\index{closure} by $\cl A$.
Since $X$ is finite, there exists a \emph{minimal open set}\index{open set!minimal} in $\cT$ containing  $A$, which we denote by $\opn A$.
We define the \emph{mouth}\index{mouth} of $A$ as $\mo A\coloneqq \cl A\setminus A$. 
A set $A$ is said to be \emph{locally closed}\index{locally closed set} if $\mo A$ is closed.

\begin{proposition}\label{prop:lcl}\cite[Problem~2.7.1]{Engelking1989}
    Let $A\subset X$. Then, the following conditions are equivalent:
    \begin{enumerate}
        \item $A$ is locally closed,
        \item $A$ is a difference of two closed sets,
        \item $A$ is an intersection of an open and closed set.
    \end{enumerate}
\end{proposition}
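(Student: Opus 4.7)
The plan is to prove the equivalences by the cycle $(1)\Rightarrow(2)\Rightarrow(3)\Rightarrow(1)$. Each implication is essentially a set-theoretic rearrangement, and finiteness of $X$ is not even needed; only the definition $\mo A = \cl A \setminus A$ and standard closure properties are used.

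For $(1)\Rightarrow(2)$, I would simply write $A = \cl A \setminus \mo A$. This holds tautologically, because $\mo A \subset \cl A$ and $\cl A \setminus \mo A = \cl A \setminus (\cl A \setminus A) = A$ (using $A \subset \cl A$). Under (1), $\mo A$ is closed and $\cl A$ is always closed, so $A$ is exhibited as a difference of two closed sets.

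For $(2)\Rightarrow(3)$, suppose $A = C_1 \setminus C_2$ with $C_1, C_2$ closed. Then $A = C_1 \cap (X \setminus C_2)$, and since $X \setminus C_2$ is open, $A$ is the intersection of a closed and an open set.

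The slightly less immediate step is $(3)\Rightarrow(1)$, and this is the only place where one must actually compute. Suppose $A = C \cap U$ with $C$ closed and $U$ open. Since $A \subset C$ and $C$ is closed, $\cl A \subset C$. Therefore
\[
\mo A = \cl A \setminus A = \cl A \setminus (C \cap U) = \cl A \cap \bigl((X\setminus C) \cup (X\setminus U)\bigr) = \cl A \setminus U,
\]
where in the last equality I used $\cl A \cap (X\setminus C) = \emptyset$. Since $\cl A$ is closed and $X\setminus U$ is closed, their intersection $\mo A$ is closed, which is (1). There is no real obstacle here; the only thing to be careful about is keeping track of which sets are closed versus open and using $\cl A \subset C$ at the right moment.
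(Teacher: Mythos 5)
Your proof is correct. The paper itself states Proposition~\ref{prop:lcl} without proof (treating it as a standard fact about locally closed sets), so there is no argument of the authors' to compare against; your cyclic argument $(1)\Rightarrow(2)\Rightarrow(3)\Rightarrow(1)$ is the standard one, and each step checks out — in particular the identity $A=\cl A\setminus\mo A$, and the computation $\mo A=\cl A\setminus U$ in the last step, which correctly uses $\cl A\subset C$ to discard the $X\setminus C$ term.
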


The following theorem allows us to identify a finite $T_0$ topological space with a finite partial order. 
In particular, open sets translate into upper sets, closed sets into down sets, and locally closed sets into convex sets.
\begin{theorem}[Alexandrov Theorem \cite{Alexandrov1937}]\label{thm:alexandrov_theorem}
    Let $(X, \leq)$ be a finite, partially ordered set.
    The family of all upper sets of $(X, \leq)$ forms a $T_0$ topology $\cT_\leq$ on $X$.
    Conversely, a $T_0$ finite topological space $(X,\cT)$ induces a partial order $(X,\leq_\cT)$, where $x \leq_\cT y$ whenever $x\in \cl y$.
    In particular $\cT\equiv\cT_{\leq_\cT}$ and $\leq\equiv\leq_{\cT_\leq}$.
    Moreover, continuous maps can be identified with order-preserving maps.
\end{theorem}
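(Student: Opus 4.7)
The plan is to verify the statement in four steps: (i) the upper-set family is indeed a $T_0$ topology, (ii) the relation $\leq_\cT$ is a partial order, (iii) the two passages invert each other, and (iv) continuous maps correspond to order-preserving ones.

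First I would check (i). The empty set and $X$ are upper sets, and the upper-set property passes through arbitrary unions and intersections, so $\cT_{\leq}$ is a topology. For the $T_0$ axiom, pick $x\neq y$; by antisymmetry we cannot have both $x\leq y$ and $y\leq x$, so at least one of the principal upper sets $U_x \coloneqq \{z\mid z\geq x\}$ or $U_y \coloneqq \{z\mid z\geq y\}$ contains one of the two points but not the other. For (ii), reflexivity of $\leq_\cT$ follows from $y\in\cl y$; transitivity from the observation that $y\in\cl z$ implies $\cl y\subseteq\cl z$, so $x\in\cl y$ forces $x\in\cl z$; and antisymmetry is precisely the $T_0$ condition, because $x\in\cl y$ and $y\in\cl x$ imply that $x$ and $y$ belong to the same closed sets, hence to the same open sets, which by $T_0$ forces $x=y$.

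For (iii), the key computation identifies closures and minimal open sets. In $\cT_{\leq}$ closed sets are down sets, so $\cl y=\{z\mid z\leq y\}$, giving at once $\leq_{\cT_\leq}\,=\,\leq$. Conversely, in a finite $T_0$ space the minimal open set $\opn\{x\}$ exists (the intersection of the finitely many open sets containing $x$ is open), and a direct unpacking shows $\opn\{x\}=\{y\mid x\in\cl y\}=\{y\mid x\leq_\cT y\}$, which is the principal upper set of $x$ with respect to $\leq_\cT$. Since every open set of $\cT$ is the union of the minimal open sets of its points, and every upper set is the union of the principal upper sets of its elements, the open sets of $\cT$ coincide with the upper sets of $\leq_\cT$, proving $\cT=\cT_{\leq_\cT}$.

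Finally, for (iv), I would use the characterization that a map $f\colon X\to Y$ between finite $T_0$ spaces is continuous if and only if $f(\cl A)\subseteq \cl f(A)$ for every $A\subseteq X$. Applied to singletons this reads $f(\cl y)\subseteq\cl f(y)$, which under the translation of (iii) is precisely the order-preservation $x\leq y \Rightarrow f(x)\leq f(y)$; conversely, an order-preserving map pulls back principal upper sets to upper sets, hence pulls back every open set of $\cT_{\leq}$ to an open set. The main obstacle in the whole argument is purely notational: one has to pin down the convention (open $=$ upper set versus open $=$ down set) and keep the direction of $\leq$ consistent with $x\in\cl y$ throughout; once that bookkeeping is fixed, every step reduces to a one-line verification.
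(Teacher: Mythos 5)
The paper does not prove this statement; it is quoted as a classical result with a citation to Alexandrov, so there is no in-paper argument to compare against. Your verification is correct and is the standard one: the only points that genuinely need care are the identification $\opn\{x\}=\{y\mid x\in\cl y\}$ (which you establish correctly) and the consistency of the convention ``open $=$ upper set'' with $x\leq_\cT y \iff x\in\cl y$ (which you maintain throughout), and your two-directional treatment of continuity---restriction of $f(\cl A)\subseteq\cl f(A)$ to singletons one way, pullback of principal upper sets the other way---closes the equivalence completely.
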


\section{Combinatorial Multivector Fields Theory}
In this section we cover the  theory of combinatorial multivector fields.
Most of the definitions come from~\cite{DeLiMrSl2022, LKMW2022}.
We reformulate some of the concepts  and introduce new ones to fit our specific needs. 
\label{sec:multivector-fields-theory}
\subsection{Elementary notions}\label{subsec:mvf-elementary}
Let $X$ be a finite topological space; in particular $X$ can be a simplicial or a regular CW-complex. 
A \emph{multivector field}~$\cV$\index{multivector field} on~$X$ is a partition of $X$ into locally closed subsets, called \emph{multivectors}\index{multivector}.
Since~$\cV$ is a partition, every $x\in X$ belongs to a unique multivector $V\in\cV$; we denote it by $[x]_\cV$. 
A~multivector $V$ is called \emph{regular}\index{multivector!regular} if the relative homology group $H(\cl V,\mo V)$ is $0$, otherwise $V$ is \emph{critical}\index{multivector!critical}.
A~set~$A\subset X$ is called \emph{$\cV$-compatible}\index{$\cV$-compatible set} if it is a union of multivectors in $\cV$.

A multivector field $\cV$ induces a multivalued map $F_\cV: X\multimap X$ defined as
\begin{equation}\label{eq:FV}
    \mvm(x)\coloneqq  \cl x\cup \mvx{x}.
\end{equation}
The map $\mvm$ can be viewed as a digraph $\dgV$ with nodes given by $X$ and the set of edges consisting of pairs $(x,y)\in X\times X$ such that $y\in \mvm(x)$.
A \emph{solution}\index{solution} is a path in $\dgV$, which we represent as a map $\rho:I\rightarrow X$, where $I$ is an interval in $\ZZ$.

\begin{example}\label{ex:first-example-MVF}
    Figure \ref{fig:first-example-MVF} shows an example of a multivector field $\cV$ on a simplicial complex, consisting of ten multivectors: 
        $\{abc\}$, $\{a,ab\}$, $\{b,bd\}$,
        $\{bc, bcd\}$,
        $\{c,ac\}$,
        $\{d,cd\}$,
        $\{df, def\}$,
        $\{e,ce,de,cde\}$,
        $\{ef\}$, and 
        $\{f\}$.
    The three singleton multivectors are critical and the rest are regular.
    Singleton $\{f\}$ models an attracting equilibrium;
        $\{ef\}$ a saddle; and
        $\{abc\}$ a repelling equilibrium.
    In particular, $\mvm(f)=\{f\}$, thus there are no outward arrows from $f$ in $\dgV$.
    One can also check that there are no arrows pointing toward $\{abc\}$. 
    \qedex
\end{example}

\begin{figure}
    \centering
    \includegraphics[width=0.65\linewidth]{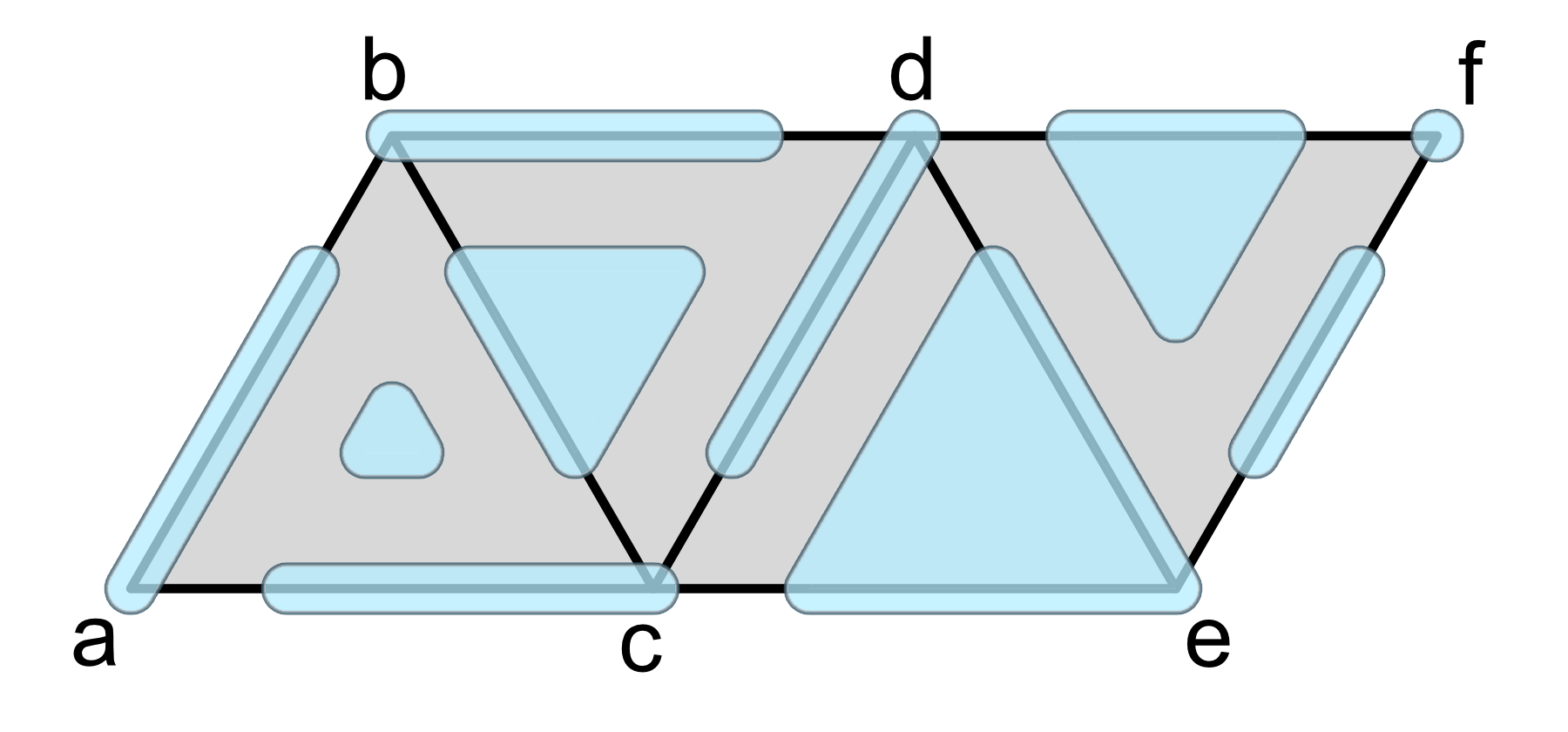}
    \caption{Example of a multivector field on a simplicial complex.}
    \label{fig:first-example-MVF}
\end{figure}

We write $\solV(A)$ for the family of all solutions in the graph $\dgV$ such that $\im\rho\subset A$.
It will be handy to distinguish the following subsets of $\solV(A)$: 
\begin{align*}
    \pathsV(A) &\coloneqq  \{\rho\in\solV(A)\mid \dom\rho \text{ is bounded}\},\\
    \pathsV(B_s,B_e,A) &\coloneqq  \{\rho\in\pathsV(A)\mid \pbeg\rho\in B_s \text{ and } \pend\rho\in B_e\},\\
    \isol_\cV(A) &\coloneqq  \{\rho\in\solV(A)\mid \dom\rho=\ZZ\}.
\end{align*}
In particular, $\pathsV(B_s, B_e,A)$ contains all bounded solutions in $A$ 
    with the starting point in $B_s$ and the end point in $B_e$.
On the other hand $\isol_\cV(A)$ consists of all bi-infinite solutions in~$A$, we call them \emph{full solutions}\index{solution!full}.

While useful, the above notion of a solution is not enough to capture the essential structure of a multivector field.
Therefore, we distinguish another type of solutions, called \emph{essential}.
In particular, a full solution $\varphi$ is called \emph{right-essential}\index{solution!right-essential} (\emph{left-essential}\index{solution!left-essential}) if for every $t\in\ZZ$ there exist $s>t$ ($s<t$) such that $\mvx{\varphi(t)}\neq\mvx{\varphi(s)}$ or $\mvx{\varphi(s)}$ is critical. 
A full solution $\varphi$ is called \emph{essential}\index{solution!essential} if it is both right- and left-essential.
In other words, a full solution is essential if it leaves every regular multivector it enters within a finite amount of steps---both in forward and backward time direction.
Note that a regular multivector may still be visited an infinite number of times by a single essential solution.
We denote the set of all essential solutions in $A\subset X$ by $\esolV(A)$, and the subset of essential solutions passing through $x\in A$ by
\begin{equation*}
    \esolV(x, A)\coloneqq \{\varphi\in\esolV(A)\mid \varphi(0)=x\}.
\end{equation*}
We usually use $\rho$ and $\gamma$ for non-essential solutions and $\varphi$ and $\psi$ for essential solutions.
To summarize, we have the following correspondence between the introduced families of solutions:
\begin{equation*}
    \esolV(x,A)\subset\esolV(A)\subset\isolV(A)\subset\solV(A)\supset\pathsV(A)\supset\pathsV(x,y,A).
\end{equation*}

Similarly to paths in a graph, 
given two solutions $\rho,\gamma\in\solV(A)$, right- and left-bounded, respectively, such that $\pbeg\gamma\in\mvm(\pend\rho)$ we write $\rho\cdot\gamma$ for the new solution constructed as the concatenation\footnote{
    Clearly, the concatenation of two solutions requires adjusting the domain of the new path.
    There are couple ways of doing that (for instance, see \cite[Section 4.2]{LKMW2022});
    nevertheless the choice of reparameterization is irrelevant here.
    % Hence, we ignore the details.
}.
Sometimes, for simplicity, we identify a point $x\in X$ with the trivial solution $\rho_x:\{0\}\rightarrow X$, defined $\rho_x(0)\coloneqq x$.
For instance, by $x\cdot y\cdot z$ we mean the solution $\rho\coloneqq \rho_x\cdot\rho_y\cdot\rho_z$ (under the assumption that $y\in\mvm(x)$ and $z\in\mvm(y)$), that is
\begin{equation*}
    \rho(t)\coloneqq \begin{cases}
        x; & t=0,\\
        y; & t=1,\\
        z; & t=2.
    \end{cases}
\end{equation*}

With the notion of an essential solution we are ready to define the notion of invariance and isolation in the context of multivector fields.
The \emph{invariant part}\index{invariant part} of a set $A\subset X$ is defined as
    $\inv_\cV(A) \coloneqq  \{x\in A \mid \esolV(x, A)\neq\emptyset\}=\bigcup\{\im\varphi\mid\esolV(A)\}$.
A set $S\subset X$ is called an \emph{invariant}\index{invariant set} if $\inv_\cV S= S$. 
Clearly, an invariant part forms an invariant set.

\begin{definition}\label{def:isolating_block} (Isolating block and isolated invariant set)
A set $N\subset X$ is a (combinatorial) \emph{isolating block}\index{isolating block}
if for every path $x\cdot y\cdot z$ such that $x,z\in N$ implies $y\in N$.
An invariant set $S$ is called an \emph{isolated invariant set}\index{invariant set!isolated} if there exists an isolating block $N$ such that $S=\inv_\cV N$.
\end{definition}

\begin{proposition}\label{prop:iso_block_is_lcl_Vcomp}
    A set $N$ is an isolating block if and only if $N$ is locally closed and $\cV$-compatible.
\end{proposition}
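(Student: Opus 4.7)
The plan is to reduce the statement to a clean order-theoretic reformulation via the Alexandrov correspondence (Theorem~\ref{thm:alexandrov_theorem}), and then to use the flexibility afforded by the three-step path $x\cdot y\cdot z$ to toggle between the two halves of $\mvm$.

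First I would record the following characterization, which is immediate from Proposition~\ref{prop:lcl} together with the fact that open (resp.\ closed) sets in a $T_0$ finite space are exactly the upper (resp.\ down) sets: a subset $N\subset X$ is locally closed if and only if it is \emph{order-convex}, that is, whenever $x,z\in N$ and $z\leq y\leq x$ then $y\in N$. The nontrivial direction writes $N$ as the intersection of its upward closure (open) with its downward closure (closed) and uses order-convexity to show the reverse inclusion. Recall also that $y\leq x$ in the induced order is equivalent to $y\in\cl x$, and that $\mvm(x)=\cl x\cup\mvx{x}$.

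For the $(\Leftarrow)$ direction, assume $N$ is $\cV$-compatible and order-convex, and let $x\cdot y\cdot z$ be a path with $x,z\in N$. Then $y\in\cl x\cup\mvx{x}$ and $z\in\cl y\cup\mvx{y}$. If $y\in\mvx{x}$, then $\mvx{x}\subset N$ by $\cV$-compatibility and $y\in N$. If $z\in\mvx{y}$, then $\mvx{y}$ meets $N$ at $z$, hence $\mvx{y}\subset N$ and again $y\in N$. In the remaining case $y\in\cl x$ and $z\in\cl y$, giving $z\leq y\leq x$; order-convexity yields $y\in N$.

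For $(\Rightarrow)$, suppose $N$ is an isolating block. For $\cV$-compatibility, fix $x\in N$ and $y\in\mvx{x}$. Then $y\in\mvm(x)$ and $x\in\mvx{y}\subset\mvm(y)$, so $x\cdot y\cdot x$ is a valid path in $\dgV$ with both endpoints in $N$; the isolating block condition forces $y\in N$, whence $\mvx{x}\subset N$. For local closedness, I verify order-convexity: if $x,z\in N$ and $z\leq y\leq x$, then $y\in\cl x\subset\mvm(x)$ and $z\in\cl y\subset\mvm(y)$, so $x\cdot y\cdot z$ is a valid path and the isolating block condition gives $y\in N$.

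The only mildly subtle point is the equivalence between local closedness and order-convexity, but this is standard for finite $T_0$ spaces and is really the content of Proposition~\ref{prop:lcl} translated through Alexandrov's theorem; the rest is a clean case analysis built on the fact that the definition of $\mvm$ allows each single step of a length-two path to be realized either by an order relation or by a multivector equivalence, and the isolating block axiom is strong enough to handle all four combinations simultaneously.
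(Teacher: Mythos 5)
Your proof is correct and follows essentially the same route as the paper's: the forward direction uses the identical paths $x\cdot y\cdot x$ (for $\cV$-compatibility) and $x\cdot y\cdot z$ with $x>y>z$ (for local closedness), and the backward direction rests on the same case analysis of whether each step of the path is a closure step or a multivector step. The only cosmetic difference is that you make explicit the equivalence ``locally closed $\iff$ order-convex'' (which the paper uses implicitly via the closedness of $\mo N$) and argue directly rather than by contradiction; both variants are fine.
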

\begin{proof}
    Assume that $N$ is an isolating block.
    To prove that $N$ is $\cV$-compatible, we need to show that for any $x \in N$, $\mvx{x}$ is a subset of $N$.
    To do so, fix  $x\in N$ and $y\in\mvx{x}$ and notice that $y\in\mvm(x)$ and $x\in\mvm(y)$.
    By definition, path $x\cdot y\cdot x$ implies that $y\in N$.
    Thus, $\mvx{x}\subset N$, which proves that $N$ is $\cV$-compatible. 
    Similarly, let $x,z\in N$ and $y\in X$ such that $x>y>z$.
    It follows that $y\in\mvm(x)$ and $z\in\mvm(y)$.
    Therefore, a path $x\cdot y\cdot z$ is a solution and $y\in N$ by definition of isolating block, which proves that $N$ is locally closed.

    Now assume that $N$ is a locally closed and $\cV$-compatible set.
    Suppose that there exists a path $x\cdot y\cdot z$ such that $x,z\in N$ and $y\not\in N$.
    Necessarily, $\mvx{x} \neq \mvx{y} \neq \mvx{z}$, because of the $\cV$-compatibility of $N$.
    Thus, we have $y\in\cl(x)$, $z\in\cl(y)$ and $y \in \mo N$.
    Lastly, since $N$ is locally closed, 
    $\mo N$ is closed and $z \in \mo N$, a contradiction.
\end{proof}

\begin{proposition}\cite[Propositions 4.10, 4.11 \& 4.12]{LKMW2022}
\label{prop:iso_inv_set_is_lcl_Vcomp}
    An invariant set $S$ is an isolated invariant set if and only if it is locally closed and $\cV$-compatible.
\end{proposition}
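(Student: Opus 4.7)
The plan is to prove the two implications separately, leveraging the already-established Proposition~\ref{prop:iso_block_is_lcl_Vcomp} characterizing isolating blocks.

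For the $(\Leftarrow)$ direction the argument is immediate: if $S$ is locally closed and $\cV$-compatible, then by Proposition~\ref{prop:iso_block_is_lcl_Vcomp} the set $S$ is itself an isolating block. Since $S$ is assumed invariant, $\inv_\cV S = S$, so $S$ is an isolated invariant set with witness block $N \coloneqq S$.

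For the $(\Rightarrow)$ direction, suppose $S = \inv_\cV N$ for some isolating block $N$. By Proposition~\ref{prop:iso_block_is_lcl_Vcomp}, $N$ is locally closed and $\cV$-compatible, and the plan is to transfer both properties to $S$. To prove $\cV$-compatibility, fix $x \in S$ and $y \in \mvx{x}$; pick an essential solution $\varphi$ through $x$ inside $N$. I will build an essential solution $\psi$ through $y$ by inserting a bounce $x \to y \to x$ at time $0$: shift the backward portion of $\varphi$ one step earlier, set $\psi(0) = y$ and $\psi(1) = x$, and shift the forward portion one step later. Every new transition is legal because $x$ and $y$ share a multivector, giving $y \in \mvx{x} \subset \mvm(x)$ and $x \in \mvx{y} \subset \mvm(y)$; the image stays inside $N$ because $\mvx{x} \subset N$ by $\cV$-compatibility of $N$.

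For local closedness I invoke the Alexandrov correspondence (Theorem~\ref{thm:alexandrov_theorem}) together with the poset characterization of locally closed sets coming from Proposition~\ref{prop:lcl}, namely that $S$ is locally closed iff it is order-convex: whenever $x \geq y \geq z$ with $x, z \in S$, then $y \in S$. Given such a triple, select essential solutions $\varphi_x$ and $\varphi_z$ through $x$ and $z$ inside $N$, and concatenate the left tail of $\varphi_x$ ending at $x$, the three-step segment $x \cdot y \cdot z$ (which is a valid path in $\dgV$ because $y \in \cl x \subset \mvm(x)$ and $z \in \cl y \subset \mvm(y)$), and the right tail of $\varphi_z$ starting at $z$. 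This produces a full solution through $y$ which is essential on both tails. The missing inclusion $y \in N$ follows from $N$ being an isolating block applied to the path $x \cdot y \cdot z$ with endpoints in $N$; hence the entire image lies in $N$ and $y \in \inv_\cV N = S$.

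The main obstacle in both constructions is confirming that the re-indexed and concatenated trajectories remain \emph{essential}. This is where the argument could slip: inserting a point in the same multivector, or gluing two halves through a finite bridge, must not create an infinite sojourn in a regular multivector. The key observation making this routine is that essentiality is a tail condition in each time direction, and both constructions modify the trajectory only on a bounded window while matching, outside that window, a time-shift of a solution already known to be essential.
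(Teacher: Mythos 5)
The paper does not prove this proposition itself---it is quoted from \cite[Propositions 4.10--4.12]{LKMW2022}---but your argument is correct and follows essentially the same route as the cited source: the backward implication is immediate from Proposition~\ref{prop:iso_block_is_lcl_Vcomp} (and is the content of Corollary~\ref{cor:iso_inv_is_iso_block}), while the forward implication transfers $\cV$-compatibility and local closedness from the isolating block $N$ to $S=\inv_\cV N$ by splicing bounded detours ($x\cdot y\cdot x$, respectively $x\cdot y\cdot z$) into essential solutions. Your closing observation that essentiality is a tail condition, unaffected by a bounded modification, is exactly the point that makes both splicings legitimate, so the proof is complete.
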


\begin{corollary}\label{cor:iso_inv_is_iso_block}
    An isolated invariant set $S$ is itself the minimal isolating block for $S$.
\end{corollary}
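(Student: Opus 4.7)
The plan is to chain together the two preceding propositions and then verify minimality from the definition of the invariant part.

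First, I would argue that $S$ is itself an isolating block. By Proposition~\ref{prop:iso_inv_set_is_lcl_Vcomp}, the fact that $S$ is an isolated invariant set gives that $S$ is locally closed and $\cV$-compatible. Proposition~\ref{prop:iso_block_is_lcl_Vcomp} then immediately yields that $S$ is an isolating block. Moreover, since $S$ is invariant we have $\inv_\cV S = S$, so $S$ is an isolating block for itself.

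Next, for minimality, suppose $N$ is any isolating block with $\inv_\cV N = S$. Directly from the definition $\inv_\cV(N) = \{x\in N \mid \esol(x,N)\neq\emptyset\}$, we see that $\inv_\cV N \subseteq N$, hence $S \subseteq N$. Thus $S$ is contained in every isolating block that isolates it, which is precisely the minimality claim.

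Since both parts reduce cleanly to the two cited propositions plus the definition of the invariant part, I do not anticipate any substantive obstacle; the only thing to be careful about is stating clearly what ``minimal'' means here (minimal among isolating blocks $N$ satisfying $\inv_\cV N = S$), so that the one-line inclusion $\inv_\cV N \subseteq N$ genuinely delivers minimality rather than merely a lower bound in some other sense.
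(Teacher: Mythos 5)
Your proposal is correct and is exactly the argument the paper intends: the corollary is stated without proof precisely because it follows by chaining Proposition~\ref{prop:iso_inv_set_is_lcl_Vcomp} with Proposition~\ref{prop:iso_block_is_lcl_Vcomp}, and minimality is the one-line observation that $\inv_\cV N \subseteq N$ forces $S \subseteq N$ for any isolating block $N$ of $S$. Your explicit remark on what ``minimal'' means is a welcome clarification, not a deviation.
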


\begin{example}
    The left panel of Figure \ref{fig:first-example-BD} shows a multivector field with four isolating blocks highlighted in brown. 
    Three of them, $B_1$, $B_2$ and $B_3$, have non-empty invariant part, which are respectively denoted $M_1$, $M_2$, and $M_3$, and highlighted in green in the right panel of Figure \ref{fig:first-example-BD}.
    \qedex
\end{example}

\begin{figure}
    \centering
    \includegraphics[width=0.40\linewidth]{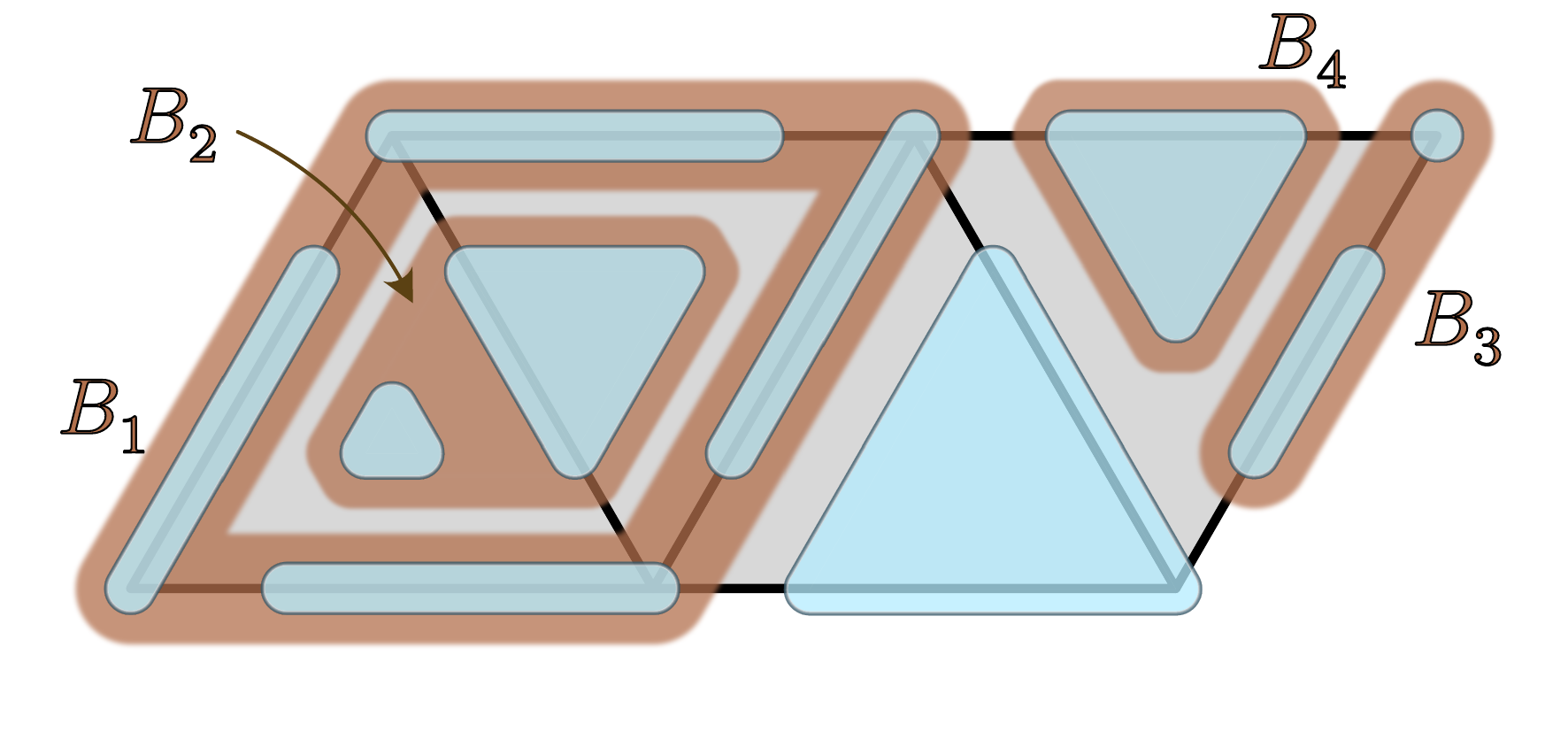}
    \includegraphics[width=0.09\linewidth]{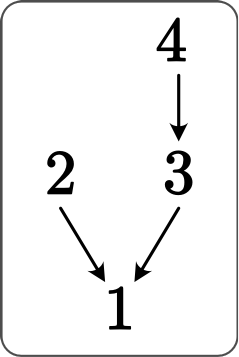}
    \includegraphics[width=0.40\linewidth]{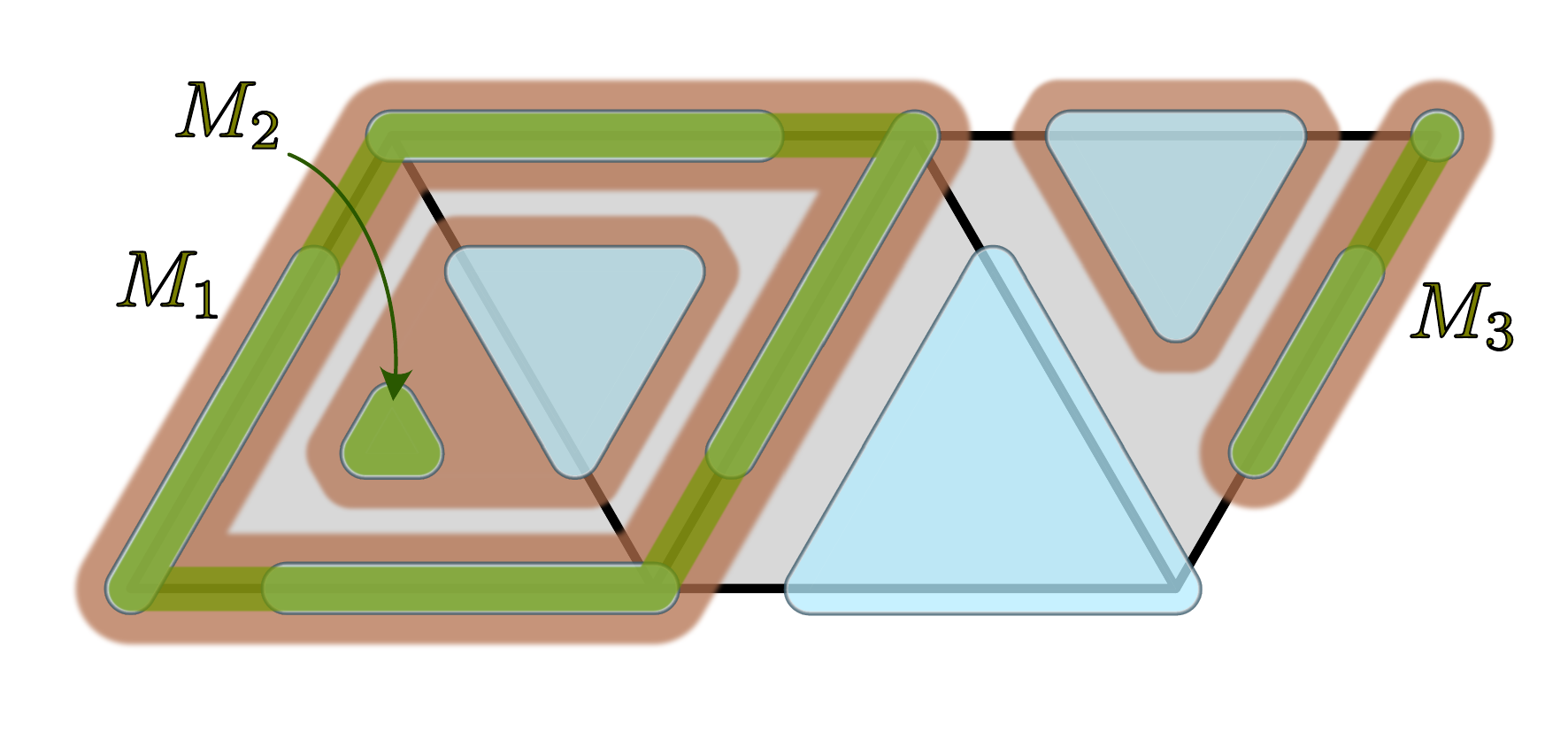}
    \caption{Examples of isolating blocks (brown sets) and isolated invariant sets (green sets) for a multivector field $\cV$ from Figure~\ref{fig:first-example-MVF}.
    Four isolating blocks on the left panel form a block decomposition $\BD$ of $\cV$.
    The green sets on the right panel indicate the invariant parts of the isolating blocks in $\BD$ forming a Morse decomposition.
    The graph on the middle panel represents the flow induced partial order for $\BD$.
    }
    \label{fig:first-example-BD}
\end{figure}

Note, that in the theory of continuous flows, an isolating block $\bl$ is defined as a compact set with the closed exit set and no internal tangencies.
In the combinatorial setting the only way to escape an isolating block is through its mouth, which by Proposition~\ref{prop:iso_block_is_lcl_Vcomp} has to be closed as well.
Moreover, no path starting in an isolating block $B$ can go to $\mo B$ and directly return to $B$.
    This can be viewed as a counterpart of the ``no internal tangencies'' condition.
The closedness of the isolating block, however, must be abandoned due to sparsity inherent in finite topological spaces.

% \paragraph{\textbf{Morse and block decomposition}}
\subsection{Morse and block decomposition}\label{subsec:morse-block-decomposition}

Let $\varphi$ be a full solution in $\cV$.
We define the \emph{ultimate backward and forward images}\index{ultimate image}\index{ultimate image!backward}\index{ultimate image!forward} of $\varphi$:
\begin{align*}
    \uimm\varphi &\coloneqq  \bigcap_{t<0}\varphi((-\infty, t]),\\
    \uimp\varphi &\coloneqq  \bigcap_{t>0}\varphi([t, +\infty)).
\end{align*}
Clearly, since the space $X$ is finite, the ultimate images are always non-empty.

\begin{definition}\label{def:morse_decomposition}(Morse decomposition)  
    \cite[Definition 7.1]{LKMW2022}
    A collection $(\cM, \PP) \coloneqq  \{M_p \mid p\in\PP\}$ of mutually disjoint, non-empty \underline{isolated invariant sets} is called a \emph{Morse decomposition}\index{Morse decomposition} of $S\subset X$
    in $\cV$ if there exists a partial order $(\PP,\leq)$ such that
    \begin{enumerate}[label=(M\arabic*)]
        \item\label{it:morse_decomposition_uims}
            for every $\varphi\in\esolV(S)$
            there exist $p,q\in\PP$ such that $\uimm\varphi\subset M_p$ and $\uimp\varphi\subset M_q$, 
        \item\label{it:morse_decomposition_paths} 
            if there exists $\rho\in\pathsV(S)$ with $\pbeg\rho\in M_p$ and $\pend\rho\in M_q$ for some $p,q\in\PP$ then
                either 
                \begin{enumerate}[label=(\alph*)]
                    \item\label{it:morse_decomposition_paths_le} $p>q$, or
                    \item\label{it:morse_decomposition_paths_eq} $p=q$ and $\im\rho\subset M_p$. 
                \end{enumerate}
    \end{enumerate}
\end{definition}

The primary goal of this work is to study the evolution of a Morse decomposition.
However, in order to study continuation it is preferable to shift our attention from isolated invariant sets to isolating blocks.
Therefore, we introduce the concept of a block decomposition, 
    which we obtain  by replacing isolated invariant sets with isolating blocks in Definition \ref{def:morse_decomposition}.

\begin{definition}\label{def:block_decomposition}(Block decomposition)
    \cite[Definition 7.2.5]{MroWan2025}
    A collection $(\BD, \PP) \coloneqq  \{B_p \mid p\in\PP\}$ of mutually disjoint, non-empty 
    \underline{isolating blocks} is called a \emph{block decomposition}\index{block decomposition} of $S\subset X$
    in $\cV$ if there exists a partial order $(\PP,\leq)$ such that
    \begin{enumerate}[label=(B\arabic*)]
        \item\label{it:block_decomposition_uims}
            for every $\varphi\in\esolV(S)$
            there exist $p,q\in\PP$ such that $\uimm\varphi\subset \bl_p$ and $\uimp\varphi\subset \bl_q$, 
        \item\label{it:block_decomposition_paths} 
            if there exists $\rho\in\pathsV(S)$ with $\pbeg\rho\in \bl_p$ and $\pend\rho\in \bl_q$ for some $p,q\in\PP$ then 
                either % $p>q$ or $p=q$ and $\im\rho\subset \bl_p$, 
                \begin{enumerate}[label=(\alph*)]
                    \item\label{it:block_decomposition_paths_le} $p>q$, or
                    \item\label{it:block_decomposition_paths_eq} $p=q$ and $\im\rho\subset \bl_p$. 
                \end{enumerate}
    \end{enumerate}
    If additionally
    \begin{enumerate}[label=(B\arabic*)]
          \setcounter{enumi}{2}
        \item\label{it:block_decomposition_partition}
            $S=\bigcup_{p\in\PP} B_p$,
    \end{enumerate}
    then we call $(\BD,\PP)$ a \emph{block partition}\index{block partition} of $S$.
\end{definition}

\begin{remark}\label{rem:MD_is_BD}
    By Corollary \ref{cor:iso_inv_is_iso_block} every Morse decomposition is also a block decomposition.
\end{remark}

The concept of block decomposition was introduced in \cite{MroWan2025}; 
    however the authors refer to its elements as simply blocks, not (combinatorial) isolating blocks.
Also note, that in various works on multivector fields, 
    the definition of the Morse decomposition varies depending on context.
In particular, Definition \ref{def:morse_decomposition} coincides with the one introduced in \cite{LKMW2022, LiMiMr2025}.
However, in papers focused on algorithms or computations \cite{DJKKLM2019, DeLiMrSl2024, DeLiHa2026}, ``Morse decomposition'' refers to what we call a block partition.
With the distinction between Morse and block decompositions we intend to avoid this ambiguity.

We write $\cM$ or $\cB$ for a Morse or a block decomposition, respectively, omitting $\PP$ when it can be deduced from the context.
We call the minimal order satisfying~\ref{it:morse_decomposition_paths} or \ref{it:block_decomposition_paths} the \emph{flow induced order}\index{flow induced order}.
Any extension of the partial order $(\PP,\leq)$ to a linear order is called an \emph{admissible linear order}\index{linear order!admissible} for $\cM$ or $\BD$.

There is a simple correspondence between block and Morse decompositions. Namely, as we prove in the next proposition, every $\BD$ induces the Morse decomposition 
\begin{align}\label{eq:induced-morse-decomposition}
    \indMDV{\BD}{\cV}&\coloneqq \{\inv_\cV(B_p) \mid p\in\PP \text{ such that } \inv_\cV(B_p)\neq\emptyset\}.
    % \cM(\BD;\cV)&\coloneqq \{\inv_\cV(B_p) \mid p\in\PP \text{ such that } \inv_\cV(B_p)\neq\emptyset\}.
\end{align}
We omit $\cV$ when it is clear from the context and write $\indMD{\BD}$.
We say that a block decomposition $\BD$ \emph{covers}\index{block decomposition!covering} Morse decomposition $\MD$ if $\BDmd=\MD$.
By Remark~\ref{rem:MD_is_BD}, every Morse decomposition admits at least one (trivial) covering block decomposition. 

\begin{proposition}\label{prop:block_to_Morse_decomposition}
    Let $(\BD,\PP)$ be a block decomposition of $S$.
    Then $\indMD{\BD}$ is a Morse decomposition of $S$.
\end{proposition}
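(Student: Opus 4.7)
The plan is to verify the three defining properties of a Morse decomposition for the collection $\indMD{\BD}$, using as index poset the restriction of $(\PP,\leq)$ to $\PP' := \{p\in\PP \mid \inv_\cV(B_p)\neq\emptyset\}$. The easy observations come first: each $M_p := \inv_\cV(B_p)$ is, by Definition~\ref{def:isolating_block}, an isolated invariant set (with $B_p$ serving as the witnessing isolating block); non-emptiness is built into the definition of $\PP'$; and mutual disjointness of the $M_p$ follows from $M_p \subset B_p$ together with the disjointness of the blocks $B_p$.

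The first genuine content is axiom \ref{it:morse_decomposition_uims}. Given $\varphi\in\esolV(S)$, axiom \ref{it:block_decomposition_uims} already supplies $p,q\in\PP$ with $\uimm\varphi\subset B_p$ and $\uimp\varphi\subset B_q$; the task is to upgrade these inclusions to $M_p$ and $M_q$, which will also force $p,q\in\PP'$ since ultimate images are nonempty in a finite space. I will argue this for $\uimp\varphi$; the backward case is symmetric. Fix $y\in\uimp\varphi$. Because $X$ is finite, there is some $T$ beyond which $\varphi$ only visits points of $\uimp\varphi$, hence $\varphi([T,+\infty))\subset B_q$; and within $[T,+\infty)$ the point $y$ reoccurs infinitely often. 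I would then pick $T\le t_0<t_1$ with $\varphi(t_0)=\varphi(t_1)=y$ and, invoking essentiality of $\varphi$, choose $t_1$ large enough that the loop $\varphi|_{[t_0,t_1]}$ exits every regular multivector that it enters (each such multivector must be exited at arbitrarily late times along $\varphi$). The periodic extension $\psi:\ZZ\to X$ obtained by looping $\varphi|_{[t_0,t_1)}$ and placing $\psi(0)=y$ is then a full solution contained in $B_q$, and the loop-exit property makes $\psi$ essential. Hence $y\in\inv_\cV(B_q)=M_q$.

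For axiom \ref{it:morse_decomposition_paths}, suppose $\rho\in\pathsV(S)$ has $\pbeg\rho\in M_p$ and $\pend\rho\in M_q$. Axiom \ref{it:block_decomposition_paths} yields either $p>q$ (done) or $p=q$ with $\im\rho\subset B_p$; in the latter case I must promote $\im\rho\subset B_p$ to $\im\rho\subset M_p$. For any intermediate $y=\rho(t)$, pick essential solutions $\varphi_x,\varphi_z\in\esolV(B_p)$ through the endpoints $x=\pbeg\rho$ and $z=\pend\rho$, and splice them onto the two ends of $\rho$ (reparameterised so that $y$ sits at time $0$). The resulting full solution $\psi$ lies entirely in $B_p$. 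Essentiality of $\psi$ at negative times follows from left-essentiality of $\varphi_x$ (whose required witnesses already live in $\varphi_x$'s own negative tail, unaffected by the splice), and symmetrically for positive times, while for times inside $\rho$ one uses the essentiality of the attached tails to find a strictly later time in $\psi$ where the multivector changes (or the current one is critical). Thus $y\in\inv_\cV(B_p)=M_p$, and $\im\rho\subset M_p$ as required.

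The bookkeeping around essentiality is where I expect the only real subtlety: one must ensure that each time-step inside the path~$\rho$ (or inside the concatenated loop, in the \ref{it:morse_decomposition_uims} argument) still admits a strictly later witness in the combined solution. In both cases the resolution is the same, namely that an essential solution necessarily leaves every regular multivector infinitely often, so the attached essential tails automatically supply the needed witnesses regardless of how the finite middle piece behaves. Everything else is formal.
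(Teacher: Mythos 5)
Your proof is correct, and for axiom \ref{it:morse_decomposition_paths} it is essentially identical to the paper's argument: splice the negative tail of an essential solution through $\pbeg\rho$ and the positive tail of one through $\pend\rho$ onto $\rho$ to exhibit every point of $\im\rho$ as lying in $\inv_\cV\bl_p$. The divergence is in axiom \ref{it:morse_decomposition_uims}: the paper simply invokes a cited fact from the multivector-field literature, namely that ultimate images are themselves invariant, $\inv_\cV(\uimpm\varphi)=\uimpm\varphi$, and then uses monotonicity of $\inv_\cV$ to land in $M_p$ and $M_q$. You instead prove the needed containment from scratch by a periodic-orbit construction: since $X$ is finite, the sets $\varphi(\itpinf)$ stabilize, every $y\in\uimp\varphi$ recurs at arbitrarily late times, and looping a sufficiently long segment of $\varphi$ between two occurrences of $y$ yields an essential full solution through $y$ inside $\bl_q$. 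This is a genuine, self-contained replacement for the citation; the only place where your wording overshoots is the requirement that the loop ``exits every regular multivector that it enters,'' which is stronger than needed and not literally achievable for the last entry before $t_1$. For the periodic extension it suffices that the loop image meets two distinct multivectors (or a single critical one), and that weaker condition is guaranteed for large enough $t_1$ by right-essentiality of $\varphi$ at $t_0$; with that small adjustment your argument is airtight. The trade-off is the usual one: the paper's version is shorter but leans on an external result, while yours is longer but makes the proposition independent of it.
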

\begin{proof}
    Let $\varphi\in\esolV(S)$.
    By \ref{it:block_decomposition_uims}, there exist $p,q\in\PP$ such that $\uimm\varphi\subset \bl_p$ and $\uimp\varphi\subset \bl_q$.
    As a consequence of \cite[Proposition 6.5]{LKMW2022}, $\inv_\cV(\uimpm\varphi)=\uimpm\varphi$;
        therefore, we conclude \ref{it:morse_decomposition_uims}, because of
    \begin{align*}
        \inv_\cV(\uimm\varphi)\subset \inv_\cV(B_p)\in\indMD{\BD} 
            \text{\quad and \quad}
        \inv_\cV(\uimp\varphi)\subset \inv_\cV(B_q)\in\indMD{\BD}.
    \end{align*}

    Consider $p,q\in\PP$ and denote $M_p\coloneqq \inv_\cV\bl_p$ and $M_q\coloneqq \inv_\cV\bl_q$.
    Assume that there exists a path $\rho\in\pathsV(M_p, M_q, S)$.
    Clearly $M_p\subset \bl_p$ and $M_q\subset \bl_q$.
    Hence, if $p\neq q$ and $p>q$ in $\BD$ then $p\neq q$ and $p>q$ in the induced order on $\indMD{\BD}$.
    If $p=q$ then, in particular exist essential solutions 
        $\psi\in\esolV(\pbeg\rho, M_p)$ and $\psi'\in\esolV(\pend\rho, M_p)$.
    Let $\psi_+$ denotes the restriction of $\psi$ to $\zint{+\infty}$ and
        $\psi_-'$, the restriction of $\psi'$ to $\zintab{-\infty}{0}$.
    It is easy to verify that $\varphi\coloneqq \psi_-'\cdot\rho\cdot\psi_+$ is an essential solution in $\bl_p$.
    Therefore, $\im\rho\subset\im\varphi\subset\inv_\cV M_p$,
    which shows \ref{it:morse_decomposition_paths} and concludes the proof.
\end{proof}

\begin{example}\label{ex:first-example-MD}
    The four isolating blocks in Figure \ref{fig:first-example-BD} (left) form a block decomposition $\BD=\{B_1,B_2,B_3,B_4\}$ of $K$ for multivector field~$\cV$,
        and the three isolated invariant sets shown in the right panel form the Morse decomposition 
        $\MD=\{M_1,M_2,M_3\}$. 
    In particular, $\BD$ covers $\MD$, that is $\BDmd=\MD$.
    The graph in the middle of Figure \ref{fig:first-example-BD} shows the flow induced partial order on $\PP$.
    \qedex
\end{example}

A Morse decomposition $\cM$ of $S$ is said to be the \emph{finest}\index{Morse decomposition!finest} if 
    for any other Morse decomposition $\cM'$ of $S$ we have $\MD\inscr\MD'$.
We define the \emph{finest block decomposition}\index{block decomposition!finest} and the 
\emph{finest block partition}\index{block partition!finest} analogously.
In particular, the finest Morse decomposition and the finest block decomposition coincide.

Finally, we evoke the result showing that the finest block partition, which always exists, can be easily obtained from $\dgV$, which in turn, gives the finest Morse decomposition.
The following theorem is a direct consequence of the proof of \cite[Theorem~7.3]{LKMW2022} and Proposition \ref{prop:block_to_Morse_decomposition}.
\begin{theorem}[Decomposition by \scc]\label{thm:scc_finest_BD}
    Let $\cV$ be a multivector field on $X$.
    Then the family of strongly connected components of the graph $\dgV$ forms the finest block partition $\BD$ of $X$ with respect to $\cV$. 
    Moreover, $\indMD{\BD}$ is the finest Morse decomposition of $X$.
\end{theorem}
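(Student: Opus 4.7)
The plan is to exploit the direct correspondence between the strongly connected components (sccs) of $\dgV$ and isolating blocks, and then leverage Proposition~\ref{prop:block_to_Morse_decomposition} for the Morse decomposition claim. The first step is to verify that every scc $C$ of $\dgV$ is an isolating block. For $\cV$-compatibility, observe that every multivector $V \in \cV$ is itself strongly connected in $\dgV$ (for $x,y \in V$ we have $y \in [x]_\cV \subset \mvm(x)$ and symmetrically $x \in \mvm(y)$), hence any scc is a union of multivectors. For local closedness, I would prefer to argue directly from Definition~\ref{def:isolating_block}: given a path $x \cdot y \cdot z$ in $\dgV$ with $x,z \in C$, strong connectivity of $C$ provides a path from $z$ back to $x$, and concatenation yields a cycle through $y$, forcing $y \in C$. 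By Proposition~\ref{prop:iso_block_is_lcl_Vcomp}, $C$ is then an isolating block.

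Next, I would verify that the collection $\BD$ of all sccs, equipped with the condensation order ($p > q$ iff there is a path in $\dgV$ from some vertex of $B_p$ to some vertex of $B_q$, and $p \neq q$), is a block partition. Disjointness, non-emptiness, and covering $X$ are immediate from the definition of sccs, so condition~\ref{it:block_decomposition_partition} holds. For \ref{it:block_decomposition_uims}, note that for any $\varphi \in \esolV(X)$ the ultimate images $\uimpm \varphi$ consist of vertices visited along the infinite solution in a tail of its domain; since $\varphi$ provides a directed path from any such vertex to any later such vertex and the domain is unbounded in the appropriate direction, these vertices mutually reach one another in $\dgV$, placing them all in a single scc. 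For \ref{it:block_decomposition_paths}, any $\rho \in \pathsV(X)$ witnesses $p \geq q$ in the condensation preorder; if $p = q$ then $\pbeg\rho$ and $\pend\rho$ lie in the same scc and every vertex on $\rho$ shares a cycle with them, so $\im \rho \subset B_p$.

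To prove that $\BD$ is the \emph{finest} block partition, I would take any other block partition $(\BD', \PP')$ and an scc $C$ with $x,y \in C$. Pick $B'_p, B'_q \in \BD'$ with $x \in B'_p$ and $y \in B'_q$. Strong connectivity gives paths $\rho_{xy}$ and $\rho_{yx}$ in $\dgV$, which by \ref{it:block_decomposition_paths} applied to $\BD'$ force $p \geq q$ and $q \geq p$; antisymmetry yields $p = q$, so $y \in B'_p$ and hence $C \subset B'_p$. This establishes $\BD \inscr \BD'$. The Morse part follows immediately: Proposition~\ref{prop:block_to_Morse_decomposition} yields that $\indMD{\BD}$ is a Morse decomposition, and the finest block partition produces the finest Morse decomposition because any Morse decomposition $\MD'$ is covered by the block partition $\MD'$ itself (Remark~\ref{rem:MD_is_BD}), so $\BD \inscr \MD'$ implies each scc-with-nonempty-invariant-part is inscribed in some Morse set of $\MD'$.

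The step I expect to require the most care is \ref{it:block_decomposition_uims}, because it is the one place where the \emph{essential} (as opposed to merely bi-infinite) nature of $\varphi$ enters: one must rule out pathologies where $\varphi$ wastes time inside a regular multivector without cycling through it in $\dgV$, and confirm that the vertices in $\uimpm\varphi$ really do admit mutual $\dgV$-paths rather than only repeated visits. Everything else reduces to the cycle-through-a-path argument that is the recurring motif of the proof, matching the observation in~\cite[Theorem~7.3]{LKMW2022} that the paper cites.
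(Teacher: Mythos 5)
Your construction of the block partition is sound and, up to the last step, essentially a self-contained version of the argument the paper outsources to \cite[Theorem~7.3]{LKMW2022}: the cycle-through-a-path argument correctly shows each scc is an isolating block, the condensation order verifies \ref{it:block_decomposition_uims}--\ref{it:block_decomposition_partition}, and the mutual-reachability argument against an arbitrary block \emph{partition} $\BD'$ correctly yields $\BD\inscr\BD'$. (Your worry about essentiality in \ref{it:block_decomposition_uims} is unfounded: repeated visits along a full solution \emph{are} $\dgV$-paths, since a solution is by definition a path in $\dgV$, so the ultimate images of any full solution already lie in a single scc.)

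The genuine gap is in the final step. You justify $\indMD{\BD}\inscr\MD'$ by invoking ``the block partition $\MD'$ itself'' and the implication ``$\BD\inscr\MD'$ implies \dots''. But Remark~\ref{rem:MD_is_BD} only makes $\MD'$ a block \emph{decomposition}, not a block \emph{partition}: a Morse decomposition need not cover $X$, so your finest-partition argument (which picks, for every $x\in C$, a block of $\BD'$ containing $x$) does not apply to $\MD'$. Moreover the antecedent $\BD\inscr\MD'$ is simply false in general: an scc consisting of a single regular multivector has empty invariant part and is contained in no Morse set, and even an scc $C$ with $\inv_\cV(C)\neq\emptyset$ typically contains connecting points outside every Morse set, so $C\not\subset M'$ for any $M'\in\MD'$. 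What you actually need is the weaker statement $\inv_\cV(C)\subset M'$, and it requires a different argument through Definition~\ref{def:morse_decomposition}: for $x\in\inv_\cV(C)$ take $\varphi\in\esolV(x,C)$; by \ref{it:morse_decomposition_uims} there are $p,q$ with $\uimm\varphi\subset M'_p$ and $\uimp\varphi\subset M'_q$; since $\im\varphi\subset C$ and $C$ is strongly connected, there are paths both ways between points of $\uimm\varphi$ and $\uimp\varphi$, so \ref{it:morse_decomposition_paths} forces $p=q$ and places the segment of $\varphi$ through $x$ (hence $x$ itself) inside $M'_p$; finally, mutual reachability of any two points of $\inv_\cV(C)$ inside $C$ forces them into the same Morse set. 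With that replacement the proof is complete.
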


\begin{example}\label{ex:first-example-BP}
    Figure \ref{fig:first-example-BP} illustrates another block and Morse decomposition for the multivector field $\cV$ from Example \ref{ex:first-example-MVF}.
    Here, we have block decomposition $\BD'=\{B'_1, B'_2, B'_3, B'_4, B'_5, B'_6, B'_7\}$.
    In fact, $\BD'$ is the finest block partition of $X$ 
        (although, not the finest block decomposition, which would consists of $\{B'_1, B'_3, B'_4, B'_7\}$), because the union of blocks in $\BD'$ gives the entire complex, 
        and there is no room for a further refinement.
    Note, that breaking $B'_1$ into smaller pieces would violate both conditions \ref{it:block_decomposition_uims} and \ref{it:block_decomposition_paths}.
    The Morse decomposition $\MD'=\{M'_1,M'_2,M'_3,M'_4\}$ is the finest Morse (and block) decomposition and it is covered by~$\BD'$, that is $\cM'=\indMDV{\cB'}{}$.
    In particular, compared to $\MD$ from Example~\ref{ex:first-example-MD}, we have $\MD'\inscr\MD$.
    Note, that in case of $\BD$ and $\BD'$ we have neither $\BD\inscr\BD'$ nor $\BD'\inscr\BD$.
    \qedex
\end{example}
\begin{figure}
    \centering
    \includegraphics[width=0.49\linewidth]{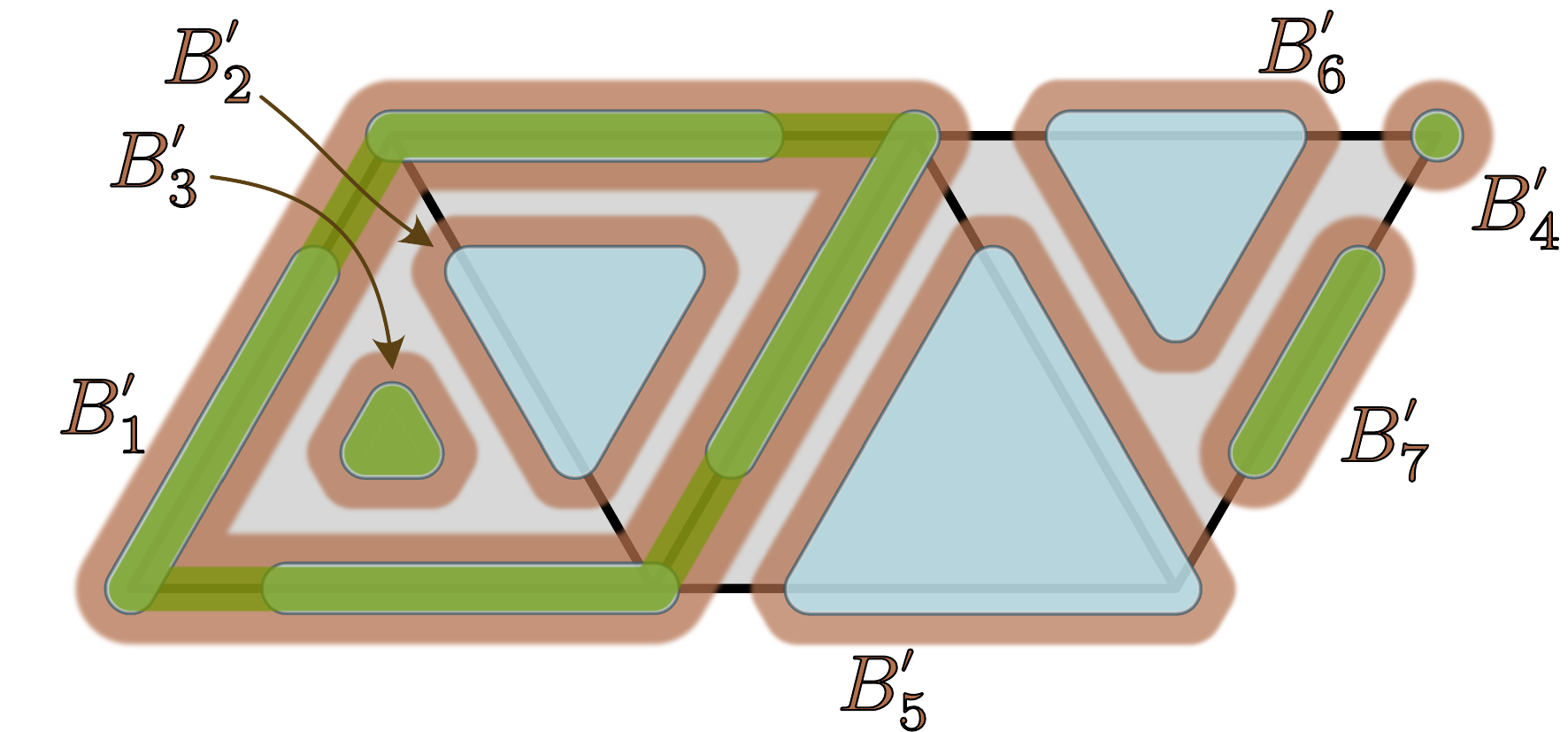}
    \includegraphics[width=0.49\linewidth]{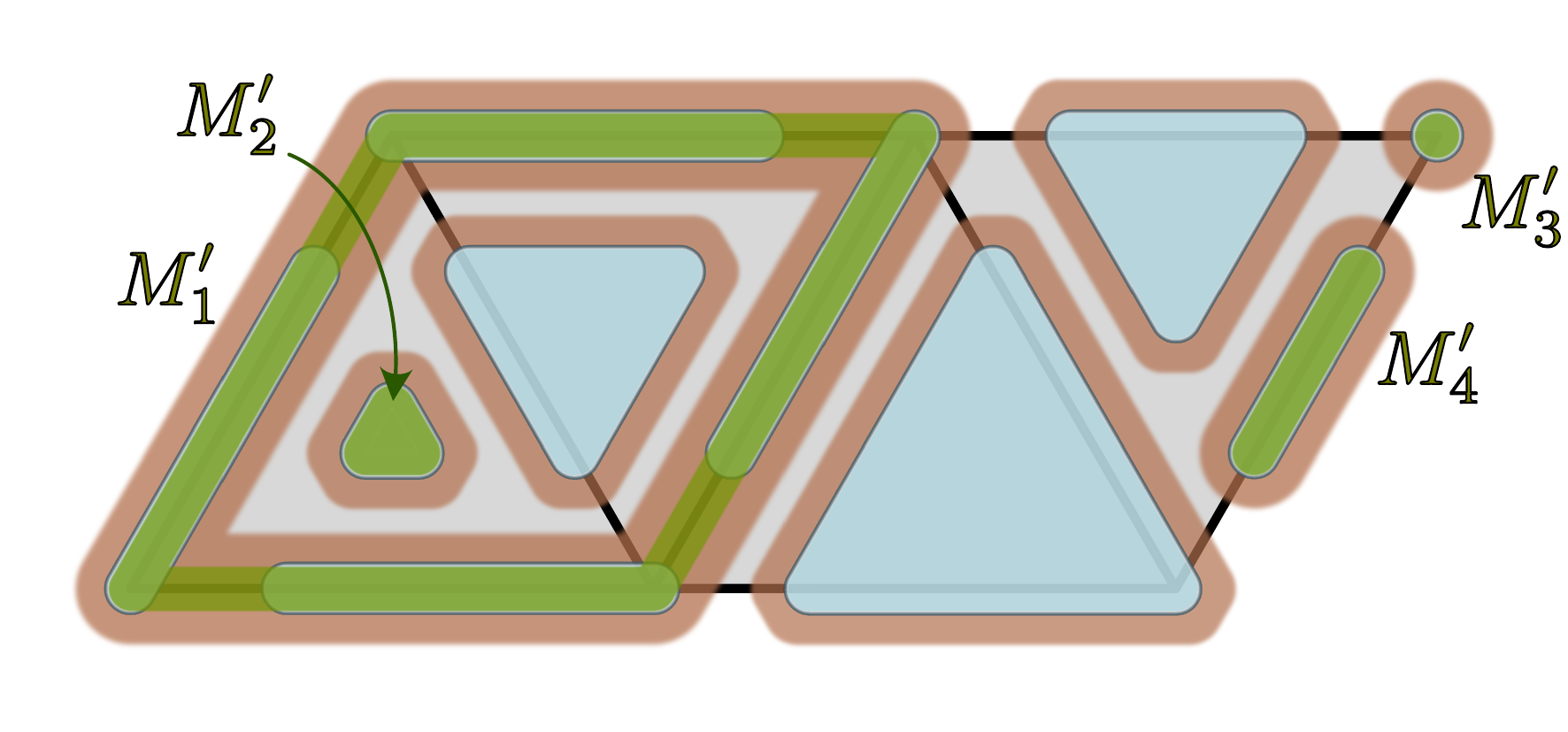}
    \caption{Example of the finest block partition (brown sets in the left panel) and 
        the corresponding finest Morse decomposition (green sets in the right panel)
        for a multivector field $\cV$ from Figure~\ref{fig:first-example-MVF}.
    }
    \label{fig:first-example-BP}
\end{figure}

Finally, we introduce a special type of Morse and block decomposition, namely the attractor-repeller pair.
An isolated invariant set $A$ is an \emph{attractor}\index{attractor} (relative) to $S$ if $\mvm(A)\cap S=A$.
Equivalently, an isolated invariant set is an attractor in $S$ if it is closed in $S$, that is $S\cap\cl A=A$ \cite[Theorem~6.2]{LKMW2022}.
Analogously an isolated invariant set $R$ is a \emph{repeller}\index{repeller} (relative) to $S$ if $\mvm^{-1}(R)\cap S=R$.
Equivalently, an isolated invariant set $R$ is a repeller in $S$ if it is open in $S$, that is $S\cap\opn R=R$ \cite[Theorem~6.3]{LKMW2022}.

\begin{definition}[Attractor-repeller pair]
    Let $S$ be an isolated invariant set.
    Then a pair of isolated invariant sets $A, R\subset S$ is 
        an \emph{attractor-repeller pair}\index{attractor-repeller pair} (or an \emph{AR-pair}\index{AR-pair|seealso{attractor-repeller pair}}) if 
        $A$ is an attractor in $S$ and $R=\inv_\cV S\setminus A$.
    In particular, $R$ is called the \emph{dual repeller}\index{repeller!dual} in $S$.
    It is easy to see that an AR-pair $(A,R)$ of $S$ form a Morse decomposition of $S$, 
        which we also call an \emph{AR-decomposition}\index{AR-decomposition}.

    For our purposes we allow $A$ and/or $R$ to be empty.
\end{definition}

\begin{proposition}\label{prop:AR-block-decomposition}
    Let $\BD=\{\bl_a, \bl_r\}$ be a two-element block decomposition of an isolating block $B$ such that $r\not<a$.
    Then $(M_a,M_r)\coloneqq(\inv_\cV B_a, \inv_\cV B_r)$ is an AR-pair for $S\coloneqq\inv_\cV B$.
\end{proposition}
\begin{proof}
    By Proposition~\ref{prop:block_to_Morse_decomposition}, the pair $\cM\coloneqq\{M_a,M_r\}$ is a Morse decomposition of $S$ as long as both are not empty.
    However, if $M_a$ and/or $M_b$ is empty the below argument remains virtually the same.

    To show that $M_a$ is an attractor, suppose that there exists an $x\in(\mvm(M_a)\cap S)\setminus M_a$.
    Since $S$ is invariant there exists $\varphi\in\esolV(x,S)$.
    We also have $p\in\{a,r\}$ such that $\uimp(\varphi)\subset M_p$, because $\cM$ is a Morse decomposition of $S$.
    If $p=a$ then we can take $y\in M_a$ such that $x\in\mvm(y)$ and $\rho\subset\varphi$ such that $\pbeg{\rho}=x$ and $\pend{\rho}\in M_a$;
        but then, path $y\cdot\rho$ implies $x\in\im\rho\subset M_a$ contradicting \ref{it:morse_decomposition_paths}\ref{it:morse_decomposition_paths_eq}.
    Putting $p=r$, we can construct a similar path, but with $\pend\rho\in M_r$, 
        then $y\cdot\rho$ and \ref{it:morse_decomposition_paths}\ref{it:morse_decomposition_paths_le} imply $r< a$, again a contradiction.
    Hence, $M_a$ is an attractor in $S$.

    To show that $\inv_\cV(S\setminus A)=M_r$ we notice first that $\bl_r\subset\bl\setminus A$ immediately implies 
    $M_r=\inv_\cV{\bl_r}\subset\inv_\cV{\bl\setminus A}$.
    To see the other inclusion consider $\varphi\in\esolV(S\setminus M_a)$.
    Since $\cM$ is a Morse decomposition we have $\uimpm_\cV\varphi\subset M_r$, thus $\im\varphi\subset M_r$ by \ref{it:morse_decomposition_paths}\ref{it:morse_decomposition_paths_eq}.
\end{proof}

\subsection{Conley Index}\label{subsec:conley-index}
\begin{definition}(Index pair) \cite[Definition~5.1]{LKMW2022}\label{def:idxpair}
A pair of closed sets $(P,E)$ such that $E\subset P$ is an \emph{index pair}\index{index pair} for the isolated invariant set $S$ if the following conditions hold:
    \begin{enumerate}[label=(IP\arabic*)]%[label=(\roman*)]
    \item\label{it:idxpair_PE} $F_\cV(P\setminus E)\subset P$ (the exit set condition),
    \item\label{it:idxpair_E} $F_\cV(E)\cap P\subset E$ (the positive invariance condition),
    \item\label{it:idxpair_inv} $S = \inv_\cV(P\setminus E)$ (the invariant part condition).
\end{enumerate}
\end{definition}

It is easy to verify that $P\setminus E$ is an isolating block.
When we say that $(P,E)$ is an index pair for an isolating block $B$ we mean that $(P,E)$ is an index pair for $\inv B$ and $B\subset P\setminus E$.
The following proposition is an immediate consequence of Proposition \ref{prop:iso_block_is_lcl_Vcomp} and \cite[Proposition 9]{DeLiMrSl2022}.

\begin{proposition}
\label{prop:iso_block_forms_ipair}
    Let $\bl$ be an isolating block in $\cV$. Then $(\cl\bl, \mo\bl)$ is an index pair for $\inv_\cV\bl$.
    In particular, by Corollary \ref{cor:iso_inv_is_iso_block}, $(\cl S, \mo S)$ is the minimal index pair for an isolated invariant set $S$.
\end{proposition}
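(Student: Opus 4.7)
The approach is to verify the three conditions of Definition \ref{def:idxpair} for $(P,E) := (\cl\bl, \mo\bl)$, relying on the characterization from Proposition \ref{prop:iso_block_is_lcl_Vcomp} that $\bl$ is locally closed and $\cV$-compatible. A convenient preliminary observation is that $P \setminus E = \cl\bl \setminus (\cl\bl \setminus \bl) = \bl$, which immediately reduces \ref{it:idxpair_inv} to the tautology $\inv_\cV(P\setminus E) = \inv_\cV \bl$, i.e.\ the isolated invariant set we wish to describe. Condition \ref{it:idxpair_PE}, namely $\mvm(\bl) \subset \cl\bl$, follows from the definition $\mvm(x) = \cl x \cup \mvx{x}$: for $x \in \bl$ both $\cl x \subset \cl\bl$ and $\mvx{x} \subset \bl \subset \cl\bl$ hold, the latter by $\cV$-compatibility.

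The substantive step is \ref{it:idxpair_E}, i.e.\ $\mvm(\mo\bl) \cap \cl\bl \subset \mo\bl$. I would argue by contradiction: assume $x \in \mo\bl$ and $y \in \mvm(x) \cap \bl$, and split on which summand of $\mvm(x) = \cl x \cup \mvx{x}$ contains $y$. If $y \in \mvx{x}$, then $\mvx{x} = \mvx{y}$, and $\cV$-compatibility together with $y \in \bl$ forces $\mvx{x} \subset \bl$, contradicting $x \in \mo\bl$. Otherwise $y \in \cl x$, which under the Alexandrov correspondence (Theorem \ref{thm:alexandrov_theorem}) means $y \leq x$; combining with $x \in \cl\bl$, which yields some $z \in \bl$ with $x \leq z$, we obtain a chain $y \leq x \leq z$ with both endpoints in $\bl$. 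At this point the key move is order-convexity of locally closed sets: invoking Proposition \ref{prop:lcl}, write $\bl = U \cap C$ with $U$ open (an upper set) and $C$ closed (a down set); then $y \in U$ and $y \leq x$ give $x \in U$, while $z \in C$ and $x \leq z$ give $x \in C$, so $x \in U \cap C = \bl$, again contradicting $x \in \mo\bl$.

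The ``in particular'' clause then follows by applying the result just proved to the isolating block $\bl := S$, which is legitimate by Corollary \ref{cor:iso_inv_is_iso_block}; minimality is inherited from the minimality of $S$ itself as an isolating block of $S$. I expect the main obstacle to be the second case of \ref{it:idxpair_E}: it is precisely the place where local closedness (as opposed to mere $\cV$-compatibility) is indispensable, since a closure-edge from the mouth could otherwise land back inside $\bl$, and the translation to an order-convexity argument via Proposition \ref{prop:lcl} and Theorem \ref{thm:alexandrov_theorem} is the single non-routine ingredient in the proof.
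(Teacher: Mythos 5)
Your proof is correct. The paper itself does not spell out an argument for this proposition—it declares it an immediate consequence of Proposition~\ref{prop:iso_block_is_lcl_Vcomp} together with an external result, and its own Proposition~\ref{prop:invPE} (stated just afterwards) covers the general situation: $\cl\bl$ and $\mo\bl$ are closed, $\cl\bl\setminus\mo\bl=\bl$ is $\cV$-compatible, and $\inv_\cV\bl$ is the invariant part by definition. Your verification of \ref{it:idxpair_PE}, \ref{it:idxpair_inv}, and the first case of \ref{it:idxpair_E} matches that argument exactly. The one place where you take a genuinely longer route is the second case of \ref{it:idxpair_E}: the detour through Proposition~\ref{prop:lcl}, the Alexandrov order, and order-convexity of $\bl$ is valid but unnecessary. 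Local closedness of $\bl$ says precisely that $E=\mo\bl$ is closed, so for $x\in\mo\bl$ one has $\cl x\subset\mo\bl$ directly, and $y\in\cl x$ lands in $E$ with no contradiction argument needed; this is how the paper's Proposition~\ref{prop:invPE} handles it. Your identification of this case as ``the place where local closedness is indispensable'' is exactly right—you have just used local closedness in its order-theoretic disguise rather than in the form ``the mouth is closed.'' On the ``in particular'' clause, your treatment of minimality is as brief as the paper's (which asserts it without proof); a fully rigorous argument would still need to check that $\mo S\subset E$ for an arbitrary index pair $(P,E)$ of $S$, but this gap is present in the source as well.
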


\begin{proposition}\label{prop:invPE}
    Let $(P,E)$ be a pair of closed sets such that $E\subset P$ 
    and $P\setminus E$ is $\cV$-compatible.
    Then $(P,E)$ is an index pair for $S\coloneqq \inv_\cV (P\setminus E)$.
\end{proposition}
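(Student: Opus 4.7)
The plan is to verify the three index-pair axioms \ref{it:idxpair_PE}, \ref{it:idxpair_E}, \ref{it:idxpair_inv} in Definition~\ref{def:idxpair} directly, using only two tools: (a) the fact that $P$ and $E$ are closed, so $x\in P$ forces $\cl(x)\subset P$ and $x\in E$ forces $\cl(x)\subset E$; and (b) $\cV$-compatibility of $P\setminus E$, i.e.\ $x\in P\setminus E$ forces $\mvx{x}\subset P\setminus E$. Recall $\mvm(x)=\cl(x)\cup\mvx{x}$, so every verification reduces to checking the closure part and the multivector part separately. Condition \ref{it:idxpair_inv} is automatic from the definition $S\coloneqq\inv_\cV(P\setminus E)$, so only \ref{it:idxpair_PE} and \ref{it:idxpair_E} require work.

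For \ref{it:idxpair_PE}, I would take $x\in P\setminus E$. Since $P$ is closed and $x\in P$, the closure part $\cl(x)$ is contained in $P$; by $\cV$-compatibility, the multivector part $\mvx{x}$ is contained in $P\setminus E\subset P$. Combining these gives $\mvm(x)\subset P$, and hence $\mvm(P\setminus E)\subset P$.

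For \ref{it:idxpair_E}, I would take $x\in E$ and $y\in\mvm(x)\cap P$, and show $y\in E$ by splitting into the two cases $y\in\cl(x)$ and $y\in\mvx{x}$. In the first case, since $E$ is closed and $x\in E$, we have $\cl(x)\subset E$, so $y\in E$. The second case is the only slightly delicate point: if $y\in\mvx{x}$ then $\mvx{y}=\mvx{x}$. Assuming for contradiction $y\in P\setminus E$, $\cV$-compatibility would give $\mvx{y}=\mvx{x}\subset P\setminus E$, forcing $x\in P\setminus E$, contrary to $x\in E$. Hence $y\in E$, establishing \ref{it:idxpair_E}.

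I do not anticipate a genuine obstacle; the proposition is essentially a reformulation of Proposition~\ref{prop:iso_block_is_lcl_Vcomp} in the language of pairs, relying on the equivalence between local closedness and being a difference of closed sets (Proposition~\ref{prop:lcl}). The only subtlety is the case $y\in\mvx{x}$ in \ref{it:idxpair_E}, which must be handled by the ``all-or-nothing'' behaviour of multivectors under $\cV$-compatibility rather than by a topological argument.
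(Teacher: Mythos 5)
Your proof is correct and follows essentially the same route as the paper's: both verify \ref{it:idxpair_PE} and \ref{it:idxpair_E} by splitting $F_\cV(x)=\cl x\cup\mvx{x}$ into the closure part (handled by closedness of $P$, resp.\ $E$) and the multivector part (handled by $\cV$-compatibility of $P\setminus E$), with \ref{it:idxpair_inv} holding by definition of $S$. Your contradiction argument in the $y\in\mvx{x}$ case of \ref{it:idxpair_E} is just a slightly more explicit phrasing of the paper's one-line justification.
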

\begin{proof}
    To see \ref{it:idxpair_PE}:
    let $x\in P\setminus E$ and $y\in F_\cV(x)=[x]_\cV\cup\cl x$.
    If $y\in [x]_\cV$ then $y\in P$, because $P\setminus E$ is $\cV$-compatible.
    If $y\in\cl x$ then $y\in P$, because $P$ is closed.
    Therefore, $y\in P$.
    To see \ref{it:idxpair_E}:
    let $x\in E$ and $y\in F_\cV(x)$.
    If $y\in [x]_\cV \cap P$ then necessarily $y\in E$ because $P\setminus E$ is $\cV$-compatible and $x\not\in P\setminus E$. 
    If $y\in\cl x$ then $y\in E$, because $E$ is closed.
    Therefore, $y\in E$.
    Condition \ref{it:idxpair_inv} is given by the assumption.
\end{proof}

\begin{theorem}\cite[Theorem 5.16]{LKMW2022}\label{thm:con_idx_well_defined}
    Let $(P_1,E_1)$ and $(P_2,E_2)$ be two index pairs for an isolated invariant set $S$ in $\cV$. 
    Then $H(P_1,E_1)\cong H(P_2,E_2)$.
\end{theorem}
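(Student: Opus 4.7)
The plan is to show that for any index pair $(P,E)$ of $S$ the relative homology $H(P,E)$ is isomorphic to $H(\cl S,\mo S)$, the homology of the canonical minimal index pair furnished by Proposition~\ref{prop:iso_block_forms_ipair} (using Corollary~\ref{cor:iso_inv_is_iso_block}). Once that is established, transitivity of isomorphism immediately gives $H(P_1,E_1)\cong H(\cl S,\mo S)\cong H(P_2,E_2)$ and the theorem follows.

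Given an arbitrary index pair $(P,E)$ for $S$, Proposition~\ref{prop:iso_block_is_lcl_Vcomp} tells us that $P\setminus E$ is a locally closed, $\cV$-compatible isolating block whose invariant part is $S$. The strategy is to shrink $(P,E)$ to $(\cl S,\mo S)$ by a finite sequence of elementary moves, each of which removes a single regular multivector $V\subset (P\setminus E)\setminus S$ by absorbing it into $E$ (together with whatever portion of $\cl V$ lies in $P\setminus E$). Because $X$ is finite, such a process must terminate, and the only $\cV$-compatible isolating block with invariant part $S$ that admits no such removable multivector is $S$ itself; the corresponding index pair is then $(\cl S,\mo S)$.

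The key technical lemma I would need is that each elementary move preserves relative homology. Concretely, if $V$ is a regular multivector in $P\setminus E$ selected so that $P':=P$ and $E':=E\cup\cl V$ (or an appropriate variant) still satisfy Definition~\ref{def:idxpair} for $S$, then the long exact sequence of the triple $(P,E',E)$ reduces to showing $H(E',E)=0$. After excising the part of $E$ disjoint from $\cl V$, this relative homology is computed by the pair $(\cl V,\mo V)$, and vanishes exactly by the regularity assumption $H(\cl V,\mo V)=0$. Iterating gives $H(P,E)\cong H(\cl S,\mo S)$.

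The main obstacle I anticipate is the \emph{combinatorial selection} of a removable regular multivector: one must choose $V\subset(P\setminus E)\setminus S$ whose closure can be moved into $E$ without violating the positive invariance condition \ref{it:idxpair_E} or the exit set condition \ref{it:idxpair_PE}, and without accidentally absorbing part of $S$. A natural way to guarantee existence is to choose $V$ to be an \emph{exit-most} regular multivector, i.e.\ one in $\gpred$-minimal (or $\gsucc$-maximal) position in $\dgV$ restricted to $(P\setminus E)\setminus S$; the Morse-like structure coming from Theorem~\ref{thm:scc_finest_BD} ensures such a $V$ exists whenever $(P\setminus E)\supsetneq S$. Verifying that this choice keeps $(P,E\cup\cl V)$ a valid index pair, and that the iterated excision argument indeed terminates at $(\cl S,\mo S)$, is the delicate bookkeeping step; everything else is formal manipulation of the long exact sequence.
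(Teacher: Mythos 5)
You are right that the theorem reduces to showing $H(P,E)\cong H(\cl S,\mo S)$ for an arbitrary index pair, and the paper itself does not reprove the statement (it imports it from \cite[Theorem 5.16]{LKMW2022}); the mechanism it does record, in Proposition~\ref{prop:connecting_index_pairs_of_S} and the sequence \eqref{eq:connection_ip_sequence}, is a short zigzag through \emph{push-forward} pairs $(\pf_\cV(\cl S,P),\pf_\cV(\mo S,P))$, in which $P$ is shrunk as well as $E$. Your proposal replaces this with a monotone process that only enlarges $E$, one regular multivector at a time, justified by $H(\cl V,\mo V)=0$ and the long exact sequence of the triple. The individual legal move is fine (when $\mo V\subset E$, the pair $(P,E\cup\cl V)$ is again an index pair for $S$ and excision gives $H(E\cup\cl V,E)\cong H(\cl V,\mo V)=0$), and your observation that every critical multivector inside $P\setminus E$ already lies in $S$ is correct.

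The gap is in the claim that a removable multivector always exists while $P\setminus E\supsetneq S$. Your process can only eliminate multivectors lying \emph{downstream} of $S$; a regular multivector $V$ with $\mo V\cap S\neq\emptyset$ (or, more generally, from which $S$ is reachable inside $P\setminus E$) can never be absorbed into $E$, because $E\cup\cl V$ would then contain points of $S$ and condition \ref{it:idxpair_inv} fails. A minimal example: $X=\{a,b,ab\}$ with $\cV=\{\{a\},\{b,ab\}\}$, $S=\{a\}$, and the index pair $(P,E)=(X,\emptyset)$. The only regular multivector outside $S$ is $V=\{b,ab\}$, with $\mo V=\{a\}=S$, so no elementary move is available, yet $(P,E)\neq(\cl S,\mo S)=(\{a\},\emptyset)$. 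Choosing $V$ ``exit-most'' does not help, since here there is no choice at all; the obstruction is structural and occurs whenever $S$ contains an attractor of $P\setminus E$. To finish one needs a second type of move that \emph{shrinks} $P$ to $\pf_\cV(\cl S,P)$ (discarding multivectors upstream of $S$), and the homological justification for that step is not the vanishing of $H(\cl V,\mo V)$ for a single regular multivector but an excision/retraction argument for the complementary inclusion --- which is exactly what the push-forward zigzag of \eqref{eq:connection_ip_sequence} supplies. As written, your induction terminates at a pair strictly larger than $(\cl S,\mo S)$ in general, so the proof is incomplete.
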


\begin{definition} (Conley index)\label{def:conley_index}
\cite[Section 5.2]{LKMW2022}
    The \emph{Conley index}\index{Conley index} of an isolated invariant set $S$ is defined as $\con(S)\coloneqq [H_0(P,E), H_1(P,E), \ldots]$, where $\ipair{}$ is an index pair for $S$ and the relative homology is calculated over the field $k$.
    We also denote each component of $\con(S)$ as $\con_i(S) \coloneqq H_i(P,E)$.
\end{definition}
Note that the Conley index is well defined due to Theorem \ref{thm:con_idx_well_defined};
and that every $\con_i(S)$ is isomorphic to the vector space $k^{d_i}$, where $d_i = \dim H_i(P,E)$.

\begin{example}
    Conley indices for the Morse sets in $\MD$ from Example \ref{ex:first-example-MD} are as follows: 
    $\con(M_1)=[\fk,\fk,0]$, 
    $\con(M_2)=[0,0,\fk]$, and $\con(M_3)=[0,0,0]$.
    In case of $\MD'$ from Example \ref{ex:first-example-BP} we have 
        $\con(M'_3)=[\fk, 0,0]$ and $\con(M'_4)=[0,\fk, 0]$.
    Morse sets $M'_1$ and $M'_2$ coincide with $M_1$ and $M_2$.
    \qedex
\end{example}

Theorem~\ref{thm:con_idx_well_defined} can be proved by constructing a sequence of index pairs related by isomorphisms.
We show such a sequence explicitly, as it will come in handy later. 
Moreover, the construction already hints at connections with persistence theory.
To do that, let us recall Theorem~\ref{thm:ipair-inclusion-isomorphism} and the notion of push forward.
\begin{theorem}\cite[Theorem~28]{DeLiMrSl2022}\label{thm:ipair-inclusion-isomorphism}
    If $(P,E)$ and $(P',E')$ are index pairs for $S$ in $\cV$ such that $(P,E)\subset (P',E')$ then the inclusion induces an isomorphism in homology.
\end{theorem}
The \emph{push forward}\index{push forward} of a set $A$ in $Y\subset X$ with respect to~$\cV$ is defined by:
\begin{equation}\label{eq:push_forward}
    \pf_\cV(A, Y) \coloneqq  \{x\in Y \mid \exists_{a\in A}\, \exists_{\rho\in\paths_\cV(Y)}\ \pbeg\rho=a \text{ and } \pend\rho = x \}.
\end{equation}
The crucial property of the push forward is captured by the following proposition, which can be easily proved by adapting the proof of \cite[Proposition 5.11]{LKMW2022}.
\begin{proposition}\label{prop:push_forward_closed_vcomp}
    Let $\bl$ be a locally closed, $\cV$-compatible subset of $X$.
    Then, for any set $A\subset\bl$, the push forward $\pf_\cV(A, \cl\bl)$ is closed.
    Moreover, $\pf_\cV(A,\bl)$ is locally closed and $\cV$-compatible.
\end{proposition}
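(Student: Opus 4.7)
The plan is to split the proposition into three claims handled in sequence, leaning on the elementary observation that we can always extend a path $\rho$ to $\rho\cdot y$ whenever $y\in F_\cV(\pend\rho)=\cl(\pend\rho)\cup[\pend\rho]_\cV$.

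First I would prove that $\pf_\cV(A,\cl\bl)$ is closed. Take $x$ in this set, witnessed by a path $\rho$ in $\cl\bl$ from some $a\in A$ to $x$, and pick any $y\in\cl x$. Then $y\in F_\cV(x)$, and since $\cl\bl$ is closed we have $y\in\cl\bl$, so $\rho\cdot y$ certifies $y\in\pf_\cV(A,\cl\bl)$. The $\cV$-compatibility of $\pf_\cV(A,\bl)$ is equally short: for $x\in\pf_\cV(A,\bl)$ with witnessing path $\rho\subset\bl$, and any $y\in[x]_\cV$, the $\cV$-compatibility of $\bl$ gives $y\in\bl$, so $\rho\cdot y\subset\bl$ and $y\in\pf_\cV(A,\bl)$.

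The main step, and the main obstacle, is local closedness of $\pf_\cV(A,\bl)$. The strategy is to establish the identity
\[
    \pf_\cV(A,\bl) \;=\; \pf_\cV(A,\cl\bl)\cap\bl,
\]
from which local closedness follows immediately, since the right-hand side is the intersection of a closed set with a locally closed set and $\bl=C\cap U$ for closed $C$ and open $U$ gives a presentation as (closed)$\,\cap\,$(open). The nontrivial inclusion is ``$\supset$'': given a path $\rho=x_0,\ldots,x_n$ in $\cl\bl$ with $x_0\in A\subset\bl$ and $x_n\in\bl$, I need to produce a path in $\bl$ with the same endpoints. I will argue that every intermediate $x_i$ already lies in $\bl$. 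Suppose not, and let $i$ be the first index with $x_i\notin\bl$, so $x_{i-1}\in\bl$ and $x_i\in\cl\bl\setminus\bl=\mo\bl$. Since $\bl$ is $\cV$-compatible, $x_i\in[x_{i-1}]_\cV$ would force $x_i\in\bl$, so instead $x_i\in\cl x_{i-1}$; by the same token, when the path later re-enters $\bl$ at some first index $j>i$, we must have $x_j\in\cl x_{j-1}$. Iterating the closure inclusions along the segment $x_i,\ldots,x_{j-1}$ and using that $\mo\bl$ is closed (by local closedness of $\bl$), I obtain $\cl x_{j-1}\subset\mo\bl$, hence $x_j\in\mo\bl$, contradicting $x_j\in\bl$.

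Where the argument needs care is precisely in this last step: one has to track that whenever the path is in $\mo\bl$, each successive arrow is forced into the closure direction (never a multivector jump) because the $\cV$-compatibility of $\bl$ rules out the alternative, and therefore the path cannot ``climb back up'' out of $\mo\bl$. Once that combinatorial fact is pinned down, local closedness, closedness, and $\cV$-compatibility all fall into place, completing the proof.
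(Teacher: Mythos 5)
Your argument is correct, and it is worth noting that the paper does not actually write out a proof of Proposition~\ref{prop:push_forward_closed_vcomp}: it only asserts that the statement follows by adapting the proof of \cite[Proposition 5.11]{LKMW2022}. Your proposal therefore supplies a genuinely self-contained argument. The first two claims (closedness of $\pf_\cV(A,\cl\bl)$ by appending a closure step, and $\cV$-compatibility of $\pf_\cV(A,\bl)$ by appending a multivector step) are exactly the standard one-line extensions of a witnessing path. Your identity $\pf_\cV(A,\bl)=\pf_\cV(A,\cl\bl)\cap\bl$ is a clean way to get local closedness, and the no-re-entry argument that proves the nontrivial inclusion is sound: at the re-entry index $j$ the step $x_{j-1}\to x_j$ cannot be a multivector jump (else $\cV$-compatibility of $\bl$ would put $x_{j-1}\in\mvx{x_j}\subset\bl$), so $x_j\in\cl x_{j-1}\subset\cl(\mo\bl)=\mo\bl$, contradicting $x_j\in\bl$.

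Two small remarks. First, your closing paragraph overstates what is needed and is in fact slightly inaccurate: it is \emph{not} true that every arrow along the segment inside $\mo\bl$ is forced to be a closure arrow --- a multivector jump between two points of $\mo\bl$ is perfectly possible, since $\cV$-compatibility of $\bl$ only forbids a multivector straddling $\bl$ and its complement. What your main paragraph actually uses, and all that is needed, is that the single re-entry step must be a closure step and that $\mo\bl$ is closed; no iteration along the segment is required, since $\cl x_{j-1}\subset\mo\bl$ follows immediately from $x_{j-1}\in\mo\bl$. Second, an even shorter route to local closedness is available once $\cV$-compatibility is established: by Proposition~\ref{prop:iso_block_is_lcl_Vcomp} it suffices to check the isolating-block condition for $\pf_\cV(A,\bl)$, and for a path $x\cdot y\cdot z$ with $x,z\in\pf_\cV(A,\bl)$ the isolating-block property of $\bl$ itself forces $y\in\bl$, whence $y\in\pf_\cV(A,\bl)$ by extending the path witnessing $x$. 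Either way, your proof stands.
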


Assume that $(P,E)$ and $(P',E')$ are two index pairs for $S$ in $\cV$.
Now take $\bl$, an isolating block for $S$, such that $\bl\subset P\setminus E$ and $\bl\subset P'\setminus E'$ (by Corollary~\ref{cor:iso_inv_is_iso_block} such a $\bl$ always exists, that is, $\bl=S$).
Then, the sequence
\begin{align}
    \begin{split}\label{eq:connection_ip_sequence}
    (P,E)\ \supset\ &(\pf_\cV(\cl \bl, P), E\cap\pf_\cV(\mo \bl, P))\ \\
        \subset\ &(\pf_\cV(\cl \bl, P), \pf_\cV(\mo \bl, P))\
        \supset\ (\cl \bl, \mo \bl)
    \end{split}
\end{align}
consists of index pairs (\cite[Theorems 10 and 15]{DeyMrozekSlechta2022}) and the inclusions induce isomorphisms in homology (Theorem \ref{thm:ipair-inclusion-isomorphism}).
Symmetrically, we construct another sequence connecting $(\cl \bl, \mo \bl)$ with $(P',E')$ and 
concatenate them to obtain a filtration from $(P,E)$ to $(P',E')$ as we schematically present in diagram~\eqref{eq:connecting_ip_sequence}, which we call the \emph{connecting sequence}\index{connecting sequence}. 
% \begin{figure}[h]
    % \begin{tikzcd}
    %     (P,E)\ \supset\ \ldots\ \supset\ (\cl\bl, \mo\bl)\ \subset\ \ldots\ \subset\ (P',E')  
    % \end{tikzcd}
\begin{equation}
    \includegraphics[width=0.6\textwidth]{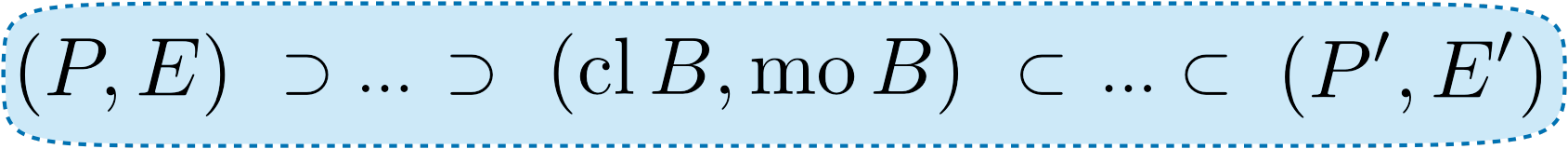}
        \vspace{0.25cm}
    \label{eq:connecting_ip_sequence}
\end{equation}
%     \caption{A sequence of index pairs between $(P,E)$ and $(P',E')$ inducing isomorphisms in homology constructed from two \eqref{eq:connection_ip_sequence} sequences.}
%     \label{fig:connecting_ip_sequence}
% \end{figure}

\subsection{Combinatorial continuation}
\label{subsec:combinatorial_continuation}
Let $\cV$ and $\cV'$ be two multivector fields on~$X$.
Whenever $\cV\sqsubseteq\cV'$ we say that $\cV$ is a \emph{refinement}\index{refinement} of $\cV'$, and symmetrically, $\cV'$ is a \emph{coarsening}\index{coarsening} of $\cV$.

We denote the collection of all possible multivector fields on $X$ by $\smvf(X)$.
The pair $(\smvf(X), \sqsubseteq)$ forms a partial order, and therefore, by Theorem \ref{thm:alexandrov_theorem}, we can interpret it as a finite topological space 
    where upper sets correspond to open sets.
In particular, the minimal open set containing $\cV\in\smvf(X)$ consists of all its refinements;
    we denote it $\opn_\sqsubseteq\cV$.\footnote{Note that $\cV\inscr\cV'$ implies that $\cV$ is higher in the poset (i.e. $\cV\geq\cV'$), which might appear counterintuitive. 
However, we follow the standard convention in the multivector fields literature, where open sets are identified with upper sets, 
    a choice that makes multivectors, when drawn on a simplicial complex,
        geometrically resemble continuous isolating blocks.
}

An example of the $\smvf(X)$ is presented in Figure~\ref{fig:smvf-example}.
For the sake of clarity, we restrict the example to multivector fields with multivectors that are connected.
The dynamical interpretation of a disconnected multivector is unclear, however none of our proofs require that property.

Note that in finite setting,
    the $\opn_\sqsubseteq\cV$ is the best possible approximation of an ``$\epsilon$-neighborhood''.
Therefore, we can think of a multivector field $\cV'\in\opn_\sqsubseteq\cV$ as a \emph{combinatorial perturbation}\index{combinatorial perturbation} of $\cV$.
Hence, with the use of Proposition~\ref{prop:iso_block_is_lcl_Vcomp}, 
    we mimic the stability of an isolating block present in the continuous theory of flows (see \cite[Proposition~1.1]{MischMro_Conley_2002}).

\begin{figure}[t]
    \centering
    \includegraphics[width=0.9\linewidth]{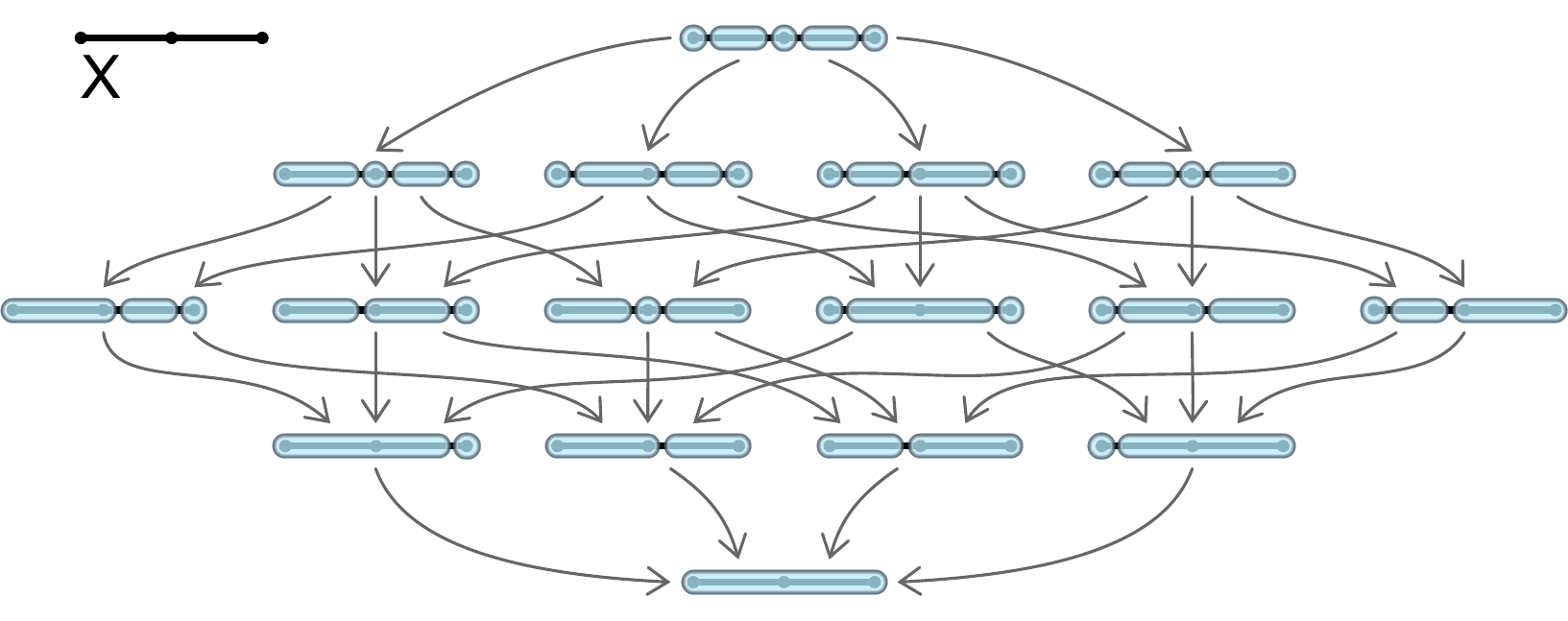}
    \caption{The poset of $(\smvf(X), \inscr)$ for simplicial complex~$X$.}
    \label{fig:smvf-example}
\end{figure}

\begin{proposition}[Stability of an isolating block]\label{prop:iso_block_stability}
    Let $\cV\in\smvf(X)$.
    An~isolating block $N$ for $\cV$ is also an isolating block for every $\cV'\in\opn_\sqsubseteq\cV$. 
\end{proposition}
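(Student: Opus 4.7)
The plan is to invoke Proposition \ref{prop:iso_block_is_lcl_Vcomp}, which characterizes isolating blocks as exactly the locally closed, multivector-field-compatible subsets. Since local closedness is a purely topological property of $N \subset X$ and does not reference the multivector field at all, it automatically transfers from $\cV$ to any other $\cV' \in \smvf(X)$. So the entire content of the proposition reduces to showing that $\cV$-compatibility of $N$ implies $\cV'$-compatibility of $N$, whenever $\cV' \in \opn_\sqsubseteq\cV$, i.e.\ whenever $\cV' \sqsubseteq \cV$.

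The key step is to unwind the definition of $\sqsubseteq$ from Section~\ref{subsec:prelim-sets}. The relation $\cV' \sqsubseteq \cV$ means that every multivector of $\cV'$ is contained in some multivector of $\cV$. Fixing $x \in N$, the unique $\cV'$-class $[x]_{\cV'}$ lies inside some $V \in \cV$; since $x \in V$ and $[x]_\cV$ is the unique $\cV$-class containing $x$, we get $V = [x]_\cV$, and therefore $[x]_{\cV'} \subset [x]_\cV$.

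Using $\cV$-compatibility of $N$ (i.e.\ $[x]_\cV \subset N$ for every $x \in N$), we conclude $[x]_{\cV'} \subset N$ for every $x \in N$, which is precisely $\cV'$-compatibility of $N$. Combined with local closedness, Proposition \ref{prop:iso_block_is_lcl_Vcomp} applied to $\cV'$ gives that $N$ is an isolating block for $\cV'$.

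There is no real obstacle here; the proposition is essentially a one-line consequence of the characterization in Proposition \ref{prop:iso_block_is_lcl_Vcomp} together with the observation that refinement preserves the containment $[x]_{\cV'} \subset [x]_\cV$. The only subtle point worth emphasizing in the write-up is distinguishing clearly which multivector field refines which; once the reader sees that $\cV' \in \opn_\sqsubseteq \cV$ is interpreted via Theorem~\ref{thm:alexandrov_theorem} as $\cV' \sqsubseteq \cV$ (i.e.\ $\cV'$ is the finer field, obtained as a perturbation by \emph{splitting} multivectors of $\cV$), the argument is essentially immediate.
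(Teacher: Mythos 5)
Your proof is correct and takes exactly the approach the paper intends: the paper states Proposition~\ref{prop:iso_block_stability} without proof, remarking only that it follows ``with use of Proposition~\ref{prop:iso_block_is_lcl_Vcomp}.'' Your argument---local closedness is independent of the multivector field, and $\cV'\sqsubseteq\cV$ gives $[x]_{\cV'}\subset[x]_\cV$, so $\cV$-compatibility implies $\cV'$-compatibility---supplies precisely the omitted details, including the correct reading of the direction of the refinement relation.
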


We call a fence of multivector fields $\zzV=\{\cV_\tdyn\}_\tdyn=\mvfseq{\tdynmax}\subset \smvf(X)$, 
    that is a sequence such that 
    $\cV_{\tdyn}\sqsubseteq\cV_{\tdyn+1}$ or $\cV_{\tdyn}\sqsupseteq\cV_{\tdyn+1}$ for all $\tdyn\in\zint{\tdynmax-1}$,
    a (\emph{continuously}) \emph{parameterized combinatorial multivector field}\index{multivector field!parameterized}.
We leave it as an exercise to the reader that one can indeed construct a continuous map $\cV(\tdyn):[0,1]\rightarrow\smvf(X)$\footnote{Continuous with respect to the standard topology on the real interval, 
    and the finite topology on $\smvf(X)$ induced by relation $\inscr$.}
    generating the sequence.
This leads to the definition of a combinatorial continuation of an isolated invariant set.

\begin{definition}[Combinatorial continuation of an isolated invariant set]
\cite[Definition 10]{DeLiMrSl2022}
	\label{def:continuation}
    Let $\cV_0\inscr\cV_1$ or $\cV_0\ovscr\cV_1$.
    An isolated invariant set $S_0$ in $\cV_0$ \emph{continues} to $S_1$ in $\cV_1$ if there exists a set $B$, 
        which is an isolating block both in $\cV_0$ and $\cV_1$, 
        and $\inv_{\cV_0}B=S_0$ and $\inv_{\cV_1}B=S_1$.
\end{definition}

The above definition differs from the one introduced in \cite{DeLiMrSl2022}, but they are equivalent through Proposition~\ref{prop:iso_block_forms_ipair}.
Moreover, the concept of continuation can be easily extended to any parametrized multivector field $\zzV$.
In particular, $S_0$ in $\cV_0$ continues to $S_T$ in $\cV_T$ if there is a 
    sequence $S_0, S_1,\ldots,S_T$, where $S_\lambda$ is an isolated invariant set in $\cV_\lambda$ and 
    $S_\lambda$ continues to $S_{\lambda+1}$ for each $\lambda$. 
Intuitively, if an isolated invariant set continues to another, then they play the same qualitative role in the corresponding dynamical systems; 
    in particular, their Conley indices are isomorphic.

\begin{theorem}\cite[Theorem 22]{DeLiMrSl2022}
	\label{thm:continuation_iso_conley_index}
	If the isolated invariant set $S$ continues to $S'$ then $\con(S)\cong\con(S')$.
\end{theorem}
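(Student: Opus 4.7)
The plan is to reduce the theorem to a single elementary step and then iterate. By the definition of continuation, there is a sequence $S = S_0, S_1, \ldots, S_\tdynmax = S'$ of isolated invariant sets together with isolating blocks $B_0, \ldots, B_{\tdynmax-1}$, each $B_\tdyn$ serving simultaneously as an isolating block for $S_\tdyn$ under $\cV_\tdyn$ and for $S_{\tdyn+1}$ under $\cV_{\tdyn+1}$. Since isomorphism of Conley indices is transitive, it suffices to prove the one-step statement: if $B \subset X$ is an isolating block both for a multivector field $\cV$ and for a multivector field $\cV'$, with $S \coloneqq \inv_\cV B$ and $S' \coloneqq \inv_{\cV'} B$, then $\con(S) \cong \con(S')$.

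The crux of this reduction is that the pair $(\cl B, \mo B)$ is defined purely in terms of the underlying finite topology on $X$ and makes no reference whatsoever to the multivector field. By Proposition~\ref{prop:iso_block_forms_ipair}, this pair is an index pair for $\inv_\cW B$ whenever $B$ is an isolating block for $\cW$. Applying the proposition twice---once with $\cW = \cV$ and once with $\cW = \cV'$---shows that the very same topological pair $(\cl B, \mo B)$ serves as an index pair for $S$ and for $S'$. Theorem~\ref{thm:con_idx_well_defined} now asserts that the Conley index is independent of the choice of index pair, so
\[
    \con(S) \;\cong\; H_*(\cl B, \mo B) \;\cong\; \con(S'),
\]
with the isomorphism carried by the identity on the common representing pair. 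Iterating over the $\tdynmax$ steps of the continuation completes the argument.

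There is, frankly, no technically hard step here, and I would not expect one to appear. The apparent ease is not an accident: Definition~\ref{def:continuation} has been engineered precisely so that consecutive dynamics are bridged by a purely topological object. The only subtlety worth flagging is conceptual rather than technical: the invariant parts $\inv_{\cV_\tdyn} B_\tdyn$ and $\inv_{\cV_{\tdyn+1}} B_\tdyn$ may genuinely differ as subsets of $X$, yet they are trapped inside the same locally closed, $\cV$-compatible hull $B_\tdyn$ whose relative homology is intrinsic to $X$. Had continuation instead been formulated only via a homotopy of multivector fields without requiring a common isolating block, one would have had to produce an explicit chain-level comparison map between two genuinely different index pairs, which is considerably more delicate; here, Proposition~\ref{prop:iso_block_forms_ipair} combined with Theorem~\ref{thm:con_idx_well_defined} renders everything a formal consequence.
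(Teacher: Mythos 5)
Your proof is correct and follows exactly the argument the paper itself sketches after stating the theorem: the common isolating block $\bl_\tdyn$ yields, via Proposition~\ref{prop:iso_block_forms_ipair}, a single index pair $(\cl\bl_\tdyn,\mo\bl_\tdyn)$ valid for both $S_\tdyn$ in $\cV_\tdyn$ and $S_{\tdyn+1}$ in $\cV_{\tdyn+1}$, so Theorem~\ref{thm:con_idx_well_defined} and transitivity finish the job. No gaps.
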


%%%%%%%%%%%%%%%%%%%%%%%%%%%%%%%%%%%%%%%%%%%%%%%%%%%
%%%%%%%%%%%%%%%%%%%%%%%%%%%%%%%%%%%%%%%%%%%%%%%%%%%
%%%%%%%%%%%%%%%%%%%%%%%%%%%%%%%%%%%%%%%%%%%%%%%%%%%
% \section{Conley-Morse Persistence Barcode}\label{sec:CMbarcode}
\section{Transition Diagram for a Zigzag Filtration of Block Decompositions}\label{sec:transition-diagram}

The notion of continuation identifies isolated invariant sets at different steps of a parameterization 
    and relates their Conley indices directly via isomorphisms.
In~\cite{DeLiMrSl2022}, we explored how persistence can be used to capture changes in a Conley index.
Here, we take the next step and develop a framework that allows us to track all Conley indices simultaneously, 
    providing additional insight into their mutual interactions and encoding the nature of these changes.

\subsection{Filtration of block decompositions}
\label{subsec:zigzag-filtration-BD}

Let $\zzV=\{\cV_\tdyn\}_{\tdyn\in\tInt}$
be a parameterized multivector field
    and 
    $\zzMD=\{(\MD_\tdyn,\cV_\tdyn)\}_{\tdyn\in\tInt}$
    be the corresponding Morse decompositions, such that, for each $\tdyn$, $\MD_\tdyn$ is a Morse decomposition for $\cV_\tdyn$.
To track the changes we require the existence of a sequence of covering block decompositions $\zzBD=\{(\BD_\tdyn,\cV_\tdyn)\}_{\tdyn\in\tInt}$ 
    (that is, $\BDmdt{\tdyn}=\MD_\tdyn$ for each $\tdyn\in\tInt$)
    forming a zigzag filtration as defined below.

\begin{definition}[Zigzag filtration of block decompositions]
    Let $\mvfseq{\tdynmax}$ be a parameterized multivector field on $X$.
    A sequence of pairs $\zzBD=\{(\BD_\tdyn,\cV_\tdyn)\}_{\tdyn\in\tInt}$, where $\tInt=\zint{\tdynmax}$, such that 
        $\BD_\tdyn$ is a block decomposition for $\cV_\tdyn$ is called a \emph{zigzag filtration of block decompositions}\footnote{
        Since elements of the sequence are families of sets, this is not a zigzag filtration in the standard sense.
        Nevertheless, we keep the name because the philosophy is analogous. 
        This can be seen as a higher-level form of filtration.
    }\index{zigzag filtration of block decompositions}
    (or simply a \emph{zigzag filtration} if it is clear from the context)
    if for all $\tdyn\in\zint{\tdynmax-1}$ either
    \begin{align*}
        \BD_{\tdyn}\inscr\BD_{\tdyn+1} 
            \quad\text{and}\quad
                \cV_{\tdyn}\inscr\cV_{\tdyn+1}
    \end{align*}
    or
    \begin{align*}
        \BD_{\tdyn+1}\inscr\BD_{\tdyn}
            \quad\text{and}\quad
                \cV_{\tdyn+1}\inscr\cV_{\tdyn},
    \end{align*}
    which we also denote as $(\BD_\tdyn, \cV_\tdyn)\inscr (\BD_{\tdyn+1}, \cV_{\tdyn+1})$ or
        $(\BD_\tdyn, \cV_\tdyn)\ovscr (\BD_{\tdyn+1}, \cV_{\tdyn+1})$, respectively.
   
   The sequence $\zzBD$ is called a \emph{filtration}\index{filtration of block decompositions} if all relations are in the same direction.
    We denote the indexing set corresponding to $\BD_\tdyn$ by $\PP_\tdyn$ and 
        an element of $\BD_\tdyn$ with index $p\in\PP_\tdyn$ by $\bl_{p,\tdyn}$.
    We usually write $M_{p,\tdyn}\coloneqq\inv_{\cV_{\tdyn}} \bl_{p,\tdyn}$. 
    If non-empty, $M_{p,\tdyn}$ is a Morse set in the corresponding Morse decomposition $\MD_\lambda$.
    However, one should keep in mind that this set might be empty.
\end{definition}

The canonical example of $\zzMD$ is a sequence of the finest Morse decompositions for the multivector fields in $\zzV$. 
The simplest strategy to build a corresponding zigzag filtration $\zzBD$ is to take the sequence of finest block partitions corresponding to~$\zzV$.
We use this canonical choice in our examples as it is also natural from an algorithmic perspective, but all presented results work for non-canonical zigzag filtrations as well.
Since a block decomposition carries all information about the underlying Morse decomposition (Proposition~\ref{prop:block_to_Morse_decomposition}), we focus mainly on $\zzBD$.

\begin{proposition}\label{prop:refinement_of_V_gives_inscribed_BDs}
	Let $\BD$ and  $\BD'$ be the finest block partitions for $\cV$ and $\cV'$, respectively. 
    If $\cV\in\opn_\inscr\cV'$ then $\BD\inscr\BD'$.
\end{proposition}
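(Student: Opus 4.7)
The plan is to reduce the statement to a simple comparison of the graphs $\dgV$ and $\dgVp$, and then invoke Theorem~\ref{thm:scc_finest_BD}, which identifies the finest block partition with the set of strongly connected components of these graphs.

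First, I would translate the hypothesis $\cV \in \opn_\sqsubseteq \cV'$ into a pointwise statement about multivectors: for every $x \in X$ we have $\mvx{x} \subseteq [x]_{\cV'}$, simply because $\cV \sqsubseteq \cV'$ means each $\cV$-multivector is inscribed in a (necessarily unique) $\cV'$-multivector. Using the definition \eqref{eq:FV} of the multivalued map, $F_\cV(x) = \cl x \cup \mvx{x} \subseteq \cl x \cup [x]_{\cV'} = F_{\cV'}(x)$. Since $\dgV$ and $\dgVp$ share the common vertex set $X$ and their edge sets are exactly the graphs of $F_\cV$ and $F_{\cV'}$ respectively, this inclusion of multivalued maps gives an inclusion of edge sets. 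In particular, every path in $\dgV$ is also a path in $\dgVp$.

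Next, I would invoke Theorem~\ref{thm:scc_finest_BD} twice: the finest block partition $\BD$ of $\cV$ is the family of strongly connected components of $\dgV$, and similarly $\BD'$ is the family of strongly connected components of $\dgVp$. Take any $B \in \BD$ and any $x \in B$; let $B' \in \BD'$ be the (unique) strongly connected component of $\dgVp$ containing $x$. For any other $y \in B$, there exist paths from $x$ to $y$ and from $y$ to $x$ in $\dgV$, and by the previous paragraph these are paths in $\dgVp$ as well, so $y$ lies in the same strongly connected component of $\dgVp$ as $x$, namely $B'$. Hence $B \subseteq B'$, which by definition is exactly $\BD \sqsubseteq \BD'$.

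There is not really a hard step here: the whole proof rests on the elementary fact that refining multivectors can only add edges to the flow graph, never remove them, so strongly connected components can only merge. The one thing worth being careful about is that the inscription relation on multivector fields used in $\opn_\sqsubseteq \cV'$ is indeed the same notion of ``partition refinement'' that gives $\mvx{x} \subseteq [x]_{\cV'}$ pointwise; this is immediate from the definition of $\sqsubseteq$ recalled in Section~\ref{subsec:prelim-sets}, combined with the fact that both $\cV$ and $\cV'$ are partitions of $X$, so the $\cV'$-block containing $\mvx{x}$ is automatically the one containing $x$.
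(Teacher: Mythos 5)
Your proof is correct and follows the same route as the paper's: both establish the edge-set inclusion $G_\cV\subset G_{\cV'}$ and then appeal to Theorem~\ref{thm:scc_finest_BD} to identify the finest block partitions with strongly connected components, which can only merge when edges are added. You have merely spelled out the details (the pointwise inclusion $F_\cV(x)\subseteq F_{\cV'}(x)$ and the containment of strongly connected components) that the paper's two-line proof leaves implicit.
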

\begin{proof}
    Note that $G_\cV\subset G_{\cV'}$, that is, whenever $(x,y)$ is an edge in $G_\cV$ then it is in $G_{\cV'}$ as well.
    Therefore, the assertion follows directly from 
        Theorem~\ref{thm:scc_finest_BD}.
\end{proof}

\begin{example}\label{ex:main-example-part-1}
    Figure \ref{fig:main-example-MVFs-v2} shows a sequence forming a parameterized multivector field $\zzV\coloneqq\cV_0\ovscr\cV_1\inscr\cV_2\ovscr\cV_3\ovscr\cV_4$.
    The central column in Figure~\ref{fig:main-example-MVFs-BD-v2} illustrates the corresponding finest block partitions $\BD_0$, $\BD_1$, $\BD_2$, $\BD_3$, and $\BD_4$,
    which, by Proposition~\ref{prop:refinement_of_V_gives_inscribed_BDs}, form the following zigzag filtration of block decompositions: 
    \begin{align*}
        \zzBD\coloneqq
            (\BD_0, \cV_0)\ovscr(\BD_1, \cV_1)\inscr(\BD_2, \cV_2)\ovscr(\BD_3, \cV_3)\ovscr(\BD_4, \cV_4).
    \end{align*}
    The central column in Figure~\ref{fig:main-example-MVFs-BD-v2} presents the isolating blocks highlighted in brown, and the corresponding Morse sets highlighted in green.
    The block decompositions consist of 2, 3, 2, 3, and 7 isolating blocks, respectively, the associated Morse decompositions, defined as 
        $\MD_\tdyn\coloneqq \indMDV{{\BD_\tdyn}}{\cV_\tdyn}$ (see definition~\eqref{eq:induced-morse-decomposition})
        contain  $1$, $2$, $2$, $2$ and $3$ Morse sets, respectively.
    The corresponding flow induced partial orders are presented in Figure~\ref{fig:main-example-morse-graphs}.
    \qedex
\end{example}

\begin{figure}
    \centering
    \includegraphics[width=0.33\linewidth]{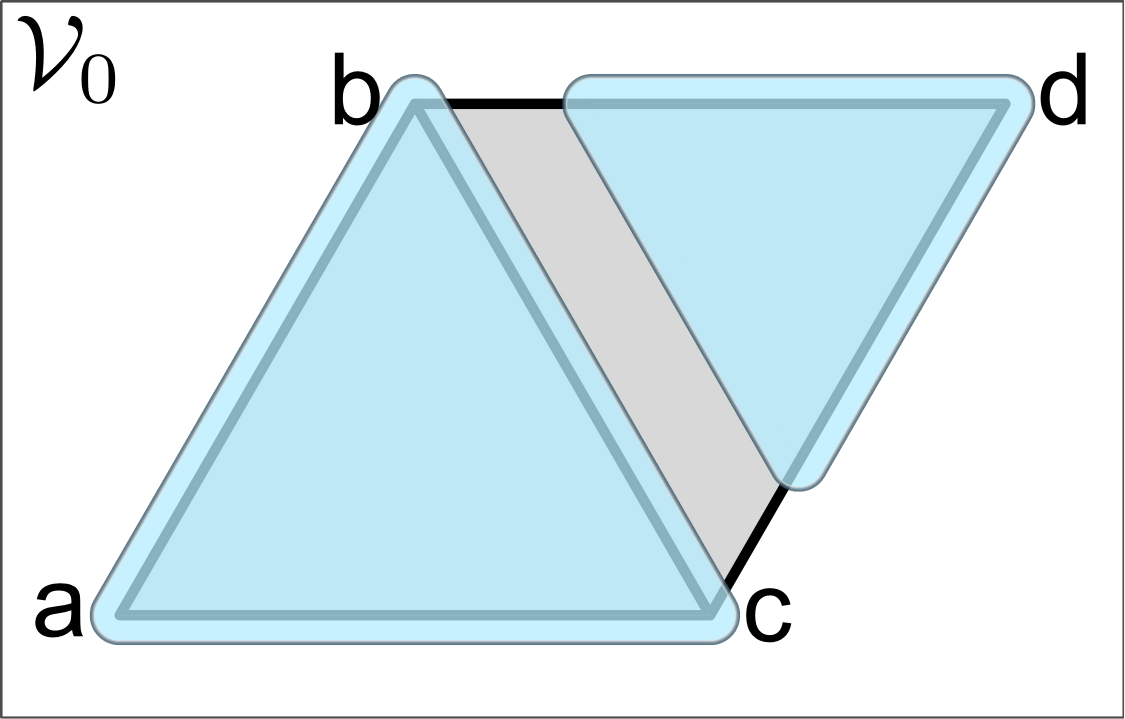}
    \hspace{-8pt}
    \includegraphics[width=0.33\linewidth]{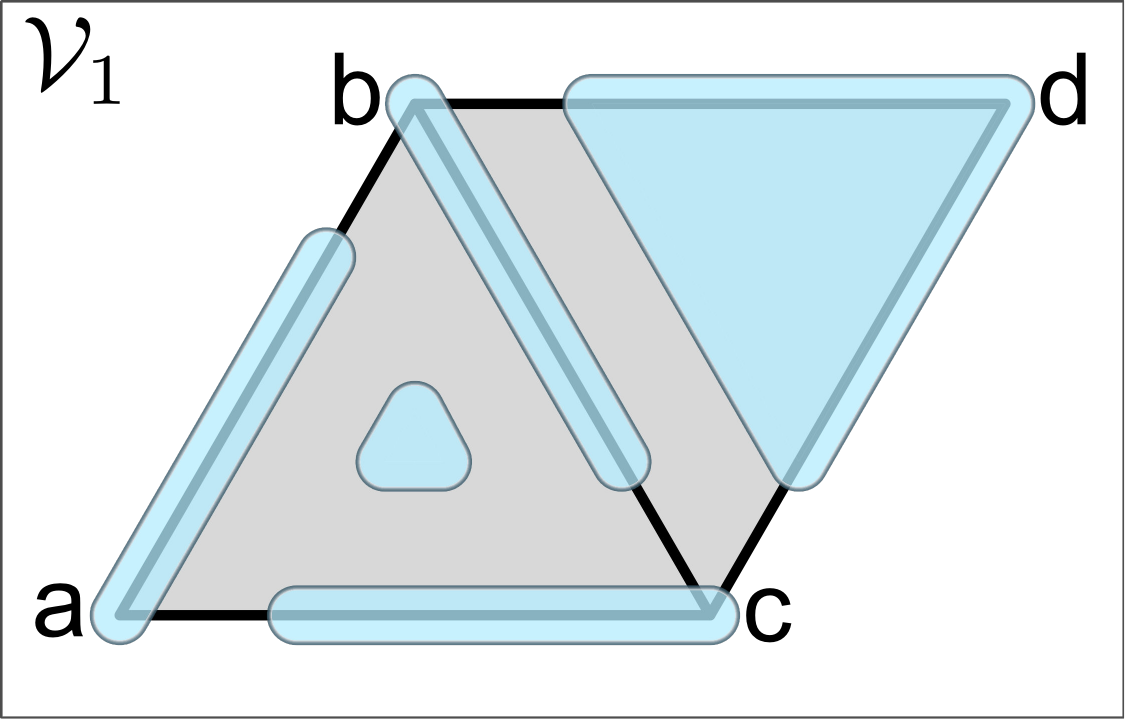}
    \hspace{-8pt}
    \includegraphics[width=0.33\linewidth]{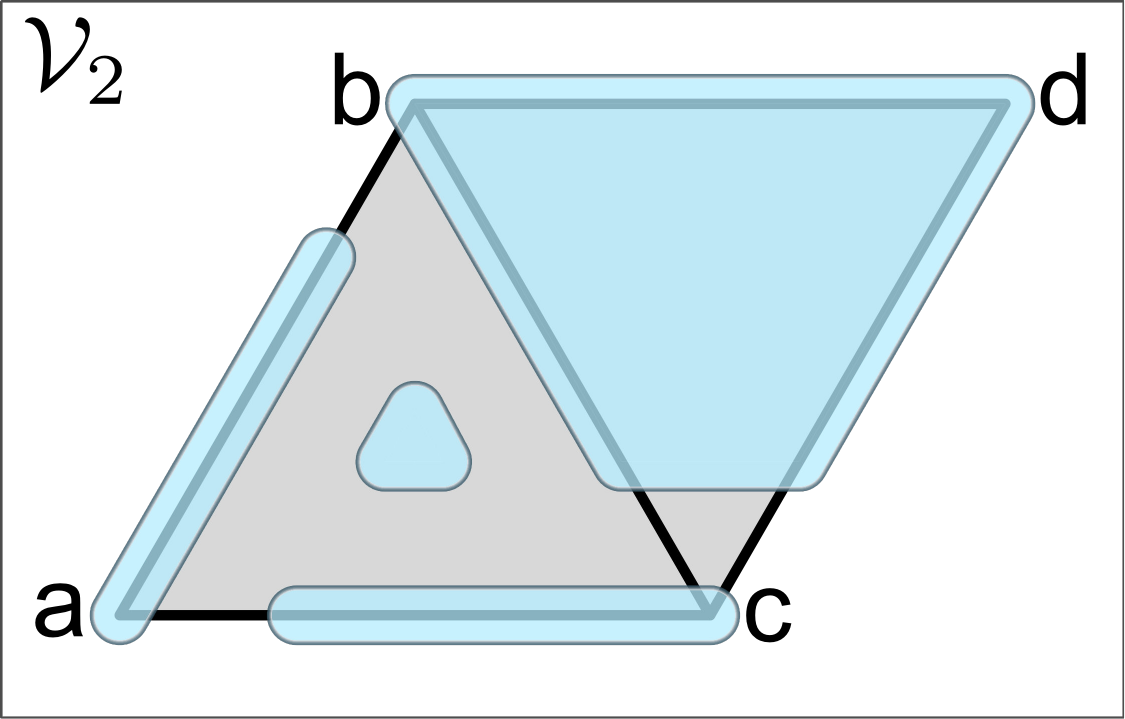}
    \vspace{-1.25pt}
    
    \includegraphics[width=0.33\linewidth]{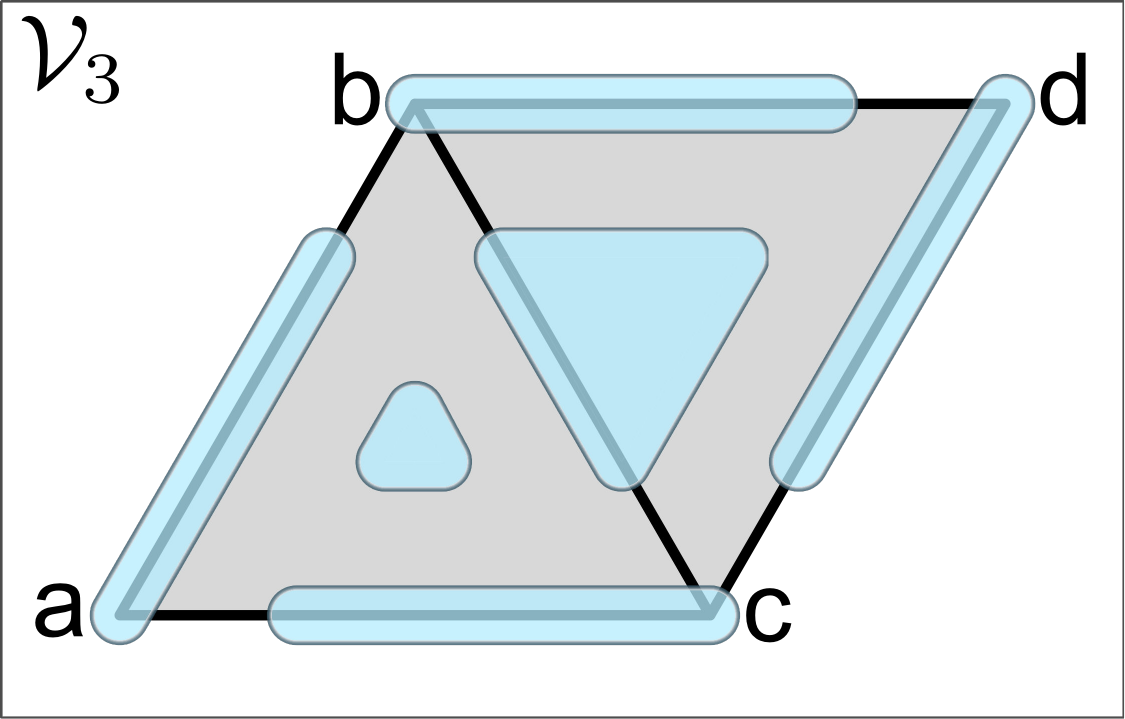}
    \hspace{-8pt}
    \includegraphics[width=0.33\linewidth]{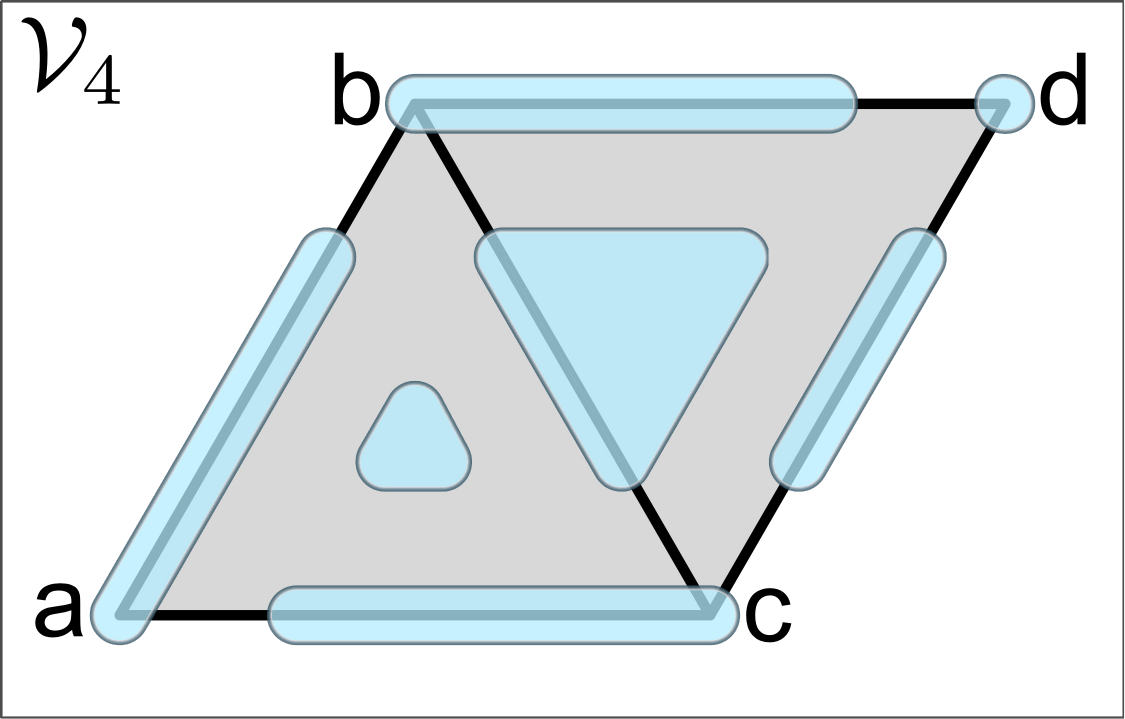}
    % \vspace{0.2cm}
    
    \caption{From top left to bottom right, multivector fields $\cV_0$, $\cV_1$, $\cV_2$, 
        $\cV_3$, and $\cV_4$ on a simplicial complex $K$.
    In particular, $\cV_0\ovscr\cV_1\inscr\cV_2\ovscr\cV_3\ovscr\cV_4$.
    }
    \label{fig:main-example-MVFs-v2}
\end{figure}
\begin{example}
    Another example of a parameterized multivector field was given in Section~\ref{subsec:sphere-example}.
    In Figure~\ref{fig:sphere-example-combinatorial-mvf}
        we have the sequence $\cV_0\ovscr\cV_1\inscr\cV_2\ovscr\cV_3\ovscr\cV_4$.
    The finest block partition of $\cV_0$ consists of 7 blocks, each formed by a single multivector.
    The finest block partition of $\cV_1$ consists of 8 isolating blocks, two formed by the 0-cells at the poles, 5 formed by gray regular multivectors, and the 8th one consists of the collection of 4 orange multivectors.
    The block partitions for the remaining steps should be also easy to find.
    In total we have a zigzag filtration of the following form:
    \[
           \quad 
           \qquad 
           (\BD_0, \cV_0)\ovscr(\BD_1, \cV_1)\inscr(\BD_2, \cV_2)\ovscr(\BD_3, \cV_3)\ovscr(\BD_4, \cV_4).
           \quad\qquad{\lozenge}
    \]
\end{example}

Whenever we have inscribed block decompositions $(\BD,\cV)\inscr(\BD',\cV')$ with corresponding index sets $\PP$ and $\PP'$, we can define the \emph{indexing map}\index{indexing map} 
$\idxmap{}:\PP \rightarrow\PP'$ such that $\idxmap{}(p)\coloneqq r$ if $\BD\ni\bl_{p}\subset \bl_{r}\in\BD'$.
We leave it as an easy exercise to the reader to verify that $\idxmap{}$ is an order preserving map between the flow induced partial orders.

In the case of zigzag filtration $\zzBD$, we distinguish two types of indexing maps:
    for $\BD_{\tdyn}\sqsubseteq\BD_{\tdyn+1}$ we have
    the \emph{$\tdyn$-forward map}\index{$\tdyn$-forward map}
    denoted and defined as:
    \[
        \idxfwd{\tdyn}:\PP_\tdyn\ni p \mapsto r\in\PP_{\tdyn+1}
    \]
    such that $\BD_\tdyn\ni\bl_{p,\tdyn}\subset\bl_{r,\tdyn+1}\in\BD_{\tdyn+1}$.
    Analogously, for $\BD_{\tdyn}\sqsupseteq\BD_{\tdyn+1}$ we have the \emph{$\tdyn$-backward map}\index{$\tdyn$-backward map}:
    \[
        \idxbck{\tdyn}:\PP_{\tdyn+1}\ni r \mapsto p\in\PP_{\tdyn}
    \]
    such that $\BD_{\tdyn+1}\ni\bl_{r,\tdyn+1}\subset \bl_{p, \tdyn}\in\BD_{\tdyn+1}$.
    Whenever we refer to $\zzBD$ we assume that the corresponding indexing sets $\PP_\tdyn$, and
        $\tdyn$-forward/backward maps, $\idxfwd{\tdyn}$ and $\idxbck{\tdyn}$, are implied.

% \afterpage{
\renewcommand{\arraystretch}{0.0}
\begin{figure}
    % max 0.326
    \begin{tabular}{c@{}c@{}c}
   &\includegraphics[width=0.29\linewidth]{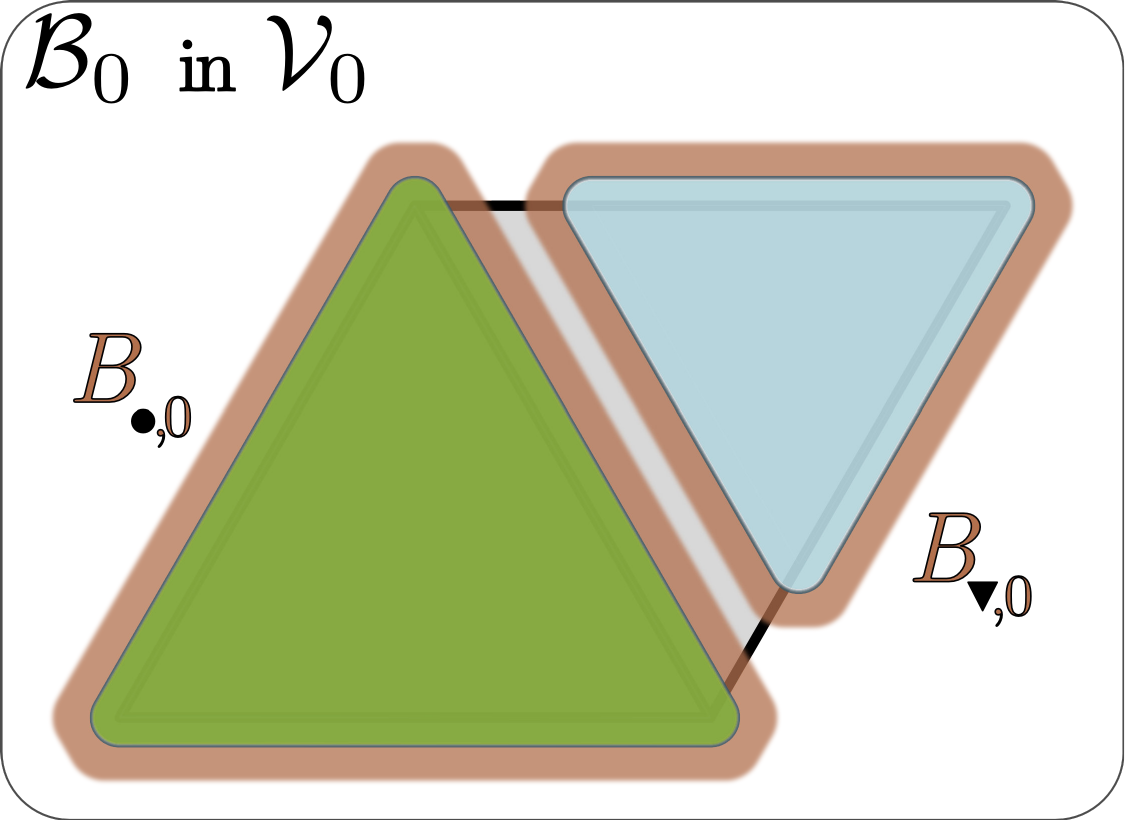}&\\
    \includegraphics[width=0.29\linewidth]{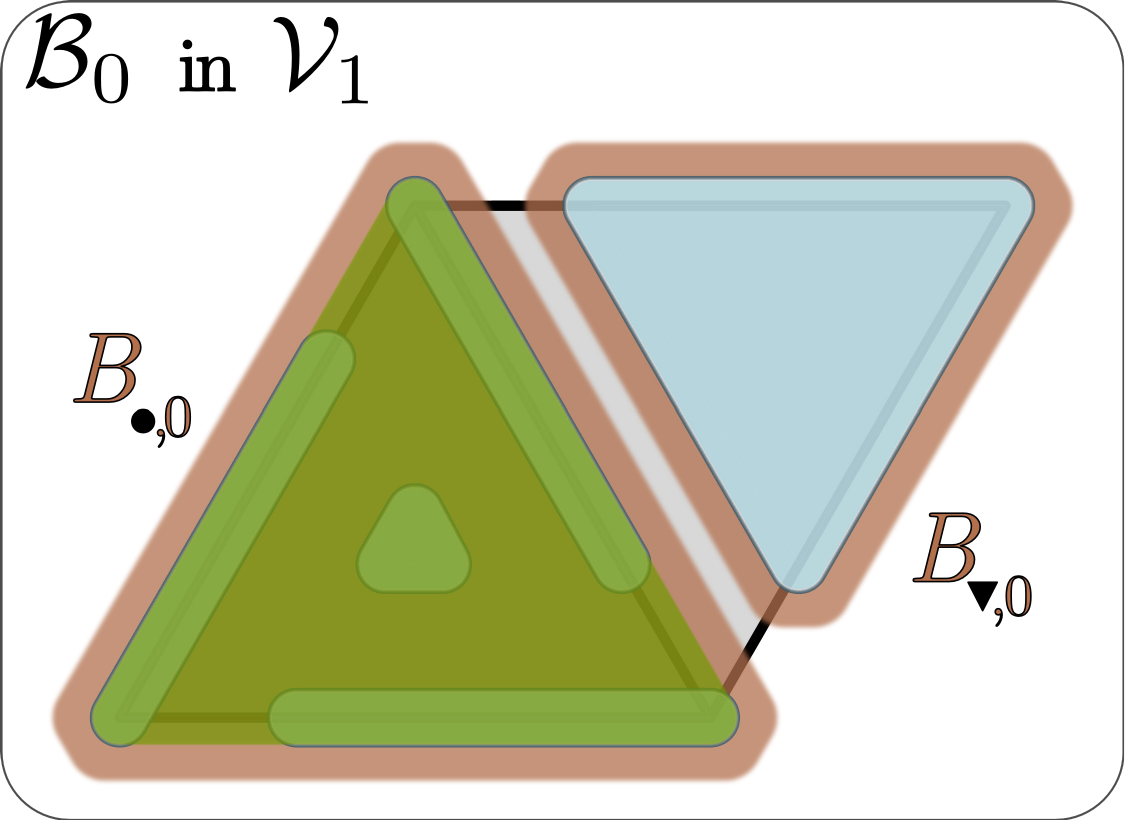}&
    \includegraphics[width=0.29\linewidth]{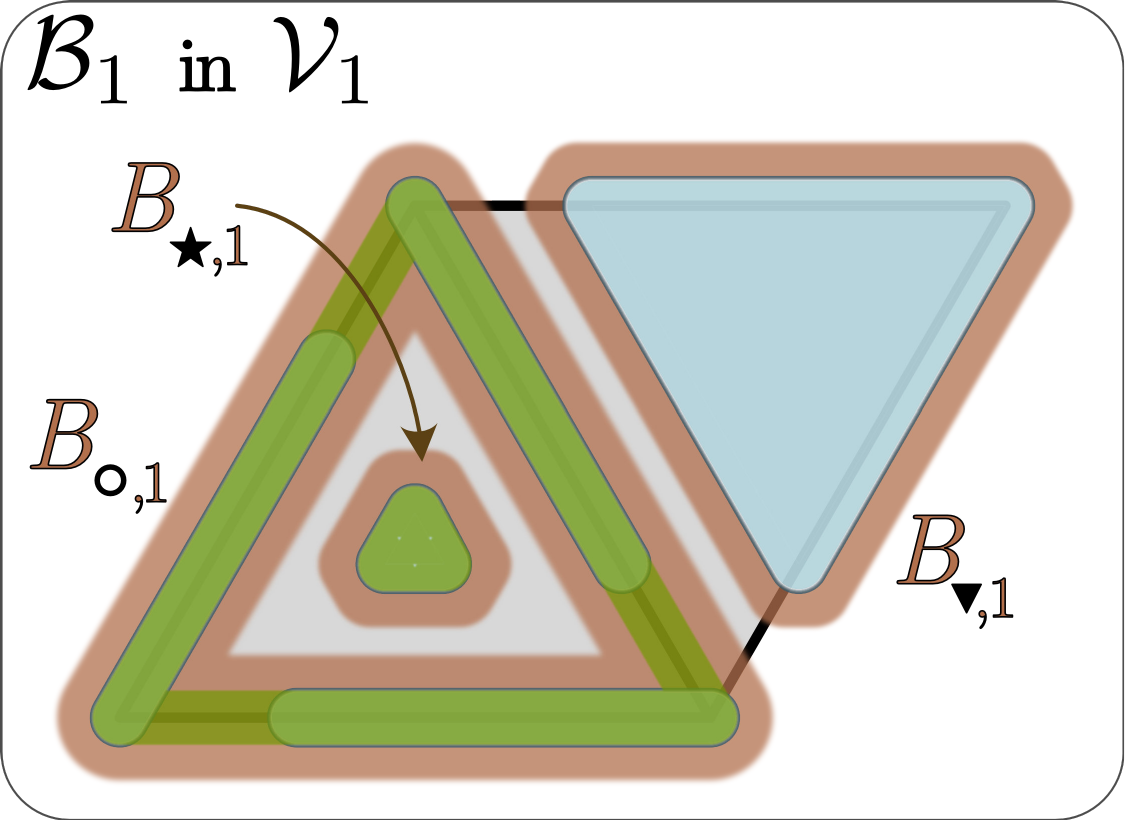}&
    \includegraphics[width=0.29\linewidth]{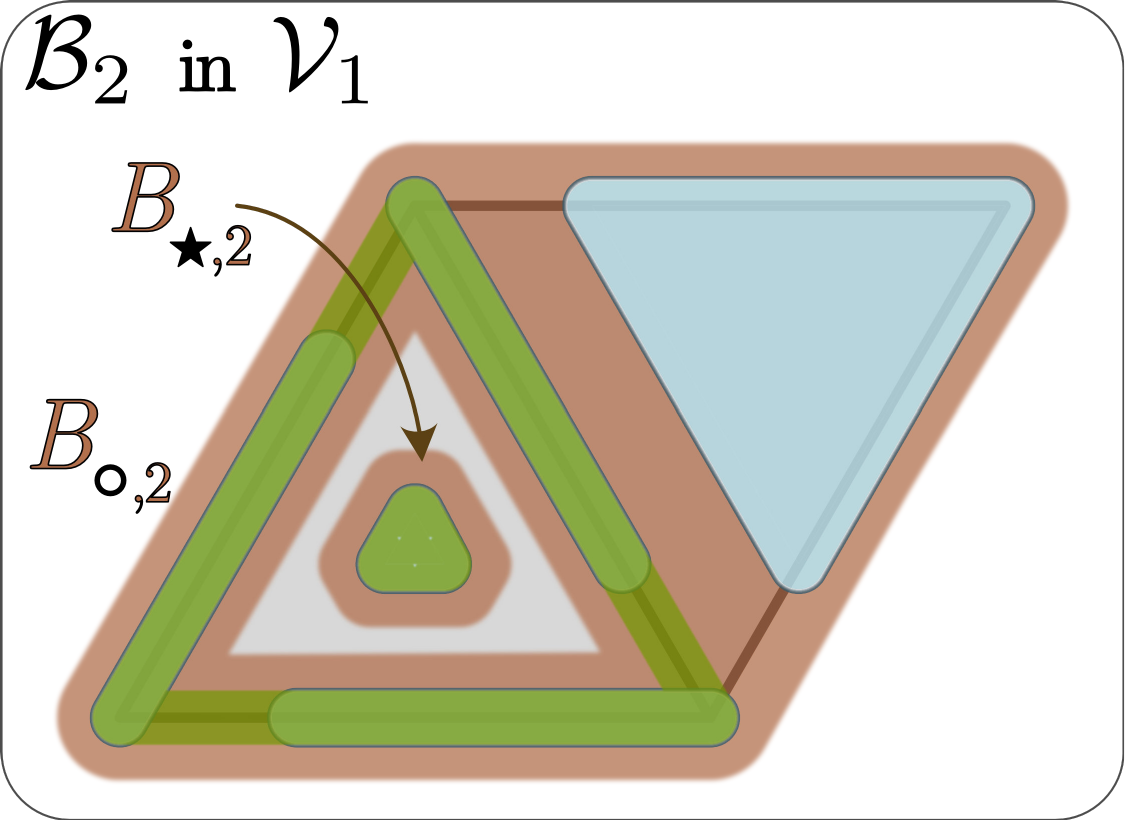}\\
   &\includegraphics[width=0.29\linewidth]{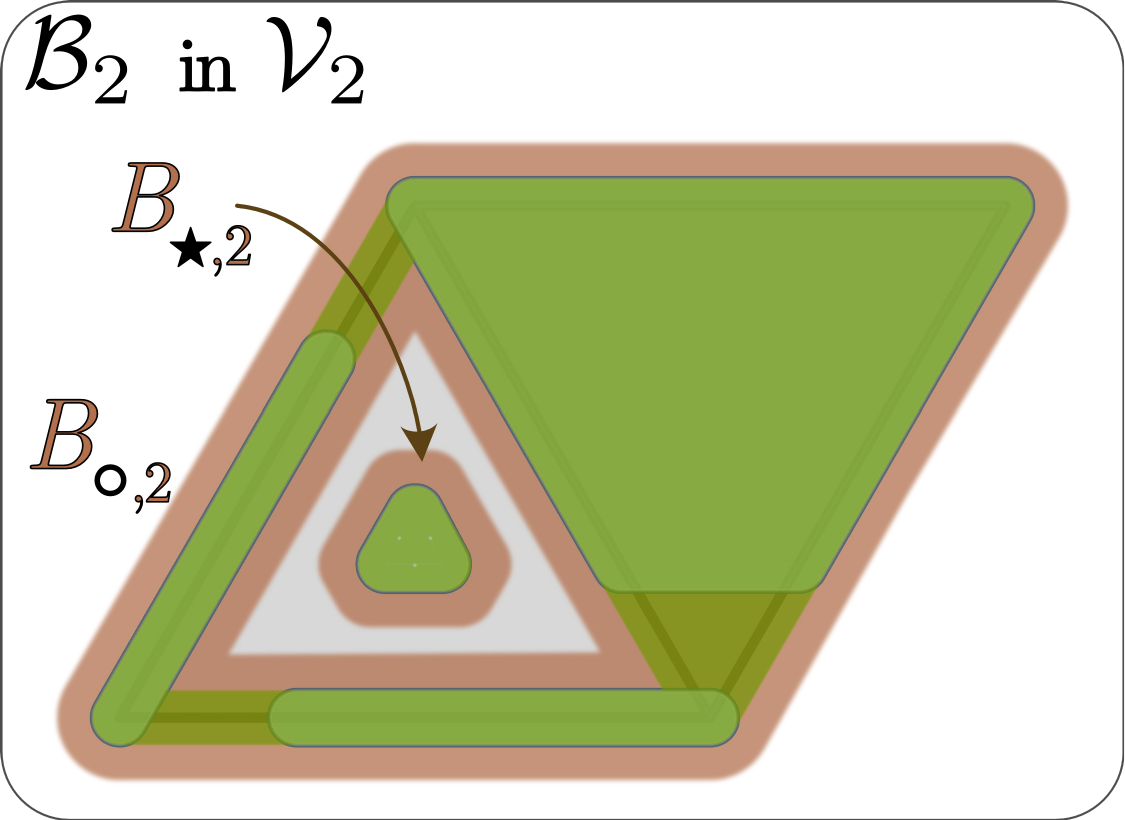}&\\
    \includegraphics[width=0.29\linewidth]{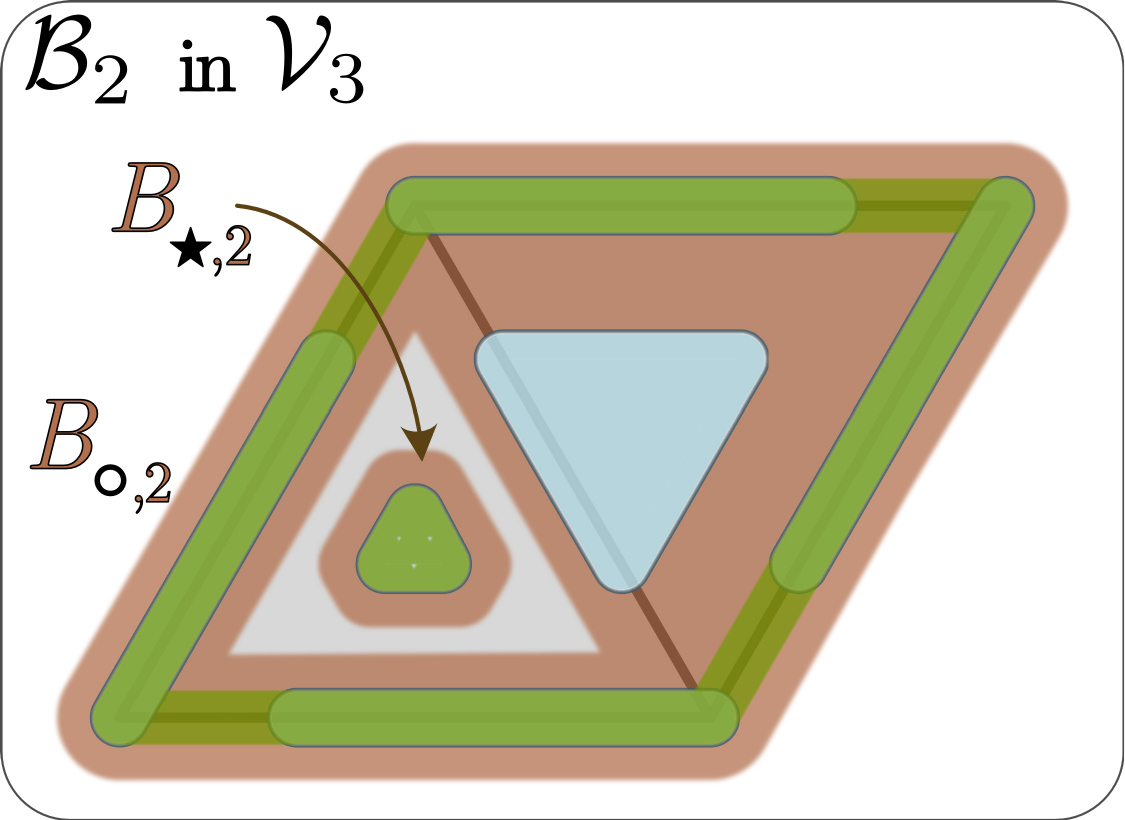}&
    \includegraphics[width=0.29\linewidth]{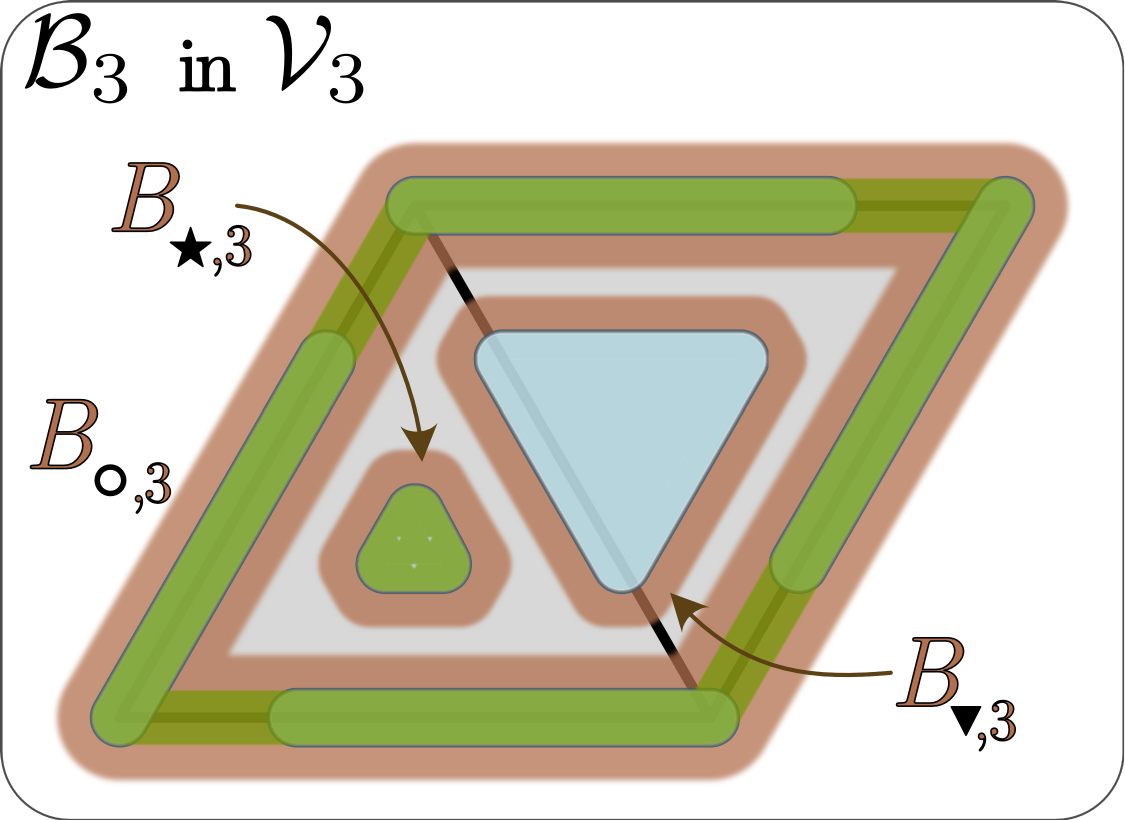}&\\
    \includegraphics[width=0.29\linewidth]{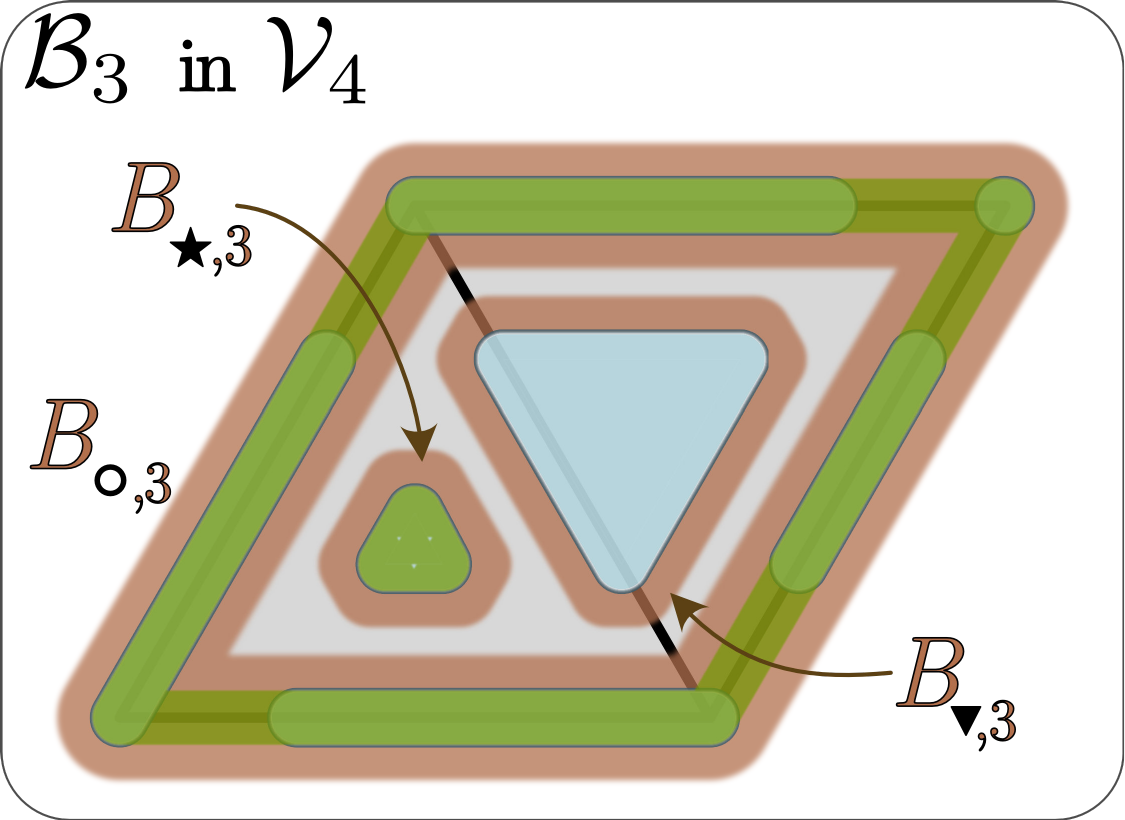}&
    \includegraphics[width=0.29\linewidth]{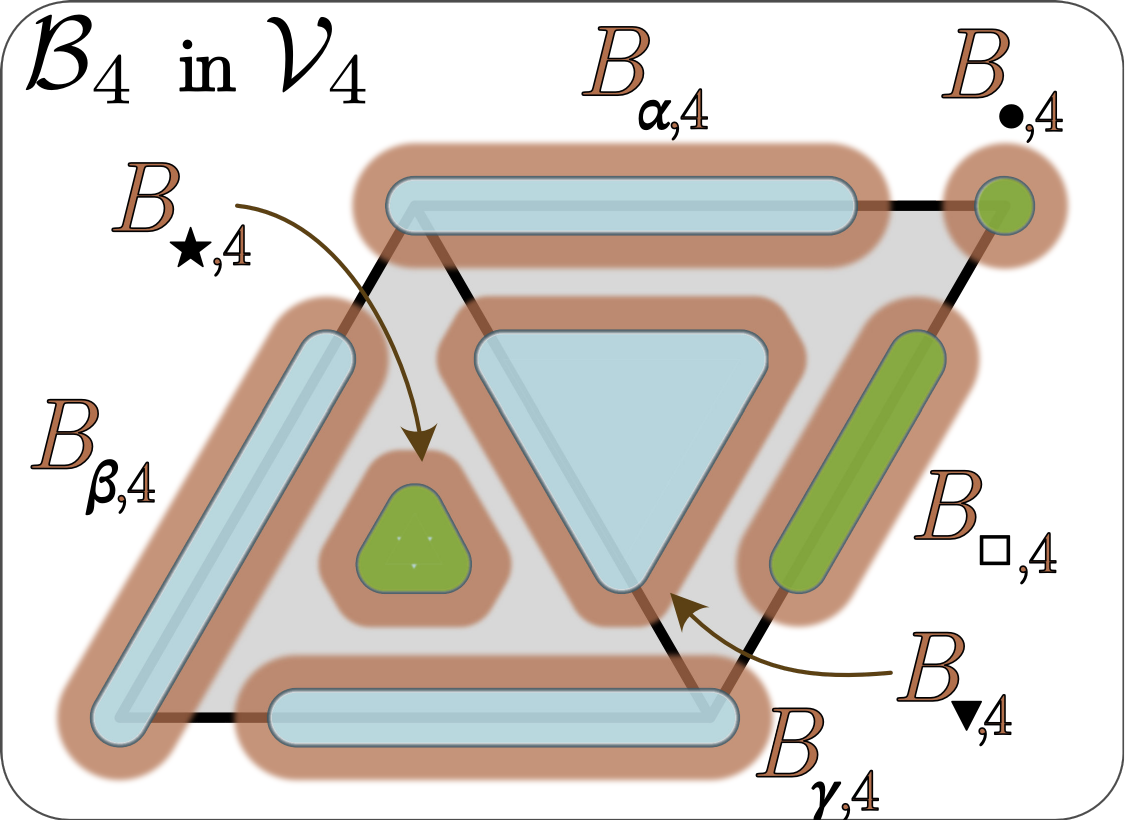}
    \end{tabular}
    \centering
    \caption{
        The central column shows the finest block partitions for multivector fields in Figure~\ref{fig:main-example-MVFs-v2}.
        The brown sets represent isolating blocks and the green subsets---their invariant parts.
        The left column contains block partition $\BD_\lambda$ for multivector $\cV_{\lambda+1}$ whenever $\cV_{\lambda}\inscr\cV_{\lambda+1}$.
        The right column presents block partition $\BD_2$ of multivector $\cV_1$.
    }
    \label{fig:main-example-MVFs-BD-v2}
\end{figure}
\vspace{0.5cm}
\begin{figure}
    \centering
    \includegraphics[width=0.66\linewidth]{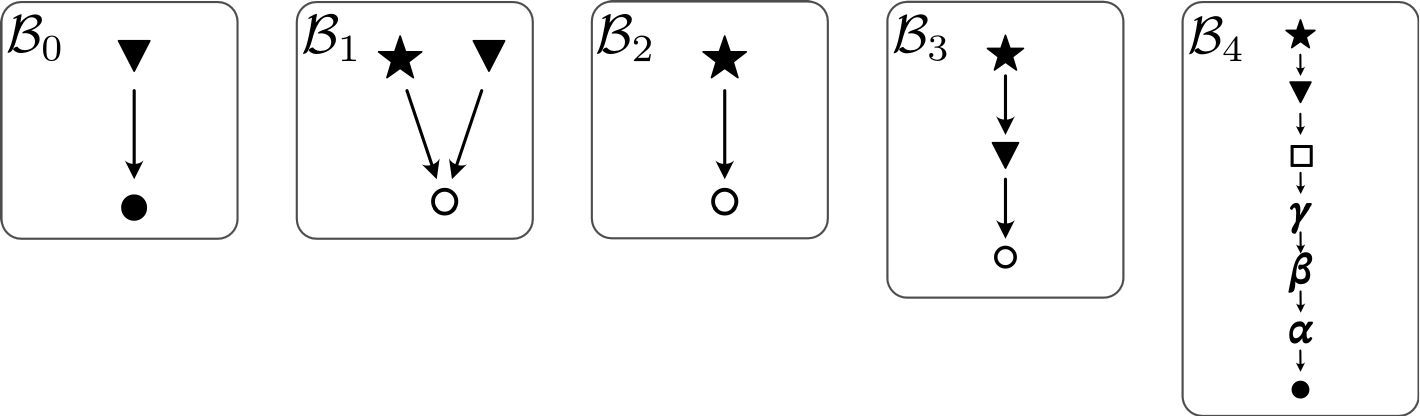}
    \caption{Flow induced partial orders corresponding to block decompositions in Example~\ref{ex:main-example-part-1}.}
    \label{fig:main-example-morse-graphs}
\end{figure}
% \clearpage
% }

\begin{example}\label{ex:main-example-part-2-indexing-maps}
    For the zigzag filtration $\zzBD$ from Example~\ref{ex:main-example-part-1} we have four indexing maps:
    $\idxbck{0}$, $\idxfwd{1}$, $\idxbck{2}$, and $\idxbck{3}$, 
    as shown in Figure~\ref{fig:idxbck-idxfwd-v2}.
    \qedex
\end{example}

\begin{figure}[h]
    \begin{tikzcd}[font=\small, row sep=0.25em, column sep=1.20em]
    \BD_0&\idxbck{0}&\BD_1&\idxfwd{1} &\BD_2&\idxbck{2} &\BD_3&\idxbck{3} &\BD_4\\
    && && && && \bl_{\star,4}\\
    \bl_{\blacktriangledown,0}\arrow[rr, hookleftarrow] 
        && \bl_{\blacktriangledown,1}\arrow[rrdd, hook] && 
        && \bl_{\star,3}\arrow[rru, hookleftarrow] && \bl_{\blacktriangledown,4}\\
    && \bl_{\star,1}\arrow[rr, hook] 
        && \bl_{\star,2}\arrow[rru, hookleftarrow]
        && \bl_{\blacktriangledown,3}\arrow[rru, hookleftarrow] && \bl_{\square,4}\\
    \bl_{\bullet,0}\arrow[rr, hookleftarrow]\arrow[rru, hookleftarrow] 
        && \bl_{\circ,1}\arrow[rr, hook] 
        && \bl_{\circ,2}\arrow[rr, hookleftarrow]\arrow[rru, hookleftarrow]
        && \bl_{\circ,3}\arrow[rru, hookleftarrow]\arrow[rr, hookleftarrow]
            \arrow[rrd, hookleftarrow]
            \arrow[rrdd, hookleftarrow]
            \arrow[rrddd, hookleftarrow]
        && \bl_{\gamma,4}\\
    && && && && \bl_{\beta,4}\\
    && && && && \bl_{\alpha,4}\\
    && && && && \bl_{\bullet,4}
    \end{tikzcd}
    \caption{Indexing maps 
        $\idxbck{0}$, $\idxfwd{1}$, $\idxbck{2}$, and $\idxbck{3}$ for the zigzag filtration of block decompositions $\zzBD$ from Figure~\ref{fig:main-example-MVFs-BD-v2}.
        }
    \label{fig:idxbck-idxfwd-v2}
\end{figure}       

We begin the analysis of the zigzag filtration by observing a direct relationship between two successive block decompositions.
We focus mostly on the $\inscr$ case, as the $\ovscr$ case is symmetric.

\begin{proposition}\label{prop:bd-after-refinement-is-bd}
    If $(\BD_1,\cV_1)\inscr(\BD_2,\cV_2)$, then $\BD_2$ is a block decomposition for~$\cV_1$.
\end{proposition}
\begin{proof}
    By Proposition~\ref{prop:iso_block_stability}, an isolating block $\bl_{q,2}\in\BD_2$ is an isolating block both in $\cV_1$ and $\cV_2$.
    Since $\dgVz\subset\dgVo$ we have $\paths_{\cV_1}(X)\subset\paths_{\cV_2}(X)$.
    Therefore, condition \ref{it:block_decomposition_paths} is preserved for $\BD_2$ with respect to $\cV_1$.

    Consider $\varphi\in\esol_{\cV_1}(X)$.
    In particular, $\varphi\in\sol_{\cV_2}(X)$.
    By~\ref{it:block_decomposition_uims} for $\BD_1$, there exist $p,q\in\PP_1$ such that 
    % Since $\BD_1$ is a block decomposition, there exist $p,q\in\BD_1$ such that 
        $\uimp_{\cV_1}\varphi\subset\bl_{p,1}$ and 
        $\uimm_{\cV_1}\varphi\subset\bl_{q,1}$.
    % By assumption, 
    Moreover, 
        $\bl_{p,1}\subset\bl_{\idxfwd{1}(p),2}\in\BD_2$, and 
        $\bl_{q,1}\subset\bl_{\idxfwd{1}(q),2}\in\BD_2$.
    Hence, \ref{it:block_decomposition_uims} for $\BD_2$ in $\cV_1$ is satisfied.
\end{proof}

Proposition~\ref{prop:bd-after-refinement-is-bd} tells us that $\BD_2$ is a block decomposition both in $\cV_1$ and $\cV_2$.
Therefore, by Definition~\ref{def:continuation}, for every $\bl_{p,2}\in\BD_2$ its 
    invariant part in $\cV_2$---that is, $\inv_{\cV_2}\bl_{p,2}$---continues to 
        $\inv_{\cV_1}\bl_{p,2}$ in $\cV_1$, 
    and therefore, shares the same dynamical properties, in particular the Conley index (Theorem~\ref{thm:continuation_iso_conley_index}).
This allows us to study the transition from $\BD_1$ to $\BD_2$ on a common ground, that is, within multivector field~$\cV_1$.
The next theorem captures in more detail the relationship between individual isolating blocks of $\BD_1$ and $\BD_2$.
Let us first define the \emph{connection set}\index{connection set} for a family of subsets $\cA$ of $X$ within $Y\subset X$:
\begin{equation}
    \label{eq:connection_set}
	\cset{\cV}(\cA, Y) \coloneqq  \bigcup_{A, A'\in \cA} \{\im\rho \mid \rho\in\paths_\cV(Y),\ \pbeg\rho\in A,\ \pend\rho\in A'\}.
\end{equation}

\begin{theorem}
\label{thm:continuation-preimage-iota}
    Let $(\BD_1,\cV_1)\inscr(\BD_{2},\cV_{2})$.
    Let $q\in\PP_{2}$ and $\QQ:=\idxfwd{1}^{-1}(q)$.
    Then:
    \begin{enumerate}[label=(\alph*)]
        \item\label{it:continuation-preimage-iota-BD} $\BD_\QQ\coloneqq\{\bl_{p,1}\in\BD_1\mid p\in \QQ\}$ is a block decomposition of $\bl_{q,2}$ with respect to~$\cV_1$;
        moreover 
            $\bl_{\QQ,1}\coloneqq
                \cset{\cV_1}(\{\bl_{p,1}\mid p\in \QQ\}, X)\subset\bl_{q,2}$,
        \item\label{it:continuation-preimage-iota-MQ} 
            $M_{\QQ,1}\coloneqq\inv_{\cV_1}\bl_{q,2}$ is an invariant set in $\cV_1$ isolated by $\bl_{q,2}$;
            moreover, 
            $M_{\QQ,1}=\cset{\cV_1}(\{M_{p,1}\mid p\in \QQ\}, X)$,
        \item\label{it:continuation-preimage-iota-continuation} $M_{\QQ,1}$ in $\cV_{1}$ continues to $M_{q,2}$ in $\cV_{2}$.
    \end{enumerate}
\end{theorem}

If $\idxfwd{\tdyn}^{-1}(q)$ is empty, then the theorem becomes trivial. 
In particular, there is no isolated invariant set in $\cV_1$ corresponding to $M_{q,2}$; 
    or, in other words, $M_{q,2}$ continues to the empty set in $\cV_1$.
If $\idxfwd{\tdyn}^{-1}(q)$ is a singleton, that is $\QQ=\{p\}$, 
    then $M_{\QQ,1}=M_{p,1}$,
    and therefore, the Morse set $M_{p,1}$ corresponding to $\bl_{p,1}$ in $\cV_1$ simply
    continues to $M_{q,2}$ corresponding to $\bl_{q,2}$ in $\cV_2$.
    
The situation becomes interesting when $\QQ$ has more than one element.
We know that it is the aggregated Morse set $M_{\QQ,1}$ in $\cV_1$ that continues to $M_{q,2}$ in $\cV_{2}$.
Again, they share the Conley index through a common index pair (see Proposition~\ref{prop:iso_block_forms_ipair}),
    therefore it is enough to study the contribution of individual Morse sets in $\cM_{1}$ 
    to the Conley index of $M_{\QQ,1}$ within $\cV_1$.
In the following section we introduce split diagrams that quantify this contribution.

\begin{example}\label{ex:main-example-part-3-continuation-theorem}
    The left and the right column in Figure~\ref{fig:main-example-MVFs-BD-v2} illustrate Proposition~\ref{prop:bd-after-refinement-is-bd}.
    For instance, since $\BD_0\ovscr\BD_1\inscr\BD_2$, it follows that $\BD_0$ and $\BD_2$ are also proper block decompositions for $\cV_1$.
    Therefore, we will study the transition from $\BD_0$ to $\BD_1$ and from $\BD_1$ to $\BD_2$ within $\cV_1$.

    To illustrate Theorem~\ref{thm:continuation-preimage-iota}, consider the step $(\BD_1,\cV_1)\inscr(\BD_2,\cV_2)$ 
    of the zigzag filtration from Example~\ref{ex:main-example-part-1} and the corresponding indexing map $\idxfwd{1}:\PP_1\rightarrow\PP_2$.
    Let $q\coloneqq\circ\in\PP_2$ and $\QQ\coloneqq\idxfwd{1}^{-1}(\circ)=\{\circ, \blacktriangledown\}\subset\PP_1$.
    By point~\ref{it:continuation-preimage-iota-BD}, 
        $\BD_{\QQ,1}=\{
        \bl_{\circ, 1}, \bl_{\blacktriangledown, 1}\}$ 
        is a block decomposition of set $\bl_{\circ, 2}$ in $\cV_1$.
    In this case, the invariant part of $\bl_{\circ, 2}$ with respect to $\cV_1$, 
        denoted $M_{\QQ,1}$, is the same as for $\bl_{\circ,1}$, denoted $M_{\circ,1}$ 
        (the highlighted green empty triangle in the second row, the central and right column, respectively).
    By point~\ref{it:continuation-preimage-iota-continuation},
        $M_{\circ,1}$ continues to $M_{\circ,2}$; 
        in particular, because they share the same isolating block $\bl_{\circ,2}$.

    As another example consider step $(\BD_3,\cV_3)\ovscr(\BD_4,\cV_4)$ and the map $\idxbck{3}:\PP_4\rightarrow\PP_3$.
    Let $q\coloneqq\circ\in\PP_3$ and $\QQ\coloneqq\idxbck{3}^{-1}(\circ)=\{\square, \gamma, \beta, \alpha, \bullet\}\subset\PP_4$.
    By~\ref{it:continuation-preimage-iota-BD}, 
        $\BD_\QQ=\{
        \bl_{\square, 4}, \bl_{\gamma, 4}$, $\bl_{\beta, 4}$, $\bl_{\alpha, 4}$, $\bl_{\bullet, 4}\}$ 
        is a block decomposition of $\bl_{\circ, 3}$ with respect to $\cV_4$.
    By~\ref{it:continuation-preimage-iota-MQ},
        $M_{\QQ,4}$---the invariant part of $\bl_{\circ, 3}$ in $\cV_4$--- coincides with $\bl_{\circ, 3}$ and consists of two critical multivectors--- the vertex $\{d\}$ and the edge $\{cd\}$---as well as of the connections between them---the three regular multivectors---forming together the empty quadrangle (highlighted in green in the bottom left panel in Figure~\ref{fig:main-example-MVFs-BD-v2}).
    By~\ref{it:continuation-preimage-iota-continuation}, $M_{\circ, 3}$ in $\cV_3$ continues to $M_{\QQ,4}$ in $\cV_4$.
    As collections of cells $M_{\circ, 3}$ and $M_{\QQ,4}$ are the same, but they represent different dynamics.
    In particular, $M_{\circ,3}$ behaves like a periodic orbit in $\cV_3$, 
        while $M_{\QQ,4}$ contains heteroclinic connections between equilibria in~$\cV_4$.
    Moreover, $M_{\QQ,4}$ can be further decomposed into a finer block decomposition.
    \qedex
\end{example}

Before proving Theorem~\ref{thm:continuation-preimage-iota}, we introduce Proposition~\ref{prop:path-to-esol} and Lemma~\ref{lem:subBD}.
Lemma~\ref{lem:subBD} will also come in handy later.  

\begin{proposition}\cite[Corollary of Proposition~3.10]{LiMiMr2025}\label{prop:path-to-esol}
    Let $\rho\in\pathsV(S,S',X)$, where $S$ and $S'$ are isolated invariant sets.
    Then $\rho$ extends to an essential solution~$\varphi$ with $\uimm_\cV\varphi\subset S$ and $\uimp_\cV\varphi\subset S'$.
\end{proposition}

\begin{lemma}
    \label{lem:subBD}
    Let $A\subset X$ be an isolating block in $\cV$ and $\BD$ be a block decomposition of $X$.
    Define $\QQ:=\{q\in\PP\mid \bl_q\subset A\}$ and $\BD_\QQ:=\{\bl_p\in\BD\mid p\in \QQ\}$.
    If $\bl_p\cap A=\emptyset$ for every $p\in\PP\setminus\QQ$ then $\BD_\QQ$
         is a block decomposition of $A$.
    
    Additionally, $\inv_\cV A = \cset{\cV}(\BDmdt{\QQ}, A)=\inv_\cV\cset{\cV}(\BD_\QQ, A)$.
\end{lemma}
\begin{proof}
    To show that $\BD_\QQ$ is a block decomposition of $A$ consider $\varphi\in\esolV(A)$.
    Since $\BD$ is a block decomposition, there exists $p\in\PP$ such that $\uimp_{\cV}\varphi\subset \bl_{p}$.
    The fact that $\uimp_{\cV}\subset A$ implies that $p\in\QQ$.
    The same argument holds for $\uimm_\cV\varphi$, therefore \ref{it:block_decomposition_uims} is satisfied.
    Condition \ref{it:block_decomposition_paths} follows immediately by taking the restriction of the partial order on $\PP$ to $\QQ$, 
        because $\pathsV(A)\subset\pathsV(X)$.

    We show first that $\inv_\cV A \subset \cset{\cV}(\BDmdt{\QQ}, A)$.
    Consider $\varphi\in\esolV(A)$.
    Since $\BDmdt{\QQ}$ is a Morse decomposition of $A$, by~\ref{it:morse_decomposition_uims}, there are $q,q'\in\QQ$ such that 
        $\uimm_\cV\varphi\subset M_q\subset\bl_q$ and 
        $\uimp_\cV\varphi\subset M_{q'}\subset\bl_{q'}$, where $M_q\coloneqq\inv_\cV\bl_q\in\BDmdt{\QQ}$.
    Therefore, every point in $\im\varphi$ belongs to a path from $M_q$ to $M_{q'}$; 
        hence, $\im\varphi\subset \cset{\cV}(\BDmdt{\QQ}, A)$.
    
    To see $\cset{\cV}(\BDmdt{\QQ}, A)\subset\inv_\cV\cset{\cV}(\BD_{\QQ}, A)$, fix $q,q'\in\QQ$ and consider a path $\rho\in\pathsV(M_q, M_{q'}, A)$. 
    By Proposition~\ref{prop:path-to-esol}, we can extend $\rho$ to an essential solution~$\varphi$ with $\uimm\varphi\subset M_q\subset\bl_q$ and $\uimp\varphi\subset M_{q'}\subset\bl_{q'}$.
    Thus, $\im\rho\subset\im\varphi\subset\inv_\cV\cset{\cV}(\BD_{\QQ}, A)$.

    Finally, let $\varphi\in\esolV(\inv_\cV\cset{\cV}(\BD_\QQ, A))$.
    Clearly, $\varphi\in\esolV(A)$, and therefore, $\im\varphi\subset \inv_\cV A$, which shows the equalities in the second statement.
\end{proof}

\begin{proof}[Proof of Theorem~\ref{thm:continuation-preimage-iota}]
    The first statement in \ref{it:continuation-preimage-iota-BD} is a special case of Lemma~\ref{lem:subBD}.
    To see the second, consider a path $\rho\in\paths_{\cV_1}(\blo{p}, \blo{p'}, X)$ for some $p,p'\in\QQ$.
    Since $\rho$ is also a path in $\cV_2$ we have $\rho\in\paths_{\cV_2}(\blt{q}, \blt{q}, X)$.
    Thus, $\im\rho\subset\blt{q}$ by \ref{it:block_decomposition_paths}\ref{it:block_decomposition_paths_eq}.

    To see \ref{it:continuation-preimage-iota-MQ}, note that by Proposition~\ref{prop:iso_block_stability}, $\blt{q}$ is an isolating block in $\cV_1$;
        thus, by definition $\blt{q}$ isolates $M_{\QQ,1}$.
    The second part follows from the fact that $\blo{\QQ}\subset\blt{q}$ (by~\ref{it:continuation-preimage-iota-BD}) and the second part of Lemma~\ref{lem:subBD}.

    To show~\ref{it:continuation-preimage-iota-continuation}, note that 
        \ref{it:continuation-preimage-iota-MQ} implies that 
     $\bl_{q,2}$ is an isolating block for $M_{\QQ,1}$ in $\cV_1$,
     and by definition $\bl_{q,2}$ is an isolating block for $M_{q,2}$ in $\cV_2$.
     Since $M_{\QQ,1}$ and $M_{q,2}$ share the same isolating block, they are related by continuation.
\end{proof}

\subsection{Transition diagram for a basic zigzag filtration}
\label{subsec:basic-transition-diagram}
In this section we focus on a special type of zigzag filtration of block decompositions.
Namely, we say that $\zzBD$ is a \emph{basic zigzag filtration}\index{zigzag filtration of block decompositions!basic}
if for every $\tdyn\in\Lambda$, depending on the type of the map, we have:
\begin{align*}
    \abs{\idxbck{\tdyn}^{-1}(p)}\leq 2 \text{ for every } p\in\PP_{\tdyn}\text{,}
    \quad \text{or} \quad
    \abs{\idxfwd{\tdyn}^{-1}(r)}\leq 2 \text{ for every } r\in\PP_{\tdyn+1}.
\end{align*}

In other words, at each step of the zigzag filtration, 
    an isolating block can split into at most two other isolating blocks when $\BD_\tdyn\ovscr\BD_{\tdyn+1}$, or
    an isolating block can merge with at most one other to create a new 
        larger isolating block in step $\lambda+1$ when $\BD_\tdyn\inscr\BD_{\tdyn+1}$.
Therefore, each observed combinatorial bifurcation is a split into an attractor-repeller pair.

\begin{definition}[Index triple]
    \label{def:index_triple}
    Let $\BD=\{B_a, B_r\}$ be a block decomposition for an isolating block $\bl$ both in $\cV$;
        thus, inducing an attractor-repeller pair (see Proposition~\ref{prop:AR-block-decomposition}).
    Then, a triple $N_0\subset N_1\subset N_2$ of closed sets is called an \emph{index triple}\index{index triple} if 
        $(N_1, N_0)$, $(N_2,N_1)$ and $(N_2,N_0)$ form index pairs for 
        $M_a\coloneqq\inv_\cV B_a$, $M_r\coloneqq\inv_\cV B_r$ and $M\coloneqq\inv_\cV B$, respectively.
    Moreover, we assume that $B_a\subset N_1\setminus N_0$,  $B_r\subset N_2\setminus N_1$, and $B\subset N_2\setminus N_0$.
    In particular, the triple induces diagrams called  
        \emph{AR-split diagrams}\index{AR-split diagram} (see diagram~\eqref{eq:ip_index_triple}).
\end{definition}

\begin{align}
    \includegraphics[width=0.35\textwidth]{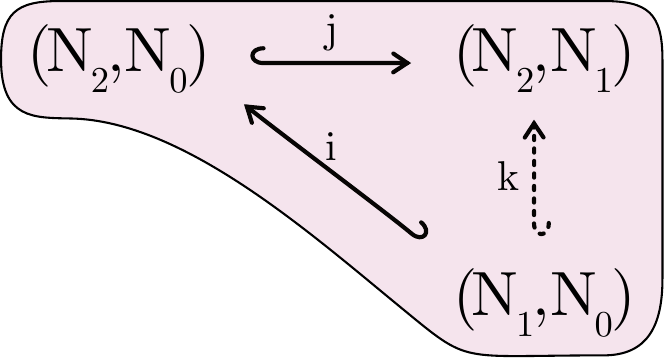}
    \qquad\quad
    \includegraphics[width=0.35\textwidth]{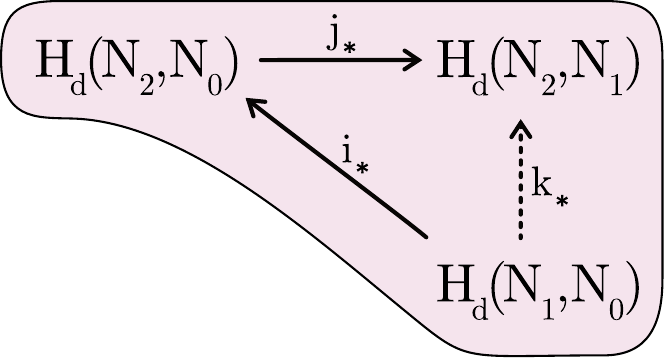}
    % \hspace{1.5cm}
    % \caption{
    %     The AR-split diagrams for a index triple $N_0\subset N_1\subset N_2$.
    % }
    \label{eq:ip_index_triple}
\end{align}

%%%%%% DO NOT REMOVE THE BELOW DIAGRAM %%%%%%
% \begin{diagram}
%     \begin{tikzcd}
%     \ipairpp{2}{1}{} \arrow[r, hookleftarrow, "j"]&
%         \ipairleqp{2}{}\\
%     \ipairleqp{1}{}\arrow[u, hookrightarrow, dashed, "k"]\arrow[ru, hookrightarrow, "i"] &
%     \end{tikzcd}
%     \begin{tikzcd}
%         \ \arrow[Mapsto]{r}{Homology}[swap]{functor}&\
%         % \ \arrow[Mapsto]{r}{Homology}[swap]{functor}&\
%     \end{tikzcd}
%     \begin{tikzcd}
%     H_d(N_2,N_{1}) 
%         \arrow[d, leftarrow, dashed, swap, shift right, "k_\ast^d=\,0"]
%         %\arrow[d, rightarrow, shift left, "\partial_\ast"]
%         \arrow[r, leftarrow, "j_\ast^d"]&
%         H_d(N_2,N_{0})\\
%     H_d(N_{1},N_{0})
%         \arrow[ru, rightarrow, "i_\ast^d"] 
%         &
%     \end{tikzcd}
%     \caption{
%         The split diagrams for a index triple $N_0\subset N_1\subset N_2$.
%     }
%     \label{diag:ip_index_triple}
% \end{diagram}

It is easy to notice that an index triple forms a long exact sequence, as shown below, from which we can 
    relate the Conley indices of sets involved in the AR-decomposition, that is $M$, $M_a$ and $M_r$.
\begin{align}\label{eq:long_exact_sequence_index_triple}
    % \displaystyle
    \ldots 
    \overset{i_\ast^d}{\longrightarrow}
    H_d(N_2, N_0) \overset{j_\ast^d}{\longrightarrow}
    H_{d}(N_2, N_{1}) \overset{\partial_\ast^d}{\longrightarrow}
    H_{d-1}(N_{1}, N_0) \overset{i_\ast^{d-1}}{\longrightarrow}
    % H_{d-1}(N_p, N_0) \overset{j_\ast^{d-1}}{\longrightarrow}
    \ldots
\end{align}

\begin{theorem}[The AR-split]\label{thm:index_triple}
Consider an index triple $N_0\subset N_1\subset N_2$ and the inclusion induced
    maps $i_\ast^d,j_\ast^d$ and $\partial_\ast^d$ as in the long exact sequence \eqref{eq:long_exact_sequence_index_triple}. 
    Then, we have the following properties:
\begin{enumerate}[label=(\alph*)]
    \item\label{it:0homomorphism} $k_\ast^d\coloneqq j_\ast^d\circ i_\ast^d = 0$ for all $d\in\NN$,
    \item\label{it:index_triple_split} $H_d(N_2, N_0) \cong \im i_\ast^d \oplus \frac{H_d(N_2, N_0)}{\ker j_\ast^d}$ for all $d\in\NN$,
    \item\label{it:index_triple_isomorphism} 
        $h_\ast^d: \coker j_\ast^d \rightarrow \ker i_\ast^{d-1}$,
        defined as the restriction of $\partial_\ast^d$ to $\coker j_\ast^d$,
        is an isomorphism for all $d\in\NN$.
\end{enumerate}
\end{theorem}
\begin{proof}
    Property \ref{it:0homomorphism} is straightforward since $N_{1}$ becomes the relative part under the map~$k$.

    To see \ref{it:index_triple_split}, note that we work with vector spaces; therefore, the long exact sequence splits.
    In particular, we have 
    $H_d(N_2, N_0) \cong \im i_\ast^d \oplus \frac{\ker j_*^d}{\im i_\ast^d} \oplus \frac{H_d(N_2, N_0)}{\ker j_*^d}$,
    but we can omit the middle term, because exactness provides that
    $\ker j_*^d=\im i_\ast^d$.

    To prove \ref{it:index_triple_isomorphism}, we use the following sequence of isomorphisms:
    \begin{equation*}%\label{eq:coker_j_iso_ker_l}
        \coker j_{\ast}^d \cong \frac{H_d(N_2, N_{1})}{\im j_{\ast}^d}
            \cong \frac{H_d(N_2, N_{1})}{\ker \partial_{\ast}^d}
            \cong \im \partial_{\ast}^d
            \cong \ker i_{\ast}^{d-1},
    \end{equation*}
    where the first equality follows by definition of $\coker$, the second and the fourth equality are given by the exactness of the sequence, and the third by the first isomorphism theorem.
\end{proof}

We will revisit Theorem~\ref{thm:index_triple} in Section~\ref{sec:cm-barcodes} after introducing the persistence language, 
    but for now, let us observe what it implies for Conley indices when we encounter an AR-split.
In particular, based on the right diagram in~\eqref{eq:ip_index_triple} we construct the diagram: 

\begin{equation*}
    \begin{tikzcd}[font=\small, row sep=normal]
        H(N_2,N_0)\arrow[r, phantom, "\cong"] &
            X_1\oplus X_2 \arrow[r, "0\oplus f"]
            & V_1 \oplus V_2 \arrow[r, phantom, "\cong"]
                \arrow[d, "h_\ast", to path={(\tikztostart.230) -- (\tikztotarget.50) \tikztonodes}]
                & H(N_2,N_1)\\
            & & W_1 \oplus W_2 \arrow[lu, "g\oplus 0"]
                \arrow[r, phantom, "\cong"]& H(N_1,N_0)
    \end{tikzcd}
\end{equation*}
where:
\begin{align*}
    X_1&\cong \im i_\ast, &       X_2&\cong \coim j_\ast =\frac{H(N_2, N_0)}{\ker j_\ast},\\
    V_1&\cong \coker j_\ast, &    V_2&\cong \im j_\ast,\\
    W_1&\cong \coim i_\ast = \frac{H(N_1, N_0)}{\ker i_\ast},&  W_2&\cong \ker i_\ast.
\end{align*}
The maps $f:X_2\rightarrow V_2$, $g:W_1\rightarrow X_1$, and $h:V_1\rightarrow W_2$ are induced by $j_\ast$, $i_\ast$, and $\partial_\ast$, respectively.
Note that $f$ and $g$ are of degree $0$, while $h$ of degree $-1$.
All maps are isomorphisms, $f$ and $g$ by the first isomorphisms theorem, and $h_\ast$ is given by Theorem~\ref{thm:index_triple}\ref{it:index_triple_isomorphism}.

\begin{corollary}\label{cor:index_triple_consequences}
Let 
    $N_0\subset N_1\subset N_2$
    be an index triple for isolated invariant set $M$ and its AR-decomposition $\{M_a, M_r\}$.
Then, Theorem \ref{thm:index_triple} implies that:
\begin{enumerate}[label=({\alph*})]
    \item\label{it:index_triple_time_consistency} 
        $\con(M_a)$ and $\con(M_{r})$ do not ``share'' generators, that is, any generator in $\con(M_a)$ is mapped into $0$ in $\con(M_r)$ through the connecting maps, because $k_\ast\cong(0\oplus f)\circ(g\oplus 0)=0$,
    \item\label{it:index_triple_con_distribution} 
        each generator of $\con(M)$ is present either in the attractor $M_{a}$ or in the repeller $M_r$;
        none of them vanishes during the split, 
            that is, $X_1$ continues as $W_1$ and $X_2$ continues as $V_2$,
    \item\label{it:index_triple_con_pairing}
        the generators of $\con_d(M_{r})$ and $\con_{d-1}(M_{a})$ that are not ``inherited'' from $\con(M)$, that is $W_1$ and $V_2$, are coupled via isomorphism $h_\ast^d$.
        That is, whenever new generators are born during the AR-split, they appear in pairs.
        Specifically, for every basis element of degree $d-1$ in $\con(M_a)$ which is not present in $\con(M)$, that is, an element of $W_2$,
            there exists a matching dual basis element of degree $d$ in $\con(M_r)$ which is likewise not present in $M$ (that is, an element of $V_1$).
\end{enumerate}
\end{corollary}

\begin{example}\label{ex:main-example-part-4-index-split}
    Consider the splitting of $\bl_{\bullet,0}$ into $\bl_{\circ,1}$ and $\bl_{\star,1}$ in the step $\BD_0\ovscr\BD_1$ from Example~\ref{ex:main-example-part-1}.
    In this case, the involved isolating blocks equal the corresponding Morse sets $M_{\bullet,0}$,  $M_{\circ,1}$ and $M_{\star,1}$.
    Sets $\bl_{\circ,1}$ and $\bl_{\star,1}$ form a block decomposition of $\bl_{\bullet,0}$ in $\cV_1$.
    One can easily check that sets $N_0\coloneqq\emptyset$, 
        $N_1\coloneqq\bl_{\circ,1}$ and 
        $N_2\coloneqq\bl_{\bullet,0}$ satisfy the index triple assumptions.
    Thus, we get the following diagram:
    \begin{equation*}
        \begin{tikzcd}[font=\small, row sep=normal, column sep=1.5em]
            (\bl_{\bullet,0}, \emptyset) \arrow[r, equal]
                & \ipairleqp{2}{}\arrow[r, hook]\arrow[rd, hookleftarrow]
                & \ipairpp{2}{1}{} \arrow[r, equal]
                & (\bl_{\bullet,0}, \bl_{\circ,1})\\
                & 
                & \ipairpp{1}{0}{}\arrow[u, hook,dashed,swap]\arrow[r, equal]
                & (\bl_{\circ,1}, \emptyset)
        \end{tikzcd}
    \end{equation*}       
    The relations between Conley indices are captured by the following diagram: 
    \begin{equation*}
        \begin{tikzcd}[font=\small, row sep=normal, column sep=2.0em]
            \con(M_{\bullet,0})\arrow[r]\arrow[rd, leftarrow]
                &  \con(M_{\star,1})\\ 
                & \con(M_{\circ,1})\arrow[u, dashed,swap]
        \end{tikzcd}
    \quad\cong\quad
        \begin{tikzcd}[font=\small, row sep=normal, column sep=2.5em]
         [ \fk, 0, 0]\arrow[r, "{[0, 0, 0]}"] 
            & {[0,0, \fk]} \\
        & {[\fk, \fk, 0]}\arrow[u, dashed,swap, "0"]\arrow[lu, "{[\id, 0, 0]}"]
         % 0\oplus 0\oplus \ZZ_2\arrow[r, leftarrow] 
            % & \ZZ_2\oplus \ZZ_2\oplus 0 \\
        % \con(M_1)\arrow[u, dashed,swap, "0"]\arrow[ru, swap]
        \end{tikzcd}        
    \end{equation*}

    The AR-split theorem 
    states that degree 0 generator of $\con(M_{\bullet,0})$ has been ``passed'' to $\con(M_{\circ,0})$. 
    Moreover, degree 1 and 2 generators of $\con(M_{\circ, 1})$ and $\con(M_{\star, 1})$, respectively, 
        form a coupled pair of generators which are born through the breakdown of $M_{\bullet,0}$.
    \qedex
\end{example}

The AR-split diagram captures dependencies of Conley indices of isolated invariant sets involved in the AR-decomposition. 
Therefore, it will serve as an elementary building block for the analysis of basic zigzag filtrations $\zzBD=\{\BD_\tdyn\}_{\tdyn\in\tInt}$ with $\tInt=\{0,1,\ldots,\tdynmax\}$. 
In particular, we compose all AR-split diagrams of the AR-splits occurring in $\zzBD$ into a single diagram called a \emph{transition diagram}\index{transition diagram} for $\zzBD$.
We denote it by $\zzTD$ and define it constructively using the following procedure:

\begin{enumerate}[label=\arabic*.]
    \item \textbf{Transition step:} 
        for each successive pair $\BD_{\tdyn}\inscr\BD_{\tdyn+1}$ in $\zzBD$ we relate the Conley indices associated with the isolating blocks in $\BD_\tdyn$ with those in $\BD_{\tdyn+1}$ using the split diagrams.
        We have three cases (which are symmetric for the $\BD_{\tdyn}\ovscr\BD_{\tdyn+1}$ case):
        \begin{enumerate}[label=1.\arabic*.]
            \item\label{it:transition-diagram-basic-split} 
            If $\idxfwd{\tdyn}^{-1}(q)=\{p_0, p_1\}$, that is $\bl_{q,\tdyn+1}\in\BD_{\tdyn+1}$ splits into $B_{p_0, \tdyn},B_{p_1, \tdyn}\in\BD_\tdyn$,
             we construct an AR-split diagram
            (we show an explicit construction in Section~\ref{subsec:construction-of-the-trans-diag}).
            \item\label{it:transition-diagram-continuation} If $\idxfwd{\tdyn}^{-1}(q)=\{p\}$ then we take any index pair $(P^q,E^q)$ for 
                $M_{q,\tdyn+1}$ in $\cV_{\tdyn+1}$
                that is also an index pair for 
                $M_{p,\tdyn}$ in $\cV_{\tdyn}$ 
                such that $\bl_{q,\tdyn+1}\subset\ipairdb{}{q}$ 
                (it always exists, for example $(\cl\bl_{q,\tdyn+1},\mo\bl_{q,\tdyn+1})$ satisfies this condition). 
            \item If $\idxfwd{\tdyn}^{-1}(q)=\emptyset$ then we take any index pair $(P^q,E^q)$ for 
                $M_{q,\tdyn+1}$ in $\cV_{\tdyn+1}$
                such that $\bl_{q,\tdyn+1}\subset\ipairdb{}{q}$ 
                (this case can only happen if $\BD_\lambda$ is not a block partition of~$X$).
        \end{enumerate}
    \item \textbf{Aligning step:} 
        The first step provides index pairs for every isolating block occurring in $\zzBD$.
        In particular, 
            for each isolating block in $\BD_0$ we construct exactly one index pair from the \textbf{Step 1} for $\BD_0$ and $\BD_1$,
            the same holds for $\BD_\tdynmax$, using the step for $\BD_{\tdynmax-1}$ and $\BD_\tdynmax$.
        Any other block $\bl_{p,\tdyn}$ has two index pairs constructed in the process, 
            one from the transition from $\BD_{\tdyn-1}$ to $\BD_{\tdyn}$, 
            and the other from the transition from $\BD_{\tdyn}$ to $\BD_{\tdyn+1}$.
        We denote the corresponding index pairs as $\ipairl{p,\tdyn}$ and $\ipairr{p,\tdyn}$, respectively.
        Both index pairs are for $\inv_{\cV_\tdyn}\bl_{p,\tdyn}$ in $\cV_\tdyn$, 
            thus, we connect them using the connecting sequence (see equation~\eqref{eq:connecting_ip_sequence}) to get a proper filtration of topological pairs.
\end{enumerate}

The above procedure explains the general philosophy of the transition diagram, 
    we provide a concrete recipe for the index pairs in Section~\ref{subsec:construction-of-the-trans-diag}.
After applying the homology functor to the constructed transition diagram we ob\-tain the
    \emph{Con\-ley-Morse persistence module}\index{Conley-Morse persistence module} whose decomposition 
    into intervals (strings) gives the \emph{Conley-Morse persistence barcode}\index{Conley-Morse persistence barcode}.
We will return to the decomposition in Section~\ref{sec:cm-barcodes} after necessary algebraic preparations in Section~\ref{sec:gentle-algebras-persistence}.

\begin{example}\label{ex:main-example-part-5-transition-diagram-basic}
    Consider the first four stages of the zigzag filtration from Example~\ref{ex:main-example-part-1}, that is $\BD_0\ovscr\BD_1\inscr\BD_2\inscr\BD_3$, which form a basic zigzag filtration.
    The last part of the filtration, that is $\BD_3\inscr\BD_4$, contains the splitting of $\bl_{\blacktriangledown, 3}$ into five isolating blocks, thus, making the filtration non-basic.   
    We will address this more general situation in the next section.
    
    In the first step of the construction of the transition diagram, we relate all pairs of successive block decompositions using the AR-split diagrams.
    The yellow blocks in Figure~\ref{fig:transition-diagram-basic} represent such comparisons for 
        $\BD_0\ovscr\BD_1$, $\BD_1\inscr\BD_2$ and $\BD_2\ovscr\BD_3$, from left to right, respectively.
    In each we have one AR-split diagram from step~\ref{it:transition-diagram-basic-split} and one equality from step~\ref{it:transition-diagram-continuation}.

    We construct index pairs for each pair of block decompositions independently;
        therefore $\ipairl{p,\tdyn}$ and $\ipairr{p,\tdyn}$ may differ.
    Thus, in the second step of the construction we join the comparison blocks using connection sequences introduced in Section~\ref{subsec:combinatorial_continuation} (see equation~\eqref{eq:connecting_ip_sequence});
        they are represented in Figure~\ref{fig:transition-diagram-basic} with the blue strips.
    
    Note that in the top part of Figure~\ref{fig:transition-diagram-basic} we indicate for which multivector field the given index pairs are well defined. 
    For instance, since we have $\BD_0\ovscr\BD_1$, the index pair $(P_{\bullet,0}, E_{\bullet,0})$ corresponding to $\bl_{\bullet,0}$ is also a proper index pair for $\cV_1$.
    It is the key fact that allows us to decompose $\bl_{\bullet,0}$ in $\cV_1$ into $\bl_{\star,1}$ and $\bl_{\circ,1}$, which we utilized in Theorem~\ref{thm:continuation-preimage-iota}.
    \qedex
\end{example}

\begin{figure}
    \centering
    \includegraphics[width=1.0\linewidth]{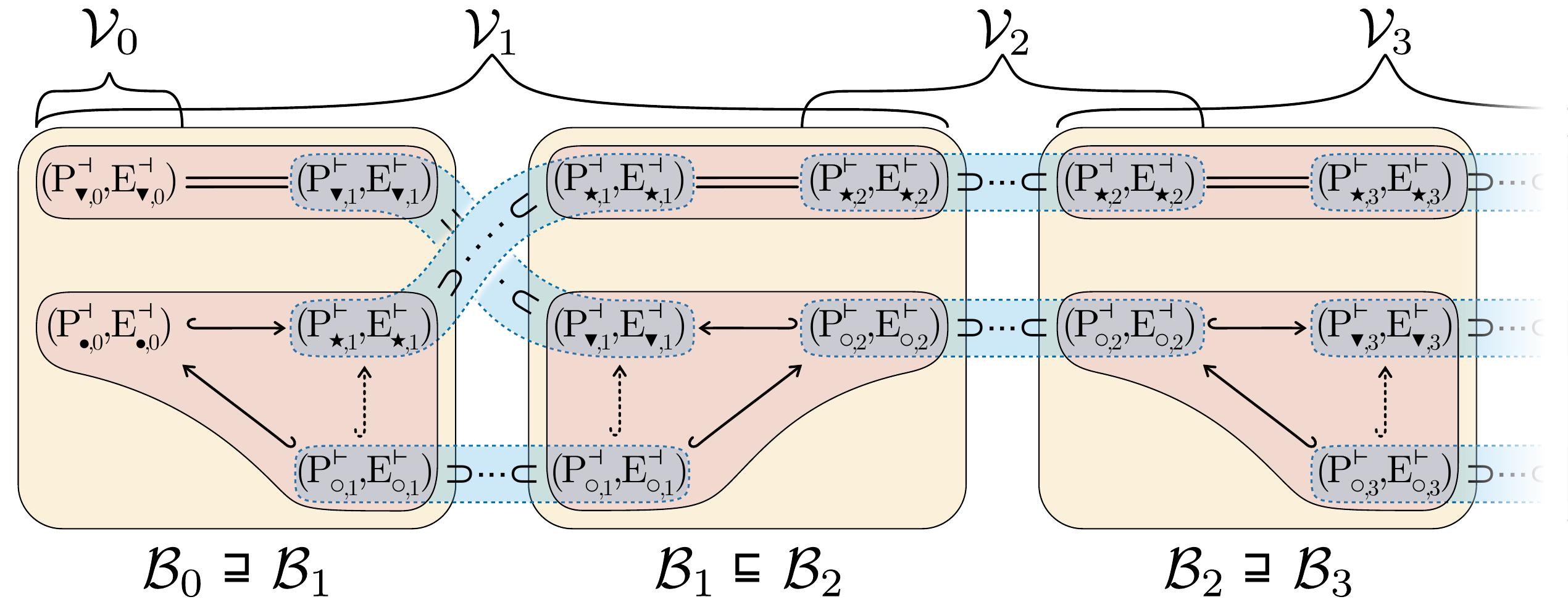}
    \caption{Transition diagram for the first four steps of the zigzag filtration $\zzBD$ from Example~\ref{ex:main-example-part-1} (see also Figure~\ref{fig:main-example-MVFs-BD-v2}).
    }
    \label{fig:transition-diagram-basic}
\end{figure}

We close the section with a straightforward, yet crucial property of acyclicity of the transition diagram. 

\begin{proposition}\label{prop:iptd_is_acyclic}
    The digraph obtained from a transition diagram by taking the set of index pairs as nodes and the directed arrows given by inclusions is acyclic.
    In particular, its transitive closure is a partially ordered set; we call it the \emph{transition diagram induced partial order}\index{partial order!transition diagram induced}.
\end{proposition}
\begin{proof}
    On the contrary, assume that there exists a loop in the induced graph.
    Consider a minimal loop.
    It corresponds to a sequence of index pairs 
        such that $\ipair{0} \subset\ipair{1}\subset\ldots \subset\ipair{n}=\ipair{0}$,
        which implies that all index pairs in the loop are equal.
    Necessarily, there exists an $i$ such that index pairs $\ipair{i}$ is at step $\lambda$ 
        and index pairs $\ipair{i-1}$ and $\ipair{i+1}$ are at step $\lambda-1$ and $\lambda+1$, respectively.
    This configuration is possible only if the three index pairs form an AR-split.
    Note that by Definition~\ref{def:index_triple}, 
        the index pairs cannot be equal, a contradiction.
\end{proof}

\subsection{Transition diagram for a non-basic zigzag filtration}
\label{subsec:TD-general-case}

Our strategy for the general, non-basic zigzag filtration $\zzBD$ is to reduce it to the basic case and to apply the procedure from the previous section.

\begin{definition}[AR-cascade]
    Consider two block decompositions such that $(\BD_0,\cV_0)\inscr(\BD_1,\cV_1)$.
	An~\emph{AR-cascade}\index{AR-cascade} from $\BD_0$ to $\BD_1$ is a 
        basic filtration of block decompositions
    \begin{align*}
        \zzBD_{0,1}\coloneqq (\BD_0,\cV_0)=
            (\BD_{0'},\cV_0)\inscr 
            (\BD_{0''},\cV_0)\inscr 
            \ldots\inscr
            (\BD_{0^{(n)}},\cV_0)= 
            (\BD_1,\cV_1).
    \end{align*}
\end{definition}
In other words, 
    whenever we observe a refinement of $\bl_{q,1}$ into more than two blocks in $\BD_0$,
    we decompose the split into a sequence of attractor-repeller decompositions leading to a basic filtration.
As it follows from the proposition below, it is always possible to merge two elements into a coarser isolating block.
An iterative application of the result leads to an AR-cascade. 

\begin{proposition}\label{prop:merging-two-blocks}
    Let $(\BD_0,\cV_0)\inscr(\BD_1,\cV_1)$, $q\in\PP_1$ and $\QQ\coloneqq\idxfwd{}^{-1}(q)$.
    Then, $\QQ$ is convex in $\PP_0$.
    Assume that $\abs{\QQ}\geq 2$ and $\{p,p'\}$ is a convex subset of $\QQ$.
    Then, $\BD_{0'}\coloneqq \BD_0\setminus\{\bl_{p,0},\bl_{p',0}\}\cup\{\cset{\cV_0}(\{\bl_{p,0},\bl_{p',0}\}, X)\}$
    is again a block decomposition.
    Moreover, $(\BD_0,\cV_0)\inscr(\BD_{0'},\cV_0)\inscr(\BD_1,\cV_1)$.
\end{proposition}

Consider an arbitrary zigzag filtration 
    $\mathfrak{B}=\{(\BD_\tdyn,\cV_\tdyn)\}_{\tdyn\in\Lambda}$, where 
    $\Lambda=\zint{\tdynmax}$.
A~\emph{simplified zigzag filtration for $\zzBD$}\index{zigzag filtration of block decompositions!simplified} is a basic zigzag filtration:
\begin{align*}
    \zzBD_{0,1} \zzBD_{1,2} \ldots \zzBD_{T-1,T},
\end{align*}
where $\zzBD_{\tdyn,\tdyn+1}$ is an AR-cascade of the following form:
\begin{itemize}
    \item in the $(\BD_\tdyn,\cV_\tdyn)\inscr(\BD_{\tdyn+1},\cV_{\tdyn+1})$ case: 
        % then 
        % $\zzBD_{\tdyn,\tdyn+1}$ is an AR-cascade of the form 
        \begin{align*}
            (\BD_\tdyn,\cV_\tdyn)=
                (\BD_{\tdyn'}, \cV_\tdyn)\inscr 
                (\BD_{\tdyn''},\cV_\tdyn)\inscr 
                \ldots\inscr
                (\BD_{\tdyn^{(n)}},\cV_\tdyn)\inscr 
                (\BD_{\tdyn+1},\cV_{\tdyn+1}).
        \end{align*}
        % $\BD_\tdyn=\BD_{\tdyn}^{(0)}\inscr \BD_{\tdyn}^{(1)}\inscr \ldots\inscr \BD_{\tdyn}^{(n_\tdyn)}=\BD_{\tdyn+1}$ in $\cV_{\tdyn}$.
    \item in the $(\BD_\tdyn,\cV_\tdyn)\ovscr(\BD_{\tdyn+1},\cV_{\tdyn+1})$ case: 
        % then 
        % $\zzBD_{\tdyn,\tdyn+1}$ is an AR-cascade of the form 
        \begin{align*}
            (\BD_\tdyn,\cV_\tdyn)\ovscr
                (\BD_{{\tdyn+1}^{(n)}},     \cV_{\tdyn+1})\ovscr 
                \ldots\ovscr
                (\BD_{{\tdyn+1}''},    \cV_{\tdyn+1})\ovscr 
                (\BD_{{\tdyn+1}' },\cV_{\tdyn+1})= 
                (\BD_{\tdyn+1},\cV_{\tdyn+1}).
        \end{align*}
\end{itemize}
From now on, we will study only basic zigzag filtrations of block decompositions as the general case can be always reduced to a basic one.

\begin{example}\label{ex:main-example-part-6-cascade}
    Consider $\BD_3\ovscr\BD_4$ step of the zigzag filtration in Example~\ref{ex:main-example-part-1}, where the isolating block $\bl_{\circ, 3}\in\BD_3$ splits into 
    $\{\bl_{\square, 4}, \bl_{\gamma, 4}, \bl_{\beta, 4}, \bl_{\alpha, 4}, \bl_{\bullet 4}\}\subset\BD_4$.
    By Proposition~\ref{prop:merging-two-blocks} we can turn $\BD_3\ovscr\BD_4$ into an AR-cascade 
        $(\BD_3, \cV_3)\ovscr (\BD_{4''}, \cV_4)\ovscr (\BD_{4'}, \cV_4)\ovscr (\BD_4, \cV_4)$.
    One possibility is presented in Figure~\ref{fig:main-example-MVFs-BD-cascade}.
    Block decomposition $\BD_{4'}$ is obtained by merging 
        $\bl_{\square, 4}$ and $\bl_{\gamma,4}$ into $\bl_{\square, {4'}}$, and 
        by merging
        $\bl_{\bullet, 4}$ and $\bl_{\alpha,4}$ into $\bl_{\bullet, 4'}$.
    The block decomposition $\BD_{4''}$ is obtained by merging $\bl_{\bullet, 4'}$ and $\bl_{\beta, 4'}$ from $\BD_{4'}$ into $\bl_{\bullet,4''}$.

    With the simplified zigzag filtration for $\zzBD$ we can finish the transition diagram we started to construct in Example~\ref{ex:main-example-part-5-transition-diagram-basic}.
    It is shown in Figure~\ref{fig:transition-diagram-cascade}. 
    \qedex
\end{example}                                             

\renewcommand{\arraystretch}{0.0}
\begin{figure}
    \begin{tabular}{c@{}c}
   \includegraphics[width=0.375\linewidth]{figures/main-example-v2/MVF4-BD3-MD.pdf}&
   \includegraphics[width=0.375\linewidth]{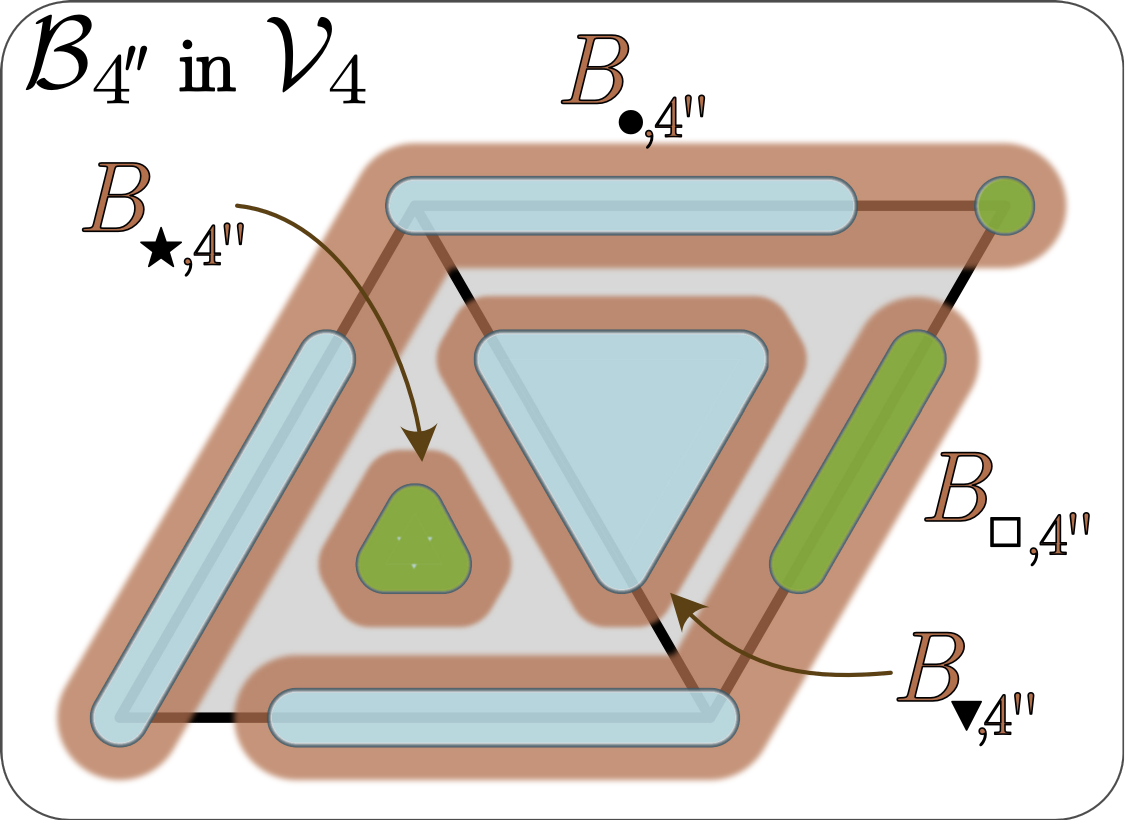}\\
   \includegraphics[width=0.375\linewidth]{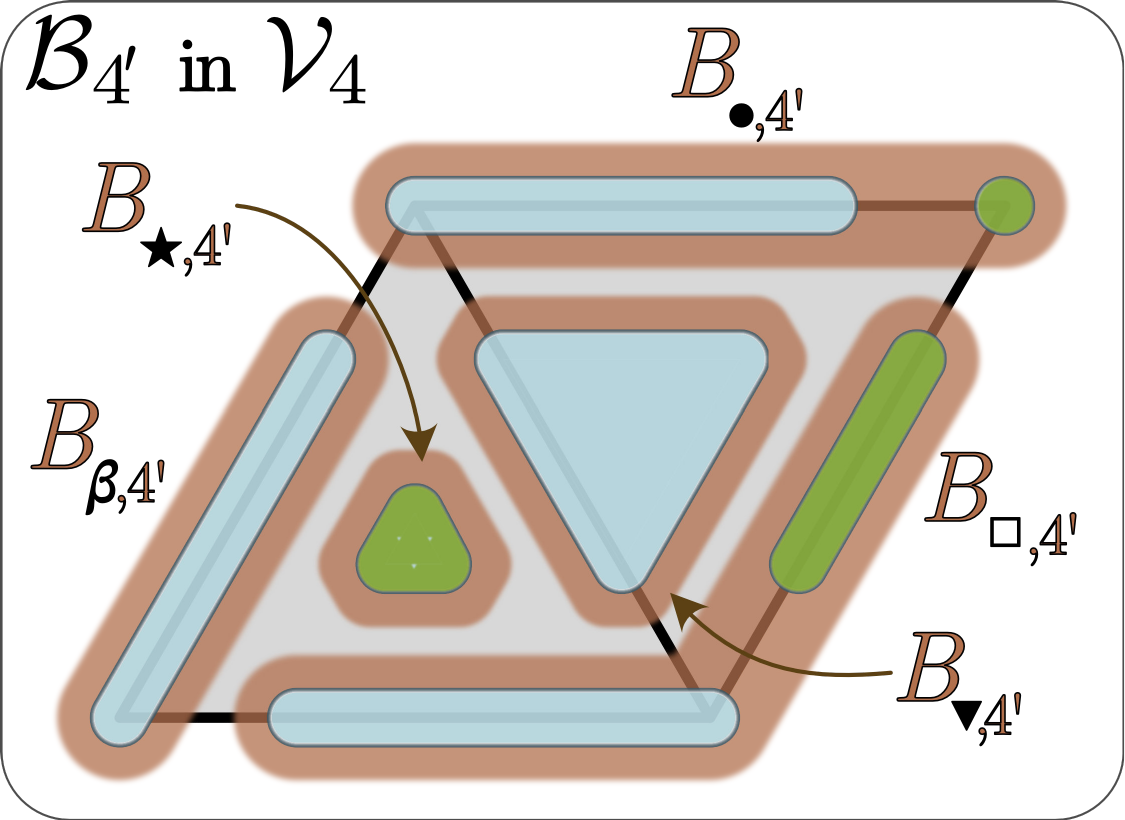}&
   \includegraphics[width=0.375\linewidth]{figures/main-example-v2/MVF4-BD4-MD.pdf}
    \end{tabular}
    \centering
    \caption{Splitting cascade for the step $\BD_3\ovscr\BD_4$ of zigzag filtration $\zzBD$ from Example~\ref{ex:main-example-part-1}.
    }
    \label{fig:main-example-MVFs-BD-cascade}
% \end{figure}

% \begin{figure}
    \vspace{1cm}
    \centering
    \includegraphics[width=1.0\linewidth]{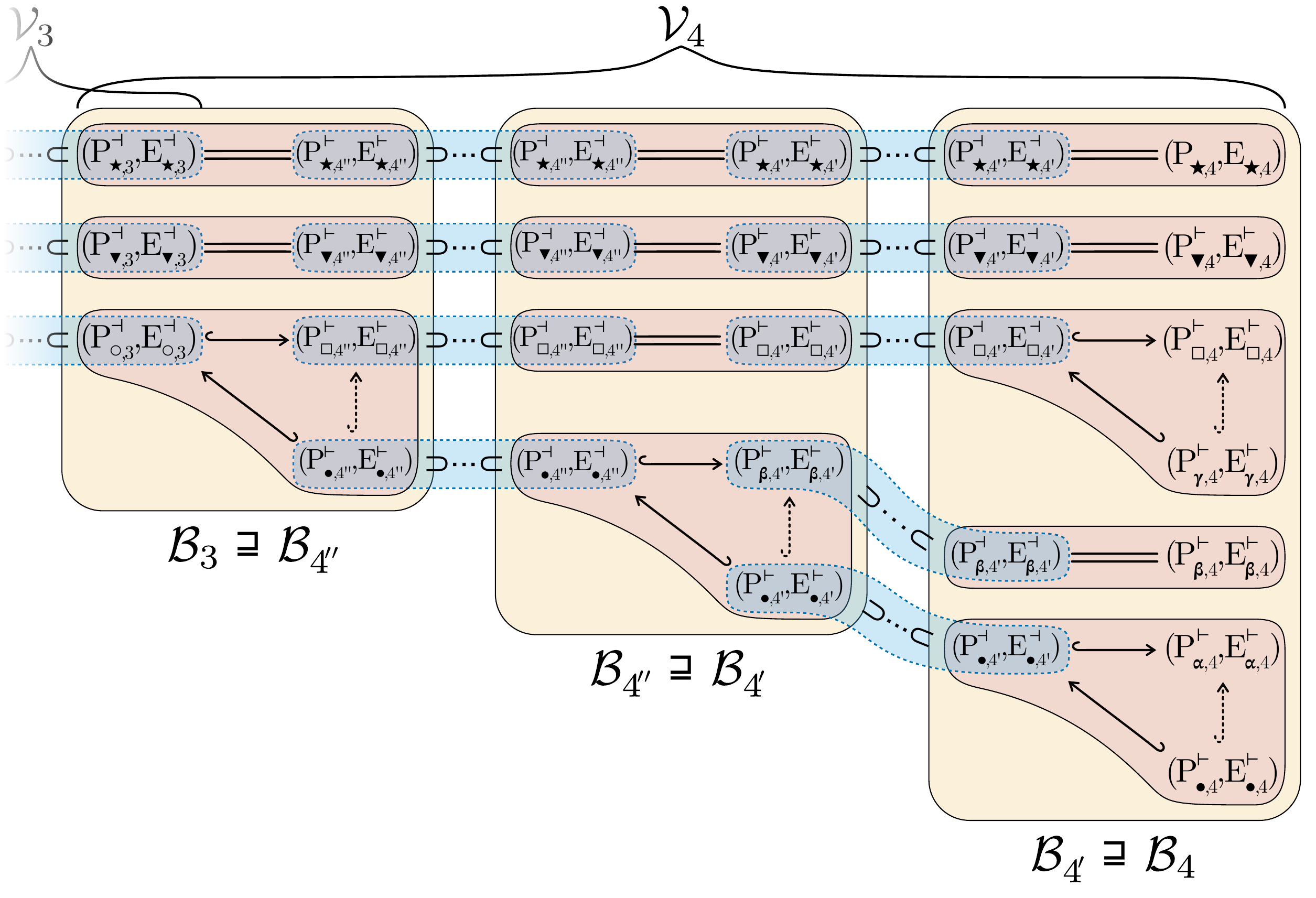}
    \caption{Transition diagram for the splitting cascade from Example~\ref{ex:main-example-part-6-cascade} (see also Figure~\ref{fig:main-example-MVFs-BD-cascade}).
    }
    \label{fig:transition-diagram-cascade}
\end{figure}

\begin{remark}
    One should view the above expansion of zigzag filtration analogously to the approach in standard persistence.
    In practice, for a filtration of complexes $K_0\subset K_1\subset\ldots\subset K_n$ we expand the sequence implicitly so that in each step a single cell is added. 
    The chosen order determines the basis and may change the pairing of cells in the algorithm.
    It does not affect the final persistence bars, because
        we treat the expanded parts as if they arrive at the same time.
    The same happens in our case; in order to make the construction computable we split the sequence into simpler steps and collapse them again at the very end.
    We discuss in Section~\ref{sec:discussion} how a choice of AR-cascade may affect the final outcome, 
        but a thorough study of this phenomenon is beyond the scope of this paper. 
\end{remark}

We prove Proposition~\ref{prop:merging-two-blocks} with the help of the following, more general lemma.

\begin{lemma}[Consolidation lemma]\label{lem:consolidation-BD}
    Let $\BD$ be a block decomposition of $X$ for $\cV$ and $\QQ\subset\PP$ be a convex subset.
    Define $\bl_{\QQ}\coloneqq\cset{\cV}(\{\bl_{p}\mid p\in \QQ\}, X)$.
    Then $\BD'\coloneqq\BD\setminus\{\bl_{q}\mid q\in\QQ\}\cup\{\bl_{\QQ}\}$ is also a block decomposition of $X$ for $\cV$ .
\end{lemma}
\begin{proof}
    An easy modification of the proof of \cite[Lemma~4.12]{LiMiMr2025} shows that $\bl_\QQ$ is an isolating block and that $\bl_\QQ\cap \bl_p=\emptyset$ for all $p\in\PP\setminus\QQ$.
    Thus, $\BD'$ is a family of mutually disjoint isolating blocks.
    
    Condition \ref{it:block_decomposition_uims} is satisfied, 
        because for a $\varphi\in\esolV(X)$ we can find $p\in\PP$ such that $\uimp\varphi\subset\bl_p\in\BD$. 
    If $p\in\QQ$ then $\uimp\varphi\subset\bl_\QQ\in\BD'$; otherwise we again have $\uimp\varphi\subset\bl_p\in\BD'$.

    To show that there exists a partial order satisfying \ref{it:block_decomposition_paths} suppose the contrary, 
        that is, the flow induced order on $\PP'$ contains a loop.
    Since there was no loop in $\PP$ for $\BD$ and the set of solutions remains the same, 
        the loop had to appear as a result of aggregating isolating blocks into $\bl_\QQ$.
    Thus, $\QQ$ is an element of the loop and we have a sequence of relations
        $\QQ<p_0<p_1<\ldots<p_k<\QQ$.
    It follows that there are $q,q'\in\QQ$ such that $q<p_0<q'$, but this contradicts the assumption that $\QQ$ is convex.
    Therefore, \ref{it:block_decomposition_paths} is also proved.
\end{proof}

\begin{proof}[Proof of Proposition~\ref{prop:merging-two-blocks}]
    Suppose that $\QQ$ is not convex in $\PP_0$.
    Then, for certain $p,p'\in\QQ$ and $r\in\PP\setminus\QQ$ we have $p>r>p'$.
    Therefore, there exist paths 
        $\rho\in\paths_{\cV_0}(\blz{p},\blz{r}, X)$ and 
        $\rho'\in\paths_{\cV_0}(\blz{r},\blz{p'}, X)$,
        which are also paths in $\cV_1$.
    % Since $\rho$ and $\rho'$ are also paths for $\cV_1$ we obtain 
    Hence,
        $\rho\in\paths_{\cV_1}(\blo{q},\blo{\idxfwd{}(r)},X)$ and 
        $\rho'\in\paths_{\cV_1}(\blo{\idxfwd{}(r)},\blo{q},X)$.
    This implies $q<\idxfwd{}(r)<q$, which contradicts that $\BD_1$ is a block decomposition.

    Lemma~\ref{lem:consolidation-BD} provides that $\BD_{0'}$ is indeed a block decomposition of $X$ for $\cV_0$.

    Finally, the statement $\bl_{r,0'}\coloneqq\cset{\cV_0}(\{\bl_{p,0},\bl_{p',0}\}, X)\subset\blo{q}$ follows directly from Theorem~\ref{thm:continuation-preimage-iota}\ref{it:continuation-preimage-iota-BD}.
    Clearly, $\bl_{p,0},\bl_{p',0}\subset \bl_{r,0'}$.
    Thus, $\BD_0\inscr\BD_{0'}\inscr\BD_1$.
\end{proof}

\subsection{Construction of the transition diagram}
    \label{subsec:construction-of-the-trans-diag}

In this section we present an explicit construction of a transition diagram for a zigzag filtration.
    % $\zzBD=\{(\BD_\tdyn,\cV_\tdyn)\}_{\tdyn\in\tInt}$ with $\tInt=\zint{\tdynmax}$.
The reader interested in the decomposition theorem leading to the Conley-Morse persistence diagram and its properties 
    can safely skip this section.

Let us begin with a simple zigzag filtration consisting of two block partitions
    $\zzBD\coloneqq (\BD_0,\cV_0)\inscr(\BD_1,\cV_1)$.
Let $\idxfwd{}:\PP_0\rightarrow\PP_1$ be the corresponding forward map.
Linear extensions on $\PP_0$ and $\PP_1$ are called \emph{filtration consistent linear orders}\index{linear order!filtration consistent}
    if $\idxfwd{}$ is \emph{order preserving}\index{order preserving map}, 
    that is
        for all $p,p'\in\PP_0$ relation $p<p'$ implies $\idxfwd{}(p)<\idxfwd{}(p')$.
The proposition below shows that the blocks
    that merge together when passing to $\BD_1$ are grouped together in the filtration consistent order.

\begin{proposition}
    If $p,p',p''\in\PP_0$, $p<p'<p''$ and $q\coloneqq\idxfwd{}(p)=\idxfwd{}(p'')$ then $\idxfwd{}(p')=q$.
\end{proposition}
\begin{proof}
    Assume the contrary that $\idxfwd{}(p')=q'\neq q$.
    Since $\idxfwd{}$ is an order preserving map, 
        $p<p'<p''$ implies $q<q'<q$, which is a contradiction.
\end{proof}

\begin{proposition}
    For any linear extension of $\PP_1$ there exists a filtration consistent linear order on $\PP_0$.
\end{proposition}
\begin{proof}
    Define a linear order on $\PP_0$ starting from the lowest element as follows:
    In increasing order, for each $q\in\PP_1$, consider the set $\idxfwd{}^{-1}(q)$.
    If it is nonempty, fix any linear order on that set consistent with the partial order on $\PP_0$ and append it to the growing sequence.
    If it is empty, skip it. 

    The obtained order on $\PP_0$ clearly is a linear extension. 
    By construction, $p<p'$ implies $\idxfwd{}(p)\leq\idxfwd{}(p')$.
\end{proof}
Assume that $\PP_0=\zintp{m}$ reflecting a linear order consistent with the filtration.
For a $p\in\PP_0$ denote $q_p\coloneqq \idxfwd{}(p)$.
Then we define the sequence of closed sets:
\begin{align}
\label{eq:Nsetssequence}
    N_{p}\coloneqq\begin{cases}
        N_{p-1}\cup \pf_{\cV_0}(B_{p,0}, X);\quad   &\text{if } p \neq \max\idxfwd{}^{-1}(q_p),\\
        N_{p-1}\cup \pf_{\cV_1}(B_{q_p,1}, X);\quad  &\text{if } p = \max\idxfwd{}^{-1}(q_p),\\
    \end{cases}
\end{align}
where $N_0\coloneqq\emptyset$.
This gives us a nested sequence of topological spaces.
The two cases in formula \eqref{eq:Nsetssequence} distinguish whether $B_p$ is the maximal element in the linear order among the sets merging into $B_{q_p,1}$. 
If $\BD_0$ and $\BD_1$ are block partitions, then the sequence becomes a filtration of $X$ consistent with the chosen linear order on $\PP_0$.
\begin{proposition}\label{prop:Nsequence-for-partitions}
    If $\BD_0$ and $\BD_1$ are block partitions then $N_p=\bigcup_{k\leq p} B_{k,0}$.
\end{proposition}

The next two propositions guarantee that the sets defined in formula~\eqref{eq:Nsetssequence} directly lead to index pairs that can be used to construct the transition diagram.

\begin{proposition}\label{prop:NpNpm-ipair-Mp}
    $(N_{p}, N_{p-1})$ is an index pair for $M_{p,0}\coloneqq\inv_{\cV_0}\bl_{p,0}$.
\end{proposition}

\begin{proposition}\label{prop:NpNpm-ipair-Mq}
    Consider a filtration $(\BD_0,\cV_0)\inscr(\BD_1,\cV_1)$.
    Let $q\in\PP_1$, $\QQ\coloneqq\idxfwd{}^{-1}(q)$, and $p\coloneqq\max\QQ$.
    Denote $M_{\QQ,0}\coloneqq\inv_{\cV_0}\bl_{q,1}$ and 
        $M_{q,1}\coloneqq\inv_{\cV_1}\bl_{q,1}$.
    If $\QQ\neq\emptyset$, then, $(P,E)\coloneqq(N_p, N_{p-\abs{\QQ}})$
    is a common index pair for $M_{\QQ,0}$ in $\cV_0$ and for $M_{q,1}$ in $\cV_1$.
\end{proposition}

\begin{example}\label{ex:main-example-part-7-concrete-transition-diagram}
    We apply Proposition~\ref{prop:Nsequence-for-partitions} to construct 
        index pairs for the transition diagram described in 
        Examples~\ref{ex:main-example-part-5-transition-diagram-basic} and~\ref{ex:main-example-part-6-cascade}. 
    According to the procedure, we need to choose consistent linear orders for every pair of consecutive block decompositions first.
    For the step $\BD_0\inscr\BD_1$ the only possibility is 
        $\bullet<\blacktriangledown$ for $\PP_0$ and $\circ<\star<\blacktriangledown$ for $\PP_1$. 
    For the step $\BD_1\ovscr\BD_2$ we also have no choice but  
        $\circ<\blacktriangledown<\star$ for $\PP_1$ and $\circ<\star$ for $\PP_2$. 
    Note that we have to choose different linear orders for $\PP_1$ at each step in to ensure consistency with the filtration.
    For the remaining steps we use a proper ordering which is already depicted in 
        Figures~\ref{fig:transition-diagram-basic} and~\ref{fig:transition-diagram-cascade}.

    In the running example we consider only block partitions which allows us to apply Proposition~\ref{prop:Nsequence-for-partitions} to easily obtain the index pairs.
    In particular, for the transition from $\BD_0$ to $\BD_1$ we have:
        $N_0\coloneqq\emptyset$, $N_1\coloneqq\bl_{\circ,1}$,  $N_2\coloneqq\bl_{\circ,1}\cup\bl_{\star,1}$, and $N_3\coloneqq K$.
    To construct the splitting diagrams between $\BD_1$ and $\BD_2$ we put 
        $N_0\coloneqq\emptyset$, $N_1\coloneqq\bl_{\circ,1}$,  $N_2\coloneqq\bl_{\circ,1}\cup\bl_{\blacktriangledown,1}$, and $N_3\coloneqq K$.
    The obtained transition diagram is presented in Figure~\ref{fig:transition-diagram-actual-blocks}.
    Instead of listing simplices, we draw the corresponding subcomplexes for each index pair.
    
    In practice, the connecting sequences (the blue strips) are not always needed.
    In our example, no additional intermediate index pair is necessary.
    Only two isolating blocks in $\BD_1$ have different index pairs,
        but they are still related by an inclusion.
    \qedex
\end{example}

\begin{figure}
    \centering
    \includegraphics[width=1.0\linewidth]{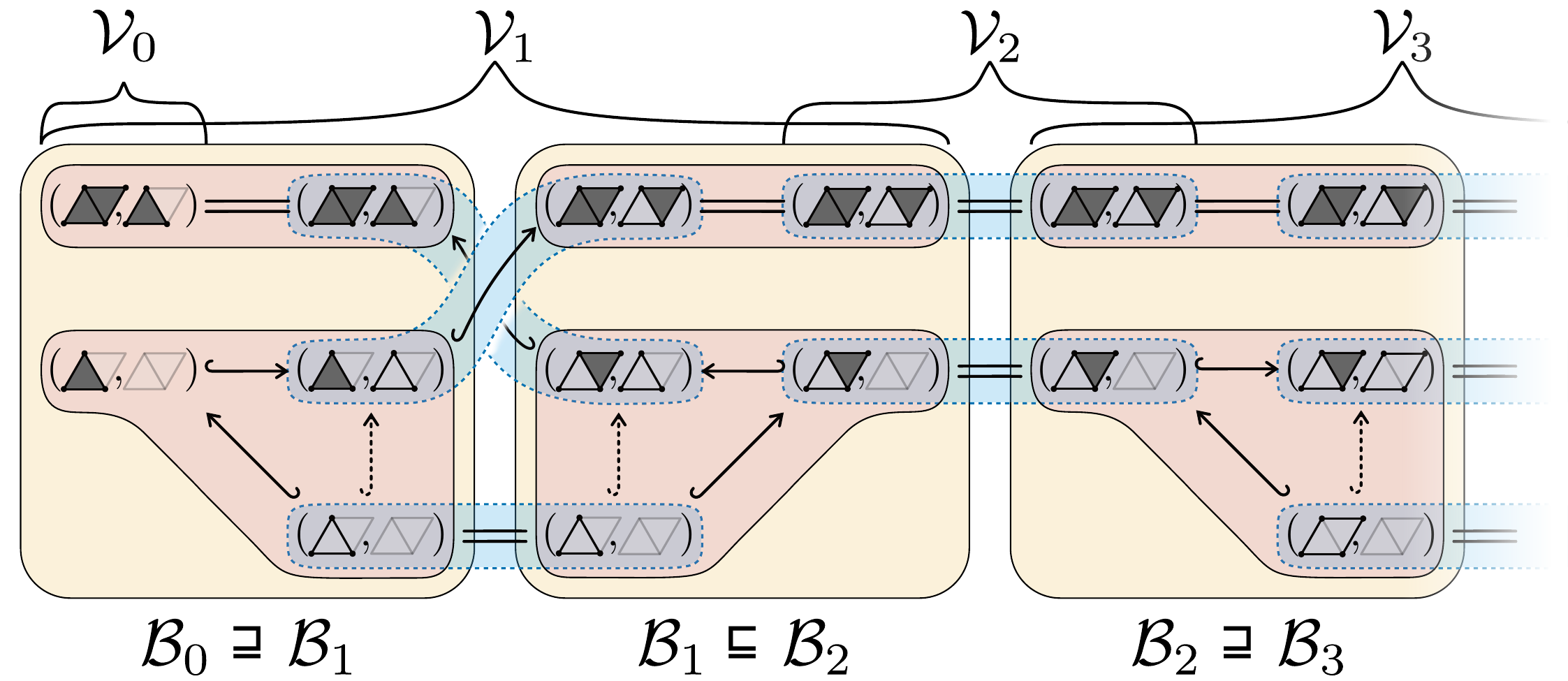}
    
    \vspace{0.5cm}
    
    \includegraphics[width=1.0\linewidth]{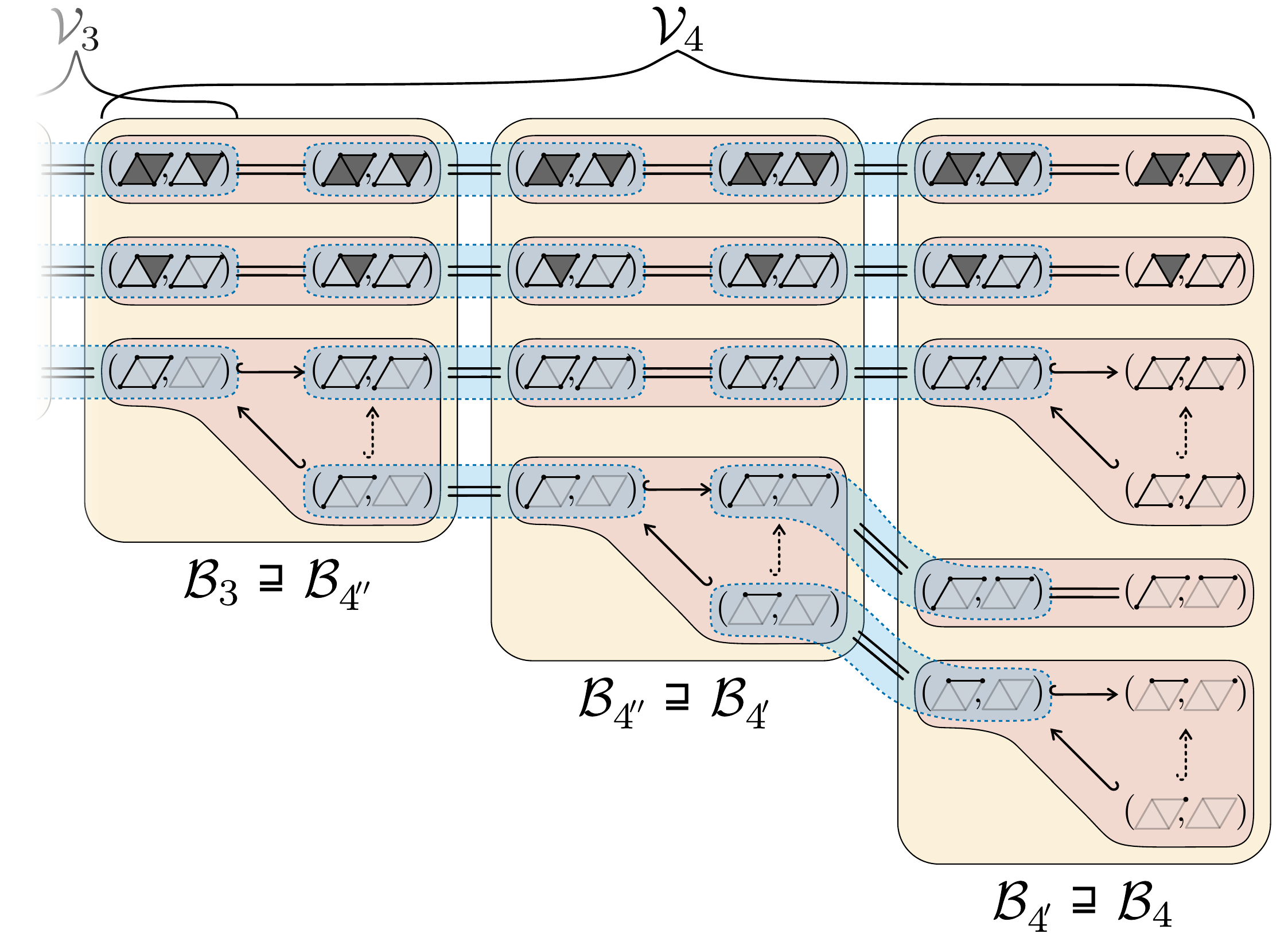}
    \caption{A transition diagram for the zigzag filtration $\zzBD$ from Example~\ref{ex:main-example-part-1}
        with concrete simplicial complexes representing  index pairs.
        }
    \label{fig:transition-diagram-actual-blocks}
\end{figure}

Proposition~\ref{prop:Nsequence-for-partitions} follows immediately from the next lemma.
\begin{lemma}\label{lem:Np-minus-Npm-Bp}
    The sets $\{N_p\}_p$ defined in formula~\eqref{eq:Nsetssequence} have the following properties:
    \begin{enumerate}[label=(\alph*)]
        \item\label{it:Np-minus-Npm-Bp-P0} 
            Let $p\in\PP_0$.
            Then $\blz{p}\subset N_p\setminus N_{p-1}$. 
        \item\label{it:Np-minus-Npm-PQ} 
            Let $q\in\PP_1$, $\QQ\coloneqq\idxfwd{}^{-1}(q)$ and $p\coloneqq\max\QQ$.
            Then $\blo{q}\subset N_p\setminus N_{p-\abs{\QQ}}$.
    \end{enumerate}
\end{lemma}

\begin{proof}
    By definition, $\blz{p}\subset\pf_{\cV_0}(\blz{p}, X)\subset N_p$.
    Suppose that there exists an $x\in \blz{p}\cap N_{p-1}$.
    Since $x\in N_{p-1}$ 
    we have two cases corresponding to formula~\eqref{eq:Nsetssequence};
        in the first one,
            there exists a $p'\in\PP$ such that $p'<p$ and $x\in\pf_{\cV_0}(\blz{p'},X)$;
        but $x$ is also in $\blz{p}$, therefore there is a path $\rho\in\paths_{\cV_0}(\blz{p'}, \blz{p},X)$, 
        and by \ref{it:block_decomposition_paths} we have $p<p'$, which is a contradiction.
    In the second case, there exists a $p'\in\PP$ such that $p'<p$, $q'\coloneqq\idxfwd{}(p')$ and 
        $x\in\pf_{\cV_1}(\blo{q'},X)$. 
    By the same argument, we can find a path $\rho'\in\paths_{\cV_1}(\blo{q'}, \blz{p},X)$. 
    Since $\blz{p}\subset\blo{q}$, 
        $\rho$ is also a path from $\blo{q'}$ to $\blo{q}$ in $\cV_1$.
    Thus, we obtain $q<q'$;
        but $p'<p$ implies $\idxfwd{}(p')=q'<q=\idxfwd{}(p)$, which gives a contradiction.
    Hence,~\ref{it:Np-minus-Npm-Bp-P0} is proved. 

    For the second statement it is clear that $\blo{q}\subset\pf_{\cV_1}(\blo{q},X)\subset N_p$.
    To see that $\blo{q}\cap N_{p-\abs{\QQ}}=\emptyset$ assume the contrary, that is, suppose there exists an $x\in \blo{q}\cap N_{p-\abs{\QQ}}$.
    Again, we have two cases;
        in the first one there exists a $p'\in\PP$ such that $p'\leq p-\abs{\QQ}$ and $x\in\pf_{\cV_0}(\blz{p'}, X)$.
    Therefore, there is a path $\rho\in\paths_{\cV_0}(\blz{p'}, \blo{q},X)$, 
        which implies $q<\idxfwd{}(p')$,
        but this contradicts the filtration consistent order assumption, 
        in particular $p'<p$ implies $\idxfwd{}(p')<\idxfwd{}(p)=q$.
    In the second case, there exists a $p'\in\PP$ such that $p'\leq p-\abs{\QQ}$, $q'\coloneqq\idxfwd{}(p')$ and 
        $x\in\pf_{\cV_1}(\blo{q'})$. 
    Therefore, $\paths_{\cV_1}(\blo{q'}, \blo{q},X)\neq\emptyset$, 
        and again we obtain $q<q'$, while the assumption $p'<p$ implies $\idxfwd{}(p')=q'<q=\idxfwd{}(p)$,
        which is a contradiction.
\end{proof}

In the following proofs we will also use the following remark which is an immediate corollary of Lemma~\ref{lem:subBD}.
\begin{remark}\label{rem:one-element-BD}
    Let $\BD=\{\bl\}$ be a one element block decomposition of $X$ then $\inv_\cV\bl=\inv_\cV X$.
\end{remark}

\begin{proof}[Proof of Proposition~\ref{prop:NpNpm-ipair-Mp}]
    We prove the statement by showing that $(N_p,N_{p-1})$ satisfies the conditions of Proposition~\ref{prop:invPE}.
    By Proposition~\ref{prop:push_forward_closed_vcomp}, 
        the set $N_{p}\setminus N_{p-1}$ is $\cV_0$-compatible as a difference of $\cV_0$-compatible sets; 
        it is also locally closed as a difference of closed sets.
    Hence, $N_{p}\setminus N_{p-1}$ is an isolating block by Proposition~\ref{prop:iso_block_is_lcl_Vcomp}. 
    Lemmas~\ref{lem:Np-minus-Npm-Bp}\ref{it:Np-minus-Npm-Bp-P0} and~\ref{lem:subBD} imply that $\{\blz{p}\}$ is a one-element block decomposition of $N_{p}\setminus N_{p-1}$.
    By Remark~\ref{rem:one-element-BD} we have $\inv_{\cV_0}N_{p}\setminus N_{p-1} = M_{p,0}$, 
        which finishes the proof.
\end{proof}

\begin{proof}[Proof of Proposition~\ref{prop:NpNpm-ipair-Mq}]
    Note that for the considered situation both $N_p$ and $N_{p-\abs{Q}}$ are obtained by applying the second rule of formula~\eqref{eq:Nsetssequence}.
    Therefore, by Proposition~\ref{prop:push_forward_closed_vcomp}, the set $P\setminus E$ is $\cV_1$-compatible and locally closed as a difference of $\cV_1$-compatible, closed sets.
    In particular, it is also $\cV_0$-compatible.
    Hence, it is an isolating block both in $\cV_0$ and~$\cV_1$. 

    To show that $(P,E)$ is an index pair for $M_{\QQ,0}$ in $\cV_0$, 
        note that by Theorem~\ref{thm:continuation-preimage-iota}\ref{it:continuation-preimage-iota-BD} 
        we know that $\BD_\QQ\coloneqq\{\blz{p}\mid p\in \QQ\}$ is a block decomposition of $\blo{q}$.
    By Lemma~\ref{lem:Np-minus-Npm-Bp}\ref{it:Np-minus-Npm-Bp-P0}
        $\bl_{p,0}\subset P\setminus E$ when $p\in\QQ$ and $\bl_{p,0}\cap P\setminus E=\emptyset$ when $p\not\in\QQ$.
    Therefore, Lemma~\ref{lem:subBD} implies that $\BD_\QQ$ is also a block decomposition of $P\setminus E$.
    Therefore, $\BD_\QQ$ is a block decomposition both for $P\setminus E$ and $\blo{q}$.
    Thus, we apply Lemma~\ref{lem:subBD} twice to obtain 
        $\inv_{\cV_0}P\setminus E = \inv_{\cV_0}\cset{\cV_0}(\BD_Q, \blo{q}) = \inv_{\cV_0}\blo{q}$.
    Hence, we get the thesis by Proposition~\ref{prop:invPE}.

    Now we focus on $M_{q,1}$.
    Lemmas~\ref{lem:Np-minus-Npm-Bp}\ref{it:Np-minus-Npm-PQ} and~\ref{lem:subBD} imply that $\{\blo{q}\}$ is a one-element block decomposition of $P\setminus E$ in $\cV_1$. 
    Therefore, by Remark~\ref{rem:one-element-BD} we have $\inv_{\cV_1}P\setminus E = M_{q,1}$, 
        and the result again follows by Proposition~\ref{prop:invPE}.
\end{proof}

\section{Persistence Modules and Gentle Algebras}
\label{sec:gentle-algebras-persistence}

In this section, we review some concepts from quiver representations and persistence modules.
In particular, we focus on the notion of a gentle algebra, a~well-known concept from quiver representation theory that we use to study the transition diagram.
Recall that, in this paper, all vector spaces are finite dimensional and defined over a fixed field~$\fk$.

\subsection{Quivers}\label{subsec:quivers}
In this section, we summarize the main definitions of quiver theory. 
For a more detailed introduction we refer to \cite{Simson_Skowroński_book_2007}.
A \emph{quiver}\index{quiver}, $\quiver$, consists of a~set of nodes, $\quiver_0$, a set of arrows, $\quiver_1$, and two maps $s,e \colon \quiver_1 \to \quiver_0$, such that $s$ and $e$  define the \emph{source}\index{source node} node and the \emph{target}\index{target node} node of each arrow, respectively.
We assume that $\quiver$ is a \emph{finite quiver}\index{quiver!finite}, which means that the sets $\quiver_0$ and $\quiver_1$ are finite.
Given $a,b \in \quiver_0$, a \emph{path}\index{path} with source $a$ and target $b$ is a sequence of arrows $\alpha_1 \ldots \alpha_n$ such that $s(\alpha_1) = a$, $e(\alpha_n) = b$ and $e(\alpha_i) = s(\alpha_{i+1})$ for $i \in\zintp{n-1}$.
Note that the convention for paths in quivers is different from paths in directed graphs: while the latter are defined on vertices, the former are defined on edges (see Section~\ref{subsec:prelim-graphs}).

The \emph{path algebra}\index{path algebra} of $\quiver$, denoted $\fk\quiver$, is a $\fk$-algebra whose underlying vector space has the set of all paths in $\quiver$ as a basis and whose product is
\begin{equation*}
    (\alpha_1 \ldots \alpha_n) \cdot (\beta_1 \ldots \beta_m) =   \alpha_1 \ldots \beta_m
\end{equation*}
if $e(\alpha_n) = s(\beta_1)$ or $0$ otherwise.
In order for $\fk\quiver$ to be well defined, we need to introduce trivial paths.
A \emph{trivial path} $\varepsilon_a$ for the vertex $a\in\quiver_0$ is defined as a path of length $0$ such that $s(\varepsilon_a) = e(\varepsilon_a) = a$.
Note that trivial paths are not elements of $\quiver_1$ but they are elements of $\fk\quiver$.
If $e(\alpha) = s(\beta) = a$, we define $\alpha \cdot \varepsilon_a = \alpha$ and $\varepsilon_a \cdot \beta = \beta$.

The product of basis elements is extended to any element of $\fk\quiver$ by distributivity.
We denote the ideal generated by all arrows in $\quiver_1$ by the arrow ideal $R$.
It can be decomposed as the following sum of $\fk$-vector spaces 
\[
    R = V_1 \oplus V_2 \oplus \ldots \oplus V_n \oplus \ldots
\]
where $V_n$ is the vector subspace of $\fk\quiver$ generated by (the sum) of paths of length~$n$.
In particular, there are no paths of zero length in $R$.
Given an ideal $I \subset \fk\quiver$, $(\quiver,I)$ is said to be a \emph{bound quiver}\index{quiver!bound} if there exists $m$ with $R^m \subseteq I \subseteq R^2$.
In other words, the ideal $I$ contains all paths which are long enough.

Lastly, a \emph{representation}\index{representation} $\perm{M}$ of a quiver associates to each $a \in \quiver_0$ a $\fk$-vector space, $\perm{M}(a)$, and to every arrow $\alpha$ from $q$ to $r$ a linear map $\perm{M}(\alpha) \colon \perm{M}(q) \to \perm{M}(r)$.
The \emph{evaluation}\index{evaluation} of $\perm{M}$ on the path $u = \alpha_1\ldots \alpha_n$ is given by the composition $\perm{M}(u) = \perm{M}(\alpha_n) \circ \ldots \circ \perm{M}(\alpha_1)$.
The concept of evaluation extends to any element of $\fk\quiver$ by linearity.
Given a bound quiver $(\quiver, I)$, a representation $\perm{M}$ is bound by $I$ if for any $u \in I$, $\perm{M}(u) = 0$.
We also define the direct sum of two representations $\perm{M}_1, \perm{M}_2$, as $\perm{M}_1 \oplus \perm{M}_2 \colon \quiver \to \vect$, where $(\perm{M}_1 \oplus\perm{M}_2)(a) = \perm{M}_1(a)\oplus\perm{M}_2(a)$ and $(\perm{M}_1 \oplus\perm{M}_2)(\alpha) = \perm{M}_1(\alpha)\oplus\perm{M}_2(\alpha)$.
We say a representation $\perm{M}$ is indecomposable if, whenever $\perm{M} \cong \perm{M}_1 \oplus\perm{M}_2$, we have that $\perm{M}_1 = 0$ or $\perm{M}_2 = 0$.

\subsection{Gentle algebras}\label{subsec:gentle-algebras}
Gentle algebras were introduced in the 80s as a generalization of the path algebras of some well-known quivers, usually denoted as $\mathbb{A}$ and $\tilde{\mathbb{A}}$, \cite{Assem1981, Assem1987}.
Since then, they have been applied to the study of different algebras, including cluster algebras~\cite{gentel_triangulations} and enveloping algebras of Lie algebras~\cite{Huerfano2006CATEGORIFICATIONOS}.
As we will see, they can be used to study the transition diagram.
We follow the definition appearing in \cite{Simson_Skowroński_book_2007}.
\begin{definition}\label{def:gentle_algebra}
    Let $\quiver$ be a quiver and $\fk\quiver$ its path algebra. 
    Then, $\fk\quiver/I$ is called a \emph{gentle algebra}\index{gentle algebra} if $(\quiver,I)$ is a bound quiver and has the following properties:
    \begin{enumerate}
        \item Each node of $\quiver$ is the source of at most two arrows and the target of at most two arrows.
        \item For each arrow $\alpha \in \quiver_1$, there is at most one arrow $\beta$ and one
        arrow $\gamma$ such that $\alpha\beta \notin I$ and $\gamma\alpha \notin I$.
        \item For each arrow $\alpha \in \quiver_1$, there is at most one arrow $\beta$ and one arrow $\gamma$ such that $\alpha\beta \in I$ and $\gamma\alpha \in I$.
        \item The ideal I is generated by paths of length two.
    \end{enumerate}
\end{definition} 

By abuse of terminology, we refer to the bound quiver $(Q, I)$ itself as a gentle algebra whenever the associated algebra $kQ/I$ is gentle.
Gentle algebras can be decomposed into the so-called strings and band modules~\cite{Butler1987AuslanderreitenSW}.
We introduce some necessary definitions before describing these modules.
Given $\alpha \in \quiver_1$, we define its \emph{formal reverse}, $\alpha^-$, such that $s(\alpha^-) = e(\alpha)$ and $e(\alpha^-) = s(\alpha)$.
We also set the operation $(\alpha^-)^- = \alpha$. 
Given a bound quiver $(\quiver,I)$, a \emph{string}\index{string} is a sequence $\alpha_1^{\ell_1} \ldots \alpha_n^{\ell_n}$ with $\ell_i \in \{-1, 1 \}$ such that 
$e(\alpha_i^{\ell_i}) = s(\alpha_{i+1}^{\ell_{i+1}})$ and $\alpha_i^{\ell_i} \neq {(\alpha_{i+1}^{\ell_{i+1}})}^-$ for $i\in\zintp{n-1}$, and satisfying that for any $1 \leq i < j \leq n$, neither the sequence $\alpha_i^{\ell_i} \ldots \alpha_j^{\ell_j}$ nor $\alpha_j^{-\ell_j}, \ldots, \alpha_i^{-\ell_i}$ is in $I$.
We also define two trivial strings $\{ \varepsilon_a, \varepsilon_a^-\}$ for each node $a$. 

In addition, we define $\stringset(\quiver,I)$ as the quotient set given by all possible strings modulo the relation $(\alpha_1^{\ell_1} \ldots \alpha_n^{\ell_n}) \sim (\alpha_n^{-\ell_n}, \ldots, \alpha_1^{-\ell_1})$.
Hence, any class $u \in \stringset(\quiver,I)$ is formed by a string and its formal reverse.
For the sake of simplicity we also write $u$ for the class $[u]$ in $S(\quiver, I)$.  
We say that $u \in \stringset(\quiver,I)$ passes through the arrow $\alpha$ if $\alpha$ appears in either of the two strings forming $u$.
Analogously, we say that $u$ passes through the node $a$ if it is the starting or ending node of an arrow for which $u$ passes through.

\begin{definition}
    Given $u \in \stringset(\quiver,I)$ define the \emph{string module}\index{string module}~$\stringmodule{u}$ as the representation
    \[
        \stringmodule{u}(a) = 
            \begin{cases*}
                k & if $u$ passes through $a$ \\
                0 & otherwise,
            \end{cases*}
    \]
    where $a \in \quiver_0$ and $\alpha \in \quiver_1$.
    Moreover, $\stringmodule{u}(\alpha)$ is the identity map between $\stringmodule{u}(s(\alpha))$ and $\stringmodule{u}(e(\alpha))$ if $u$ passes through $\alpha$, or the zero map otherwise.
\end{definition}

A band is a string with the same source and target.
Since bands are cyclic, their representations have a more complex behavior than the rest of strings.
However, as we will see in Section~\ref{sec:cm-barcodes}, bands do not appear in our setting;
thus, for the sake of simplicity, we omit further details on band modules.

The following theorem is a direct consequence of the description of all possible Auslander-Reiten sequences for gentle algebras given in \cite{Butler1987AuslanderreitenSW}.

\begin{theorem}\label{the:decomposition_quiver}
    The set of indecomposables for the representations of a gentle algebra $(\quiver,I)$ is formed by string modules and band modules. 
\end{theorem}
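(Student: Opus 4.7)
The plan is to follow the classical Butler--Ringel approach for special biserial algebras, of which gentle algebras form a subclass, and specialize the argument to the gentle case using the four axioms of Definition~\ref{def:gentle_algebra}. First I would verify that string and band modules are well defined and indecomposable. For a string module $\stringmodule{u}$, the gentleness conditions (2) and (3) ensure that at each node of $\quiver$ the allowed continuations of $u$ are essentially unique, so one can compute $\mathrm{End}(\stringmodule{u})$ explicitly and show it is local (in fact isomorphic to $\fk$ for strings, and to a local $\fk[t]$-algebra for bands after choosing an automorphism of the underlying cyclic vector space). Local endomorphism ring then yields indecomposability via Fitting's lemma.

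The main task is the converse: every finite-dimensional indecomposable bound representation of $(\quiver, I)$ is isomorphic to some $\stringmodule{u}$ or to a band module. The standard strategy is the \emph{functorial filtration} method. For each arrow $\alpha \in \quiver_1$ one defines two subfunctors corresponding to ``entering $\alpha$'' and ``leaving $\alpha$'' on any representation $\perm{M}$, and condition~(1) of Definition~\ref{def:gentle_algebra} guarantees that at each vertex at most two such filtration steps collide. Conditions~(2) and~(4) force the relations at a vertex to be monomial of length two, which makes the collisions splittable: at each vertex $a$, the vector space $\perm{M}(a)$ decomposes canonically as a direct sum indexed by the local ``string germs'' passing through $a$. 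Condition~(3) ensures that the global gluing of these local decompositions along arrows is controlled by the set of admissible strings, producing a direct sum decomposition of $\perm{M}$ whose summands are parametrized precisely by $\stringset(\quiver,I)$ together with the cyclic data attached to bands.

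At that point I would assemble the two halves: the functorial filtration gives $\perm{M} \cong \bigoplus_i \stringmodule{u_i} \oplus \bigoplus_j \perm{B}_j$ for some strings $u_i$ and band modules $\perm{B}_j$, and indecomposability of $\perm{M}$ forces exactly one summand to be nonzero, which yields the theorem. Since the paper cites \cite{Butler1987AuslanderreitenSW} and really only needs the statement, I would present the argument as specialization of the Butler--Ringel classification, citing it for the detailed functorial filtration computation and including a self-contained check only for the indecomposability of $\stringmodule{u}$ (the part that will be actually used later, since bands will be excluded in Section~\ref{sec:cm-barcodes}).

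The main obstacle I anticipate is the bookkeeping in the functorial filtration step: one must check that the local decompositions at each vertex are compatible across arrows \emph{in the presence of the ideal $I$}, and that the monomial relations generated by paths of length two (condition (4)) are the precise combinatorial input that makes this compatibility automatic. This is the point where the gentleness hypotheses are used in full, and where a careless argument would either miss non-string indecomposables or double-count string modules that differ only by formal reversal.
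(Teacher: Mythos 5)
Your proposal is correct and follows essentially the same route as the paper, which gives no argument of its own and simply derives the statement from the classification in \cite{Butler1987AuslanderreitenSW}; your functorial-filtration sketch is the standard proof behind that citation. One minor inaccuracy: $\mathrm{End}(\stringmodule{u})$ need not be isomorphic to $\fk$ (a string can overlap itself and produce nilpotent graph maps), but it is local, which is all that Fitting's lemma requires for indecomposability.
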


\subsection{Persistence modules}\label{sec:persistence}
We review here the most common notions in persistence theory.
For a more detailed introduction, the reader can refer to
\cite[Sections 3.4 and 4.3]{DW22} or \cite{chazal_structure_2016}.
Given a finite poset $\poset$,  we can consider the bound quiver $(\quiver, I)$ where $\quiver$ is the Hasse diagram of $\poset$ regarded as a quiver, and $I$ is the ideal of the path algebra of $\quiver$ generated by all commutative relations. 
With this notation, a \emph{persistence module}\index{persistence module} $\perm{M}$ over $\poset$ is just a representation of $\quiver$ bounded by $I$.
Note that if $\perm{M}$ is a persistence module, its evaluation over a path of $\quiver$ only depends on the source and target of the path.

A common example of a persistence module is a zigzag module, arising in topological data analysis \cite{carlsson_daSilva_zigzag_2010}.
A zigzag module is defined over a fence, that is a poset
\[
    a_1 \leftrightarrow a_2 \leftrightarrow \ldots \leftrightarrow a_m
\]
where every $\leftrightarrow$ is either $\leq$ or $\geq$.
Hence, the persistence module is a sequence of vector spaces and linear maps as follows,
\[
V_1 \overset{f_1}{\longleftrightarrow} V_2  \overset{f_2}{\longleftrightarrow} \ldots \overset{f_{m-1}}{\longleftrightarrow} V_m.
\]

\begin{example}\label{ex:zigzag}
    Consider the following zigzag module,
    \[\begin{tikzcd}
        V_1  & \arrow[l] V_2 & \arrow[l] V_3 \arrow[r] & V_4 & \arrow[l] V_5 .
    \end{tikzcd}\]
    Then, the underlying partial order is given by
    \[
        a_1 \; \leq \; a_2 \; \leq \; a_3 \; \geq \; a_4 \; \leq \; a_5,
    \]
    and the corresponding quiver is,
    \[\begin{tikzcd}
        a_1  & \arrow[l] a_2 & \arrow[l] a_3 \arrow[r] & a_4 & \arrow[l] a_5.
    \end{tikzcd}\]
    \qedex
\end{example}

The simplest type of a zigzag module is a zigzag interval.
Given a poset formed by just one fence with $m$ elements, and an interval $\zintab{l}{n} \subset \zintab{0}{m-1}$, the interval module $\mathbb{I}_{[l,n]}$ is defined as,
\begin{equation*}
    \perm{I}_{[l,n]}(a_i) = \begin{cases*}
        \fk \text{ if } i \in \zintab{l}{n} \\
        0 \text{ otherwise};
    \end{cases*}
    \qquad
\end{equation*}
and with identity maps between adjacent copies of $\fk$, and zero maps otherwise.
More generally, a zigzag module can be seen as the representation of a gentle algebra, and interval modules as its string modules.

\begin{example}\label{ex:interval}
    Consider the poset appearing in Example~\ref{ex:zigzag}.
    Then, the interval module $\perm{I}_{[2,4]}$ is
    \[\begin{tikzcd}
        0  & \arrow[l, "0", swap] k & \arrow[l, "\id", swap] k \arrow[r, "\id"] & k & \arrow[l] 0 .
    \end{tikzcd}\]
    \qedex
\end{example}

Interval modules are indecomposable and can be used to describe zigzag modules. 
This follows from a well-known result about the decomposability of quiver representations since the 70s~\cite{Gabriel1972}.
In the context of topological data analysis, zigzag modules were introduced in \cite{carlsson_daSilva_zigzag_2010}, together with a constructive proof of such a decomposition. 

\begin{theorem}\label{the:zigag_decomposition}
    Every persistence module can be uniquely decomposed, up to isomorphism, as a direct sum of indecomposables.
    Specifically, for zigzag modules these indecomposables are interval modules.
    \label{thm:decomposition}
\end{theorem}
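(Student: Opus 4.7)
The plan is to reduce both claims to the structure theory developed in Section~\ref{subsec:gentle-algebras} together with the classical Krull--Schmidt principle. For the first claim, I would observe that every persistence module considered in this paper is a finite-dimensional representation of the finite-dimensional bound path algebra $\fk\quiver/I$ attached to $\poset$; Krull--Schmidt then yields existence and uniqueness of the decomposition into indecomposables up to reordering and isomorphism, since the endomorphism ring of each indecomposable summand is local.

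For the second claim, my first step is to verify that the bound quiver underlying a zigzag module is a gentle algebra in the sense of Definition~\ref{def:gentle_algebra}. The Hasse diagram of a fence poset is itself a fence, so every vertex has in-degree at most two and out-degree at most two (condition~(1)). Moreover, since a fence contains no pair of distinct directed paths with a common source and a common target, the commutativity ideal $I$ is zero; condition~(4) is then satisfied by the empty generating set, condition~(3) is vacuous (no composable pair lies in $I=0$), and condition~(2) holds because at every non-peak, non-valley vertex there is exactly one possible continuation of any given arrow.

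Once gentleness is established, Theorem~\ref{the:decomposition_quiver} decomposes any zigzag module into string modules and band modules. I would next exclude band modules by noting that a band requires a nontrivial closed walk in the quiver built from arrows together with their formal reverses, but the underlying undirected graph of a fence is a tree and so admits no such walk. Hence only string modules arise. A string in the fence quiver is uniquely determined by the consecutive set of vertices it traverses, namely an interval $[l,n]\subseteq[0,m-1]$, and the associated string module is, directly from its definition, the interval module $\perm{I}_{[l,n]}$ of Example~\ref{ex:interval}.

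The main subtlety I expect is the careful bookkeeping of the gentleness conditions for the fence quiver---especially keeping straight the interplay between conditions~(2) and (3) when $I=0$---and the clean argument that strings in the tree-like fence coincide with intervals. Given those verifications, the Krull--Schmidt and band-exclusion steps are essentially automatic, and the two assertions of the theorem follow immediately.
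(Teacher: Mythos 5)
Your argument is correct, but it takes a different route from the paper: the paper does not prove Theorem~\ref{the:zigag_decomposition} at all, treating it as a classical result and citing Gabriel's theorem \cite{Gabriel1972} (where the indecomposables of an $A_n$-type quiver are classified by positive roots, i.e.\ intervals) and the constructive proof of \cite{carlsson_daSilva_zigzag_2010}. You instead derive the zigzag case from the machinery the paper builds anyway: Krull--Schmidt for the existence and uniqueness of the decomposition (valid here since all vector spaces are finite-dimensional and the bound path algebra of a finite acyclic quiver is finite-dimensional, so indecomposables have local endomorphism rings), and the Butler--Ringel classification (Theorem~\ref{the:decomposition_quiver}) for the identification of indecomposables. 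Your verification that a fence quiver with $I=0$ is gentle is sound --- each node has total degree at most two, so conditions (1)--(3) hold, and the bound-quiver requirement $R^m\subseteq I$ is met because a fence has no oriented cycles; the only convention to flag explicitly is that the zero ideal is ``generated by paths of length two'' via the empty generating set. The band exclusion (no non-backtracking closed walk in a tree) and the identification of strings with intervals are both correct. What your route buys is coherence: it makes Theorem~\ref{the:zigag_decomposition} a special case of the same argument used for Theorem~\ref{thm:CMbarcode-decomposes}, at the cost of invoking the heavier Butler--Ringel classification where Gabriel's theorem for type $A_n$ would suffice directly.
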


In particular, given a zigzag module $\perm{M}$, there exists a multiset $S$ (i.e.~a set where multiple instances of the same element are allowed) of intervals such that $\perm{M} \cong \bigoplus_{I \in S} \perm{I}_I$.
The multiset $S$ is denoted as the barcode of $\perm{M}$.
In addition, string modules can also be seen as the interval modules of  specific zigzag modules, as explained in Section~\ref{sec:algorithm}.

%%%%%%%%%%%%%%%%%%%%%%%%%%%%%%%%%%%%%%%%%%%%%%%%%%%%%%%%%%%%%
%%%%%%%%%%%%%%%%%%%%%%%%%%%%%%%%%%%%%%%%%%%%%%%%%%%%%%%%%%%%%
%%%%%%%%%%%%%%%%%%%%%%%%%%%%%%%%%%%%%%%%%%%%%%%%%%%%%%%%%%%%%
\section{Conley-Morse Persistence Barcode} \label{sec:cm-barcodes}
    Let $\zzTD$ be a transition diagram for a zigzag filtration of block decompositions~$\zzBD$.
    Note that we have a natural arrangement of index pairs by columns corresponding to the stages of the construction.
    In particular, Proposition~\ref{prop:iptd_is_acyclic} tells us that there is a poset underlying every transition diagram, which we denote by $\poset$ from now on.
    By applying the homology functor to $\zzTD$, we obtain a persistence module $\perm{M}$ over $\poset$.
    It consists of Conley indices and linear maps induced by inclusions.
    We call it the \emph{Conley-Morse persistence module}\index{Conley-Morse persistence module}.    
    Before studying its decomposition, we introduce two crucial properties of $\zzTD$ and $\perm{M}$.
    
\begin{lemma}
    Let $\zzTD$ be a transition diagram.
    Consider two index pairs in the same column.
    If there is an arrow in the induced poset $\poset$ representing an inclusion $\ipair{p,\tdyn} \hookrightarrow \ipair{q,\tdyn}$ then the induced homomorphism in homology is $0$.
\end{lemma}
\begin{proof}
    By construction the vertical inclusions appear only within an AR-split diagram. 
    The splitting theorem (Theorem~\ref{thm:index_triple}) guarantees that the induced linear map equals $0$.
\end{proof}

\begin{remark}\label{rem:hasse}
    Since all vertical arrows in $\zzTD$ come from the concatenation of two inclusions in an AR-split diagram, there are no vertical arrows in the Hasse diagram of $\poset$.
    In addition, due to the construction of the transition diagram (see Section~\ref{subsec:basic-transition-diagram}),
    every node in the Hasse diagram is the source of at most one arrow and the target of at most one arrow coming from the preceding column.
    The same is true for the following column.
    See Figure~\ref{fig:node_example} for an illustration.
\end{remark}
 \begin{figure}[h]
     \[\begin{tikzcd}
        \ldots \arrow[r, hookleftarrow]
            & (P_{p,\tdyn}, E_{p,\tdyn}) \arrow[r, hook] & \ldots \\
        \ldots\arrow[ur, hook, swap]  & & \ldots\arrow[ul, hook']
     \end{tikzcd}\]
     % \[\begin{tikzcd}
     %    \ldots \arrow[dr, hookleftarrow] & & \ldots \\
     %    &(P_{p,\tdyn}, E_{p,\tdyn}) \arrow[ur, hook] & \\
     %    \ldots\arrow[ur, hook, swap]  & & \ldots\arrow[ul, hook']
     % \end{tikzcd}\]
     \caption{Every node in the Hasse diagram of $\poset$ can have at most one arrow coming from, and another going to, a previous column; and one arrow coming from, and another going to, a later column.}
     \label{fig:node_example}
 \end{figure}
\begin{proposition}\label{prop:bdtd_gentle}
    Consider the poset $\poset$ induced by $\zzTD$, and the quiver $\quiver$ formed by the Hasse diagram of $\poset$.
    Consider the ideal $I$ generated by all 2-element paths $\beta\alpha$ forming AR-splits in $\zzTD$ (that is corresponding to $j_\ast\circ i_\ast$ in diagram~\eqref{eq:ip_index_triple}).
    Then, $(Q,I)$ is a gentle algebra.
\end{proposition}
\begin{proof}
    By Remark~\ref{rem:hasse} each point of $\quiver$ is the source and the target of at most two arrows.
    In addition, 
        by the structure of $\zzTD$ whenever $\beta\alpha$ is going back and forward (or forward and back) to the same column,
        it is a part of an AR-split diagram.
    Thus, every node is the ending of at most one arrow that goes back to a previous column, or forward to a later column, we have the following two properties:
    \begin{itemize}
        \item[-] For each arrow $\alpha \in \quiver_1$, there is at most one arrow $\beta$ and one arrow $\gamma$ such that $\alpha\beta \notin I$ and $\gamma\alpha \notin I$.
        \item[-] For each arrow $\alpha \in \quiver_1$, there is at most one arrow $\beta$ and one arrow $\gamma$ such that $\alpha\beta \in I$ and $\gamma\alpha \in I$.
    \end{itemize}
    Moreover, $(\quiver,I)$ is a bound quiver since $\quiver$ is finite and $I$ is generated by paths of length 2.
    Hence, $(\quiver,I)$ satisfies Definition~\ref{def:gentle_algebra} and is a gentle algebra.
\end{proof}

\begin{theorem}\label{thm:CMbarcode-decomposes}
    A Conley-Morse persistence module decomposes uniquely, up to isomorphism, into a direct sum of string modules.
\end{theorem}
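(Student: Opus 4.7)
The plan is to leverage the gentle algebra structure established in Proposition~\ref{prop:bdtd_gentle} and apply the known decomposition theorem for representations of gentle algebras. First I would verify that a Conley-Morse persistence module $\perm{M}$ is a valid representation of the quiver $\quiver$ bound by the ideal $I$ of Proposition~\ref{prop:bdtd_gentle}. Commutativity of the image of every path is automatic since $\perm{M}$ arises by applying the relative homology functor $H_d$ to the commutative diagram underlying the partial order from Proposition~\ref{prop:iptd_is_acyclic}. The vanishing on $I$ is immediate from the lemma preceding Proposition~\ref{prop:bdtd_gentle}, which states that every basic-split two-path induces the zero map on homology.

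Second, I would invoke Theorem~\ref{the:decomposition_quiver} combined with the Krull--Schmidt theorem. Since $\fk\quiver/I$ is a finite-dimensional $\fk$-algebra and $\perm{M}$ is finite-dimensional (each Conley index being finite-dimensional and the number of index pairs in a \BDTD\ being finite), Krull--Schmidt provides a decomposition, unique up to isomorphism and reordering of summands, into indecomposables, and Theorem~\ref{the:decomposition_quiver} identifies these indecomposables as string modules and band modules.

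The main obstacle is the third step: ruling out band summands. I expect to argue that the quiver $(\quiver, I)$ admits no band whatsoever, so that no band module can appear in the decomposition. A band would be a non-trivial closed walk in the underlying undirected graph of $\quiver$ that avoids the relations in $I$. The structural input is that the nodes of the \BDTD\ are organized in a linearly ordered sequence of columns (one per bifurcation or continuation step), that every arrow in $\quiver$ either stays within one column or connects two adjacent columns, and, by Remark~\ref{rem:hasse}, each node admits at most one arrow going to, and one coming from, each adjacent column. Tracing a hypothetical closed walk, at some column it would necessarily have to reverse direction using the diagonal arrows of a splitting diagram; however, two consecutive diagonal arrows in a splitting diagram form a basic split, which is precisely a generator of $I$, violating the band condition. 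Making this case analysis watertight---in particular tracking walks that meander through multiple splitting diagrams and continuation sequences, and handling the alternation of forward and backward horizontal steps correctly---will be the core technical challenge.

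Finally, combining the absence of bands with the decomposition from Step~2 yields the desired unique decomposition of $\perm{M}$ into a direct sum of string modules, which is the claim.
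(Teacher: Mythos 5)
Your proposal follows essentially the same route as the paper: verify that the Conley-Morse module is a representation bound by the basic-split ideal $I$, invoke the string/band dichotomy for gentle algebras (Theorem~\ref{the:decomposition_quiver}) together with uniqueness of the decomposition, and exclude bands by observing that a closed walk must reverse direction within a column, which forces two consecutive arrows forming a basic split and hence a relation in $I$. The step you flag as the "core technical challenge" is handled in the paper at the same level of brevity, so there is no substantive difference between the two arguments.
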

\begin{proof}
    Let $\perm{M}$ be a Conley-Morse persistence module and $(Q,I)$ the gentle algebra from  Proposition~\ref{prop:bdtd_gentle}.
    Then, note that $\perm{M}(\alpha) \circ \perm{M}(\beta) = 0$ if $\beta\alpha$ comes from an AR-split, so $\perm{M}(u) = 0$ for any $u \in I$ and $\perm{M}$ is a representation of $Q$ bound by $I$.
    In particular, Theorem~\ref{the:decomposition_quiver} implies that its indecomposables are string and band modules.
    However, $\perm{M}$ cannot have band modules.
    To see this, note that having band modules implies that there should be at least one string, $u$, with the same source and target.
    In particular, $u$ must start and end in the same column.
    But then, there must be at least two consecutive arrows going back/forward to the same column contained in $u$, forming an AR-split.
    This contradicts $u$ being a path in $(Q,I)$, since it cannot contain any path contained in $I$.
    Hence, no string can be a band and all indecomposables in $\perm{M}$ must be string modules.
\end{proof}

\begin{remark}\label{rem:backward}
     A direct consequence of Proposition~\ref{prop:bdtd_gentle} and Theorem~\ref{thm:CMbarcode-decomposes} is that if~$\perm{S}_u$ is a non-null string submodule of a Conley-Morse persistence module, then $u$ does not contain any AR-split.
     In particular, a Conley-Morse persistence module decomposes into strings that cannot go backward and forward, i.e., once it passes a column it cannot return to it.
\end{remark}

\begin{definition}
    Let $\perm{M}$ be a Conley-Morse persistence module and $S$ the multiset of strings such that $\perm{M} \cong \bigoplus_{u \in S} \perm{S}_u$.
    Then $S$ is called the \emph{Conley-Morse persistence barcode}\index{Conley-Morse persistence barcode} of $\perm{M}$.
    \label{def:CMbarcode}
\end{definition}

\begin{example}
    We present in Figure~\ref{fig:main-example-CM-persistence-module} the Conley-Morse persistence module corresponding to the diagram in Figure~\ref{fig:transition-diagram-actual-blocks}.
    The corresponding Conley-Morse persistence barcode is represented by the bars.
    The green bar represents the generator of degree 0, the blue bar the generator of degree 1, and the orange one the generator of degree 2.
    \qedex
\end{example}

\begin{figure}[b]
    \includegraphics[width=0.9\textwidth]{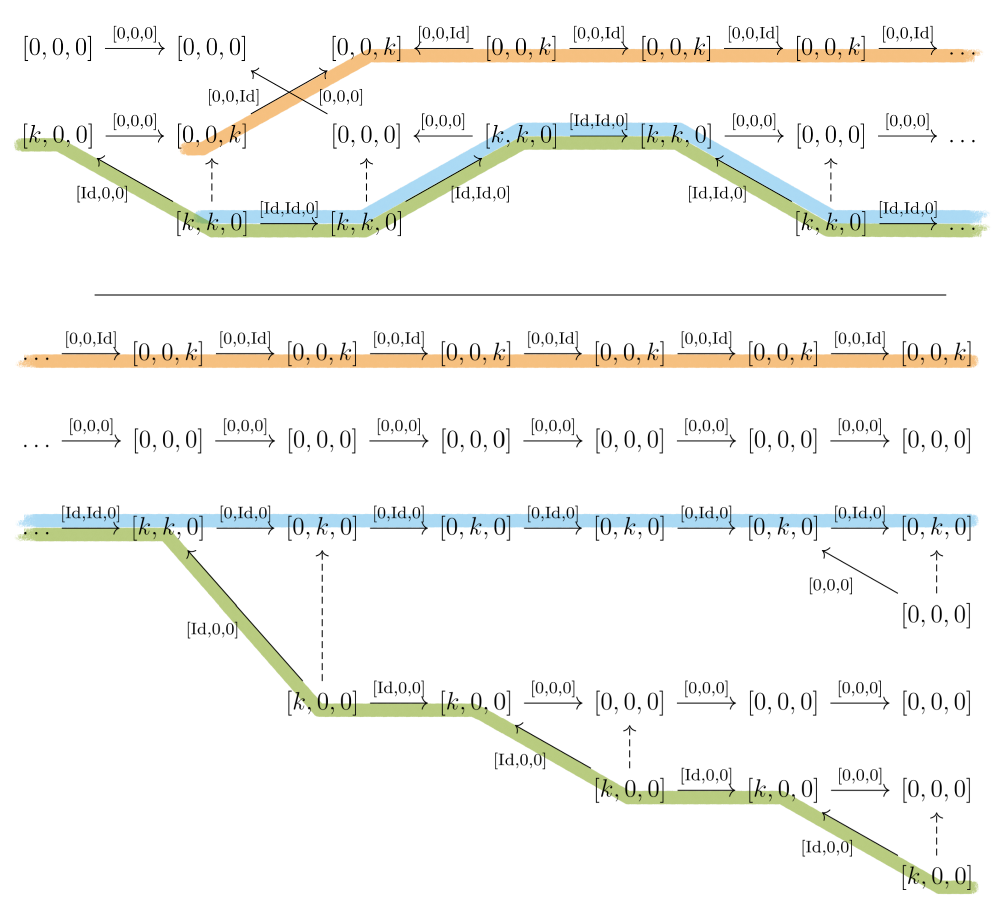}
    \caption{The Conley-Morse persistence module obtained from the diagram in Figure~\ref{fig:transition-diagram-actual-blocks}.
        The bars represent the string modules into which it decomposes.}
    \label{fig:main-example-CM-persistence-module}
\end{figure}

Let $\perm{M}$ be a Conley-Morse persistence module for the zigzag filtration of block decompositions $\zzBD\coloneqq\{(\BD_\tdyn, \cV_\tdyn)\}_{\tdyn\in\tInt}$, where $\tInt=\zint{\tdynmax}$.
Let $S$ be the corresponding multiset of strings of the corresponding Conley-Morse persistence barcode.
Recall that in general, the number of columns of the poset underlying $\perm{M}$ is greater than $\tdynmax+1$. 
However, analogously to the standard persistence algorithm, where every filtration is implicitly refined into a simplex-wise filtration,
we also treat the extra columns of $\perm{M}$ as an auxiliary maneuver facilitating the barcode computation.
Thus, we associate the lifespan of a string $u\in S$ with the corresponding values indexing the input sequence $\zzBD$. 
Note that Theorem~\ref{thm:CMbarcode-decomposes} assures that a string cannot ``go back'' with respect to the $\tdyn$ variable.
Therefore, the persistence of $u$ is the difference of the death index minus the birth index.
We can now revisit Theorem~\ref{thm:index_triple} and Corollary~\ref{cor:index_triple_consequences} and rephrase them in terms of persistence.

\begin{theorem}\label{thm:index-triple-consequences-persistence}
Let $\perm{M}$ be a Conley-Morse persistence module for the zigzag filtration of block decompositions $\zzBD\coloneqq\{(\BD_\tdyn, \cV_\tdyn)\}_{\tdyn\in\tInt}$, where $\tInt=\zint{\tdynmax}$,
    and let $S$ be the corresponding multiset of strings.
Then,
\begin{enumerate}[label=(${\alph*}$)]
    \item\label{it:index_triple_time consistency-2} 
        A string (interval) $u\in S$ cannot pass through the same time step $\tdyn\in\tInt$ twice,
        that is, it is spanned along the horizontal filtration.
    \item\label{it:index_triple_con_distribution-2} 
        There are two symmetric cases:
        \begin{itemize}
            \item If $\BD_{\tdyn}\inscr\BD_{\tdyn+1}$ (coarsening) 
                then every string $u\in S$ that is present at $\tdyn+1$ is also present in $\tdyn$, i.e., no bar is born through a coarsening.
            \item If $\BD_{\tdyn}\ovscr\BD_{\tdyn+1}$ (refinement) 
                then every string $u\in S$ that is present at $\tdyn$ is also present in $\tdyn+1$, i.e., no bar dies because of a refinement.
        \end{itemize}
    \item\label{it:index_triple_con_pairing-2}
        There are two symmetric cases:
        \begin{itemize}
            \item If $\BD_{\tdyn}\inscr\BD_{\tdyn+1}$ (coarsening) 
                then there is an even number of strings with the right endpoint at $\tdyn+1$, 
                    i.e., always an even number of bars dies through a coarsening.
            Moreover, each such string can be paired with another string of codimension~1.
            \item If $\BD_{\tdyn}\ovscr\BD_{\tdyn+1}$ (refinement), then there is an even number of strings with the left endpoint in column~$\tdyn$,
                i.e., always an even number of bars is born because of a refinement.
            Moreover, each such string can be paired with a string of codimension~1.
        \end{itemize}
\end{enumerate}
\end{theorem}
\begin{proof}
    As we will see, each of the properties \ref{it:index_triple_time consistency-2}, \ref{it:index_triple_con_distribution-2} and \ref{it:index_triple_con_pairing-2} is a consequence of properties \ref{it:0homomorphism}, \ref{it:index_triple_split} and \ref{it:index_triple_isomorphism} from Theorem~\ref{thm:index_triple}.
    In fact, property \ref{it:index_triple_time consistency-2} has already been proven since it is Remark~\ref{rem:backward} expressed with other words.
    
    We proceed now to prove \ref{it:index_triple_con_pairing-2}.
    Note that after applying the homology functor $H_d$ to $\zzTD$,
    the only linear functions that are not isomorphisms are the ones corresponding to the AR-splits,
    the remaining ones come from the connecting sequences, which are isomorphisms by Theorem~\ref{thm:ipair-inclusion-isomorphism}.
    Note that, in practice, by following the procedure for constructing the transition diagram in Section~\ref{sec:transition-diagram} the situation where a node participates in two AR-splits (like in Figure~\ref{fig:node_example}) does not happen, 
        instead we get two AR-splits connected by an isomorphism (like in Figure~\ref{fig:node_example_streched}).
    For simplicity, our proof adopts the second case 
        though it can be adjusted to the first, more general case.
    
     \begin{figure}[h]
         \[\begin{tikzcd}
            \ldots \arrow[r, hookleftarrow]
                & \ipairl{p,\tdyn}\arrow[r, hook] & \ipairr{p,\tdyn} \arrow[r, hook] & \ldots \\
            \ldots\arrow[ur, hook, swap]  & & & \ldots\arrow[ul, hook']
         \end{tikzcd}\]
         \caption{The provided construction of the transition diagram does not produce directly diagrams like in Figure~\ref{fig:node_example}.
         Instead, we have the above diagram.}
         \label{fig:node_example_streched}
     \end{figure}
    
    Therefore we can focus on the AR-split to prove the result.
    We prove the case $\BD_{\tdyn}\inscr\BD_{\tdyn+1}$ as the other is analogous.
    In this case, the AR-split diagram has the following structure:
    \begin{center}\begin{tikzcd}
        H_d(N_2,N_1) 
            \arrow[d, leftarrow, dashed, swap, shift right, "k_\ast^d=\,0"]
            \arrow[r, leftarrow, "j_\ast^d"]&
            H_d(N_2,N_{0})\\
            H_d(N_1,N_{0})
            \arrow[ru, rightarrow, "i_\ast^d"] 
            &
    \end{tikzcd}\end{center}
    We use $a,b,c$ to denote the nodes in the quiver corresponding to vector spaces $H_d(N_{1},N_{0})$, $H_d(N_2,N_{0})$ and $H_d(N_2,N_{1})$, respectively.
    We recall that, due to Theorem~\ref{thm:CMbarcode-decomposes}, there exists an isomorphism $\perm{M}_d \cong \bigoplus_{u \in S^d} \perm{S}_u$, where $\perm{M}_d$ is the persistence module corresponding to the homology of degree $d$ and $S^d$ its Conley-Morse persistence module.
    When we restrict this isomorphism to the AR-split under consideration, it takes the following shape:
    \[\begin{tikzcd}
        \underset{u \in S^d_{c}}{\bigoplus} \perm{S}_u(c)
            \arrow[d, leftarrow, dashed, swap, shift right, "k_\ast^d=\,0"]
            \arrow[r, leftarrow, "j_\ast^d"]&
            \underset{u \in S^d_{b}}{\bigoplus} \perm{S}_u(b)\\
            \underset{u \in S^d_{a}}{\bigoplus} \perm{S}_u(a)
            \arrow[ru, rightarrow, "i_\ast^d"] 
            &
    \end{tikzcd}\]
    Where $S^d_{a}, S^d_{b}, S^d_{c}$ are the strings of $S^d$ passing through the nodes $a, b$ and $c$.
    Additionally, we define $K^d \coloneqq \{ u \in S^d \mid u \text{ ends at } a \}$ and $C^d \coloneqq \{ u \in S^d \mid u \text{ ends at } c \}$.
    Then,
    \[
        \ker i_\ast^d \cong \underset{u \in K^d}{\bigoplus} \perm{S}_u(a), \qquad \coker j_\ast^d \cong  \underset{u \in C^d}{\bigoplus} \perm{S}_u(c) .
    \]
    Moreover, the isomorphism $h_\ast$ described in Theorem~\ref{thm:index_triple}\ref{it:index_triple_isomorphism} tells us that $\ker i_\ast^{d-1}$ and $\coker j_\ast^d$ have the same rank.
    In particular, the number of strings of $S^{d-1}$ ending at $a$ must be the same as the number of strings of $S^{d}$ ending at $c$,
    and it is possible to define a pairing between the two sets of strings (see Remark~\ref{rem:general-coupling}).
    Since this is true for all $d$ and all AR-split diagrams at that stage of the filtration, it proves \ref{it:index_triple_con_pairing-2} for $\BD_{\tdyn}\inscr\BD_{\tdyn+1}$.

    We proceed now to prove \ref{it:index_triple_con_distribution-2} for $\BD_{\tdyn}\inscr\BD_{\tdyn+1}$.
    As in the previous case, the strings that continue to $a$ and $c$ are respectively $S^d_{a} \setminus K^d$ and $S^d_{c} \setminus C^d$.
    Hence, 
    \begin{align*}
        &\underset{u \in S^d_{a}}{\bigoplus} \perm{S}_u(a)  \big/ \ker i_\ast^d \cong \im i_\ast^d \cong \underset{u \in S^d_{a} \setminus K^d }{\bigoplus} \perm{S}_u(a), \\
        &\im j_\ast^d \cong \underset{u \in S^d_{c}}{\bigoplus} \perm{S}_{u}(c)  \big/ \coker j_\ast^d \cong \underset{u \in S^d_{c} \setminus C^d }{\bigoplus} \perm{S}_u(c) .
    \end{align*}
    In addition, by Theorem~\ref{thm:index_triple}\ref{it:index_triple_split}, $H_d(N_2, N_0) \cong \im i_\ast^d \oplus \frac{H_d(N_2, N_0)}{\ker j_\ast^d}$.
    Using that $\frac{H_d(N_2, N_0)}{\ker j_\ast^d} \cong \im j_\ast^d$, we get 
    \[
         \underset{u \in S^d_{b}}{\bigoplus} \perm{S}_u(b) 
         \cong 
         {H_d(N_2, N_0)}
         \cong
         \im i_\ast^d \oplus \im j_\ast^d
         \cong
         \left( \underset{u \in S^d_{a} \setminus K^d }{\bigoplus} \perm{S}_u(a) \right) 
         \oplus 
         \left( \underset{u \in S^d_{c} \setminus C^d }{\bigoplus} \perm{S}_u(c) \right),
    \]
    which implies $S^d_{b} = ( S^d_{a} \setminus K^d ) \cup ( S^d_{c} \setminus C^d )$, 
    and no string can have the left endpoint at~$b$, proving 
        \ref{it:index_triple_con_distribution-2} for $\BD_{\tdyn}\inscr\BD_{\tdyn+1}$. 
\end{proof}

\begin{remark}
    \label{rem:general-coupling}
    The coupling that we mentioned---denoted in the diagrams using dashed lines in Figure~\ref{fig:CM-barcode-sphere_example} or~\ref{fig:bifurcation1d}---is given by the isomorphism $h_\ast$ defined in  Theorem~\ref{thm:index_triple}\ref{it:index_triple_isomorphism}.
    In these examples the coupling is well defined,
        however, it is not clear if the same can be said 
        when the Conley index consists of multiple generators of the same degree.
    We leave these considerations for future investigation.
\end{remark}

\section{Algorithm}\label{sec:algorithm}
The algorithm is stated in terms of zigzag modules (see Section~\ref{sec:persistence}).
Zigzag modules appear using the homology functor on a sequence of simplicial complexes $\{K_t\}_{t=0\ldots m}$ where
complexes indexed consecutively are related by inclusions, that is,
either $K_t\hookrightarrow K_{t+1}$ or $K_t\hookleftarrow K_{t+1}$ for $t\in \{0,\ldots, m-1\}$.
These sequences are known as \emph{zigzag filtrations}.
We write $K_t\leftrightarrow K_{t+1}$ to denote that the arrows could be either left or right
inclusions. Thus, a zigzag filtration is written as:
\begin{eqnarray}
{\mathcal F}: \emptyset=K_0\leftrightarrow K_1 \leftrightarrow \cdots\leftrightarrow K_{m-1}\leftrightarrow K_m
\end{eqnarray}
which provides a zigzag module by considering the homology groups $H_*(K_t)$ for each complex
$K_t$ and linear maps $\psi_t^*: H_*(K_t)\leftrightarrow H_*(K_{t+1})$ between the homology groups of consecutive complexes induced by inclusions:
\begin{eqnarray}
H{\mathcal F}: 0=H_*(K_0)\stackrel{\psi^*_0}{\leftrightarrow} H_*(K_1) \stackrel{\psi^*_1}{\leftrightarrow} \cdots\stackrel{\psi_{m-2}^*}{\leftrightarrow} H_*(K_{m-1})\stackrel{\psi_{m-1}^*}\leftrightarrow H_*(K_m)
\end{eqnarray}
In our case, we have zigzag filtrations of \emph{index pairs} arising out of the
 transition diagram $\zzTD$, where we have an index pair $(P_t,E_t):=(P_{it},E_{it})$
in place of a simplicial complex $K_t$. 
We use $t$ instead of $\tdyn$ for 
indexing the columns of the final transition diagram, which may have more columns than steps in the input zigzag filtration of block decompositions $\zzBD$.
We say that a pair $(P_t,E_t)$ is included 
in a pair $(P_{t'},E_{t'})$ if $P_t\subseteq P_{t'}$ and $E_t\subseteq E_{t'}$.
In a zigzag filtration for index pairs, consecutive pairs are related by inclusions,
that is, either $(P_t,E_t)\hookrightarrow (P_{t+1},E_{t+1})$ or $(P_t,E_t)\hookleftarrow (P_{t+1},E_{t+1})$ for $t\in \{0,\ldots, m-1\}$.
Using the notation $G_t=(P_t,E_t)$ and the relative homology group $H_*(G_t)\coloneqq H_*(P_t,E_t)$
in any fixed degree, say $\fk$, we get a zigzag persistence module out of a zigzag filtration of
index pairs:
\begin{eqnarray}
H{\mathcal F}: 0=H_k(G_0)\stackrel{\psi^*_0}{\leftrightarrow} H_k(G_1) \stackrel{\psi^*_1}{\leftrightarrow} \cdots\stackrel{\psi_{m-2}^*}{\leftrightarrow} H_k(G_{m-1})\stackrel{\psi_{m-1}^*}\leftrightarrow H_k(G_m)
\end{eqnarray}
In what follows, we refer to indices of the columns in the transition diagram $\zzTD$, as time.
Our algorithm processes $\zzTD$,
with increasing time values.
We denote the poset underlying the $\zzTD$, as $\poset$.
Let $\poset_t \subseteq\poset$ be the set of all points with time $t$.
In what follows, we say points $a,b\in \perm{P}$ are \emph{immediate} to one another
if $a$ and $b$ are adjacent in the Hasse diagram of $\perm{P}$.
For each point $a_{it}\in \poset_t$, the algorithm
inductively assumes that it has already implicitly processed all zigzag filtrations
of index-pairs 
supported on paths ending
at the index $a_{it}$ and extends the filtrations to each of the 
immediate points of $a_{it}$ at time $t+1$.

Notice that $a_{it}$ can have at most two immediate points at time $t+1$ (recall Figure~\ref{fig:node_example}) in which case 
each zigzag filtration ending at $a_{it}$ is extended to two different zigzag filtrations
ending at two different points at time $t+1$. On the other hand, two points $a_{it}$ and $a_{jt}$ 
at time $t$ can have a common immediate 
point at time $t+1$ in which case the zigzag filtrations ending at $a_{it}$ and $a_{jt}$ get extended
to the common immediate point at time $t+1$.

\subsection{Incremental zigzag persistence algorithm}\label{sec:zigzagalgo}
Our aim is to apply an incremental zigzag persistence algorithm~\cite{DW22} to extend the processing of
zigzag filtrations of index pairs from one time to the next time in the poset. This requires converting these input filtrations of index pairs
into filtrations of simplicial complexes. We achieve it by converting each pair of simplicial
complexes $(P_t,E_t)$ in a zigzag filtration of index pairs into a simplicial
complex $K_t$ by `coning' every simplex in $E_t\subseteq P_t$ with a dummy vertex. Precisely,
every simplex $\sigma=\{v_1,v_2,\ldots, v_s\}\in E_t$ is replaced with a coned simplex
$\omega\cdot\sigma=\{v_1,v_2,\ldots, v_s, \omega\}$ where $\omega$ is a dummy vertex.
It is known that the resulting simplicial zigzag filtration has the same barcode (multiset of intervals
in the interval decomposition of the zigzag persistence module) as the original
one except that an additional infinite bar appears in degree $0$.
Then, for $\dg=0$, we process every zigzag filtration in the same way as for $\dg>0$, but
at the end of computing all bars, we delete the extra infinite bar starting at $t=0$ that appears for every zigzag filtration due to the dummy vertex.
In the following, we assume that all persistence modules are induced by the index pair diagram in non-zero homology
degrees $\dg>0$.%$k>0$.

 With the above conversion, we can assume that when the algorithm arrives at the
 point $a_{it}$ at time $t$, it has already computed the
 interval modules into which zigzag modules induced by all simplicial zigzag
 filtrations ending at $a_{it}$ decompose. The supports of these interval modules
 constitute the bars in the barcode of the module $\perm{M}'$ that is the restriction of the Conley-Morse persistence module $\perm{M}$ to the poset
 $\perm{P}'\subseteq \perm{P}$ that has been processed so far.

The reason that we can use the incremental zigzag algorithm from~\cite{DW22} is that the support of the interval modules (bars) ending at point $a_{it}$ traces
 backward uniquely due to Theorem~\ref{thm:index-triple-consequences-persistence} meaning that they neither split nor merge.
 This allows us to choose a zigzag filtration
 ending at $a_{it}$ whose interval modules in the decomposition
 can be extended to the rest of the module.
 
 {\bf Matrices.} 
 For simplicity, in the rest of the section, we assume that the coefficient field is $\fk=\ZZ_2$.
 The algorithm maintains a set of
 representative $\dg$-cycles for each of the bars ending at a point $a_{it}$ at time $t$.
 In particular, it has access to two matrices $Z_{it}$ and $C_{it}$ of $\dg$-cycles and
 $(\dg+1)$-chains respectively. Each column of $Z_{it}$ represents a $\dg$-cycle 
 in the cycle space ${\sf Z}(K_{it})$ of the complex $K_{it}$ at the point
 $a_{it}$.
 If $z=\sum_j \alpha_j \sigma_j$, $\alpha_i\in \ZZ_2$, is such a cycle,
 then the column representing $z$ contains $\alpha_j$ at the $j$th row. 
 These cycles collectively constitute a basis of the cycle space ${\sf Z}(K_{it})$.
  
 The cycle space ${\sf Z}(K_{it})$ contains the subspace of
 boundaries ${\sf B}(K_{it}) \subseteq {\sf Z}(K_{it})$. A subset of the columns
 of $Z_{it}$ constitute a basis of this boundary space. We denote the corresponding
 submatrix as $B_{it}\subseteq Z_{it}$ and assume the column partition
 so that $Z_{it}=[A_{it}|B_{it}]$. It follows from the fact that the
 quotient space ${\sf Z}(K_{it})/{\sf B}(K_{it})$ represents the homology
 group ${\sf H}_\dg(K_{it})$, all $\dg$-cycles given by the columns in
 the submatrix $A_{it}$ collectively provide representative cycles whose classes
 constitute a basis of ${\sf H}_\dg(K_{it})$.
 These representative cycles also are representative cycles
 for all bars ending at $a_{it}$ which constitute a basis of the homology space ${\sf H}_\dg(K_{it})$. 
 
 The matrix $C_{it}$, on the other hand, represents
 a basis of the subspace of the chain space ${\sf C}(K_{it})$ in degree $\dg+1$ whose boundaries
 constitute the basis in $B_{it}$. 
 We maintain the invariant that if $c$ is a column in
 $C_{it}$, then the boundary $\partial c$ is a column
 in $B_{it}$. In other words, $\partial C_{it}=B_{it}$.

\subsubsection{Computing the bars}\label{sec:computebar}
The bars for zigzag filtrations are incrementally computed as we move from time $t$ to time $t+1$
using the matrices described above. 

{\it Choice of zigzag filtration}: When we arrive at point $a_{it}$, we need to choose a zigzag filtration among the many that end at $a_{it}$. We choose
this zigzag filtration using the following procedure whose justification will become clear when we discuss the correctness of the algorithm. 
We move backward from $a_{it}$. Assume that we have already arrived at the point $a:=a_{*t'}$ for $t'\leq t$ in this backward walk. If there is a single point  $a':=a_{*(t'-1)}$ so that $a'$ and $a$ are immediate points, we simply move
to $a':=a_{*(t'-1)}$. Otherwise, there are exactly two points, say $b:=b_{*(t'-1)}$
and $c:=c_{*(t'-1)}$ where $b\rightarrow a$ and $c\leftarrow a$ are two immediate pairs of points in $\perm{P}$ (we go against the arrow from $a$ to $b$ and along the arrow
from $a$ to $c$). See Figure~\ref{fig:persistence-paths}. 
In this case, we move to $c$. 
Continuing backward
this way, we obtain a unique zigzag filtration, say ${\mathcal{ZZ}_{it}}$, ending in $a_{it}$. Implicitly
we apply the algorithm in~\cite{DW22} on $\mathcal{ZZ}_{it}$ which
updates the matrices at time $t$ to get the matrices at time
$t+1$. We have three cases:

\begin{figure}
    \centering
    \includegraphics[width=1.\linewidth]{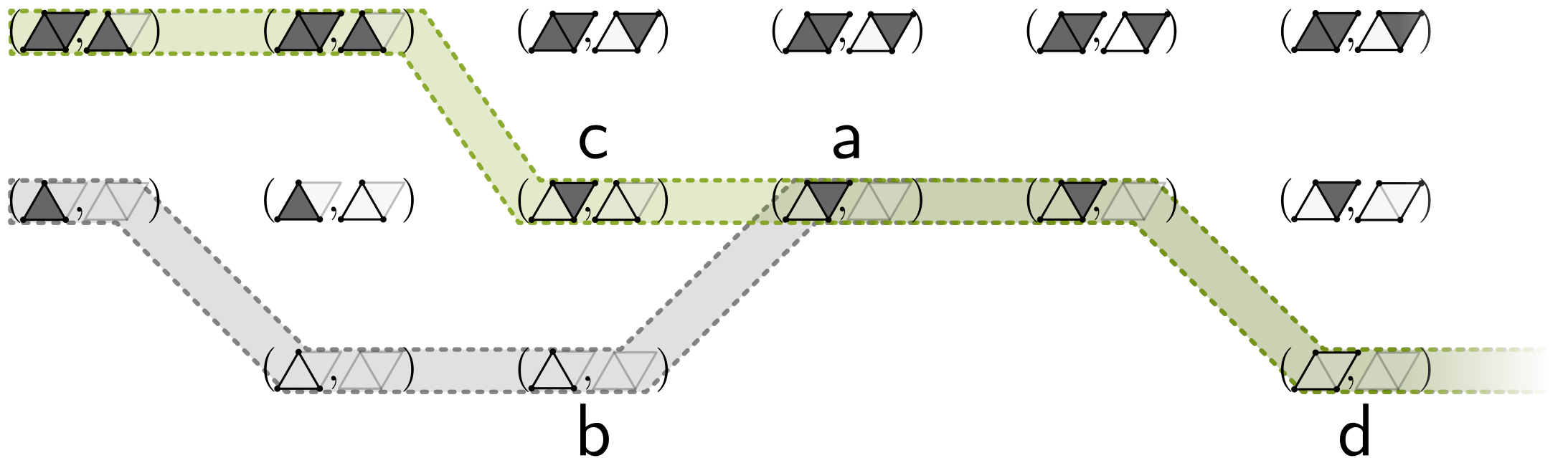}
    \caption{Choice of zigzag filtration (colored green): when the algorithm processes the point $d$, it chooses the zigzag
    filtration indicated green. While going backward, the path supporting the filtration faces a choice at $a$ between the points $b$ and $c$ where it chooses the point $c$.}
    \label{fig:persistence-paths}
\end{figure}

Case 1: Point $a_{it}$ has a single immediate point $a_{\ell(t+1)}$ at time $t+1$ and matrices at point $a_{\ell(t+1)}$ have not been computed yet. In this case, the zigzag algorithm as described in~\cite[Section 4.3]{DW22} is applied to extend the bars for
$\mathcal{ZZ}_{it}$ from $a_{it}$ to $a_{\ell(t+1)}$ with the
proper updates of the matrices.

Case 2: Point $a_{it}$ has a single immediate point $a_{\ell(t+1)}$ at time $t+1$ where the matrices have already been computed while proceeding from another point $a_{jt}$ that has also $a_{\ell(t+1)}$ as immediate point. In this case, these matrices
need to be updated further.
Observe that, proceeding from  $a_{it}$, the algorithm extends a set of bars (interval modules) for $\mathcal{ZZ}_{it}$ to
the point $a_{\ell(t+1)}$ which do not overlap with the set of bars that have already been extended by the algorithm while processing $\mathcal{ZZ}_{jt}$ at $a_{jt}$ according to Theorem~\ref{thm:index-triple-consequences-persistence}.
However, the new bars that are created while proceeding from $a_{it}$ may not be disjoint from the new bars that have already been computed while proceeding from $a_{jt}$, a case 
that needs to be reconciled via the updated matrices.

First, we observe that when we compute the extension from $a_{it}$, the bars that are continued
from $a_{jt}$ appear as new bars being born at $a_{\ell(t+1)}$. Let $\beta$ be a bar
that continues from $a_{it}$ to $a_{\ell(t+1)}$. The representative $\dg$-cycle computed
for $\beta$ at point $a_{\ell(t+1)}$ appears as a newly born $\dg$-cycle when seen from point $a_{jt}$.
Similarly, the representative cycle computed for a bar continued from point $a_{jt}$ appears
as a newly born $\dg$-cycle when seen from point $a_{it}$. Suppose that $Z_{i(t+1)}=[A_{i(t+1)} | B_{i(t+1)}]$ and
$Z_{j(t+1)}=[A_{j(t+1)} | B_{j(t+1)}]$ are the updated matrices of $Z_{it}$ and $Z_{jt}$ respectively at $a_{l(t+1)}$. 
Let $A_{i(t+1)}=[R_{i(t+1)}|S_{i(t+1)}]$ and $A_{j(t+1)}=[R_{j(t+1)}|S_{j(t+1)}]$ where $R_{i(t+1)}\subseteq A_{i(t+1)}$ and $R_{j(t+1)}\subseteq A_{j(t+1)}$ are the submatrices 
representing newly born $\dg$-cycles when proceeding from $a_{it}$ and $a_{jt}$ respectively.
By Theorem~\ref{thm:index-triple-consequences-persistence}(b)(coarsening case), no new bar is born at $a_{\ell(t+1)}$ for the module $\mathbb M$ induced by $\zzTD$. 
It follows that the space represented by the columns of $R_{i(t+1)}$ is equal to the
space represented by the columns of $S_{j(t+1)}$ and the space represented by the
columns of $R_{j(t+1)}$ is equal to the space represented by the columns of
$S_{i(t+1)}$. 
This observation allows us to construct a new matrix $Z_{\ell(t+1)}=[A_{\ell(t+1)}|B_{\ell(t+1)}]$ at point $a_{\ell(t+1)}$ where
\begin{equation*}
    A_{\ell(t+1)}=[S_{i(t+1)} | S_{j(t+1)}] \mbox{ and } B_{\ell(t+1)}= B_{i(t+1)}.
\end{equation*}
For the updated matrix $C_{\ell(t+1)}$, we simply take $C_{\ell(t+1)}\coloneqq C_{i(t+1)}$ because $B_{\ell(t+1)}=B_{i(t+1)}$ in the updated matrix $Z_{\ell(t+1)}$.

Case 3: Point $a_{it}$ has two immediate points $a_{j(t+1)}$ 
and $a_{\ell(t+1)}$ at time $t+1$. In this case, we proceed as in Case 1 or Case 2 as needed for each of $a_{j(t+1)}$ and $a_{\ell(t+1)}$ and we obtain the updated matrices
at these two points accordingly. 
It is worth noting that while processing the filtration $\mathcal{ZZ}_{it}$ for extension to $a_{j(t+1)}$, the bars that extend from $a_{it}$ to $a_{\ell(t+1)}$ appear as bars ending at $a_{it}$. Similarly, while extending the filtration $\mathcal{ZZ}_{it}$ to $a_{\ell(t+1)}$, the bars that extend from $a_{it}$ to $a_{j(t+1)}$ appear as bars ending at $a_{it}$. 
By Theorem~\ref{thm:index-triple-consequences-persistence}, the two sets of bars
do not overlap. Also, due to Theorem~\ref{thm:index-triple-consequences-persistence}\ref{it:index_triple_con_distribution-2} (refinement case), no bar actually
dies for the module $\mathbb M$ at $a_{it}$. 
Hence, we simply ignore the bars that appear to be ending at $a_{it}$ while
processing the filtration from $a_{it}$ to $a_{j(t+1)}$ and to $a_{\ell(t+1)}$.

\subsection{Correctness}
We argue that the bars computed by the zigzag algorithm described above compute
the barcode (Definition~\ref{def:CMbarcode}) of the Conley-Morse persistence module $\perm{M}$ given by the transition diagram $\zzTD$ with the underlying poset~$\mathbb{P}$. 
For this, we argue that the interval modules $\mathbb{I}_u$
supported on the paths $u\in S$ that the algorithm computes also decompose $\perm{M}$, that is, $\perm{M}\cong \bigoplus_{u\in S} \mathbb{I}_u$. Then, by the
uniqueness of indecomposables up to isomorphism (Theorem~\ref{thm:decomposition}), the set of intervals
$S=\{u\}$ forms the barcode of $\perm{M}$.

\begin{theorem}
The zigzag algorithm described in Section~\ref{sec:zigzagalgo} computes the barcode
of the input Conley-Morse persistence module $\perm{M}$ in $O(n^3m)$ time where~$n$ is the number of simplices in the simplicial complex that supports the input
dynamical system and $m$ is the number of points in the poset $\perm{P}$ indexing $\perm{M}$.
\end{theorem}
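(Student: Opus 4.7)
The plan is to establish correctness and complexity separately. For correctness, I would show that the collection of interval modules $\{\mathbb{I}_u\}_{u\in S}$ produced by the algorithm yields a decomposition $\perm{M}\cong\bigoplus_{u\in S}\mathbb{I}_u$. Once this is in hand, Theorem~\ref{thm:CMbarcode-decomposes} together with the Krull--Schmidt uniqueness of indecomposables forces each $\mathbb{I}_u$ to be isomorphic to a unique string module $\mathbb{S}_u$ in the decomposition of $\perm{M}$, so the output multiset $S$ coincides with the Conley--Morse barcode from Definition~\ref{def:CMbarcode}.

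The correctness argument would proceed by induction on the time index $t$, maintaining the invariant that after processing all points in $\poset_{\leq t}$, the pairs of matrices $(Z_{it},C_{it})$ at each $a_{it}\in\poset_t$ faithfully represent bases of the cycle space ${\sf Z}(K_{it})$ and of its boundary subspace, and that the intervals recorded so far assemble into an interval decomposition of the restricted module $\perm{M}'$. The inductive step splits along the three cases of Section~\ref{sec:computebar}. Cases~1 and~2 reduce directly to the incremental zigzag update of~\cite{DW22} applied along the canonical backward path selected by the algorithm; here Theorem~\ref{thm:basic-triple-consequences-persistence}\ref{it:basic_triple_time consistency-2} is invoked to guarantee that bars continuing on the sibling branch, which masquerade as spuriously born (or spuriously dying) during the update, are disjoint from the genuinely born (or dying) ones and can therefore be unambiguously filtered out. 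Case~3 is the substantive step: two independent branches meet at $a_{\ell(t+1)}$, and one must verify that the reconciled matrix $Z_{\ell(t+1)}=[T_{i(t+1)}\,|\,S_{i(t+1)}\,|\,S_{j(t+1)}\,|\,B_{i(t+1)}]$ is indeed a cycle basis at $a_{\ell(t+1)}$ and that the submatrix $T_{i(t+1)}$ isolates precisely those cycles simultaneously detected as newly born from both $a_{it}$ and $a_{jt}$. This is where Theorem~\ref{thm:basic_triple}\ref{it:basic_triple_split} and~\ref{it:basic_triple_isomorphism} enter: the split of $H_d(N_p,N_0)\cong\im i^d_\ast\oplus H_d(N_p,N_0)/\ker j^d_\ast$ is exactly what the column partition of $Z_{\ell(t+1)}$ records, and the pairing $h^d_\ast$ controls which cycles are matched across branches.

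For the complexity bound, note that each point $a\in\poset$ incurs a constant number of matrix updates of the types analyzed in~\cite{DW22}, plus at most one Gaussian reduction of the columns of $R_{i(t+1)}$ against the basis $Z_{j(t+1)}$ in Case~3. Every matrix involved has size at most $O(n)\times O(n)$, since the coning construction only doubles the simplex count, so a single update costs $O(n^3)$. Summing over the $m$ points of $\poset$ yields the claimed $O(n^3 m)$ total running time.

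The main obstacle I anticipate is the reconciliation in Case~3. Concretely, one must show that the column-matching between the ``new'' cycles arriving at $a_{\ell(t+1)}$ from each branch realizes the abstract isomorphism $h^d_\ast:\coker j^d_\ast\to\ker i^{d-1}_\ast$ of the basic split, so that the assembled $Z_{\ell(t+1)}$ is consistent with the decomposition of $\perm{M}$ at $a_{\ell(t+1)}$. The absence of band modules ensured by Theorem~\ref{thm:CMbarcode-decomposes} will be essential here: it prevents any string from wrapping back to a previously visited column and thus guarantees that the meeting at $a_{\ell(t+1)}$ involves matching cycles across exactly two incoming branches, never more; without this, the purely local matrix manipulation in Case~3 could not capture the global decomposition.
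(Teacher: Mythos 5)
Your proposal is correct and follows essentially the same route as the paper's proof: induction over the processing order of the poset, showing the computed interval modules decompose the restricted module at each step (with the three-case analysis and the non-overlap guaranteed by Theorem~\ref{thm:basic-triple-consequences-persistence}), then invoking uniqueness of the indecomposable decomposition to identify the computed intervals with the barcode, and the same $O(1)$ matrix operations of cost $O(n^3)$ at each of the $m$ poset points for the running time. The only cosmetic difference is that you spell out the role of Theorem~\ref{thm:basic_triple}\ref{it:basic_triple_split} and \ref{it:basic_triple_isomorphism} in the Case~3 reconciliation somewhat more explicitly than the paper does, and you omit the paper's minor remark that the backward-walk selection of the filtration is amortized to $O(m)$ total by bookkeeping.
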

\begin{proof}
    According to the observation above, we need to show that the input module $\perm{M}$ decomposes into interval modules that the algorithm computes.
    The algorithm processes the points of $\perm{P}$ in 
    an order given by a linear extension of $\perm{P}$
    and let $a_0,\ldots, a_n$ denote this order where $\perm{P}_i\subseteq \perm{P}$ denote the subposet restricted to points $a_0,\ldots, a_i$. 
    Let $\perm{M}_i$ denote the restriction of $\perm{M}$ to the subposet $\mathbb{P}_i$.
    Assume inductively that the interval modules computed by the algorithm
    for $\perm{M}_i$ decompose it. We argue that after processing the filtration up to $a_{i+1}$, the module $\perm{M}_{i+1}$ still decomposes into the computed interval modules.
    %Tamal's note for reminding the proof: We observe that the choice of the zigzag filtration $F$ is such that at the bifurcation point the following is true: If $b$ and $c$ are two points in the backward direction from $a$ and $c$ is chosen for $F$, then the bars coming from $b$ to $a$ become born at $a$ in the filtration $F$. These bars go to zero in $c$ (because of the arrow direction which is from $a$ to $c$. Then these bars are open ended at $a$. In the incremenatl zigzag algorithm, when we add bars open ended bars are not affected at the open end because nothing can be added to such a bar which go past the open end. Then, the implicit bar additions made by the algorithm operating at $a_{it}$ do not affect the representative of the bars coming from $b$ to $a$. In other words, after computing the bars up to the point $a_{i(t+1)$ with the filtration $F$, we can add back the original bars that were truncated at $a$ without contradicting any compatibility of representatives. In effect, we compute the bars for the extended module $\perm{M}_{i+1}$. 

    Consider three cases as described in Section~\ref{sec:computebar}. In each case,
    one or more of the following occur for interval modules: 
        (i) an existing interval module at~$a_j$, $j\leq i$, extends to $a_{i+1}$, 
        (ii) an interval module ends at $a_j$ and thus does not extend to $a_{i+1}$, 
        (iii) an interval module is born at $a_{i+1}$. The algorithm based on the
    approach in~\cite{DW22} and the actions taken on matrices in all cases together make sure that the classes of representative cycles for the interval modules form a basis of the vector space $\perm{M}(a_{i+1})$. Also, the algorithm
    chooses representative cycles compatibly for the interval modules that are
    affected while moving to $a_{i+1}$. In particular, because of the special choice of the zigzag filtration $\mathcal{ZZ}_j$ while extending from $a_j$, $j\leq i$, to $a_{i+1}$, we can show the following: 
    \begin{proposition}
    Let $\perm{I}_u$
    be an interval module computed for $\perm{M}_{i+1}$; the homology classes $\{\perm{I}_u(a)\}_{a\in u}$ of the chosen representative cycles induce isomorphisms $\{\perm{I}_u(a)\rightarrow \perm{I}_u(b)\}$  for
    every $a\leq b$ in $u$.
    \label{prop:algo-correct}
    \end{proposition}
    It follows from the above proposition that the updated interval modules indeed decompose the module $\perm{M}_{i+1}$.

    To deduce the time complexity, observe that every matrix operation at each point $a_{it}$ takes time $O(n^3)$ if the matrices have dimensions $O(n)\times O(n)$. If the simplicial complex over which the input combinatorial dynamical system is defined has $n$ simplices, each matrix $Z_{it}$, $B_{it}$, and
    $C_{it}$ accessed and processed by the incremental zigzag algorithm indeed has dimensions $O(n)\times O(n)$. The other major step performed by the algorithm is the choice of the zigzag filtration by a backward walk for every point $a_{it}$. 
    This may take $O(m)$ time in the worst case. However, with a bookkeeping of the list of indices for the zigzag filtration chosen for a point and updating it in $O(1)$ time during processing that point, the total cost cannot exceed $O(m)$. This cost is dominated by the cost of matrix operations overall.
    The claimed time complexity follows because at each point $p_{it}\in \perm{P}$, the algorithm processes $O(1)$ matrices.
\end{proof}

{\it Proof of Proposition}~\ref{prop:algo-correct}: While moving from $a_i$ to
$a_{i+1}$, the algorithm extends certain zigzag filtrations $\mathcal{ZZ}_j$ for
some point $a_j\in \perm{P}_i$, $j\leq i$, to $a_{i+1}$. 
The interval modules $\perm{I}_u$
computed by the algorithm for $\perm{M}_{i+1}$ are of the following two types. In each case, we argue that the claim of the proposition holds.

(i) $\perm{I}_u$ is an interval module where the path $u$ is disjoint from the
supports of the zigzag filtrations $\mathcal{ZZ}_j$ that are extended.
In this case, $\perm{I}_u$ is not affected by the updates while moving from $a_i$ to $a_{i+1}$. By inductive hypothesis, $\perm{I}_u$ satisfies the claim.

(ii) $\perm{I}_u$ is an interval module where the path $u$ intersects the support of an extended filtration $\mathcal{ZZ}_j$.
There are two cases to be considered: (a) the path $u$ is completely contained in the support of the extended filtration $\mathcal{ZZ}_j$.
In this case, the matrix updates by the zigzag algorithm described in~\cite{DW22} implicitly update the representative cycles of $\perm{I}_u$ so that $\perm{I}_u$ satisfies the claim of the proposition. (b) the path $u$ intersects the
support of $\mathcal{ZZ}_j$ only partially. Let $a_k$ be the point where
the path $u$ deviates from the support of $\mathcal{ZZ}_j$ for the first
time while going backward from $a_j$. Let $a_{\ell}$ be the 
immediate point of $a_k$ going backward on the support of $\mathcal{ZZ}_j$. Then, by the choice of $\mathcal{ZZ}_j$, we have the backward inclusion $a_{\ell}\hookleftarrow a_k$. The interval module $\perm{I}_u$ restricted to the path starting at $a_k$ and going forward appears as a newly born interval module on the zigzag module induced by $\mathcal{ZZ}_j$. The algorithm in~\cite{DW22} does not change the representative cycles for such modules because the inclusion arrow
$a_{\ell}\hookleftarrow a_k$ is backward. This means that the representative cycles for $\perm{I}_u$ at points $a_k$ and backward (as computed for $\perm{M}_i$) remain intact. The rest of the representative cycles for $\perm{I}_u$ are computed
satisfying the compatibility condition during the extension of $\mathcal{ZZ}_j$ to $a_{i+1}$.

\section{Discussion}\label{sec:discussion}
The introduced Conley-Morse persistence barcode provides a new tool, rooted in persistent homology, for describing the evolution of a parameterized combinatorial multivector field.
It establishes a strong connection between dynamical systems---particularly continuation theory---and topological data analysis, opening possibilities for further exchanges of ideas that may enrich both fields.

For instance, the Conley-Morse persistence module is a naturally arising example of a persistence module over a poset that can be decomposed into string modules (bars), 
    making it a valuable case study for the rapidly growing field of multiparameter persistence.
Conversely, the interpretation of continuation from the viewpoint of persistence theory may enrich Conley index theory, as the Conley-Morse persistence barcode can be viewed as a \emph{parameterized Conley index}. 
% \emph{extended in time}.

This work also raises several open questions and unresolved hypotheses that are worth investigating as future directions:
\begin{itemize}
    \item\textbf{Bar coupling problem:}
        As pointed out in Remark~\ref{rem:general-coupling}, the clear coupling between Conley index generators in the AR-split diagram
        (see map $h_\ast^d$ in  Theorem~\ref{thm:index_triple}\ref{it:index_triple_isomorphism})
        does not easily generalize to a coupling of bars at the level of 
            the Conley-Morse persistence barcode,
            particularly when multiple Conley index generators of a single Morse set are born or die. 
        We hypothesize that a matching still exists, but we only provide a proof of quantitative matching 
            (Theorem~\ref{thm:index-triple-consequences-persistence}\ref{it:index_triple_con_pairing-2}).

    \begin{figure}
        \centering
        \includegraphics[width=0.45\linewidth]{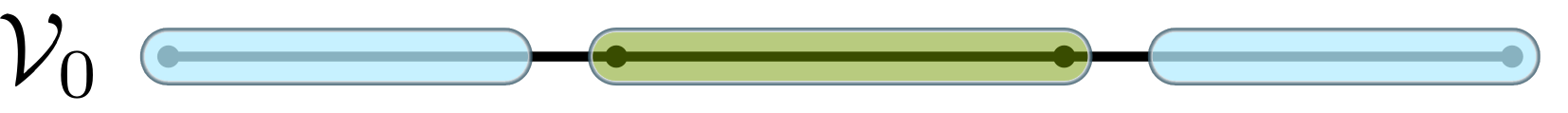}
        \vspace{0.5cm}
        
        \includegraphics[width=0.45\linewidth]{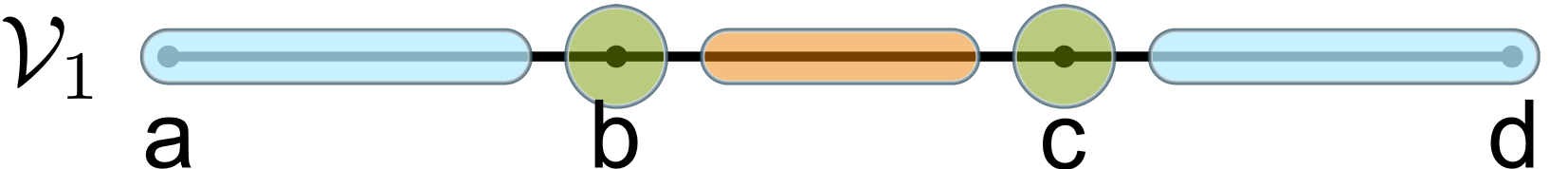}
        \caption{A minimalist combinatorial model for a pitchfork bifurcation.}
        \label{fig:pitchfork-mvf}
    \end{figure}
    \begin{figure}
        \centering
        \includegraphics[width=0.45\linewidth]{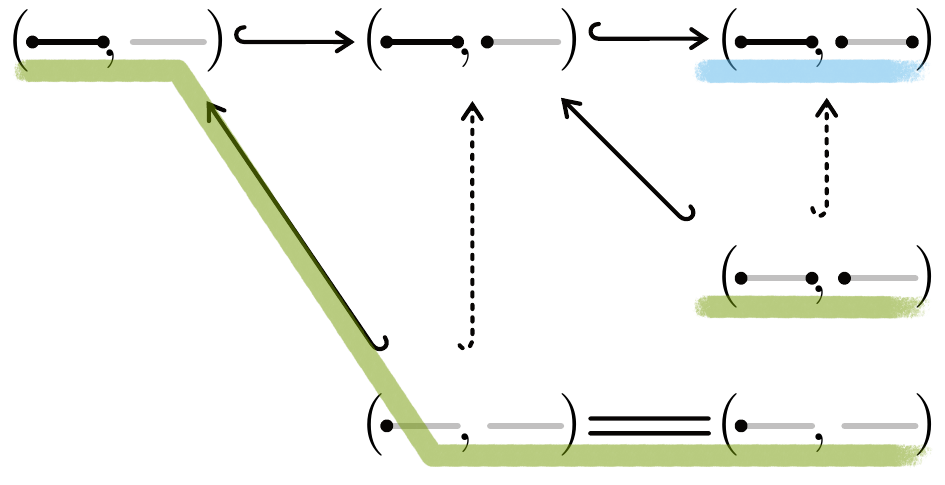}
        \hfill
        \includegraphics[width=0.45\linewidth]{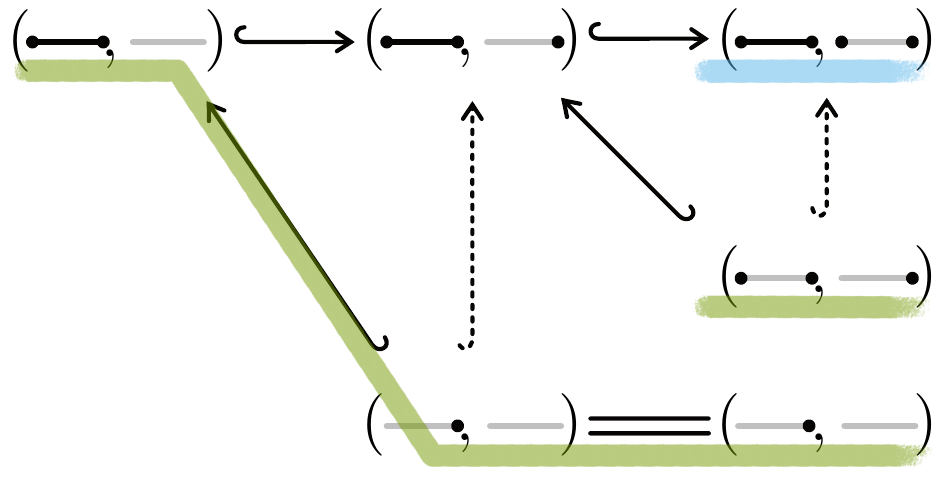}
        \caption{Two possible transition diagrams for the example in Figure~\ref{fig:pitchfork-mvf}.}
        \label{fig:pitchfork-CM-barcode}
    \end{figure}
    \begin{figure}
        \centering
        \includegraphics[width=0.4\linewidth]{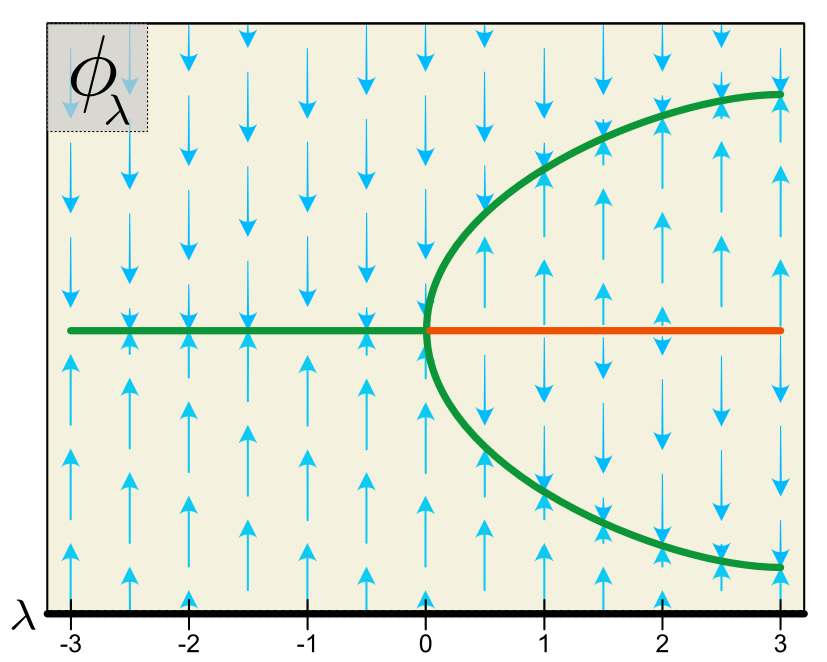}
        \includegraphics[width=0.4\linewidth]{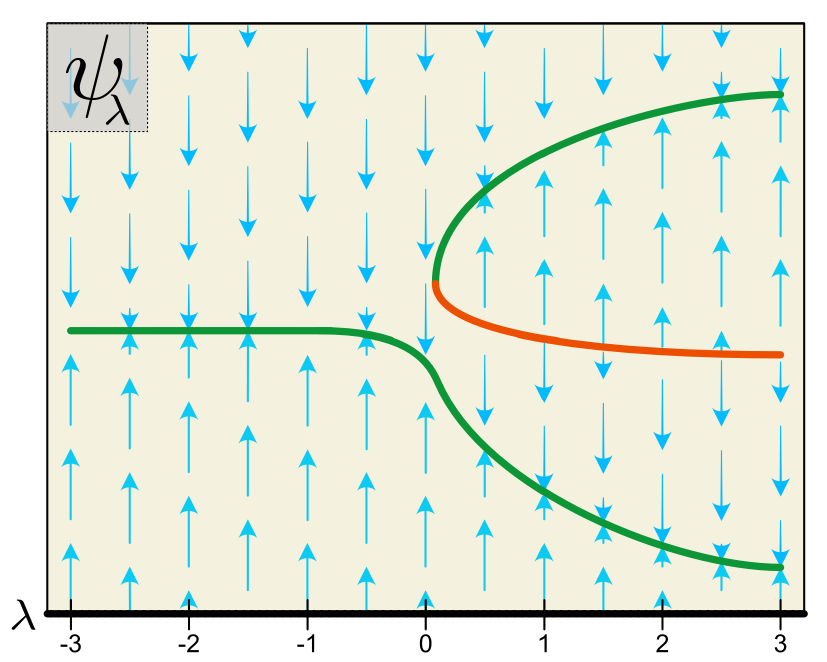}
        \caption{Pitchfork bifurcation (left) and a perturbation of the pitchfork bifurcation.}
        \label{fig:pitchfork-example}
    \end{figure}

    \item\textbf{Linear order sensitivity:} 
        The final form of the transition diagram, and thus, the Conley-Morse persistence module, 
            depends on the choice of AR-cascades (Section~\ref{subsec:TD-general-case}) and the filtration-consistent linear orders (Section~\ref{subsec:construction-of-the-trans-diag}).
        We conjecture that if all blocks are connected
            then Conley-Morse persistence modules are invariant, up to isomorphism, with respect
            to the filtration-consistent linear orders.
        However, it is not clear
            whether the same assertion holds with respect to the choice of
            the AR-cascades.
        It will be a subject for further study.

        Consider a minimalist model of a pitchfork bifurcation in Figure~\ref{fig:pitchfork-mvf} and a zigzag filtration of block decompositions: 
        $\zzBD\coloneqq\BD_0\ovscr\BD_1$, where 
        \begin{align*}
        \BD_0&\coloneqq\{\bl_{\bullet,0}\coloneqq\{b,c,bc\}\},\\
        \BD_1&\coloneqq\{\bl_{\beta,1}\coloneqq\{b\},
            \bl_{\gamma,1}\coloneqq\{c\},
            \bl_{\alpha,1}\coloneqq\{bc\}
            \}.
        \end{align*}
        We have two choices of AR-cascades to make the filtration basic:
        \begin{align*}
        \BD_{1'}&\coloneqq\{\bl_{\beta,1'}\coloneqq\{b, bc\},
            \bl_{\gamma,1'}\coloneqq\{c\}
            \} \quad\text{or}\\
        \BD_{1'}&\coloneqq\{\bl_{\beta,1'}\coloneqq\{b\},
            \bl_{\gamma,1'}\coloneqq\{c, bc\}
            \}.
        \end{align*}
        As shown in Figure~\ref{fig:pitchfork-CM-barcode}, the 0-degree generator corresponding to $\bl_{\bullet,0}$ continues to $\bl_{\beta, 1}$ in the first case, 
            while in the second, it continues to $\bl_{\gamma, 1}$.
        Nevertheless, the barcodes are isomorphic in terms of persistence.

        This ambiguity is not merely combinatorial.
        Note that in the continuous pitchfork bifurcation (Figure~\ref{fig:pitchfork-example} left) none of the newly created attractors inherits the generator either: 
            the attractor at $\lambda=0$ continues to the invariant interval consisting of all equilibria and trajectories connecting them.
        However, if we perturb the system (Figure~\ref{fig:pitchfork-example} right), we obtain uniqueness, 
            and such a perturbation precisely corresponds to the choice of an AR-cascade.
    
    \item\textbf{Extension to other settings:} 
        There is a natural question of extending the construction to other settings,
            for example, parameterized continuous flows, or discrete-time dynamical systems.
    \item\textbf{Improvement of the algorithm:} Currently we employ the incremental algorithm described in~\cite{DW22} to compute the Conley-Morse persistence barcode. 
        Can the fast zigzag algorithm presented in~\cite{DH22} be adapted to this setting to make the computation more practical?
\end{itemize}
            
\section*{Acknowledgment}
M.L. acknowledges support from the European Union’s Horizon 2020 research and innovation programme under the Marie Skło\-dow\-ska-Curie Grant Agreement No.~101034413. T.D. acknowledges the support of NSF funds CCF-2437030 and DMS-2301360.

The authors would like to thank the anonymous reviewers for their careful reading of the paper.
Their feedback significantly improved the quality of the article.
T.D. and M.L. would like to acknowledge many thought provoking discussions with Marian Mrozek on combinatorial dynamical systems and their continuations.
M.S.T. would like to thank Álvaro Sánchez for insightful discussions about representation theory.

\bibliography{bibliography}
\bibliographystyle{amsplain}

\newpage
\appendix
% \section{Abbreviations}
% \begin{table}[h]
% \centering
% \begin{tabular}{l|r}
%     \BDTD & block decomposition transition diagram\\
%     \scc & strongly connected component
% \end{tabular}
% \end{table}

\newpage
\section{Notation and Symbols}
\label{apdx:symbols}
\bgroup
\def\arraystretch{1.1}%  1 is the default, change whatever you need
\begin{longtable}[h]{c|c
    % >{\footnotesize}p{6.2cm}
    >{\raggedright\footnotesize\arraybackslash}p{6.5cm}
    >{\footnotesize}l}
% \centering
% \begin{tabular}{c|c
%     % >{\footnotesize}p{6.2cm}
%     >{\raggedright\footnotesize\arraybackslash}p{6.5cm}
%     >{\footnotesize}l}
\toprule
\textbf{Category} & \textbf{Notation} & \textbf{Description} & \textbf{Ref.} \\
\endfirsthead
    % Check this later
    % https://tex.stackexchange.com/questions/501097/split-table-on-two-pages
    % \caption{Add caption - continued}\\   
    % \toprule
    % \multicolumn{1}{c}{Variable} &Model & \multicolumn{2}{c}{\makecell[cc]{Estimated \\ Parameters}} & Details \\
    % \midrule
    % \endhead
\toprule
\midrule
\multirow{5}{*}{\makecell{Sets \&\\  Topology}} 
    & $\zintab{n}{m}$ & $\ZZ$ interval from $n$ to $m$ & Sec. \ref{subsec:prelim-sets} \\
    %& $\zint{n}$ & $\ZZ$ interval from $0$ to $n$ & Sec. \ref{subsec:prelim-sets} \\
    %& $\zintp{n}$ & $\ZZ$ interval from $1$ to $n$ & Sec. \ref{subsec:prelim-sets} \\
    & $\inscr$ & inscribed relation & Sec. \ref{subsec:prelim-sets} \\
    & $\cl A$ & closure of set A& Sec. \ref{subsec:prelim-ftop} \\
    & $\opn A$ & opening of set A& Sec. \ref{subsec:prelim-ftop} \\
    & $\mo A$ & mouth of set A& Sec. \ref{subsec:prelim-ftop} \\
    & $\fk$ & a field & ... \\
    & $\poset$ & a poset & ... \\
\hline
\multirow{3}{*}{Graphs} 
    & $\pbeg\rho$ & the first element of a path $\rho$ & Sec. \ref{subsec:prelim-graphs} \\
    & $\pend\rho$ & the last element of a path $\rho$ & Sec. \ref{subsec:prelim-graphs} \\
    & $\rho\cdot\rho'$  & concatenation of paths $\rho$ and $\rho'$ & Sec. \ref{subsec:prelim-graphs} \\
\hline
\multirow{20}{*}{\makecell{Multivector\\ fields}} 
    & $\cV$             & a multivector field                       & Sec. \ref{subsec:mvf-elementary} \\
    & $[x]_\cV$         & multivector containing point $x$          & Sec. \ref{subsec:mvf-elementary} \\
    & $F_\cV:X\multimap X$ & multivalued map induced by $\cV$       & Sec. \ref{subsec:mvf-elementary} \\
    & $G_\cV$           & digraph induced by $\cV$                  & Sec. \ref{subsec:mvf-elementary} \\
    & $\inv_\cV$        & invariant part with respect to $\cV$      & Sec. \ref{subsec:mvf-elementary} \\
    & $\uimm\varphi$    & ultimate backward image of full solution $\varphi$ & Sec. \ref{subsec:morse-block-decomposition} \\
    & $\uimp\varphi$    & ultimate forward image of full solution $\varphi$ & Sec. \ref{subsec:morse-block-decomposition} \\
    & $(\cM, \PP)$      & Morse decomposition                       & Def. \ref{def:morse_decomposition} \\
    & $(\BD, \PP)$      & block decomposition                       & Def. \ref{def:block_decomposition} \\
    & $\indMDV{\BD}{\cV}$ & Morse decomposition induced by block decomposition $\BD$ in $\cV$ & Eq. \eqref{eq:induced-morse-decomposition} \\
    & $\con(S)$         & Conley index of $S$                       & Def. \ref{def:conley_index} \\
    & $\con_d(S)$       & degree $d$ component of the Conley index of $S$  & Def. \ref{def:conley_index} \\
    & $\pf_\cV(A)$      & push forward of a set $A$                 & Eq. \eqref{eq:push_forward} \\
    & $C_\cV(\cA, X)$   & connection set in $X$ for a family of sets $\cA$ & Eq. \eqref{eq:connection_set}\\
    & $\solV(A)$        & set of $\cV$-solutions in $A$             & Sec. \ref{subsec:mvf-elementary} \\
    & $\pathsV(A)$      & set of $\cV$-paths in $A$                 & Sec. \ref{subsec:mvf-elementary} \\
    & $\pathsV(x,y,A)$  & set of $\cV$-paths from $x$ to $y$ in $A$ & Sec. \ref{subsec:mvf-elementary} \\
    & $\isolV(A)$       & set of bi-infinite solutions in $A$       & Sec. \ref{subsec:mvf-elementary} \\
    & $\esolV(A)$       & set of essential solutions in $A$         & Sec. \ref{subsec:mvf-elementary} \\
    & $\smvf(X)$        & space of multivector fields on $X$        & Sec. \ref{subsec:combinatorial_continuation} \\
    % & $\tdynmax$ & parameterized multivector field &  Sec. \ref{subsec:combinatorial_continuation}\\
    & $\zzV$ & parameterized multivector field &  Sec. \ref{subsec:combinatorial_continuation}\\
\hline
\multirow{6}{*}{\makecell{Zigzag\\ filtration of\\ block\\ decompositions}} 
    & $\idxfwd{}$       & $\tdyn$-forward index map        & Sec. \ref{subsec:zigzag-filtration-BD} \\
    & $\idxbck{}$       & $\tdyn$-backward index map       & Sec. \ref{subsec:zigzag-filtration-BD} \\
    & $\zzBD$ & zigzag filtration of block decompositions & Sec. \ref{subsec:zigzag-filtration-BD} \\
    & $\zzTD$ & transition diagram of block decompositions & Sec. \ref{subsec:zigzag-filtration-BD} \\
    & $\ipairr{p,\tdyn}$ & a right-most index pair of a splitting diagram & Sec. \ref{subsec:basic-transition-diagram}\\
    & $\ipairl{p,\tdyn}$ & a left-most index pair of a splitting diagram & Sec. \ref{subsec:basic-transition-diagram}\\
    % & $\ipair{p,\tau}_*$ & a generic index pair & Sec. \ref{sec:cm-barcodes}\\
\hline
\multirow{15}{*}{\makecell{Persistence \\ \& Gentle\\ algebras}} 
    & $\quiver$ & a quiver & Sec. \ref{subsec:quivers} \\
    & $\quiver_0$ & set of nodes in quiver $\quiver$ & Sec. \ref{subsec:quivers} \\
    & $\quiver_1$ & set of arrows in quiver $\quiver$ & Sec. \ref{subsec:quivers} \\
    & $s, e \colon \quiver_1 \to \quiver_0$ & source and target map & Sec. \ref{subsec:quivers} \\
    & $\perm{M}$ & persistence module & Sec. \ref{subsec:quivers} \\
    & $I$ & an ideal & Sec. \ref{subsec:quivers} \\
    & $R$ & the arrow ideal & Sec. \ref{subsec:quivers} \\
    & $(\quiver, I)$ & a bound quiver & Sec. \ref{subsec:quivers} \\
    & $\stringset(\quiver,I)$ & quotient set of strings modulo orientation & Sec.~\ref{subsec:gentle-algebras}\\
    & $\stringmodule{u}$ & string module over a string $u$ & Sec.~\ref{subsec:gentle-algebras}\\
    & $\mathcal{F}$ & zigzag filtration & Sec.~\ref{sec:persistence}\\
    & $\sf C$ & the chains space of complex $K$ & Sec.~\ref{sec:zigzagalgo}\\
    & $\sf Z$ & the cycles space of complex $K$ & Sec.~\ref{sec:zigzagalgo}\\
    & $\sf B$ & the boundaries space of complex $K$ & Sec.~\ref{sec:zigzagalgo}\\
% \hline
% \multirow{2}{*}{\makecell{Algorithm}} 
%     & $a$ & Description of a & [Ref4] \\
%     & $b$ & Description of b & [Ref5] \\
\hline
% \end{tabular}
\caption{Notation used across the paper.}
\label{tab:sample}
\end{longtable}
\egroup

% \newpage
% \section{Index}
% \renewcommand{\indexname}{}
% \section*{Appendix BIndex}
\printindex
% \label{apdx:index}

\end{document}